\newtheorem{theorem}{Theorem}[section]
\newtheorem{lemma}[theorem]{Lemma}
\newtheorem{corollary}[theorem]{Corollary}
\newtheorem{proposition}[theorem]{Proposition}
\theoremstyle{definition}
\newtheorem{definition}[theorem]{Definition}
\theoremstyle{remark}
\newtheorem{remark}[theorem]{Remark}
\numberwithin{equation}{section}
\numberwithin{theorem}{section}
\newcommand\mydots{\hbox to 1em{.\hss.\hss.}}
\title{Ramifications of generalized Feller theory}
\author{Christa Cuchiero\thanks{Vienna University, Department of Statistics and Operations Research, Data Science @ Uni Vienna, Kolingasse 14-16 1, A-1090 Wien, Austria, christa.cuchiero@univie.ac.at} \and Tonio M\"ollmann \thanks{ t.moellmann@gmail.com} \and Josef Teichmann \thanks{ETH Zurich,  Department of Mathematics, Rämistrasse 101, CH-8092 Zurich, Switzerland, josef.teichmann@math.ethz.ch\newline
The first author gratefully acknowledges financial support through grant Y 1235 of the START-program.
			The third author gratefully acknowledges financial support by SNF and ETH Foundation.}}
\date{}
\begin{document}
\maketitle

\begin{abstract}
Generalized Feller theory provides an important analog to Feller theory beyond locally compact state spaces. This is very useful for solutions  of certain stochastic partial differential equations, Markovian lifts of fractional processes, or infinite dimensional affine and polynomial processes which appear prominently in the theory of signature stochastic differential equations. We extend several folklore results related to generalized Feller processes, in particular on their construction and path properties, and  provide the often quite sophisticated proofs in full detail.
We also introduce the new concept of extended Feller processes and compare them with standard and generalized ones.
A key example relates generalized Feller semigroups of  algebra homomorphisms via the method of characteristics to transport equations and continuous semiflows on weighted spaces, i.e.~a remarkably generic way to treat differential equations on weighted spaces. We also provide a counterexample, which shows that no condition of the basic definition of generalized Feller semigroups can be dropped.
\end{abstract}

\textbf{Keywords:} infinite dimensional stochastic processes, weighted spaces, generalized Feller processes, path properties, transport equations on weighted spaces

\textbf{AMS MSC 2020:} 60G07, 60J25, 35Q49

\tableofcontents

\section{Introduction}

Feller processes on locally compact spaces are based on the well-developed analytical theory of strongly continuous semigroups, which simplifies considerably on the Banach space of continuous functions vanishing at infinity which is the function space of interest. There are two major shortcomings of this classical theory: first the analytical frame appears too narrow, as the semigroups act on continuous functions vanishing at infinity, and second local compactness of the underlying state space is often too restrictive. This concerns important applications where the spatial dynamics are modeled via stochastic partial differential equations (see \cite{RoS, DoT}) or signed measure-valued processes, as for instance Markovian lifts of Volterra processes (see \cite{CuT,cuchiero2019markovian, abi2019Markov, abi2019lifting}). Another recent application is in the area of  infinite stochastic covariance modeling  where the processes take values in the cone of positive
Hilbert-Schmidt operators (see \cite{cox2022affine, friesen2022stationary}). Signature stochastic differential equations (SDEs), where affine processes and  machine learning methods meet, constitute another class of promising applications (see  \cite{arribas2020sig,CSvT:23} and also the related paper \cite{friz2022unified}). There one has to deal with state spaces corresponding to subsets of so-called group-like elements of the extended tensor algebra.

To accommodate such infinite dimensional state spaces, so-called  \emph{generalized Feller semigroups} have been introduced in \cite{RoS, DoT}. Relying on these previous works, the goal of this article is to provide a comprehensive and self-contained presentation of the theory of \emph{generalized Feller processes}  and the new  concept of 
\emph{extended Feller processes.}

The theory of generalized Feller processes is built in analogy to classical Feller processes, i.e.~via strongly continuous semigroups acting on certain Banach spaces of functions. With respect to the classical setting, the space of functions vanishing at infinity is replaced by so-called $\mathcal{\mathscr{B}}^{\rho}(E)$-functions.
These are spaces of functions on a weighted space $E$  equipped with some \emph{admissible weight function}
$\rho$ whose growth is controlled by $\rho$ and which lie in the closure of continuous bounded functions with respect to  a weighted supremum norm induced by $\rho$. Therefore -- unlike Feller semigroups -- generalized Feller semigroups act also on unbounded functions, however, other basic properties remain, in particular the  definition is very similar: a
generalized Feller semigroup is a family of positive linear bounded operators from $\mathcal{\mathscr{B}}^{\rho}(E)$ to $\mathcal{\mathscr{B}}^{\rho}(E)$ such that the semigroup properties are satisfied, the norm of the operators remains uniformly bounded for small times and for any map $f$ in $\mathcal{\mathscr{B}}^{\rho}(E)$ the image under the semigroup converges pointwise to $f$ as $t$ approaches $0$.

The important feature of generalized Feller theory is that the underlying state spaces do not have to be locally compact. Already in simple situations this is  crucial: 
 take a Hilbert space $E$ and consider $d$ bounded linear operators $A_1,\ldots,A_d \in L(E)$. Consider furthermore a $d$ dimensional Brownian motion $B$ and the stochastic differential equation
$$
d \lambda_t = \sum_{i=1}^d A_i \lambda_t dB^i_t,
$$
starting at $\lambda_0 \in E$ defining a family of stochastic processes $\lambda$. The state space of the Markov process $\lambda$ is $E$, which is not locally compact. Whence no results of Feller theory are applicable even though -- due to It\^o-calculus -- we know about continuous trajectories, strong uniqueness, boundedness properties of the solutions, linearity, etc. Since bounded linear operators are weakly continuous, too, we can replace the norm topology on $E$ by the weak topology. With the weak topology $E$ is $\sigma$-compact (since balls are weakly compact) and actually a weighted space with weight, e.g., $\rho(\lambda)=1+ {\|\lambda \|}^2$. Therefore $\lambda$ defines a generalized Feller process associated to the generalized Feller semigroup
$$
P(t)f(\lambda_0) : = \mathbb{E}[f(\lambda_t)]
$$
for $t \geq 0$ and $ f \in \mathcal{\mathscr{B}}^{\rho}(E)$. It would be quite non-trivial to construct other strongly continuous semigroups associated to $\lambda$ due to the absence of local compactness or of tractable invariant measures.

In the following we explain the main contributions of this article.
In Section~\ref{notionsgenFeller} we recall important notions and results of generalized Feller theory, in particular the Riesz representation theorem proved in \cite{DoT}, allowing to characterize the dual space of $\mathcal{\mathscr{B}}^{\rho}(E)$ as signed Radon measures whose total variation measure integrates the weight function $\rho$. We prove several lemmas in this section and  provide in Proposition \ref{prop:Stone-Weierstrass on B-rho spaces} a \emph{weighted space version of the Stone-Weierstrass theorem} on $\mathcal{\mathscr{B}}^{\rho}(E)$ in the spirit of Leopoldo Nachbin \cite{nachbin65}, see also \cite[Theorem 3.6]{CST:23} for a version where the elements of the algebra can be unbounded.

This fundamental approximation result is needed in the \emph{existence proof of generalized Feller processes}, given in  Theorem \ref{thm:GFS induce Markov process}. A similar statement was already formulated in \cite[Theorem 2.11]{CuT}, but the full proof with all details is given in the current article. Theorem \ref{thm:GFS induce Markov process} yields stochastic processes whose conditional expectations are given by a strongly continuous semigroup (a generalized Feller one) even in cases when the space $E$ is neither separable nor locally compact. This is a crucial difference to the theory of Feller processes and thus one of the main results of this article.

Let us mention a subtle point in this context, namely that  generalized Feller semigroups act on $\mathscr{B}^{\rho}(E)$ functions which are in general -- as limits of continuous functions -- only Baire-measurable, see Remark \ref{rem:generalized Feller process not Markov} for further details. The proof of Theorem \ref{thm:GFS induce Markov process} relies on a general version of the Kolmogorov extension theorem (see Theorem 15.26 in \cite{AlB}). In order to apply it we construct a projective family of probability measures. Here, the Riesz  representation theorem extended to $\mathcal{B}^{\rho}(E\times E \times \cdots \times E)$ is essential to express continuous linear functionals  on the sub-level sets of the admissible weight function by a (sub-)probability measure. As we let the sub-level sets of the admissible weight function converge to the whole space, such a sequence of (sub-)probability measures converges to a probability measure on  $E\times E \times \cdots E$, yielding a projective family of probability measures with the desired properties. Then the generalized Feller processes is the canonical process on the product space when equipped with a product measure according to the general version of the Kolmogorov extension theorem.

Having constructed generalized Feller processes we treat their path regularity. This is subject of Theorem \ref{thm:GFP have cadlag version}, where it is shown that generalized Feller processes admit a càdlàg version if certain technical conditions are met (thereby closing a gap in a statement in \cite{CuT}).

Starting from a generalized Feller semigroup there is not only one way to construct a related stochastic process. Indeed, in Definition \ref{def:extended Feller process} we  introduce a new class of processes coined \emph{extended Feller processes} which build on generalized Feller semigroups, but in contrast to generalized Feller processes the weight function enters in the definition of the respective conditional expectations. For some Baire-measurable function $f$ they are given by the quotient between the generalized Feller semigroup applied to the function $f \cdot \rho$ and the weight function $\rho$. Existence of these extended Feller processes is then proved in Theorem \ref{thm:contractive gFs on weighted space is Markov process} and Corollary \ref{cor:M smaller 1 gFs on weighted space is Markov process}, under the  condition that the generalized Feller semigroup is quasi-contractive.

We then also compare extended Feller processes with generalized Feller processes and notice in Proposition \ref{prop:gFp and gamma process comparison} that if both exist, their induced laws are equivalent measures.
In Section \ref{sec:relation} we establish a connection between generalized/extended Feller processes with 
classical ones and get the following two relations: first, on locally compact spaces  a Feller process is  generalized Feller if the Feller semigroup applied to the admissible weight function remains bounded for small times (see Proposition \ref{prop:Generalized Feller sometimes Feller process on locally compact space}); second, if  the admissible weight function is continuous, then $E$ is automatically locally compact and extended Feller processes can be reduced (modulo separability) to classical Feller processes (see Theorem \ref{thm:extended Feller process is extended Feller process}). In general, when $\rho$ is not continuous, extended Feller processes thus
generalize the notion of Feller processes to spaces $E$ that are only $\sigma$-compact, which explains their name.

As important examples of generalized Feller processes we consider in Section \ref{sec:transport} deterministic processes induced by semigroups of transport type. Indeed, we first show that generalized Feller semigroups of smooth operator algebra homomorphisms, precisely introduced in Definition~\ref{def:OH}, are given by 
\[
P(t)f=f \circ  \psi_t, \quad t \geq 0, \, f \in \mathscr{B}^\rho(E),
\]
where $\psi$ is a continuous semiflow in time (and also in space when restricted to compact sets).
The associated infinitesmial generator of such a generalized Feller semigroup is a smooth derivation, also called transport operator.
 This allows to treat transport equations on weighted spaces via strongly continuous semigroup theory. Moreover, if a transport operator generates a generalized Feller semigroup, the associated generalized Feller process corresponds to $(\psi_t)_{t \in \mathbb{R}_+}$, being actually the solution of a differential equation on the weighted space. In Section \ref{sec:affineandpoly} we consider then a stochastic setting with classical affine and polynomial processes on finite dimensional state spaces and show in Proposition \ref{prop:polynomial process is generalized Feller} and Corollary \ref{cor:affine process is generalized Feller} that under certain minor conditions polynomial and affine processes are generalized Feller processes. This adds
to the existing theory, since to date it is not known whether affine or polynomial processes on general state spaces are classically Feller or not. Finally, in Section \ref{eq:secP4} we also provide a counterexample, which shows that 
the additional 
 condition, called \textbf{P4}, used to define generalized Feller semigroups  and not needed for standard Feller processes \emph{cannot} be dropped, as then strong continuity does not necessarily hold true any more.

\subsection{Notation and basic definitions}

We here introduce notation needed throughout the paper.
For a topological space $E$ we denote its  Borel $\sigma$-algebra  by $\mathcal{B}(E)$. We write $C_{b}(E)$, $C_{0}(E)$, and $C_{c}(E)$  for the spaces of continuous maps that are bounded, vanish at infinity and have compact support, respectively. Moreover,  $\text{Id}$  denotes the identity operator on a given space. 

A \emph{transition kernel $\kappa$}  from a measurable space $\left(\Omega,\mathcal{F}\right)$ to a measurable space $\left(E,\mathcal{\mathcal{E}}\right)$\footnote{We use here some generic $\sigma$-algebra $\mathcal{E}$ which does not necessarily correspond to the Borel $\sigma$-algebra. Later on it will usually be either the Baire 
(see Definition \ref{def:Baire sigma algebra}) or the Borel $\sigma$-algebra.} is a map $\kappa: \Omega \times \mathcal{E} \to [0,\infty]$ such that for any fixed $B \in \mathcal{E}$, $\omega \to \kappa(\omega, B)$ is $\mathcal{F}$-measurable
and  every fixed $\omega \in \Omega$, $B \to \kappa(\omega, B)$ is a measure. It is called \emph{transition probability} if $\kappa(\omega, E)=1$ for all $\omega \in \Omega$.

If $\left(\Omega,\mathcal{F}\right)=\left(E,\mathcal{\mathcal{E}}\right)$ we simply speak of a transition kernel/probability on $\left(E,\mathcal{\mathcal{E}}\right)$. A family $\left(p(t)\right)_{t\in\mathbb{R}_{+}}$ of transition  probabilities on $\left(E,\mathcal{E}\right)$ is called $\mathit{semigroup}$ $\mathit{of}$ $\mathit{transition}$ $\mathit{probabilities}$  on $ \left(E,\mathcal{E}\right)$ if for all $ x\in E$, for all $s,t\in\mathbb{R}_{+}$ and all $A\in\mathcal{E}$
\[
p(s+t)(x,A)=\int_{E}p(s)(y,A)p(t)(x,dy)
\]
and
\[
p(0)(x,\cdot)=\delta_{x}
\]
hold. Here, $\delta_{x}$ denotes the Dirac measure. Similar notions apply of course to transition kernels which do not satisfy $\kappa(x, E) =1$ for all $x \in E$.  For transition kernels $\kappa$ with  $\kappa(x,E)\leq1$ for all $x\in E$, one can add a so-called \emph{cemetery state} $ \triangle $ to  $E$,
and define on $E_{\triangle}:=E\cup\left\{ \triangle\right\} $  a
transition probability
\[
\kappa':E_{\triangle}\times\sigma\left(\mathcal{\mathcal{\mathcal{E}}},\left\{ \triangle\right\} \right)\rightarrow\left[0,1\right]
\]
by
\[
\left.\kappa'\right|_{E\times\mathcal{\mathcal{E}}}=\kappa
\]
and  for any $x\in E$ and for any $A\in\mathcal E$
\begin{align*}
\kappa'\left( \triangle  ,\{ \triangle\right\}) &=1\\
\kappa'\left( \triangle ,A\right) & =0  \\
\kappa'\left(x,\{ \triangle\} \right) & =1-\kappa(x,E) .
\end{align*}
For any function $f$ on $E$ the convention
is to extend it to $E_{\triangle}$ by setting $f(\triangle)=0$.
Usually, the precise distinction between $\kappa'$ and $\kappa$
will not be made and $\kappa'$ will simply be called $\kappa$.
Finally, for  $t\in\mathbb{R}_{+}$ we define the $\mathit{translation}$ $\mathit{operator}$  via
 \[ 
\theta_{t}:\,E^{\mathbb{\mathbb{R}}_{+}}\rightarrow E^{\mathbb{\mathbb{R}}_{+}}, \quad
 \left(x(s)\right)_{s\in\mathbb{\mathbb{R}}_{+}}\rightarrow\left(x(s+t)\right)_{s\in\mathbb{\mathbb{R}}_{+}}.
 \]

\section{Notions of generalized Feller theory}\label{notionsgenFeller}

We here recall the essential notions of generalized Feller theory developed in particular in \cite{DoT} and prove some basic lemmas as well as the weighted Stone-Weierstrass theorem which will be used later on.

Let $E$  be a completely regular space. A map $\rho:\,E\rightarrow\left(0,\infty\right)$ is called $\mathit{admissible}$
$\mathit{weight}$ $\mathit{function}$ if the sets 
\[
K_{R}:=\left\{ x\in E:\,\rho(x)\leq R\right\} 
\]
are compact for all $R\geq0$. The pair $\left(E,\,\rho\right)$ is
called $\mathit{weighted}$ $\mathit{space}$. An admissible weight function is clearly lower semicontinuous and attains its minimum.

As proved in the subsequent lemma, the product space of weighted spaces is again a weighted space. This will be essential for the existence proof of generalized Feller processes in Theorem \ref{thm:GFS induce Markov process}.

\begin{lemma}
\label{lem:product space of weighted space is weighted space}Let
$\left(E_{i},\rho_{i}\right)$, $i\in\left\{ 1,...,n\right\} $ be
weighted spaces. Then 
\[
\left(E_{1}\times...\times E_{n},\rho\right)
\]
is a weighted space, where 
\[
\rho\left(x_{1},...,x_{n}\right):=\rho_{1}\left(x_{1}\right)\cdot\cdot\cdot\rho_{n}\left(x_{n}\right).
\]
\end{lemma}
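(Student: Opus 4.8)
The plan is to verify the two defining properties of a weighted space for the pair $(E_1 \times \cdots \times E_n, \rho)$: namely that $\rho$ is a map into $(0,\infty)$, and that its sublevel sets $K_R = \{x : \rho(x) \le R\}$ are compact for every $R \ge 0$. That $\rho$ takes values in $(0,\infty)$ is immediate, since it is a finite product of functions each taking values in $(0,\infty)$. The substance is the compactness of the sublevel sets, and here the natural route is to exhibit $K_R$ as a closed subset of a compact set, using the compactness of the sublevel sets of the individual $\rho_i$ together with Tychonoff's theorem.

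First I would record that since each $\rho_i$ attains its minimum on $E_i$ (as noted in the text, an admissible weight function is lower semicontinuous and attains its minimum), we may set $m_i := \min_{x_i \in E_i} \rho_i(x_i) > 0$ and $m := \prod_{j=1}^n m_j > 0$. Then for any point $(x_1,\dots,x_n)$ with $\rho(x_1,\dots,x_n) = \prod_j \rho_j(x_j) \le R$, and for each fixed index $i$, one has
\[
\rho_i(x_i) \;=\; \frac{\rho(x_1,\dots,x_n)}{\prod_{j \ne i} \rho_j(x_j)} \;\le\; \frac{R}{\prod_{j \ne i} m_j} \;=\; \frac{R\, m_i}{m}.
\]
Hence $x_i \in K^{(i)}_{R_i}$ where $K^{(i)}_{R_i} := \{y \in E_i : \rho_i(y) \le R_i\}$ with $R_i := R m_i / m$, and each $K^{(i)}_{R_i}$ is compact by admissibility of $\rho_i$. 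This shows the inclusion $K_R \subseteq K^{(1)}_{R_1} \times \cdots \times K^{(n)}_{R_n}$, and the right-hand side is compact by Tychonoff's theorem (a finite product suffices, so no choice subtleties arise).

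It then remains to see that $K_R$ is closed in the product space, so that as a closed subset of a compact set it is itself compact. Since each $\rho_i$ is lower semicontinuous, the finite product $\rho = \prod_i \rho_i$ of nonnegative lower semicontinuous functions is lower semicontinuous on $E_1 \times \cdots \times E_n$ — this is the one small point that needs a line of justification, most cleanly via the fact that pointwise products of nonnegative l.s.c. functions are l.s.c. (e.g. by writing $\rho$ as a supremum of continuous-in-each-variable approximations, or by a direct net argument). Lower semicontinuity of $\rho$ immediately gives that $K_R = \rho^{-1}((-\infty, R])$ is closed, completing the argument.

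I expect no serious obstacle here; the only mild care is in (a) handling the case $R < m$, where $K_R = \emptyset$ and the statement is trivial, and (b) justifying that a finite product of nonnegative lower semicontinuous functions is lower semicontinuous — which one might alternatively sidestep entirely by instead proving directly that the complement of $K_R$ is open, or by noting that since the product space is completely regular (a product of completely regular spaces is completely regular) the relevant topological manipulations are unproblematic. The compactness of $K_R$ as a closed subset of the compact box is then the conclusion.
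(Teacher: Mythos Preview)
Your proposal is correct and follows essentially the same approach as the paper: embed $K_R$ in a compact product box (the paper normalizes via WLOG $\rho_i\ge 1$ rather than using the minima $m_i$, but this is cosmetic), then show $K_R$ is closed via lower semicontinuity of the product weight (the paper carries out directly the neighborhood argument that proves a product of positive l.s.c.\ functions is l.s.c., which is precisely the fact you invoke). The only thing to make slightly more prominent is that complete regularity of the product space is part of what must be verified; you mention it, but it should appear as a stated step rather than a parenthetical aside.
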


\begin{proof} It is well known that the product space $E_{1}\times...\times E_{n}$ is completely regular. Without
loss of generality let $\rho_{i}\geq1$ for $i\in\left\{ 1,...,n\right\} $ and
let $R>0$ be arbitrary. Then 
\begin{align*}
 & \left\{ \left(x_{1},...,x_{n}\right)\in E_{1}\times...\times E_{n}:\,\rho_{1}\left(x_{1}\right)\cdot\cdot\cdot\rho_{n}\left(x_{n}\right)\leq R\right\} \\
 & \subset\left\{ x_{1}\in E_{1}:\,\rho_{1}\left(x_{1}\right)\leq R\right\} \times...\times\left\{ x_{n}\in E_{n}:\,\rho_{n}\left(x_{n}\right)\leq R\right\} .
\end{align*}
Since the right hand side is compact we only need to show closedness of the left hand side. For $y=\left(y_{1},...,y_{n}\right)$
such that $\rho\left(y_{1},...,y_{n}\right)>R$, by lower semicontinuity
of $\rho_{1}$,...$\rho_{n}$ for any $\varepsilon>0$ there exist open neighborhoods $U_{y_{1}}^{\varepsilon}\subset E_{1}$
of $y_{1}$, ..., $U_{y_{n}}^{\text{\ensuremath{\varepsilon}}}\subset E_{n}$
of $y_{n}$ such that for any $u_{i}\in U_{y_{i}}^{\text{\ensuremath{\varepsilon}}}$,
$i\in\left\{ 1,...,n\right\} $
\[
\rho_{i}(u_{i})>\rho_{i}(y_{i})-\varepsilon.
\]
Hence for $u\in U_{y_{1}}^{\varepsilon}\times...\times U_{y_{n}}^{\text{\ensuremath{\varepsilon}}}$
\[
\rho(u)>\left(\rho_{1}(y_{1})-\varepsilon\right)\cdot\cdot\cdot\left(\rho_{n}(y_{n})-\varepsilon\right)
\]
and the right hand side is larger than $R$ for $\varepsilon$ small
enough.
\end{proof}

Following \cite{DoT}, for an admissible
weight function $\rho$ and a Banach space $Z$ for  $f:\,E\rightarrow Z$  we define the map 
\[
\left\Vert \cdot\right\Vert _{\rho}:\,f\rightarrow\underset{x\in E}{\sup}\,\frac{\left\Vert f(x)\right\Vert }{\rho(x)}
\]
and
\[
B^{\rho}(E;Z):=\left\{ f:\,E\rightarrow Z:\,\left\Vert f(x)\right\Vert_{\rho} <\infty\right\} .
\]
By standard arguments it follows that $B^{\rho}(E;Z)$ is a Banach space with respect to the norm $\left\Vert \cdot\right\Vert_{\rho}$. Furthermore we set\[ \mathcal{\mathscr{B}}^{\rho}(E;Z) := \overline{C_{b}\left(E,Z\right)}^{\rho}
\] and $\mathcal{\mathscr{B}}^{\rho}(E) := \mathcal{\mathscr{B}}^{\rho}(E;\mathbb{R})$. Its elements are called \textit{functions with growth controlled by $\rho$}.

It was proved in
\cite{DoT} that the space $\mathcal{\mathscr{B}}^{\rho}(E)$
is closely related to the space of continuous maps on a compact space. 
\begin{theorem}
\label{thm: equivalence B-rho space} Let $f:\,E\rightarrow\mathbb{R}$.
Then $f\in\mathcal{\mathscr{B}}^{\rho}(E)$ if and only if \\
(i) for all $R>0$
\[
\left.f\right|_{K_{R}}\in C_{b}(K_{R},\mathbb{R}),
\]
 and \\
(ii)
\[
\underset{R\rightarrow\infty}{\lim}\underset{x\in E\setminus K_{R}}{\sup}\frac{\left|f(x)\right|}{\rho(x)}=0.
\]
\end{theorem}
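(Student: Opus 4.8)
The plan is to prove both implications straight from the definition $\mathscr{B}^{\rho}(E)=\overline{C_{b}(E)}^{\rho}$, using the elementary observation that on each sublevel set $K_{R}$ one has $\rho\le R$, so that $\|\cdot\|_{\rho}$-convergence forces uniform convergence on $K_{R}$: indeed $\sup_{x\in K_{R}}|g(x)|\le R\,\|g\|_{\rho}$ for every $g\in B^{\rho}(E)$.

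\emph{Necessity of (i) and (ii).} Let $f\in\mathscr{B}^{\rho}(E)$ and pick $f_{n}\in C_{b}(E)$ with $\|f_{n}-f\|_{\rho}\to 0$. By the observation above, $f_{n}|_{K_{R}}\to f|_{K_{R}}$ uniformly on $K_{R}$, so $f|_{K_{R}}$ is a uniform limit of elements of $C_{b}(K_{R})$ and hence lies in $C_{b}(K_{R})$; this gives (i). For (ii), fix $\varepsilon>0$ and $n$ with $\|f_{n}-f\|_{\rho}<\varepsilon/2$; then for $x\notin K_{R}$, which means $\rho(x)>R$, we get $|f(x)|/\rho(x)\le |f_{n}(x)|/\rho(x)+\|f_{n}-f\|_{\rho}\le \|f_{n}\|_{\infty}/R+\varepsilon/2<\varepsilon$ once $R>2\|f_{n}\|_{\infty}/\varepsilon$; since $\sup_{x\notin K_{R}}|f(x)|/\rho(x)$ is nonincreasing in $R$, (ii) follows.

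\emph{Sufficiency.} Assume (i) and (ii); I want to produce, for each $\varepsilon>0$, some $h\in C_{b}(E)$ with $\|f-h\|_{\rho}<\varepsilon$. First I reduce to bounded $f$: using (ii), pick $R$ with $\sup_{x\notin K_{R}}|f(x)|/\rho(x)<\varepsilon/2$, set $M:=\sup_{K_{R}}|f|<\infty$ (finite by (i)), and let $g:=(-M)\vee(f\wedge M)$. Then $g$ is bounded, still satisfies (i) (truncation preserves continuity on every $K_{R'}$), and $f$ and $g$ disagree only where $|f|>M$, which forces $x\notin K_{R}$ and hence $|f(x)-g(x)|/\rho(x)\le|f(x)|/\rho(x)<\varepsilon/2$; thus $\|f-g\|_{\rho}\le\varepsilon/2$. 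Next, for the bounded function $g$ satisfying (i), choose $R'$ with $\|g\|_{\infty}/R'<\varepsilon/4$ and extend $g|_{K_{R'}}\in C_{b}(K_{R'})$ to some $h\in C_{b}(E)$ with $\|h\|_{\infty}\le\|g\|_{\infty}$ (this is the delicate point, discussed below). Then $g=h$ on $K_{R'}$, while for $x\notin K_{R'}$ we have $|g(x)-h(x)|/\rho(x)\le(\|g\|_{\infty}+\|h\|_{\infty})/R'<\varepsilon/2$, so $\|g-h\|_{\rho}\le\varepsilon/2$ and $\|f-h\|_{\rho}<\varepsilon$, proving $f\in\overline{C_{b}(E)}^{\rho}=\mathscr{B}^{\rho}(E)$.

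\emph{The main obstacle} is the extension step: extending a given $u\in C_{b}(K_{R'})$ to a bounded continuous function on the merely completely regular, possibly non-normal space $E$. Tietze's theorem does not apply to $E$ directly, so I would pass to the Stone--\v{C}ech compactification $\beta E$ — in which $K_{R'}$ sits as a closed subset, being compact — extend $u$ by Tietze's theorem in the compact Hausdorff (hence normal) space $\beta E$, restrict the extension to $E$, and finally compose with the truncation at level $\|u\|_{\infty}$ to recover the sup-norm bound without altering the values on $K_{R'}$. Granting this extension fact, all remaining steps are the elementary truncation estimates carried out above.
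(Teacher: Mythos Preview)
Your argument is correct. The paper does not supply its own proof of this theorem but attributes it to \cite{DoT}, so there is no in-paper proof to compare against line by line; your approach---uniform convergence on each $K_R$ from $\|\cdot\|_\rho$-convergence for necessity, truncation plus a Tietze-type extension from a compact set for sufficiency---is the natural one and matches what is done in \cite{DoT}. One remark: the ``main obstacle'' you flag, namely extending a bounded continuous function from a compact subset of a merely completely regular space, is exactly the content of Proposition~\ref{prop: Tietze extension completely regular case} in the appendix of this paper (proved there by embedding $E$ in a compact Hausdorff space, just as you propose via the Stone--\v{C}ech compactification), so you may simply cite that result rather than re-deriving it.
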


In case of continuous admissible weight functions this relationship can be further specified, as stated in Lemma~\ref{lem: continuous weight function B-rho space is continuous } below. This will be in important in Section \ref{sec:relation} when we establish a relation to standard Feller processes. In particular, continuity of $\rho$ automatically implies local compactness of $E$.

\begin{lemma}
\label{lem: continuous weight function B-rho space is continuous }
If the admissible weight function $\rho$ is continuous, then \\
(i) E is locally compact, \\
(ii) $\mathcal{\mathscr{B}}^{\rho}(E)\subset C(E)$,\\
(iii) $f\in C_{0}(E)$ implies $f\cdot\rho\in\mathcal{\mathscr{B}}^{\rho}(E)$,\\
(iv) $f\in\mathcal{\mathscr{B}}^{\rho}(E)$ implies $\frac{f}{\rho}\in C_{0}(E)$.
\end{lemma}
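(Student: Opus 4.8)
The plan is to prove the four assertions in the order (i), (iv), (ii), (iii), since local compactness underpins the rest and the $C_0$-characterization in (iv) feeds naturally into (ii) and (iii).

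For (i): the key observation is that for any $x\in E$ the set $K_{\rho(x)+1}=\{y:\rho(y)\le\rho(x)+1\}$ is a compact neighborhood of $x$ once $\rho$ is continuous, because then $\{y:\rho(y)<\rho(x)+1\}$ is open and contains $x$. Thus every point has a compact neighborhood and $E$ is locally compact. (Without continuity of $\rho$ the set $K_R$ is still compact but need not contain an open set, so this step genuinely uses the hypothesis.)

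For (iv): let $f\in\mathcal{\mathscr{B}}^\rho(E)$. By Theorem~\ref{thm: equivalence B-rho space}(i), $f|_{K_R}$ is continuous for each $R$; since the $K_R$ with $R$ increasing form an exhausting sequence of sets whose interiors (by continuity of $\rho$) cover $E$, and each $x$ lies in the interior of some $K_R$, continuity of $f$ on each $K_R$ patches together to continuity of $f/\rho$ on $E$ (here I use that $\rho>0$ is continuous, so $f/\rho$ is continuous wherever $f$ is). That $f/\rho$ vanishes at infinity is exactly Theorem~\ref{thm: equivalence B-rho space}(ii): for any $\e>0$ there is $R$ with $\sup_{x\notin K_R}|f(x)|/\rho(x)<\e$, and $K_R$ is compact, so $\{|f/\rho|\ge\e\}\subset K_R$ is contained in a compact set, giving $f/\rho\in C_0(E)$. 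This proves (iv), and (ii) follows immediately since $f=(f/\rho)\cdot\rho$ is then a product of continuous functions, hence $f\in C(E)$.

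For (iii): given $g\in C_0(E)$, I must show $g\cdot\rho\in\mathcal{\mathscr{B}}^\rho(E)$, and I would verify the two conditions of Theorem~\ref{thm: equivalence B-rho space}. Condition (i): $g\rho$ is continuous on $E$ (product of continuous functions) and bounded on each compact $K_R$, so $(g\rho)|_{K_R}\in C_b(K_R)$. Condition (ii): $\sup_{x\notin K_R}|g(x)\rho(x)|/\rho(x)=\sup_{x\notin K_R}|g(x)|$, and since $g\in C_0(E)$ and $K_R$ eventually contains the set $\{|g|\ge\e\}$ (which is compact, hence contained in some $K_{R_0}$), this supremum tends to $0$ as $R\to\infty$. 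Hence $g\rho\in\mathcal{\mathscr{B}}^\rho(E)$.

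The main subtlety — the only place where one must be a little careful — is the patching argument in (iv): one needs that the continuity of $f$ restricted to each compact sublevel set $K_R$ actually yields global continuity of $f$, which requires that every point of $E$ be interior to some $K_R$; this is precisely what continuity of $\rho$ buys via part (i), and it is worth spelling out that for $x$ with $\rho(x)<R$ one has an open neighborhood $\{\rho<R\}\subset K_R$ on which $f$ agrees with the continuous function $f|_{K_R}$. Everything else is a routine application of Theorem~\ref{thm: equivalence B-rho space} together with positivity and continuity of $\rho$.
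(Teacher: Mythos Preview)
Your proof is correct and essentially matches the paper's for parts (i), (iii), and (iv). The one genuine difference is in how continuity of $f$ (equivalently of $f/\rho$) is obtained. You argue via the characterization in Theorem~\ref{thm: equivalence B-rho space}: $f|_{K_R}$ is continuous for each $R$, and continuity of $\rho$ makes the interiors $\{\rho<R\}$ an open cover of $E$, so the local continuity patches to global continuity. The paper instead goes directly from the \emph{definition} of $\mathscr{B}^\rho(E)$ as the $\|\cdot\|_\rho$-closure of $C_b(E)$: if $g_n\to f$ in $\|\cdot\|_\rho$ then $g_n/\rho\to f/\rho$ uniformly, so $f/\rho$ is a uniform limit of continuous functions and hence continuous; this gives (ii) in one line and (iv) follows. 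Your route is a bit longer and requires the patching remark you flag, but it has the virtue of making transparent exactly where continuity of $\rho$ enters; the paper's route is slicker but hides that dependence inside the division by $\rho$.
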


\begin{proof}
\begin{enumerate}
\item[(i)]
If $\rho$ is continuous, then 
every point has a compact neighborhood
of type $\{\rho  \leq R \}$ for some $R > 0$, whence is  E is locally (for a converse statement under convexity see \cite[Remark 2.2]{CuT}).

\item [
(i)] For $f\in\mathcal{\mathscr{B}}^{\rho}(E)$ by definition of $\mathcal{\mathscr{B}}^{\rho}(E)$,
$\frac{f}{\rho}$ is the uniform limit of $\left(\frac{g_{n}}{\rho}\right)_{n\in\mathbb{N}}$
for some $\left(g_{n}\right)_{n\in\mathbb{N}}\subset C_{b}(E)$. Hence
$\frac{f}{\rho}$ is continuous and therefore also $f$.

\item [(ii)] If $f\in C_{0}(E)$, then $f\cdot\rho$ is continuous and $\underset{n\in\mathbb{N}}{\bigcup}\left\{ \rho<n\right\} $
is an open cover of $E$ hence for any $\varepsilon>0$ finitely many
such sets suffice to cover the compact set $\left\{ \left|f\right|\geq\varepsilon\right\} $.
Thus, for any $\varepsilon>0$ there exists $R_{\varepsilon}>0$ such
that $\left|f\right|<\varepsilon$ on $E\setminus K_{R_{\varepsilon}}$
and by Theorem \ref{thm: equivalence B-rho space} $f\cdot\rho\in\mathcal{\mathscr{B}}^{\rho}(E)$.

\item [(iii)] From  (i) it follows that $\frac{f}{\rho}$ is continuous. By Theorem \ref{thm: equivalence B-rho space}
for any $\varepsilon>0$ there is some $R'_{\varepsilon}>0$ such
that $\left\{ \left|\frac{f}{\rho}\right|\geq\varepsilon\right\} \subset K_{R'_{\varepsilon}}$.
Hence by closedness $\left\{ \left|\frac{f}{\rho}\right|\geq\varepsilon\right\} $
is compact and
\[
\frac{f}{\rho}\in C_{0}(E).
\]
\end{enumerate}
\end{proof}
The following Riesz representation theorem was proved in \cite{DoT}. We here add the observation that the precise statement of 
\cite[§ 5 Proposition 5]{bourbaki1969elements}
 that was used in the proof yields also uniqueness of the signed Radon measure. This allows to characterize the dual space of $\mathcal{\mathscr{B}}^{\rho}(E)$.

\begin{theorem}
(Riesz representation for $\mathcal{\mathscr{B}}^{\rho}(E)$) \label{thm:Riesz-representation-for B rho}
Let $\ell:\mathcal{\mathscr{B}}^{\rho}(E)\rightarrow\mathbb{R}$ be
a continuous linear map. Then, there exists a unique
signed Radon measure $\mu: \mathcal{B}(E) \to [-\infty, \infty]$ such that
\begin{eqnarray}
\ell(f)=\int_{E}f(x)\mu(dx) & & \text{for all }f\in\mathcal{\mathscr{B}}^{\rho}(E).\label{eq:functional b-rho}
\end{eqnarray}
Additionally, 
\[
\left\Vert \ell\right\Vert _{L\left(\mathcal{\mathscr{B}}^{\rho}(E),\mathbb{R}\right)}=\int_{E}\rho(x)\left|\mu\right|(dx).
\]
On the other hand, for any signed Radon measure $\mu$ for which
\[
\int_{E}\rho(x)\left|\mu\right|(dx) < \infty
\]
holds, the linear map
$
\mathcal{\mathscr{B}}^{\rho}(E)  \rightarrow\mathbb{R}, \quad 
f \rightarrow\int_{E}f(x)\mu(dx)
$
is  continuous.
\end{theorem}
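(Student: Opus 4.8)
The plan is to establish the Riesz representation theorem for $\mathcal{\mathscr{B}}^{\rho}(E)$ by reducing it to the classical Riesz representation theorem on the compact space obtained via the one-point-type compactification associated with the weight function, an idea already implicit in Theorem~\ref{thm: equivalence B-rho space}. Concretely, I would first recall from \cite{DoT} that the map $f \mapsto f/\rho$ is an isometric isomorphism between $\mathcal{\mathscr{B}}^{\rho}(E)$ (with $\norm[\rho]{\cdot}$) and a closed subspace of $C_b(E)$ (with the sup norm), and that the latter can be identified with $C(\hat E)$ for a suitable compactification $\hat E$ of $E$ on which $1/\rho$ extends continuously by $0$ on the "boundary at infinity"; the subspace corresponding to $\mathcal{\mathscr{B}}^{\rho}(E)/\rho$ consists exactly of those $g \in C(\hat E)$ vanishing on $\hat E \setminus E$ together with a compatibility condition across the level sets $K_R$. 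Given a continuous linear functional $\ell$ on $\mathcal{\mathscr{B}}^{\rho}(E)$, transporting it through this isometry and applying Hahn--Banach to extend to all of $C(\hat E)$ produces, via the classical Riesz--Markov theorem, a signed (finite) Radon measure on $\hat E$; restricting/pushing it back to $E$ and absorbing the factor $\rho$ gives the claimed signed Radon measure $\mu$ on $\mathcal{B}(E)$ with $\int f \,d\mu = \ell(f)$ for all $f \in \mathcal{\mathscr{B}}^{\rho}(E)$.

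The novel point the authors emphasize is \emph{uniqueness} of $\mu$, and this is where I would invoke \cite[§ 5 Proposition 5]{bourbaki1969elements} rather than the bare Riesz theorem: the issue is that two distinct signed Radon measures with $\int_E \rho \,d|\mu| < \infty$ could a priori agree on $\mathcal{\mathscr{B}}^{\rho}(E)$ if that space failed to separate measures. So the key step is to argue that $\mathcal{\mathscr{B}}^{\rho}(E)$ is rich enough: since $\rho \geq \delta > 0$ and the $K_R$ exhaust $E$, every $C_c(E)$ function lies in $\mathcal{\mathscr{B}}^{\rho}(E)$ (indeed bounded continuous functions do, and more), and $C_c(E)$ already determines a Radon measure uniquely by the Bourbaki statement cited. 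Thus if $\mu_1, \mu_2$ both represent $\ell$, they agree on $C_c(E)$, hence are equal as Radon measures. This is the main obstacle in the sense that it requires being careful about which class of functions "Radon measure" is tested against and checking that the representing measure from Hahn--Banach, after restriction to $E$, is genuinely Radon (inner regular on Borel sets, locally finite) — the weight bound $\int_E \rho\,d|\mu| < \infty$ is what guarantees local finiteness and more.

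For the norm identity $\norm[L(\mathcal{\mathscr{B}}^{\rho}(E),\R)]{\ell} = \int_E \rho \,d|\mu|$, I would argue two inequalities. The bound $\leq$ is immediate: for $f \in \mathcal{\mathscr{B}}^{\rho}(E)$, $|\ell(f)| = |\int f\,d\mu| \leq \int |f|\,d|\mu| \leq \norm[\rho]{f}\int_E \rho\,d|\mu|$. For $\geq$, I would use a Hahn decomposition $E = E^+ \sqcup E^-$ for $\mu$ and approximate the function $\rho \cdot (\ind{E^+} - \ind{E^-})$ — which has $\norm[\rho]{\cdot} = 1$ but is generally not in $\mathcal{\mathscr{B}}^{\rho}(E)$ — from within $\mathcal{\mathscr{B}}^{\rho}(E)$: by Theorem~\ref{thm: equivalence B-rho space}, functions in $\mathcal{\mathscr{B}}^{\rho}(E)$ are those that are continuous on each $K_R$ with negligible growth outside, so using regularity of $|\mu|$ and a Urysohn-type argument on the compact sets $K_R$ one builds $f_n \in \mathcal{\mathscr{B}}^{\rho}(E)$ with $\norm[\rho]{f_n}\leq 1$ and $\ell(f_n) = \int f_n\,d\mu \to \int_E \rho\,d|\mu|$ (the tail $E\setminus K_R$ contributes at most $\varepsilon \int_E \rho\,d|\mu|$ by admissibility, so it is harmless).

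The final "on the other hand" direction is the routine converse already contained in the $\leq$ estimate above: if $\mu$ is a signed Radon measure with $\int_E \rho\,d|\mu| < \infty$, then $f \mapsto \int_E f\,d\mu$ is well-defined on $\mathcal{\mathscr{B}}^{\rho}(E)$ (the integral converges since $|f| \leq \norm[\rho]{f}\,\rho$), clearly linear, and bounded by $\int_E \rho\,d|\mu|$ in operator norm, hence continuous; no further work is needed.
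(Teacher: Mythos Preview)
The paper does not give a self-contained proof of this theorem; it cites \cite{DoT} for existence and the norm identity, and adds only the remark that uniqueness follows from the precise Bourbaki statement \cite[§\,5, Proposition~5]{bourbaki1969elements} (recorded in the appendix as Proposition~\ref{prop:condition when linear funtional on Cb is measure real case}). So there is little to compare against line by line; your sketch is a plausible reconstruction, and the existence part via a compactification plus Hahn--Banach is a legitimate alternative to what \cite{DoT} presumably does, namely restrict $\ell$ to $C_b(E)$, verify the tightness criterion of Proposition~\ref{prop:condition when linear funtional on Cb is measure real case} using the sets $K_R$, and read off the Radon measure directly. Your route buys you the classical Riesz--Markov theorem on a compact space at the price of having to specify and control the compactification carefully (note that $1/\rho$ is only upper semicontinuous in general, so the phrase ``$1/\rho$ extends continuously by $0$'' needs care).

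There is, however, a genuine gap in your uniqueness argument. You reduce to the claim that $C_c(E)$ determines Radon measures, but $E$ is only assumed completely regular and $\sigma$-compact, \emph{not} locally compact. In the paper's motivating example---an infinite-dimensional Hilbert space with the weak topology---every weakly compact set has empty interior, so any weakly continuous function with compact support vanishes on a dense set and is identically zero: $C_c(E)=\{0\}$. Your argument then says nothing. The fix is immediate and is exactly what the paper points to: use $C_b(E)\subset\mathscr{B}^{\rho}(E)$ instead. Two signed Radon measures representing $\ell$ are finite (since $\rho\geq\inf_E\rho>0$ and $\int\rho\,d|\mu|<\infty$), they agree on $C_b(E)$, and the uniqueness clause of Proposition~\ref{prop:condition when linear funtional on Cb is measure real case} forces them to coincide. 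You even note parenthetically that bounded continuous functions lie in $\mathscr{B}^{\rho}(E)$; just use that directly and drop the appeal to $C_c(E)$.
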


\begin{definition}
We denote the space of signed Radon measures satisfying the conditions of Theorem \ref{thm:Riesz-representation-for B rho} by  $\mathcal{M}^{\rho}(E)$. This characterizes the dual space of $\mathcal{\mathscr{B}}^{\rho}(E)$.
\end{definition}

The above results, in particular Theorem \ref{thm: equivalence B-rho space}, show that the space $\mathcal{\mathscr{B}}^{\rho}(E)$
is closely related to  continuous functions on  compacts.
For these kinds of function spaces the Stone-Weierstrass theorem holds true. We now show that a weighted version thereof also holds for $\mathcal{\mathscr{B}}^{\rho}(E).$

\begin{proposition}[Stone-Weierstrass for $\mathcal{\mathscr{B}}^{\rho}(E)$]
\label{prop:Stone-Weierstrass on B-rho spaces}

Let $A\subset C_{b}(E)$ be an algebra with respect to pointwise multiplication
that contains $1_{E}$ and that separates points. Then $A$ is dense
in $\mathcal{\mathscr{B}}^{\rho}(E)$ with respect to $\left\Vert \cdot\right\Vert _{\rho}$.
\end{proposition}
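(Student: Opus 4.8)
The plan is to leverage the structural description of $\mathscr{B}^\rho(E)$ given in Theorem~\ref{thm: equivalence B-rho space}: a function lies in $\mathscr{B}^\rho(E)$ iff its restriction to each sublevel set $K_R$ is continuous and bounded, and the weighted tail $\sup_{x \notin K_R} |f(x)|/\rho(x)$ vanishes as $R \to \infty$. Given $f \in \mathscr{B}^\rho(E)$ and $\e > 0$, I would first use condition (ii) to pick $R$ large so that $|f(x)| \le \e\,\rho(x)$ for all $x \notin K_R$, and moreover (using that $\rho$ attains a positive minimum, so $1 \in \mathscr{B}^\rho(E)$ and $f \in \mathscr{B}^\rho(E)$ forces $f$ bounded on $K_R$) arrange that $R$ also dominates $\|f\|_{L^\infty(K_R)}$ up to constants. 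The target is then to approximate $f$ uniformly on the \emph{compact} set $K_R$ by an element of $A$, while controlling the error off $K_R$ in the $\rho$-weighted norm.

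The core step is the classical Stone--Weierstrass theorem applied on the compact Hausdorff space $K_R$. The restriction algebra $A|_{K_R} = \{a|_{K_R} : a \in A\}$ is a subalgebra of $C(K_R)$ containing the constants; it separates points of $K_R$ because $A$ separates points of $E$. Hence $A|_{K_R}$ is dense in $C(K_R)$ in the sup norm, so there is $a \in A$ with $\sup_{x \in K_R} |f(x) - a(x)| < \e \cdot \min_E \rho$ (or rather $< \e$, after dividing by the minimum of $\rho$, which is positive). This handles the approximation on $K_R$: for $x \in K_R$ we get $|f(x) - a(x)|/\rho(x) \le |f(x)-a(x)|/\min_E\rho < \e$.

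The genuine obstacle is the behaviour off $K_R$: the element $a \in A$ that does well on $K_R$ is a priori only bounded (it lies in $C_b(E)$), but there is no reason $a$ should be small outside $K_R$, so $|f(x) - a(x)|/\rho(x)$ need not be controlled there. The fix, in the spirit of Nachbin, is to damp $a$ by a cutoff that is also built from $A$. One wants a function $\chi \in A$ with $0 \le \chi \le 1$ that is close to $1$ on $K_R$ and close to $0$ outside a slightly larger sublevel set $K_{R'}$; since $\rho$ need not be continuous and $A$ need not contain $\rho$, such a $\chi$ cannot be obtained directly, but one can produce it from a point-separating argument on the compact set $K_{R'}$: using Stone--Weierstrass on $K_{R'}$, approximate a continuous Urysohn-type function (which exists on the compact Hausdorff $K_{R'}$ separating the closed set $K_R$ from the boundary region) by an element of $A$, then compose/truncate. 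Truncation $\chi \mapsto \max(0,\min(1,\chi))$ does not stay in the algebra, so instead one approximates the continuous truncation of the approximant again by Stone--Weierstrass, or, more cleanly, one works with $A$ in the uniform closure $\overline{A}$ throughout and only at the very end observes that $\overline{A} \cap C_b(E)$ is itself an algebra closed under such continuous post-compositions; since we only need density of $A$ in $\mathscr{B}^\rho(E)$, it suffices to show $\overline{A}^\rho \supseteq \mathscr{B}^\rho(E)$, and one may freely pass to uniform limits of elements of $A$ at intermediate stages.

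Assembling: choose $R \le R'$ with the tail bound, build $\chi \in \overline{A}$ with $\chi \equiv 1$ on $K_R$, $0 \le \chi \le 1$, and $\mathrm{supp}\,\chi$ contained in (a neighbourhood inside) $K_{R'}$ so that $\chi$ is supported where $\rho \le R'$; pick $a \in \overline{A}$ approximating $f$ on $K_{R'}$ within $\e$; set $g := \chi \cdot a \in \overline{A}$. On $K_R$: $g = a$, so $|f-g|/\rho \le \e/\min_E\rho$. On $K_{R'} \setminus K_R$: $|g| = |\chi a| \le |a| \le |f| + \e \le (R' + \e)$ while $|f| \le R'$, so $|f - g|/\rho \le (2R' + \e)/\rho(x)$ — this is the term to watch, and one controls it by instead choosing $\chi$ to interpolate against a tail-aware bound, i.e. requiring additionally that off $K_R$ we have $\chi(x)|a(x)| \le C\e\,\rho(x)$ via the tail condition $|f|\le \e\rho$ there together with $|a - f| \le \e$ — so in fact $|g - f| = |\chi a - f| \le |\chi||a - f| + (1-\chi)|f| \le \e + \e\rho(x) \le 2\e\rho(x)$ for $x \notin K_R$, using $\rho \ge 1$. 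On $E \setminus K_{R'}$: $\chi = 0$ so $g = 0$ and $|f - g|/\rho = |f|/\rho \le \e$ by the tail bound. Thus $\|f - g\|_\rho \le C\e$ with $C$ depending only on $\min_E \rho$, and since $g \in \overline{A}$ and $\e$ is arbitrary, $f \in \overline{A}^\rho$. I would present this with the cutoff construction as the one delicate lemma and the rest as bookkeeping across the three regions $K_R$, $K_{R'} \setminus K_R$, $E \setminus K_{R'}$.
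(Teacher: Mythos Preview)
Your argument has a genuine gap in the construction of the spatial cutoff $\chi$. You claim to build $\chi \in \overline{A}$ with $\chi \equiv 1$ on $K_R$, $0 \le \chi \le 1$, and support contained in $K_{R'}$, and then use $\chi = 0$ on $E \setminus K_{R'}$ in the third region. But Stone--Weierstrass on the compact $K_{R'}$ only controls your approximant $b$ of a Urysohn function \emph{on} $K_{R'}$; it says nothing about $b$ on $E \setminus K_{R'}$. Passing to the uniform closure $\overline{A}$ does give you closure under continuous post-composition (so you can truncate to $[0,1]$), but that still only yields $0 \le \chi \le 1$ globally, not $\chi = 0$ off $K_{R'}$. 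In general $\overline{A}$ (uniform closure in $C_b(E)$) need not contain any function with compact support: take $E = \mathbb{R}$ and $A$ the algebra generated by $1$ and $\arctan$, whose uniform closure consists of functions with limits at $\pm\infty$. With only $0 \le \chi \le 1$ off $K_{R'}$, your product $g = \chi a$ satisfies $|g| \le |a|$ there, and $|a|$ is uncontrolled outside $K_{R'}$, so the third-region estimate collapses.

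The paper avoids spatial cutoffs entirely. It first reduces to approximating a \emph{bounded} $g_\varepsilon \in C_b(E)$ (via the definition of $\mathscr{B}^\rho(E)$ as the $\|\cdot\|_\rho$-closure of $C_b(E)$), then chooses $R_\varepsilon = \|g_\varepsilon\|_\infty / \varepsilon$. After Stone--Weierstrass on $K_{R_\varepsilon}$ produces $f_\varepsilon \in A$, the key move is a \emph{value-space} truncation: one post-composes $f_\varepsilon$ with a polynomial $p_\varepsilon$ approximating a continuous map that is the identity on the range of $f_\varepsilon|_{K_{R_\varepsilon}}$ and globally bounded by $\|g_\varepsilon\|_\infty + \varepsilon$. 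Then $p_\varepsilon \circ f_\varepsilon \in A$ (algebra property), agrees with $f_\varepsilon$ on $K_{R_\varepsilon}$ up to $\varepsilon$, and is bounded by roughly $\|g_\varepsilon\|_\infty$ on all of $E$. Off $K_{R_\varepsilon}$ one then has $|g_\varepsilon - p_\varepsilon \circ f_\varepsilon|/\rho \lesssim \|g_\varepsilon\|_\infty / R_\varepsilon = \varepsilon$. The point is that one never needs an element of $A$ that is small outside a compact, only one that is \emph{globally bounded independently of the compact}, and this is exactly what polynomial post-composition delivers.
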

\begin{proof}
The idea of the proof is to approximate elements of $\mathcal{\mathscr{B}}^{\rho}(E)$
by continuous bounded maps on $E$ which in turn can be approximated
on $K_{R}$ for any $R>0$ via the standard Stone-Weierstrass theorem \cite{stone48} by elements in $A$
that are restricted to $K_{R}$. However, such an element in $A$,
albeit bounded, may have an arbitrary large bound that depends on
$R$. Thus, it may not approximate with respect to $\left\Vert \cdot\right\Vert _{\rho}$
on all of $E$. Therefore, it is rescaled by a suitable polynomial
such that an element in $A$ is obtained whose bounds do not depend
on $R$. This yields an approximation with respect to $\left\Vert \cdot\right\Vert _{\rho}$.

In the following this idea will be made rigorous. Let $h\in\mathcal{\mathscr{B}}^{\rho}(E)$
and $\varepsilon>0$. By definition of $\mathcal{\mathscr{B}}^{\rho}(E)$
there exists $g_{\varepsilon}\in C_{b}(E)$ such that 
\[
\left\Vert g_{\varepsilon}-h\right\Vert _{\rho}<\varepsilon.
\]
Set 
\[
R_{\varepsilon}:=\max\left(\frac{\left\Vert g_{\varepsilon}\right\Vert _{\infty}}{\varepsilon},1\right).
\]
The set $A_{\varepsilon}\subset C_{b}(K_{R_{\varepsilon}})$ defined
as 
\[
A_{\varepsilon}:=\left\{ \left.f\right|_{K_{R_{\varepsilon}}}:\,f\in A\right\} 
\]
is an algebra that contains $1_{K_{R_{\varepsilon}}}$ and that separates
points, hence by the standard  Stone-\\Weierstrass theorem $A_{\varepsilon}$ is
dense in $C(K_{R_{\varepsilon}}).$ Thus, there is $f_{\varepsilon}\in A$
such that 
\[
\underset{x\in K_{R_{\varepsilon}}}{\sup}\left|f_{\varepsilon}(x)-g_{\varepsilon}(x)\right|<\varepsilon.
\]
Clearly, 
\[
\alpha_{\varepsilon}:=\underset{x\in K_{R_{\varepsilon}}}{\sup}\left|f_{\varepsilon}(x)\right|\leq\underset{x\in E}{\sup}\left|g_{\varepsilon}(x)\right|+\varepsilon=:\beta_{\varepsilon}.
\]
Set 
\[
\gamma_{\varepsilon}:=\underset{x\in E}{\sup}\left|f_{\varepsilon}(x)\right|.
\]
By the Tietze-Urysohn theorem (see e.g. \cite{kelley2017general}) 
there exists a continuous map 
\[
\varphi_{\varepsilon}:\,\left[-\gamma_{\varepsilon},\gamma_{\varepsilon}\right]\rightarrow\left[-\beta_{\varepsilon},\beta_{\varepsilon}\right]
\]
such that
\[
\varphi_{\varepsilon}(y)=\begin{cases}
y & \text{for }y\in\left[-\alpha_{\varepsilon},\alpha_{\varepsilon}\right]\\
\beta_{\varepsilon} & \text{for }\left|y\right|\geq\beta_{\varepsilon}.
\end{cases}
\]
Again by the standard Stone-Weierstrass theorem, on a compact set the
space of polynomials is dense in the space of continuous maps. This
means that there is a polynomial $p_{\varepsilon}$ on $\left[-\gamma_{\varepsilon},\gamma_{\varepsilon}\right]$
such that 
\[
\underset{y\in\left[-\gamma_{\varepsilon},\gamma_{\varepsilon}\right]}{\sup}\left|p_{\varepsilon}(y)-\varphi_{\varepsilon}(y)\right|<\varepsilon,
\]
hence
\[
\underset{x\in E}{\sup}\left|\frac{\left(p_{\varepsilon}\circ f_{\varepsilon}\right)(x)-\left(\varphi_{\varepsilon}\circ f_{\varepsilon}\right)(x)}{\rho(x)}\right|\leq\frac{\varepsilon}{\underset{x\in E}{\inf}\rho(x)}.
\]
Since $A$ is an algebra $p_{\varepsilon}\circ f_{\varepsilon}\in A$
and 
\begin{alignat*}{2}
\left\Vert h-p_{\varepsilon}\circ f_{\varepsilon}\right\Vert _{\rho} &  \leq &&\left\Vert     h-g_{\varepsilon}\right\Vert _{\rho}+\left\Vert g_{\varepsilon}-\varphi_{\varepsilon}\circ f_{\varepsilon}\right\Vert _{\rho}+\left\Vert \varphi_{\varepsilon}\circ f_{\varepsilon}-p_{\varepsilon}\circ f_{\varepsilon}\right\Vert _{\rho}\\
 & \leq && \varepsilon +\underset{x\in K_{R_{\varepsilon}}}{\sup}\left|\frac{g_{\varepsilon}(x)-\left(\varphi_{\varepsilon}\circ f_{\varepsilon}\right)(x)}{\rho(x)}\right|\\
  & &&+\underset{x\in E\setminus K_{R_{\varepsilon}}}{\sup}\left|\frac{g_{\varepsilon}(x)-\left(\varphi_{\varepsilon}\circ f_{\varepsilon}\right)(x)}{\rho(x)}\right|+\frac{\varepsilon}{\underset{x\in E}{\inf}\rho(x)}\\
 & \leq && \varepsilon +\underset{x\in K_{R_{\varepsilon}}}{\sup}\left|\frac{g_{\varepsilon}(x)-f_{\varepsilon}(x)}{\rho(x)}\right|\\
 & &&+2\,\underset{x\in E}{\sup}\left|\frac{\left|g_{\varepsilon}(x)\right|+\varepsilon}{R_{\varepsilon}}\right|+\frac{\varepsilon}{\underset{x\in K_{R_{\varepsilon}}}{\inf}\rho(x)}\\
& \leq  && \varepsilon +\frac{\varepsilon}{\underset{x\in K_{R_{\varepsilon}}}{\inf}\rho(x)}+2\left(\varepsilon+\varepsilon\right)+\frac{\varepsilon}{\underset{x\in K_{R_{\varepsilon}}}{\inf}\rho(x)},
\end{alignat*}
and $A$ is dense in $\mathcal{\mathscr{B}}^{\rho}(E)$ . 
\end{proof}

We now turn to $\mathit{generalized}$ $\mathit{Feller}$ $\mathit{semigroup}s$, which
 have been introduced by Röckner and Sobol in \cite{RoS} in a specific setting which in turn has been extended in \cite{DoT} to
general $\mathcal{\mathscr{B}}^{\rho}(E)$-spaces.

\begin{definition}
\label{def:generalized Feller semigroup}Let $\left(P(t)\right)_{t\in\mathbb{R}_{+}}$
be a family of bounded linear operators on $ \mathcal{\mathscr{B}}^{\rho}(E)$.
We call the family $\left(P(t)\right)_{t\in\mathbb{R}_{+}}$ $\mathit{generalized}$
$\mathit{Feller}$ $\mathit{semigroup}$ on
$\mathcal{\mathscr{B}}^{\rho}(E)$ if the following conditions hold true:\\
\textbf{P1} $P(0)=\text{Id}$, 
on $\mathcal{\mathscr{B}}^{\rho}(E)$, \\
\textbf{P2 $P(t+s)=P(s)\circ P(t)$ }for all $s,t\in\mathbb{R}_{+}$,
\textbf{}\\
\textbf{P3} for all $f\in\mathcal{\mathscr{B}}^{\rho}(E)$ and all
$x\in E$
\[
\underset{t\searrow0}{\lim}\,P(t)f(x)=f(x),
\]
\\
\textbf{P4} there exists $\varepsilon>0$ and $C\in\mathbb{R}$ such
that for all $t\in\left[0,\varepsilon\right]$
\[
\left\Vert P(t)\right\Vert _{L\left(\mathcal{\mathscr{B}}^{\rho}(E)\right)}\leq C,
\]
\\
\textbf{P5}  $P(t)$ is a positive linear operator
for all $t\in\mathbb{R}_{+}$.
\end{definition}

As proved in \textbf{\cite{DoT}}, this definition yields strong continuity, as stated in the next theorem.

\begin{theorem}
\label{thm:generalized Feller semigroups are strongly continuous}
Let $\left(P(t)\right)_{t\in\mathbb{R}_{+}}$ be a generalized Feller
semigroup on $\mathcal{\mathscr{B}}^{\rho}(E)$. Then $\left(P(t)\right)_{t\in\mathbb{R}_{+}}$
is strongly continuous on $\mathcal{\mathscr{B}}^{\rho}(E)$.
\end{theorem}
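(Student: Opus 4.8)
The plan is to extract, essentially for free, a \emph{weak} continuity statement for the orbits $t\mapsto P(t)f$, upgrade it to strong continuity at $t=0$ by an averaging argument, and then bootstrap to all $t$ via the semigroup law. First I would record that \textbf{P2} and \textbf{P4} give $M_{T}:=\sup_{t\in[0,T]}\|P(t)\|_{L(\mathscr{B}^{\rho}(E))}<\infty$ for every $T>0$ (writing $t=k\varepsilon+r$ with $r\in[0,\varepsilon)$ and using $P(t)=P(r)\circ P(\varepsilon)^{k}$ bounds it by $C^{k+1}$). Hence it suffices to prove $\lim_{t\searrow0}\|P(t)f-f\|_{\rho}=0$ for all $f$: right continuity at an arbitrary $t_{0}$ then follows from $\|P(t_{0}+t)f-P(t_{0})f\|_{\rho}=\|P(t)(P(t_{0})f)-P(t_{0})f\|_{\rho}$ applied to the element $P(t_{0})f$, and left continuity from $\|P(t_{0}-t)f-P(t_{0})f\|_{\rho}\le M_{t_{0}}\|f-P(t)f\|_{\rho}$, both using \textbf{P2}.

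The key observation is a weak continuity statement obtained solely from \textbf{P3}, \textbf{P4} and Theorem~\ref{thm:Riesz-representation-for B rho}. Fix $f\in\mathscr{B}^{\rho}(E)$ and $\mu\in\mathcal{M}^{\rho}(E)$; the Riesz representation theorem says $\mathcal{M}^{\rho}(E)$ exhausts the dual of $\mathscr{B}^{\rho}(E)$ and that $\int_{E}\rho\,d|\mu|<\infty$. For $t\in[0,\varepsilon]$ one has the pointwise domination $|P(t)f(x)|\le\|P(t)f\|_{\rho}\,\rho(x)\le C\|f\|_{\rho}\,\rho(x)$, with $|\mu|$-integrable majorant. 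Since $P(t)f\to f$ pointwise by \textbf{P3}, dominated convergence (along arbitrary sequences $t_{n}\searrow0$) yields $\int_{E}P(t)f\,d\mu\to\int_{E}f\,d\mu$, i.e.\ $P(t)f\rightharpoonup f$ weakly; replacing $f$ by $P(t_{0})f$ shows $s\mapsto P(s)f$ is weakly right-continuous on $\mathbb{R}_{+}$.

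Now the standard averaging argument upgrades this to norm continuity at $0$. For $r>0$ the orbit $s\mapsto P(s)f$ on $[0,r]$ is weakly (hence Borel) measurable, norm-bounded by $M_{r}\|f\|_{\rho}$, and essentially separably valued: by weak right continuity each $P(s)f$ is a weak limit of a sequence from the countable set $\{P(q)f:q\in\mathbb{Q}\cap[0,r]\}$, hence lies in its norm-closed convex hull, which is $\|\cdot\|_{\rho}$-separable; so by Pettis's measurability theorem it is Bochner integrable. Put $x_{r}:=\frac1r\int_{0}^{r}P(s)f\,ds\in\mathscr{B}^{\rho}(E)$. Testing against $\mu$ and using that $s\mapsto\langle P(s)f,\mu\rangle$ is bounded with limit $\langle f,\mu\rangle$ as $s\searrow0$ gives $x_{r}\rightharpoonup f$ as $r\searrow0$. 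Using $P(h)\circ P(s)=P(s+h)$, that $P(h)$ commutes with the Bochner integral, and a substitution, for $0<h<r$
\[
P(h)x_{r}-x_{r}=\frac1r\left(\int_{r}^{r+h}P(u)f\,du-\int_{0}^{h}P(u)f\,du\right),
\]
whence $\|P(h)x_{r}-x_{r}\|_{\rho}\le\tfrac{2h}{r}M_{2r}\|f\|_{\rho}\to0$ as $h\searrow0$. Thus each $x_{r}$ lies in $D:=\{g:\lim_{t\searrow0}\|P(t)g-g\|_{\rho}=0\}$, which (using $\sup_{t\le\varepsilon}\|P(t)\|\le C$) is a $\|\cdot\|_{\rho}$-closed linear subspace; by Mazur's lemma $f$ lies in the norm-closed convex hull of $\{x_{1/n}:n\in\mathbb{N}\}\subset D$, so $f\in D$. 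Since $f$ was arbitrary, $D=\mathscr{B}^{\rho}(E)$, and with the first paragraph this yields strong continuity.

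I expect the main obstacle to be the Bochner integrability claimed in the averaging step, since $\mathscr{B}^{\rho}(E)$ need not be separable and only \emph{weak} (not norm) right-continuity of the orbit is available a priori — norm right-continuity being exactly what we are trying to prove — so one is forced to route through Pettis's theorem and the fact that weak right-continuity confines the orbit to a separable norm-closed convex set. The conceptual crux that makes the whole scheme run is the at-first-sight modest remark in the second paragraph: \textbf{P3} together with the $\rho$-domination supplied by \textbf{P4} and the measure-theoretic description of $\mathcal{M}^{\rho}(E)$ already delivers weak continuity, and the averaging trick then does the genuine work of converting weak into norm continuity.
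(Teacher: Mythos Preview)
Your argument is correct. The paper itself does not give a proof of this theorem; it simply cites \cite{DoT}. Your route is the classical one that upgrades weak to strong continuity for locally bounded semigroups (cf.\ \cite[Theorem~I.5.8]{EnN}): you first convert the pointwise convergence in \textbf{P3} into weak convergence by pairing with $\mu\in\mathcal{M}^{\rho}(E)$ and invoking dominated convergence against the majorant $C\|f\|_{\rho}\,\rho\in L^{1}(|\mu|)$ furnished by \textbf{P4} and Theorem~\ref{thm:Riesz-representation-for B rho}, and then run the standard Bochner--averaging/Mazur argument. The one point that merits care in a non-separable space like $\mathscr{B}^{\rho}(E)$ is the Bochner integrability of the orbit, and you handle this cleanly via Pettis's theorem and the observation that weak right-continuity confines the orbit to the norm-closed convex hull of a countable set (a minor quibble: to cover $s=r$ you should take rationals in $[0,r']$ for some $r'>r$, or simply note that a single point is a null set for ``essentially separably valued''). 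It is worth remarking that your proof uses only \textbf{P1}--\textbf{P4} and the Riesz representation, never \textbf{P5}; positivity enters the paper's framework through the representation of $P(t)$ by \emph{positive} measures (Lemma~\ref{lem:GFS induces family of measures}) but is not needed for strong continuity itself.
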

 
 Since we know the dual space of $\mathcal{\mathscr{B}}^{\rho}(E)$,
we can connect generalized Feller semigroups to a family of positive
finite Radon measures on $\left(E,\mathcal{B}(E)\right)$. 

\begin{lemma}\label{lem:GFS induces family of measures} 
Let
$\left(P(t)\right)_{t\in\mathbb{R}_{+}}$ be a generalized Feller
semigroup on $\mathcal{\mathscr{B}}^{\rho}(E)$ then there exists a unique family of positive finite Radon measures
\[
\left(p(t)(x,\cdot)\right)_{t\in\mathbb{R}_{+},x\in E}
\]
 on $\left(E,\mathcal{B}(E)\right)$ such that for
all $x\in E$, $t\in\mathbb{R}_{+}$ and $f\in\mathcal{\mathscr{B}}^{\rho}(E)$
\[
P(t)f(x)=\int_{E}f(y)p(t)(x,dy),
\]
and $p(t)(x,\cdot)\in\mathcal{M^{\rho}}(E)$. \\
\end{lemma}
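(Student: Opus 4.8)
The plan is to apply the Riesz representation theorem for $\mathscr{B}^\rho(E)$ (Theorem~\ref{thm:Riesz-representation-for B rho}) pointwise in $(t,x)$ and then verify the measurability and finiteness claims. First I would fix $t\in\mathbb{R}_+$ and $x\in E$ and consider the map $\ell_{t,x}\colon \mathscr{B}^\rho(E)\to\mathbb{R}$, $f\mapsto P(t)f(x)$. This is clearly linear, and it is bounded since $|\ell_{t,x}(f)|\le \|P(t)f\|_\rho\,\rho(x)\le \|P(t)\|_{L(\mathscr{B}^\rho(E))}\,\rho(x)\,\|f\|_\rho$; note that by \textbf{P4} together with the semigroup property \textbf{P2} (writing any $t$ as a finite sum of times in $[0,\varepsilon]$), $\|P(t)\|_{L(\mathscr{B}^\rho(E))}$ is finite for every $t$. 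Hence Theorem~\ref{thm:Riesz-representation-for B rho} furnishes a unique signed Radon measure $p(t)(x,\cdot)\in\mathcal{M}^\rho(E)$ with $P(t)f(x)=\int_E f(y)\,p(t)(x,dy)$ for all $f\in\mathscr{B}^\rho(E)$, and $\int_E\rho\,d|p(t)(x,\cdot)| = \|\ell_{t,x}\|<\infty$, so in particular each $p(t)(x,\cdot)$ is a \emph{finite} measure (taking $f=1_E\in C_b(E)\subset\mathscr{B}^\rho(E)$ bounds the total variation by $\|\ell_{t,x}\|/\inf_E\rho$, or one integrates the constant against $\rho$). Uniqueness of the whole family is immediate from the uniqueness clause of the Riesz theorem applied at each $(t,x)$.

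Next I would establish positivity of each $p(t)(x,\cdot)$, which upgrades ``signed Radon measure'' to ``positive finite Radon measure''. This uses \textbf{P5}: for $f\in C_b(E)$ with $f\ge 0$ we have $P(t)f\ge 0$, so $\int_E f\,d p(t)(x,\cdot)\ge 0$ for all nonnegative $f\in C_b(E)$; since the positive and negative parts of a Radon measure are determined by testing against $C_b$ (or $C_c$) nonnegative functions, this forces the negative part of $p(t)(x,\cdot)$ to vanish, i.e.\ $p(t)(x,\cdot)\ge 0$. Alternatively, if one prefers to work directly with the Hahn/Jordan decomposition of the Radon measure, one can approximate indicators of compact sets from above by continuous functions and pass to the limit. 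I would phrase this carefully because $\mathscr{B}^\rho(E)$ contains only Baire-measurable functions in general, so the class of test functions that ``sees'' the measure is essentially $C_b(E)$ together with its $\rho$-weighted closure.

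The remaining point is joint measurability: for each fixed $B\in\mathcal{B}(E)$, the map $(\omega=x)\mapsto p(t)(x,B)$ should be $\mathcal{B}(E)$-measurable (here only measurability in $x$ for fixed $t$ is asserted). I would first note that for $f\in C_b(E)$ the map $x\mapsto P(t)f(x)=\int_E f\,dp(t)(x,\cdot)$ is itself an element of $\mathscr{B}^\rho(E)$, hence Baire-measurable, and in particular Borel-measurable. Then I would use a monotone-class / functional-form argument: the collection of bounded Borel functions $f$ for which $x\mapsto\int_E f\,dp(t)(x,\cdot)$ is Borel-measurable is a vector space closed under bounded monotone limits and contains $C_b(E)$; since $C_b(E)$ generates the Baire $\sigma$-algebra and, on the $\sigma$-compact-by-sublevel-sets space $E$, is rich enough, one concludes measurability of $x\mapsto p(t)(x,B)$ at least for $B$ in the Baire $\sigma$-algebra, and then extends to all Borel $B$ using outer regularity of the Radon measures $p(t)(x,\cdot)$ (approximating $1_B$ by $C_b$ functions via compact/open sandwiches, as in the proof of Lemma~\ref{lem: continuous weight function B-rho space is continuous }). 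The main obstacle I anticipate is precisely this measurability step: one must be careful that the Riesz representation gives Radon (hence outer-regular on Borel sets, inner-regular on opens) measures, so that the monotone-class argument started from $C_b(E)$ genuinely reaches all Borel sets, and that the technical mismatch between the Baire $\sigma$-algebra natural to $\mathscr{B}^\rho(E)$ and the Borel $\sigma$-algebra of the (possibly non-metrizable, non-separable) space $E$ is handled by regularity rather than swept under the rug.
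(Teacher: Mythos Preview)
Your core argument is exactly the paper's: fix $(t,x)$, note that $\ell_{t,x}(f)=P(t)f(x)$ is linear, continuous and positive (by \textbf{P5}), and invoke the Riesz representation Theorem~\ref{thm:Riesz-representation-for B rho} to obtain a unique $p(t)(x,\cdot)\in\mathcal{M}^\rho(E)$; positivity of $\ell_{t,x}$ forces the representing Radon measure to be nonnegative, and $\int\rho\,dp(t)(x,\cdot)<\infty$ gives finiteness. That is the full content of the paper's proof.

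Your third paragraph, however, proves something the lemma does not assert. The statement only claims a \emph{family} of positive finite Radon measures indexed by $(t,x)$; there is no measurability of $x\mapsto p(t)(x,B)$ claimed here. That kernel property is the subject of the subsequent Proposition~\ref{prop:GFS implies existence of semigroup of transition kernels}, and there it is established only with respect to the Baire $\sigma$-algebra $\mathcal{B}_0(E)$, not $\mathcal{B}(E)$. Your proposed extension from Baire to Borel via outer regularity is in fact the step the paper explicitly avoids: Remark~\ref{rem:generalized Feller process not Markov} explains that indicator functions of Borel sets cannot in general be approximated by $C_b$-functions simultaneously for the whole family $(p(t)(x,\cdot))_{x\in E}$, so the monotone-class argument started from $C_b(E)$ only reaches $\mathcal{B}_0(E)$. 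Thus this part of your proposal is both unnecessary for the present lemma and, as an attempt at Borel measurability, would not go through without additional hypotheses (e.g.\ metrizability, cf.\ Remark~\ref{rem:BaireBorel}).
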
 
\begin{proof}
By definition of generalized Feller semigroups, for any $t\in\mathbb{R}_{+}$
and $x\in E$ the map
\begin{align*}
\mathcal{\mathscr{B}}^{\rho}(E) & \rightarrow\mathbb{R}\\
\ell_{t,x}:\,f & \rightarrow P(t)f(x)
\end{align*}
is positive, linear and continuous. Thus, by Theorem \ref{thm:Riesz-representation-for B rho}
for any $t\in\mathbb{R}_{+}$ and any $x\in E$ there is a unique
positive finite Radon measure $p(t)(x,\cdot)\in\mathcal{M}^{\rho}(E)$
such that 
\[
\left(P(t)f\right)(x)=\int_{E}f(y)p(t)(x,dy)
\]
holds true.\\
\end{proof}

Let us also recall the following observation  made in \cite{CuT} Remark 2.8.

\begin{remark}\label{rem: definition P_t rho} 
Let
$\left(P(t)\right)_{t\in\mathbb{R}_{+}}$ be a generalized Feller
semigroup on $\mathcal{\mathscr{B}}^{\rho}(E)$. Then it follows from standard semigroup theory that there exists some $M\geq1$ and $\omega\in\mathbb{R}$ such that for any
$t\in\mathbb{R}_{+}$
\[
\left\Vert P(t)\right\Vert \leq Me^{\omega t}.
\]
For all $x\in E$ and $t\in\mathbb{R}_{+}$ 
one then obtains the bounds
\[
P(t)\rho(x)=\underset{\begin{array}{c}
f\in C_{b}(E)\\
\left|f\right|\leq\rho
\end{array}}{\sup}\left|\left(P(t)f\right)(x)\right|\leq\rho(x)\left\Vert P(t)\right\Vert _{L(\mathcal{B}^{\rho}(E))}\leq\rho(x)Me^{\omega t}.
\]
\end{remark}

The following proposition states that
with respect to the Baire $\sigma$-algebra $\mathcal{B}_{0}(E)$
(see Definition \ref{def:Baire sigma algebra})
the family of positive finite Radon measures from Lemma \ref{lem:GFS induces family of measures} turns out to be  a semigroup
of transition probabilities.

\begin{proposition}

\label{prop:GFS implies existence of semigroup of transition kernels} Let
$\left(P(t)\right)_{t\in\mathbb{R}_{+}}$ be a generalized Feller
semigroup on $\mathcal{\mathscr{B}}^{\rho}(E)$. \\
The family $\left(\hat{p}(t)\right)_{t\in\mathbb{R}_{+}}$ defined
as the restriction 
\begin{eqnarray*}
\hat{p}(t)=\left.p(t)\right|_{E\times\mathcal{B}_{0}(E)} & & \text{ for } t\geq 0
\end{eqnarray*}
is a semigroup of transition kernels
on $(E, \mathcal{B}_{0}(E))$, i.e.~with respect to the Baire $\sigma$-algebra.
\end{proposition}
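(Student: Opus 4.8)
The plan is to verify, in turn, the three defining properties of a semigroup of transition kernels on $(E,\mathcal B_0(E))$: that each $\hat p(t)$ is a transition kernel, that the Chapman--Kolmogorov relation holds on Baire sets, and that $\hat p(0)(x,\cdot)=\delta_x$. Throughout I would use Lemma~\ref{lem:GFS induces family of measures}, which supplies finite Radon measures $p(t)(x,\cdot)\in\mathcal M^{\rho}(E)$ with $P(t)f(x)=\int_E f\,dp(t)(x,\cdot)$ for all $f\in\mathscr B^{\rho}(E)$, together with the following elementary facts: $\mathcal B_0(E)$ is generated by the multiplicative class $C_b(E)$; every $f\in\mathscr B^{\rho}(E)$ is Baire measurable, being a $\|\cdot\|_\rho$-limit — hence a pointwise limit — of elements of $C_b(E)$; $1_E\in C_b(E)\subseteq\mathscr B^{\rho}(E)$, so $p(t)(x,E)=P(t)1_E(x)<\infty$; and $\int_E\rho\,dp(t)(x,\cdot)=P(t)\rho(x)<\infty$ by Remark~\ref{rem: definition P_t rho}. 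Recall also $\inf_E\rho>0$.

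\emph{Step 1: each $\hat p(t)$ is a transition kernel.} For fixed $x$, the set map $B\mapsto\hat p(t)(x,B)$ is just the restriction of the measure $p(t)(x,\cdot)$ to the sub-$\sigma$-algebra $\mathcal B_0(E)\subseteq\mathcal B(E)$, hence a finite measure. For fixed $B\in\mathcal B_0(E)$ I would show that $x\mapsto p(t)(x,B)$ is $\mathcal B_0(E)$-measurable via a functional monotone class argument: let $\mathcal H_t$ be the family of bounded Baire-measurable $f:E\to\mathbb R$ such that $x\mapsto\int_E f\,dp(t)(x,\cdot)$ is Baire measurable. Then $\mathcal H_t\supseteq C_b(E)$, since for such $f$ this integral equals $P(t)f(x)$, an element of $\mathscr B^{\rho}(E)$ and thus Baire measurable; $\mathcal H_t$ is a vector space containing the constants; and it is closed under bounded pointwise limits by dominated convergence, using finiteness of $p(t)(x,\cdot)$. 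Since $C_b(E)$ is multiplicative and generates $\mathcal B_0(E)$, $\mathcal H_t$ contains all bounded Baire functions, in particular $1_B$.

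\emph{Step 2: Chapman--Kolmogorov.} Fix $s,t\in\mathbb R_+$ and $x\in E$. For $f\in C_b(E)$, property \textbf{P2} (in the form $P(s+t)=P(t)\circ P(s)$), the inclusion $P(s)f\in\mathscr B^{\rho}(E)$, and Lemma~\ref{lem:GFS induces family of measures} give
$$\int_E f\,dp(s+t)(x,\cdot)=P(t)\big(P(s)f\big)(x)=\int_E\big(P(s)f\big)(y)\,p(t)(x,dy)=\int_E\Big(\int_E f\,dp(s)(y,\cdot)\Big)p(t)(x,dy),$$
where the inner function $y\mapsto(P(s)f)(y)$ is $p(t)(x,\cdot)$-integrable because $|(P(s)f)(y)|\le\|P(s)f\|_\rho\,\rho(y)$ with $\int_E\rho\,dp(t)(x,\cdot)<\infty$, and is Baire measurable by Step~1. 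To upgrade from $f\in C_b(E)$ to $f=1_A$, $A\in\mathcal B_0(E)$, I would apply the functional monotone class theorem to $\mathcal H:=\{f\text{ bounded Baire}:\text{the displayed identity holds}\}$: it contains $C_b(E)$, is a vector space with constants, and is closed under bounded pointwise limits — if $f_n\to f$ with $|f_n|\le K$, dominated convergence handles the left-hand integral, and for the iterated one note $\big|\int_E f_n\,dp(s)(y,\cdot)\big|\le K\,p(s)(y,E)$ with $\int_E p(s)(y,E)\,p(t)(x,dy)=p(s+t)(x,E)<\infty$, the last equality being the already-established identity for $f=1_E$. Evaluating the resulting identity at $f=1_A$ yields $\hat p(s+t)(x,A)=\int_E\hat p(s)(y,A)\,\hat p(t)(x,dy)$.

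\emph{Step 3: initial condition.} By \textbf{P1}, $\int_E f\,dp(0)(x,\cdot)=f(x)$ for every $f\in\mathscr B^{\rho}(E)\supseteq C_b(E)$; the same monotone-class extension (closure under bounded pointwise limits by finiteness of $p(0)(x,\cdot)$) gives this for all bounded Baire $f$, so $\hat p(0)(x,B)=1_B(x)=\delta_x(B)$ for $B\in\mathcal B_0(E)$. The main obstacle — and the reason the steps are carried out in this order — is that neither the kernels $p(t)(x,\cdot)$ nor the images $P(s)f$ of bounded functions are bounded in $x$; they are only $\rho$-controlled. Hence the monotone class arguments in the $x$-variable cannot be run in a plain ``bounded functions'' framework: at each dominated-convergence step one must supply the correct dominating function ($\rho$, or $y\mapsto p(s)(y,E)$), whose $p(t)(x,\cdot)$-integrability is exactly what Remark~\ref{rem: definition P_t rho} and the $f=1_E$ case of Chapman--Kolmogorov provide; and the measurability established in Step~1 is precisely what makes the iterated integral of Step~2 well posed.
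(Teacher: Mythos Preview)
Your proof is correct and follows essentially the same strategy as the paper: both arguments start from $C_b(E)$, use that it generates $\mathcal B_0(E)$, and extend the kernel measurability and Chapman--Kolmogorov identity by a monotone-class/Dynkin argument with dominated convergence. The only cosmetic difference is packaging---the paper works with the $\pi$-system of $C_b(E)$-open sets and a Dynkin system of sets, while you use the functional monotone class theorem directly on $C_b(E)$; you also spell out the initial condition $\hat p(0)(x,\cdot)=\delta_x$, which the paper leaves implicit.
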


\begin{remark}\label{rem:BaireBorel}
Note that if $E$ is metrizable,  the Borel and the Baire $\sigma$-algebra coincide (see Corollary 6.3.5 in~\cite{Bogachev2007}). Hence in this case 
$\left(p(t)\right)_{t\in\mathbb{R}_{+}}$ is also a semigroup of transition
kernels on $(E, \mathcal{B}(E))$, i.e.~with respect to the Borel $\sigma$-algebra. For further conditions implying the coincidence of the Borel and the Baire $\sigma$-algebra we refer to Chapter 6.3 of ~\cite{Bogachev2007}.
\end{remark}

\begin{proof}
 For any $C_{b}(E)$-open set $A\in\mathcal{B}(E)$ (see Definition
\ref{def:C_b-open}) there exists a sequence $\left(f_{n}^{A}\right)_{n\in\mathbb{N}}\subset  C_{b}(E)$
such that $f_{n}^{A}\nearrow1_{A}$ pointwise \footnote{Note that by definition any $C_{b}(E)$-open set $A\in\mathcal{B}(E)$  is also Baire-measurable.}. Hence, by dominated convergence for any $t\in\mathbb{R}_{+}$

\[
x\rightarrow p(t)(x,A)=\underset{n\rightarrow\infty}{\lim}P(t)f_{n}^{A}(x)
\]
is measurable with respect to Baire $\sigma$-algebra $\mathcal{B}_{0}(E)$
as limit of maps that lie in $\mathcal{\mathscr{B}}^{\rho}(E)$ and are
therefore Baire-measurable (by virtue of being pointwise limits of
$C_{b}(E)$ functions). This property extends to all sets $A$ in
the Dynkin system generated by the $C_{b}(E)$-open sets. Since the
system of $C_{b}(E)$-open sets is intersection stable
the property holds true also for the $\sigma$-algebra generated by
the $C_{b}(E)$-open sets. By Lemma \ref{lem:C_b-open sets generate Baire sigma-algebra}
this is precisely $\mathcal{B}_{0}(E)$. 

Furthermore, for any $C_{b}(E)$-open set $A$ and any $s,t\in\mathbb{R}_{+}$
by dominated convergence
\begin{align*}
\int_{E}p(s)(y,A)p(t)(x,dy) & =\underset{n\rightarrow\infty}{\lim}P(t)P(s)f_{n}^{A}\\
 & =\underset{n\rightarrow\infty}{\lim}P(s+t)f_{n}^{A}\\
 & =p(s+t)(y,A).
\end{align*}
Since the system of sets such that this equations
holds is a Dynkin system it follows that 
\begin{align*}
\int_{E}p(s)(y,A)p(t)(x,dy) & =p(s+t)(x,A)
\end{align*}
holds true for any $A\in\mathcal{B}_0(E)$.
\end{proof}

\section{Generalized Feller processes}\label{sec:genfeller}

This section is mainly dedicated to prove existence and path properties of generalized Feller processes. Throughout $\left(E,\,\rho\right)$ denotes
a weighted space. We let $I$ be some index set and let $J\subset I$
be a finite subset.  We denote the number of elements in $J$ by $|J|$ and 
consider the product space
\[
E^{J}:=\underbrace{E \times \cdots \times E}_{|J|-\text{times}},
\]
whose elements are denoted by
$
x_{J}:=\left(x_{1},\ldots,x_{|J|}\right)
$.
We recall that by Lemma \ref{lem:product space of weighted space is weighted space}
,
$
\left(E^{J},\rho^{\otimes |J|}\right)
$
 is a weighted space with
$
\rho^{\otimes |J|}(x_{J}):=\rho\left(x_{{1}}\right)\cdots\rho\left(x_{|J|}\right).
$
Let us start by formally introducing generalized Feller processes.

\begin{definition} \label{def:generalized Feller process}
Let $\left(P(t)\right)_{t\in\mathbb{R}_{+}}$ be a generalized Feller
semigroup on $\mathcal{\mathscr{B}}^{\rho}(E)$, let $\nu\in\mathcal{M}^{\rho}(E)$
be a probability measure and let $\left(\lambda_{t}\right)_{t\in\mathbb{R}_{+}}$
be an adapted stochastic process on the filtered probability space
\[
\left(E^{\mathbb{R}_{+}},\mathcal{B}(E)^{\mathbb{R}_{+}},\left(\mathcal{F}_{t}\right)_{t\in\mathbb{R}_{+}},\mathbb{P}_{\nu}\right).
\]
If for any $t\geq s\geq0$ and any $f\in\mathcal{\mathscr{B}}^{\rho}(E)$
\begin{equation}
\mathbb{E}_{\nu}\left[\left.f(\lambda_{t})\right|\mathcal{F}_{s}\right]=P\left(t-s\right)f(\lambda_{s})\label{eq:GFS definition conditional expectation}
\end{equation}
holds true $\mathbb{P}_{\nu}$-almost surely and\textcolor{red}{{}
}
\[
\mathbb{P}_{\nu}\circ\lambda_{0}^{-1}=\nu
\]
 then $\left(\lambda_{t}\right)_{t\in\mathbb{R}_{+}}$ is called $\mathit{generalized}$
$\mathit{Feller}$ $\mathit{process}$ with respect to $\left(\mathcal{F}_{t}\right)_{t\in\mathbb{R}_{+}}$
and $\left(P(t)\right)_{t\in\mathbb{R}_{+}}$ and with initial distribution
$\nu$.

We make the convention $\mathbb{P}_{x}:=\mathbb{P}_{\delta_{x}}$ for any
$x\in E$. 
\end{definition}

\begin{remark}
\label{rem:generalized Feller process not Markov}
Note that  \ref{eq:GFS definition conditional expectation} is only required to hold for $f\in\mathscr{B}^{\rho}(E)$, thus only for Baire-measurable functions and therefore in general not necessarily for Borel-measurable
maps $f$. This is due to the fact, that indicator functions of Borel
sets can be approximated with continuous bounded functions by Corollary
\ref{cor:completely regular space, convergence of continuous bounded functions to open set}
only almost everywhere with respect to one (or countably many) measure(s),
but not necessarily simultaneously with respect to the entire family
of measures $\left(\left(p(t-s)\right)(x,\cdot)\right)_{\,x\in E}$
on $\left(E,\mathcal{B}(E)\right)$ obtained by Lemma \ref{lem:GFS induces family of measures}.
However, for indicator functions of $C_{b}(E)$-open sets
(see Definition \ref{def:C_b-open})  Equation~\eqref{eq:GFS definition conditional expectation}
holds true by dominated convergence. Since by Lemma \ref{lem:C_b-open sets generate Baire sigma-algebra}
the $C_{b}(E)$-open sets generate the Baire $\sigma$-algebra $\mathcal{B}_{0}(E)$
(see Definition \ref{def:Baire sigma algebra}) we conclude again
by dominated convergence that  Equation~\eqref{eq:GFS definition conditional expectation}
holds true for any indicator function of sets in $\mathcal{B}_{0}(E)$.
Thus, in this sense a generalized Feller process $\left(\lambda_{t}\right)_{t\in\mathbb{R}_{+}}$
 with initial distribution $\nu$ is a Markov process only with respect to the measurable space 
$
(E^{\mathbb{R}_{+}},\mathcal{B}_{0}(E)^{\mathbb{R}_{+}}),
$
and the probability measure $\mathbb{P}_{\nu}$
restricted to this space.

In cases where the Borel and the Baire $\sigma$-algebra coincide (such as for metrizable spaces, see Remark \ref{rem:BaireBorel}) there is of course no difference.
\end{remark}

One important result of the current paper is the following existence proof of generalized Feller processes. A similar statement was already formulated in \cite[Theorem 2.11]{CuT}, but the full proof with all details is given below.

\begin{theorem}
\label{thm:GFS induce Markov process}Let $\left(P(t)\right)_{t\in\mathbb{R}_{+}}$
be a generalized Feller semigroup on $\mathcal{\mathscr{B}}^{\rho}(E)$
such that for all $t\in\mathbb{R}_{+}$
\[
P(t)1=1.
\]
Then on the measurable space
$
(E^{\mathbb{R}_{+}},\mathcal{B}(E)^{\mathbb{R}_{+}})
$ 
for any probability measure  $\nu\in\mathcal{M}^{\rho}(E)$ there exists a measure $\mathbb{P}_{\nu}$
and a complete right continuous filtration $\left(\mathcal{F}_{t}\right)_{t\in\mathbb{R}_{+}}$ larger than the natural filtration of the canonical process $\left(\lambda_{t}\right)_{t\in\mathbb{R}_{+}}$
such that $\left(\lambda_{t}\right)_{t\in\mathbb{R}_{+}}$ is adapted with respect to $\left(\mathcal{F}_{t}\right)_{t\in\mathbb{R}_{+}}$ and for any $t\geq s\geq0$ and any $f\in\mathcal{\mathscr{B}}^{\rho}(E)$

\begin{equation} \label{eq:equation of GFS yields Markov process}
\mathbb{E}_{\nu}\left[\left.f(\lambda_{t})\right|\mathcal{F}_{s}\right]=P\left(t-s\right)f(\lambda_{s})
\end{equation}
holds true $\mathbb{P}_{\nu}$ -almost surely, and 
\[
\mathbb{\mathbb{P}}_{\nu}\circ\lambda_{0}^{-1}=\nu.
\]
\end{theorem}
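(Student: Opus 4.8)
The plan is to construct $\mathbb{P}_\nu$ via the Kolmogorov extension theorem, using the semigroup of transition probabilities $(\hat p(t))_{t\in\mathbb{R}_+}$ on the Baire $\sigma$-algebra from Proposition~\ref{prop:GFS implies existence of semigroup of transition kernels}. Note first that since $P(t)1=1$, each $p(t)(x,\cdot)$ is a probability measure, so $(\hat p(t))_{t\in\mathbb{R}_+}$ is a genuine semigroup of transition probabilities. For a finite subset $J=\{t_1<\cdots<t_n\}\subset \mathbb{R}_+$ (including possibly $t_1=0$) one defines the finite-dimensional distribution $\mu_J$ on $(E^J,\mathcal{B}(E)^{\otimes |J|})$ in the usual Markov way, by integrating $\nu$ against $\hat p(t_1)$, then $\hat p(t_2-t_1)$, and so on via the Chapman--Kolmogorov relation. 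The consistency (projectivity) of the family $(\mu_J)_J$ under coordinate projections is then exactly the Chapman--Kolmogorov identity together with $p(0)(x,\cdot)=\delta_x$. Here one should be slightly careful: these product measures are a priori only defined on the product Baire $\sigma$-algebra; but as explained in Remark~\ref{rem:generalized Feller process not Markov}, the measures $p(t)(x,\cdot)$ are Radon on $(E,\mathcal{B}(E))$, so each $\mu_J$ extends canonically to a Radon measure on the Borel $\sigma$-algebra of the (weighted, hence completely regular) space $E^J$, which is what one needs to invoke the measure-theoretic version of Kolmogorov extension. The ``Riesz representation for $\mathcal{B}^\rho(E^J)$'' referred to in the introduction is the tool that produces these Borel-Radon finite-dimensional measures from the iterated functionals $f\mapsto P(t_1)\big(\,P(t_2-t_1)(\cdots)\,\big)(\cdot)$ evaluated against $\nu$, controlling their mass via the bound $\|P(t)\|\le Me^{\omega t}$ from Remark~\ref{rem: definition P_t rho} applied on the sub-level sets $K_R$ and letting $R\to\infty$.

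With a consistent projective family in hand, I would apply the general Kolmogorov extension theorem (Theorem~15.26 in \cite{AlB}) to obtain a unique probability measure $\mathbb{P}_\nu$ on the product space $(E^{\mathbb{R}_+},\mathcal{B}(E)^{\mathbb{R}_+})$ whose finite-dimensional marginals are the $\mu_J$. Taking $(\lambda_t)_{t\in\mathbb{R}_+}$ to be the canonical coordinate process immediately gives $\mathbb{P}_\nu\circ\lambda_0^{-1}=\mu_{\{0\}}=\nu$. The Markov property \eqref{eq:equation of GFS yields Markov process} with respect to the natural filtration $(\mathcal{F}^0_t)_{t\in\mathbb{R}_+}$ follows from the construction of the $\mu_J$: for $s\le t$ and $f\in\mathscr{B}^\rho(E)$, one checks
\[
\mathbb{E}_\nu\!\left[f(\lambda_t)\,g(\lambda_{s_1},\ldots,\lambda_{s_k})\right]=\mathbb{E}_\nu\!\left[\big(P(t-s)f\big)(\lambda_s)\,g(\lambda_{s_1},\ldots,\lambda_{s_k})\right]
\]
for all $s_1<\cdots<s_k\le s$ and bounded continuous (or Baire) $g$, which is a direct unwinding of the iterated-integral definition using Chapman--Kolmogorov; a monotone-class / Dynkin argument over $C_b$-open cylinder sets then upgrades this to all of $\mathcal{F}^0_s$. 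A subtlety worth a sentence: $f\in\mathscr{B}^\rho(E)$ is only Baire-measurable and possibly $\rho$-unbounded, so one first verifies integrability (using $p(t)(x,\cdot)\in\mathcal{M}^\rho(E)$ and $\nu\in\mathcal{M}^\rho(E)$, giving $\mathbb{E}_\nu[\rho(\lambda_t)]<\infty$ via Remark~\ref{rem: definition P_t rho}) and then approximates $f$ in $\|\cdot\|_\rho$ by $C_b(E)$ functions to reduce to the bounded continuous case.

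Finally I would pass from the natural filtration to a complete right-continuous one: set $\mathcal{F}_t:=\bigcap_{u>t}\big(\mathcal{F}^0_u\vee\mathcal{N}\big)$ where $\mathcal{N}$ is the collection of $\mathbb{P}_\nu$-null sets of the $\mathbb{P}_\nu$-completion of $\mathcal{B}(E)^{\mathbb{R}_+}$. Right-continuity and completeness are then built in, $(\lambda_t)$ remains adapted, and the conditional expectation \eqref{eq:equation of GFS yields Markov process} survives the enlargement because the right-hand side $P(t-s)f(\lambda_s)$ is $\mathcal{F}^0_{s+}$-measurable after a suitable approximation argument (standard for Markov processes: for $u>s$ one has $\mathbb{E}_\nu[f(\lambda_t)\mid \mathcal{F}^0_u]=P(t-u)f(\lambda_u)$, and letting $u\searrow s$ along the right-continuity of $t\mapsto P(t)f(x)$ guaranteed by \textbf{P3}, via dominated convergence using the $\rho$-bound, recovers $P(t-s)f(\lambda_s)$). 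Passing to completions does not affect conditional expectations, so \eqref{eq:equation of GFS yields Markov process} holds $\mathbb{P}_\nu$-a.s.\ with respect to $(\mathcal{F}_t)_{t\in\mathbb{R}_+}$.

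The main obstacle I anticipate is not the Kolmogorov extension per se but the measure-theoretic bookkeeping around Baire versus Borel: producing the finite-dimensional laws $\mu_J$ as \emph{Radon} Borel measures on the non-metrizable product spaces $E^J$ (so that the general extension theorem applies and the marginals are genuine probability measures rather than merely finitely additive set functions), and doing so via the iterated functionals whose boundedness must be controlled uniformly over the compacts $K_R$ before taking $R\to\infty$. This is precisely the step the introduction flags as relying on ``Riesz representation extended to $\mathcal{B}^\rho(E\times\cdots\times E)$'' together with the sub-level-set exhaustion, and it is where the lack of local compactness of $E$ must be circumvented by working on the compact $K_R$'s. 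The enlargement-of-filtration step is routine once the natural-filtration Markov property is established.
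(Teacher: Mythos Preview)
Your overall strategy coincides with the paper's: construct finite-dimensional Radon measures on $(E^J,\mathcal{B}(E^J))$ via Riesz representation on $\mathscr{B}^{\rho^{\otimes|J|}}(E^J)$ with truncation by the compacts $K_R$, verify projectivity, apply the general Kolmogorov extension theorem (Theorem~15.26 in \cite{AlB}), and then pass to the right-continuous completion. You also correctly flag the Baire/Borel bookkeeping as the main measure-theoretic obstacle. Two points, however, deserve comment.

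First, a minor omission: the version of Kolmogorov extension in \cite{AlB} requires exhibiting a compact class $\mathcal{C}\subset\mathcal{B}(E)$ approximating each one-dimensional marginal from inside. The paper checks this explicitly using $\mathcal{C}=\{C\text{ compact}:C\subset K_R\text{ for some }R\}$ together with inner regularity of $p_\nu^{\{t\}}$; you should not leave this implicit.

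Second, and more substantively, there is a genuine gap in your filtration-enlargement step. You write that from $\mathbb{E}_\nu[f(\lambda_t)\mid\mathcal{F}^0_u]=P(t-u)f(\lambda_u)$ one ``lets $u\searrow s$ along the right-continuity of $t\mapsto P(t)f(x)$ guaranteed by \textbf{P3}, via dominated convergence'' to obtain $P(t-s)f(\lambda_s)$. This does not work as stated: \textbf{P3} (or even the strong continuity of Theorem~\ref{thm:generalized Feller semigroups are strongly continuous}) gives $P(t-u)f\to P(t-s)f$ in $\|\cdot\|_\rho$, but the argument $\lambda_u$ is itself moving with $u$, and at this stage you have \emph{no} path regularity for $(\lambda_u)$ whatsoever---c\`adl\`ag versions are established only afterwards in Theorem~\ref{thm:GFP have cadlag version}. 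So $P(t-u)f(\lambda_u)\to P(t-s)f(\lambda_s)$ cannot be deduced from semigroup continuity alone, and dominated convergence does not help because you have no pointwise limit to dominate.

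The paper circumvents this by a genuinely different argument. For $\nu=\delta_{x_0}$ and $s=0$ the target $P(t)f(x_0)$ is deterministic, so convergence in law suffices: one must show $\mathbb{E}_{x_0}[h(P(t-r)f(\lambda_r))]\to h(P(t)f(x_0))$ for all $h\in C_b(\mathbb{R})$. The left side equals $P(r)\big(h\circ P(t-r)f\big)(x_0)$, and now strong continuity of $(P(r))_r$ combined with continuity of the map $g\mapsto h\circ g$ on $\mathscr{B}^\rho(E)$ (Lemma~\ref{lem:composition B-rho is continuous}) and a joint-continuity result for semigroups gives the limit. General $s>0$ is then reached via the Markov property and translation operators $\theta_s$, approximating indicators of open sets in $E\times E$ by bounded continuous functions (Corollary~\ref{cor:completely regular space, convergence of continuous bounded functions to open set}); general $\nu$ follows by integrating over $x_0$. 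This is precisely where the strong continuity of the generalized Feller semigroup does nontrivial work, and calling it ``routine'' undersells the issue.
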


When we speak of generalized Feller processes we mean those
obtained via Theorem~\ref{thm:GFS induce Markov process}. 
We remind the reader that while $\mathcal{B}(E^{\mathbb{R}_{+}})\supset\mathcal{B}(E)^{\mathbb{R}_{+}}$
holds true (because on $\mathcal{B}(E^{\mathbb{R}_{+}})$ every projection
is continuous, hence measurable with respect to $\mathcal{B}(E^{\mathbb{R}_{+}})$),
the inclusion $\mathcal{B}(E^{\mathbb{R}_{+}})\subset\mathcal{B}(E)^{\mathbb{R}_{+}}$
is in general not true when the topology of $E$ does not have a countable
base.

\begin{proof}
The proof has three steps. In the first step, we construct a projective
family of probability measures 
on 
\[
\left(E^{J},\mathcal{B}\left(E^{J}\right)\right)_{J\subset\mathbb{R}_{+},\text{ finite}}.
\]
In the second step we use the generalized Kolmogorov extension theorem (Theorem 15.26 in \cite{AlB}) and obtain a probability measure on $\left(E^{\mathbb{R}_{+}},\mathcal{B}(E)^{\mathbb{R}_{+}}\right)$.
The coordinate process $\left(\lambda_{t}\right)_{t\in\mathbb{R}_{+}}$
on this space then yields for any $t\geq s\geq0$ and any $f\in\mathcal{\mathscr{B}}^{\rho}(E)$
\begin{equation}
\mathbb{E}_{\nu}\left[\left.f(\lambda_{t})\right|\mathcal{F}_{s}^{0}\right]=P\left(t-s\right)f(\lambda_{s}),\label{eq:conditional expectation gFp, natural filtration}
\end{equation}
 where $\left(\mathcal{F}_{t}^{0}\right)_{t\in\mathbb{R}_{+}}$ is
the natural filtration of the coordinate process. In the third step,
we take the right continuous extension of this filtration and show
Equation \eqref{eq:equation of GFS yields Markov process}

\emph{Step 1:} Fix some probability measure  $\nu\in\mathcal{M}^{\rho}(E)$.
For any $r\geq0$ let $p_{\nu}(r)(\cdot)$ be the unique measure in $\mathcal{M}^{\rho}(E)$
given by Theorem  \ref{thm:Riesz-representation-for B rho}
via 
\[
\int_{E}P(r)f(y)\nu(dy)=\int_{E}f(y)p_{\nu}(r)(dy).
\]
Using the  Riesz representation theorem on $E\times E$ and Lemma \ref{lem:B-rho tensor product space, linear isomorphism} we define, for $0\leq r_{1}<r_{2}$ and $R >0$,  the unique measure $\mu_{\nu}^{R,\left\{ r_{1},r_{2}\right\} }\in\mathcal{M}^{\rho^{\otimes 2}}(E\times E)$
via the continuous functional  
\begin{align*}
f_{r_{1}}\cdot f_{r_{2}} & \rightarrow\int_{E}\left(1_{\left\{ \rho (y)<R\right\} }\cdot f_{r_{1}}(y)\cdot P(r_{2}-r_{1})f_{r_{2}}(y)\right)p_{\nu}(r_{1})(dy)
\end{align*}
for $f_{r_{1}}, f_{r_2} \in\mathscr{B}^{\rho}(E)$ 
such that
\begin{align*}
 \int_{E}\left(1_{\left\{ \rho(y)<R\right\} }\cdot f_{r_{1}}(y)\cdot P(r_{2}-r_{1})f_{r_{2}}(y)\right)p_{\nu}(r_{1})(dy)
 = \,\int_{E\times E}f_{r_{1}}(y)f_{r_{2}}(z)\mu_{\nu}^{R,\left\{ r_{1},r_{2}\right\} }(dy,dz).
\end{align*}
By $P(r_{2}-r_{1})1=1$ we obtain
\begin{align*}
\mu_{\nu}^{R,\left\{ r_{1},r_{2}\right\} }(E\times E) & =p_{\nu}(r_{1})\left(\left\{ \rho(y)<R\right\} \right)
  \leq p_{\nu}(r_{1})(E)
 = \nu(E)
 = 1.
\end{align*}
Then for any $A\in\mathcal{B}\left(E\times E\right)$
by monotonicity and boundedness, we can define
\[
p_{\nu}^{\left\{ r_{1},r_{2}\right\} }(A):=\underset{R\rightarrow\infty}{\lim}\mu_{\nu}^{R,\left\{ r_{1},r_{2}\right\} }(A).
\]
One can easily show that the convergence is uniform and that $p_{\nu}^{\left\{ r_{1},r_{2}\right\} }$
is a probability measure on
$
\left(E\times E,\mathcal{B}\left(E\times E\right)\right).
$
 Furthermore, for any $r_{3}>r_{2}$ by Lemma \ref{lem:B-rho tensor product space, linear isomorphism} we can define a continuous functional $j^{R,\left\{ r_{1},r_{2},r_{3}\right\}}$  on $\mathcal{\mathscr{B}}^{\rho^{\otimes 3}}(E\times E\times E)$  by setting for any $f_{r_{i}}\in \mathcal{\mathscr{B}}^{\rho}(E)$,  $i=1,2,3$
\[
f_{r_{1}}\cdot f_{r_{2}}\cdot f_{r_{3}}\rightarrow \int_{E_{r_{1}}\times E_{r_{2}}}1_{\left\{ \rho_{r_{1}}(y)<R\right\} }\cdot1_{\left\{ \rho_{r_{2}}(z)<R\right\} }\cdot f_{r_{1}}(y)\cdot f_{r_{2}}(z)\cdot P(r_{3}-r_{2})f_{r_{3}}(z)p_{\nu}^{\left\{ r_{1},r_{2}\right\} }(dy,dz).
\]
Then again by the Riesz representation theorem
we define
 $\mu_{\nu}^{R,\left\{ r_{1},r_{2},r_{3}\right\} }$ as
the unique measure in $\mathcal{M}^{\rho^{\otimes 3}}(E\times E\times E)$ such that for any $
f\in\mathcal{\mathscr{B}}^{\rho^{\otimes 3}}(E\times E\times E)$ 
\[
j^{R,\left\{ r_{1},r_{2},r_{3}\right\} }(f)=\int_{E\times E \times E}f(x,y,z)\mu_{\nu}^{R,\left\{ r_{1},r_{2},r_{3}\right\} }(dy,dy,dz).
\]
Again for any $A\in\mathcal{B}\left(E\times E\times E\right)$
by monotonicity and boundedness
\[
p_{\nu}^{\left\{ r_{1},r_{2},r_{3}\right\} }(A):=\underset{R\rightarrow\infty}{\lim}\mu_{\nu}^{R,\left\{ r_{1},r_{2},r_{3}\right\} }(A).
\]
Proceeding inductively  in this way we can define via $\mu_{\nu}^{R,J}\in\mathcal{M}^{\rho^{\otimes |J|}}(E^{J})$ a family of probability measures
$
\left(p_{\nu}^{J}\right)_{J\subset\mathbb{R}_{+},\,\text{finite }}
$
on the respective measurable spaces
$$
\left(E^{J},\mathcal{B}\left(E^{J}\right)\right)_{J\subset\mathbb{R}_{+},\,\text{finite }}.$$
 By uniform convergence inner regularity of $\mu_{\nu}^{R,J}\in\mathcal{M}^{\rho^{\otimes |J|}}(E^{J})$ for each $R>0$ and finite
$J\subset\mathbb{R}_{+}$ implies
 that  the measure $p_{\nu}^{J}$ is inner regular,
hence a Radon measure. 

In order to apply the generalized Kolmogorov extension theorem (Theorem 15.26 in \cite{AlB}), we need to show
that this family is projective, i.e. for any finite $J$ and $i\in J$
and any $A\in \mathcal{B}\left(E^{J\setminus\left\{ i\right\} } \right)$
\[
p_{\nu}^{J}(A\times E)=p_{\nu}^{J\setminus\left\{ i\right\} }(A).
\]
We show this property by induction and start with the case $J=\left\{ r_{1},r_{2}\right\} $.
For $f\in C_{b}(E)$ 
\[
\int_{E}f(y)\mu_{\nu}^{R,\left\{ r_{1},r_{2}\right\} }(dy \times E)=\int_{E}f(y)1_{K_{R}}(y)p_{\nu}(r_{1})(dy),
\]
which implies by uniqueness of the Radon measure (see Proposition
\ref{prop:condition when linear funtional on Cb is measure real case})
for any $A\in\mathcal{B}\left(E\right)$
\[
\mu_{\nu}^{R,\left\{ r_{1},r_{2}\right\} }(A \times E)=p_{\nu}(r_{1})(A\cap K_{R}).
\]
Thus, 
\begin{align*}
p_{\nu}^{\left\{ r_{1},r_{2}\right\} }(A\times E) & =\underset{R\rightarrow\infty}{\lim}\mu_{\nu}^{R,\left\{ r_{1},r_{2}\right\} }(A\times E)
 =p_{\nu}(r_{1})(A).
\end{align*}
Furthermore,  by interchangeability of the limits due to the uniform convergence of the measures $(\mu_{\nu}^{R,\left\{ r_{1},r_{2}\right\} })_R$ and dominated
convergence we get
for any $f\in C_{b}(E)$ 
\begin{align*}
  \int_{E \times E}1(y)f(z)p_{\nu}^{\left\{ r_{1},r_{2}\right\} }(dy,dz)
 &= \,\underset{R\rightarrow\infty}{\lim}\int_{E\times E}1(y)f(z)\mu_{\nu}^{R,\left\{ r_{1},r_{2}\right\} }(dy,dz)\\
& = \,\underset{R\rightarrow\infty}{\lim}\int_{E}1_{\left\{ \rho(y)<R\right\} }P(r_{2}-r_{1})f(y)p_{\nu}(r_{1})(dy) \\
&= \,P(r_{1})P(r_{2}-r_{1})f(x)= \,P(r_{2})f(x)= \,\int_{E}f(z)p_{\nu}(r_{2})(dz).
\end{align*}
Since the functionals
$f \rightarrow\int_{E}f(z)p_{\nu}^{\left\{ r_{1},r_{2}\right\} }(E \times dz)$
and 
$
f  \rightarrow\int_{E}f(z)p_{\nu}(r_{2})(dz)$
coincide on $C_{b}(E)$ and satisfy the conditions of Proposition
\ref{prop:condition when linear funtional on Cb is measure real case}, we obtain 
by uniqueness of the Radon measure 
that
for any $A\in\mathcal{B}\left(E\right)$ 
\[
p_{\nu}^{\left\{ r_{1},r_{2}\right\} }(E\times A)=p_{\nu}(r_{2})(A).
\]
This implies in particular that for any $f_{r_{2}}\in\mathscr{B}^{\rho}(E)$
\[
\int_{E\times E}1(y)f_{r_{2}}(z)p_{\nu}^{\left\{ r_{1},r_{2}\right\} }(dy,dz)<\infty.
\]
Next, we assume that for $N\in\mathbb{N}$, 
$n\leq N$, any arbitrary  index set $J_{n}:=\left\{ r_{1},...,r_{n}\right\} \in\mathbb{R}_{+}^{n}$ with
, $0\leq r_{1}<...<r_{n}$, any $i\in\left\{ 1,...,n\right\} $
and any $A\in\mathcal{B}\left(E^{J_{n}\setminus\left\{ r_{i}\right\} }\right)$ we have
\[
p_{\nu}^{J_{n}}(A\times E)=p_{\nu}^{J_{n}\setminus\left\{ r_{i}\right\} }(A),
\]
and 
\[
\int_{E^{J_n}}1(y_{1})\cdot...1(y_{n-1})\cdot f_{r_{n}}(y_{n})p_{\nu}^{J_{n}}(dy_{1},...,dy_{n})<\infty
\]
for any $f_{r_{n}}\in\mathscr{B}^{\rho}(E)$.
 We want to show the analogous assertions for any $J_{N+1}:=\left\{ r_{1},...,r_{N+1}\right\} \in\mathbb{R}_{+}^{N+1}$ with $r_{N+1}>...>r_{1}\geq0$,  for any $i\in\left\{ 1,...,N+1\right\} $
and any $A\in\mathcal{B}\left(E^{J_{N+1}\setminus\left\{ r_{i}\right\} }\right)$.
For  $i=N+1$ and for $f_{r_{i}}\in C_b(E)$  we have by definition of the measures and dominated convergence
\begin{align*}
 & \int_{E^{J_{N+1}}}f_{r_{1}}(y_{1})\cdot...\cdot f_{r_{N}}(y_{N})1(y_{N+1})p_{\nu}^{J_{N+1}}(dy_{1},...,dy_{N+1})\\
 = & \,\underset{R\rightarrow\infty}{\lim}\int_{E^{J_{N+1}}}f_{r_{1}}(y_{1})\cdot...\cdot f_{r_{N}}(y_{N})1(y_{N+1})\mu_{\nu}^{R,J_{N+1}}(dy_{1},...,dy_{N+1})\\
 =& \,\int_{E^{J_{N}}}f_{r_{1}}(y_{1})\cdot...\cdot f_{r_{N}}(y_{N})p_{\nu}^{J_{N}}(dy_{1},...,dy_{N}),
\end{align*}
and we conclude that for any $A\in\mathcal{B}\left(E^{J_{N+1}\setminus\left\{ r_{N+1}\right\} }\right)$
\[
p_{\nu}^{J_{N+1}}(A\times E)=p_{\nu}^{J_{N+1}\setminus\left\{ r_{N+1}\right\} }(A),
\]
by  Proposition \ref{prop:condition when linear funtional on Cb is measure real case}.
Furthermore, 
\begin{align*}
 & \int_{E^{J_{N+1}}}1(y_{1})\cdot...1(y_{N})\cdot f_{r_{N+1}}(y_{N+1})p_{\nu}^{J_{N+1}}(dy_{1},...,dy_{N+1})\\
 =& \,\int_{E^{J_{N+1}}}1(y_{1})\cdot...1(y_{N})\cdot P(r_{N+1}-r_{N})f_{r_{N+1}}(y_{N})p_{\nu}^{J_{N}}(dy_{1},...,dy_{N})
 < \,\infty,
\end{align*}
by assumption.\\
In case $i=N$ and for $f_{r_{i}}\in C_{b}(E)$ and by the same arguments as above 
\begin{align*}
 & \int_{E^{J_ {N+1}}}f_{r_{1}}(y_{1})\cdot...\cdot f_{r_{N-1}}(y_{N-1})1(y_{N})f_{r_{N+1}}(y_{N+1})p_{\nu}^{J_{N+1}}(dy_{1},...,dy_{N+1})\\
 =& \,\underset{R\rightarrow\infty}{\lim}\int_{E^{J_ {N+1}}}f_{r_{1}}(y_{1})\cdot...\cdot f_{r_{N-1}}(y_{N-1})\cdot f_{r_{N+1}}(y_{N+1})\mu_{\nu}^{R,J_{N+1}}(dy_{1},...,dy_{N+1})\\
 =& \,\int_{{E^{J_{N}}}}f_{r_{1}}(y_{1})\cdot...\cdot f_{r_{N-1}}(y_{N-1})\cdot P(r_{N+1}-r_{N})f_{r_{N+1}}(y_{N})p_{\nu}^{J_{N}}(dy_{1},...,dy_{N})\\
 =& \,\int_{E^{J_{N-1}}}f_{r_{1}}(y_{1})\cdot...\cdot f_{r_{N-1}}(y_{N-1})\cdot P(r_{N+1}-r_{N-1})f_{r_{N+1}}(y_{N-1})p_{\nu}^{J_{N-1}}(dy_{1},...,dy_{N-1})\\
 =& \,\int_{{E^{J_{N}}}}f_{r_{1}}(y_{1})\cdot...\cdot f_{r_{N-1}}(y_{N-1})\cdot f_{r_{N+1}}(y_{N+1})p_{\nu}^{J_{N+1}\setminus\left\{ r_{N}\right\} }(dy_{1},...,dy_{N-1},dy_{N+1}),
\end{align*}
and we can conclude as before. 
For $i\in\left\{ 1,...,N-1\right\} $ the desired properties follow
in the same way by definition of $p_{\nu}^{J_{N+1}}$  and from the assumption that the properties
hold true for any $n\leq N$. 
Thus, by induction it follows that for any $m\in\mathbb{N}$ and any
arbitrary finite index set $J_{m}:=\left\{ r_{1},...,r_{m}\right\} \in\mathbb{R}_{+}^{m}$
, $0\leq r_{1}<...<r_{m}$ for any $i\in\left\{ 1,...,m\right\} $
and any $A\in\mathcal{B}\left(E^{J_{m}\setminus\left\{ r_{i}\right\} }\right)$
\[
p_{\nu}^{J_{m}}(A\times E)=p_{\nu}^{J_{m}\setminus\left\{ r_{i}\right\} }(A).
\]
Therefore, the family 
$
\left(p_{\nu}^{J}\right)_{J\subset\mathbb{R}_{+},\,\text{finite }}$ 
 is projective. 
 
\emph{Step 2:}  In order to construct a measure $\mathbb{P}_{\nu}$ on $\left(E^{\mathbb{R}_{+}},\mathcal{B}(E)^{\mathbb{R}_{+}}\right)$
for any $\nu\in\mathcal{M}^{\rho}(E)$ 
we shall use Theorem 15.26 in \cite{AlB}. For this purpose 
we have to find a compact class (see Definition \ref{def:compact class})
$\mathcal{C}$ in $E$ such that for each $t\in\mathbb{R}_{+}$ and
$A\in\mathcal{B}(E)$ 
\begin{equation}
p_{\nu}^{\left\{ t\right\} }(A)=\sup\left\{ p_{\nu}^{\left\{ t\right\} }(C):\,C\subset A\text{\,and }C\in\mathcal{C}\right\} .\label{eq:proof GFS is Markov, equation identity comapct class}
\end{equation}
We show that for the following set
\[
\mathcal{C}:=\left\{ C:\,C\text{ compact},\,C\subset K_{R}\text{ for some }R\geq0\right\}. 
\]
To prove that $\mathcal{C}$ is a compact class, we  choose some arbitrary sequence $\left\{ C_{l}\right\} _{l\in\mathbb{N}}\subset\mathcal{C}$
such that
\[
\underset{l\in\mathbb{N}}{\bigcap}\, C_{l}=\emptyset.
\]
For $C_{1}$ we choose $R_{1}\geq0$ such that $C_{1}\subset K_{R_{1}}.$
Then
$
\underset{l\in\mathbb{N}}{\bigcup}\, E\setminus C_{l}\supset K_{R_{1}}
$
is an open cover of the compact set $K_{R_{1}}$ hence finitely many
sets, say without loss of generality $\left\{ E\setminus C_{2},E\setminus C_{2},...,E\setminus C_{m}\right\} ,$
suffice to cover it. Thus, 
\[
\underset{l\in\left\{ 2,...,m\right\} }{\bigcap}C_{l}\cap K_{R_{1}}=\emptyset
\]
and $C_{1}\subset K_{R_{1}}$ yields that
$
\underset{l\in\left\{ 1,...,m\right\} }{\bigcap}C_{l}=\emptyset
$. 
Hence, $\mathcal{C}$ is a compact class.
Since 
$
\underset{R\geq0}{\bigcup}K_{R}=E,
$
 for any $\varepsilon>0$ and $t\in\mathbb{R}_{+}$ there is some
$R_{\varepsilon}\geq0$ such that 
\[
p_{\nu}^{\left\{ t\right\} }\left(E\right)-p_{\nu}^{\left\{ t\right\} }\left(K_{R_{\varepsilon}}\right)<\varepsilon,
\]
and by inner regularity of the Radon measure $p_{\nu}^{\left\{ t\right\} }$
for any $A\in\mathcal{B}(E)$ there is a compact set $A_{\varepsilon}\subset A$
such that 
\[
p_{\nu}^{\left\{ t\right\} }\left(A\right)-p_{\nu}^{\left\{ t\right\} }\left(A_{\varepsilon}\right)<\varepsilon.
\]
Hence 
\[
p_{\nu}^{\left\{ t\right\} }\left(A\right)-p_{\nu}^{\left\{ t\right\} }\left(A_{\varepsilon}\cap K_{R_{\varepsilon}}\right)<2\varepsilon,
\]
and Equation \eqref{eq:proof GFS is Markov, equation identity comapct class}
holds true. Thus, the conditions of Theorem 15.26 in \cite{AlB}
are satisfied. 
By applying this theorem, we obtain a probability measure $\mathbb{P}_{\nu}$
on the measurable space $\left(E^{\mathbb{R}_{+}},\mathcal{B}(E)^{\mathbb{R}_{+}}\right)$
such that for any finite $J\subset\mathbb{R}_{+}$ and
$
A\in\mathcal{B}(E^J)$
 the probability is given by
\begin{equation}
\mathbb{P}_{\nu}\left(\left(\Pi_{J}^{\mathbb{R}_{+}}\right)^{-1}\left(A\right)\right)=p_{\nu}^{J}(A)\label{eq:projectivity of measure},
\end{equation}
with $\Pi_{J}^{\mathbb{R}_{+}}$ being the projection from $E^{\mathbb{R}_{+}}$
on $E^{J}$. 
Let now $\left(\lambda_{t}\right)_{t\in\mathbb{R}_{+}}:=\left(\Pi_{t}^{\mathbb{R}_{+}}\right)_{t\in\mathbb{R}_{+}}$
be the the coordinate process 
on $\left(E^{\mathbb{R}_{+}},\mathcal{B}(E)^{\mathbb{R}_{+}}\right)$.
Then, 
by definition 
\[
\mathbb{P}_{\nu}\circ\left(\lambda_{0}\right)^{-1}=p_{\nu}^{\left\{ 0\right\} }=\nu,
\]
 and for any $f\in\mathcal{\mathscr{B}}^{\rho}(E)$
\[
\mathbb{E}_{\nu}\left[f(\lambda_{t})\right]=\int_{E}\left(P(t)f\right)(y)\nu(dy)<\infty.
\]

We now denote by $\left(\mathcal{F}_{t}^{0}\right)_{t\in\mathbb{R}_{+}}$ the natural filtration of $(\lambda_t)_{t \in \mathbb{R}_+}$ and show Equation~\eqref{eq:conditional expectation gFp, natural filtration},~i.e.~
\[
\mathbb{E}_{\nu}\left[f(\lambda_{t})\cdot1_{F}\right]=\mathbb{E}_{\nu}\left[P\left(t-s\right)f(\lambda_{s})\cdot1_{F}\right]
\]
for any $f\in\mathcal{\mathscr{B}}^{\rho}(E)$,
 $0\leq s<t$, $F\in\mathcal{F}_{s}^{0}$ and  probability measure  $\nu\in\mathcal{M}^{\rho}(E)$. By $\mathbb{E}_{\nu}\left[f(\lambda_{t})\right]<\infty$
 it is
enough to check 
\[
\mathbb{E}_{\nu}\left[f(\lambda_{t})\cdot1_{G}\right]=\mathbb{E}_{\nu}\left[P\left(t-s\right)f(\lambda_{s})\cdot1_{G}\right]
\]
for all $G\in\mathcal{G}$ of an intersection stable generator $\mathcal{G}\subset\mathcal{F}_{s}^{0}$.
The set 
\[
\left\{ \underset{j\in J}{\bigcap}\left(\lambda_{j}\right)^{-1}\left(O_{j}\right):\,J\subset\left[0,s \right]\,\text{ finite, }O_{j}\subset E\text{ open for all}\,j\in J\right\} 
\]
is such an intersection stable generator. We
fix $k\in\mathbb{N}$ and $0\leq r_{1}\leq r_{2}\leq...\leq r_{k}\leq s$
and a set $J:=\left\{ r_{1},r_{2},...,r_{k},s,t\right\} .$ For
$O_{r_{i}}\in E$ open we have by
definition 
\begin{align*}
 & \mathbb{E}_{\nu}\left[f(\lambda_{t})\cdot1_{O_{r_{1}}}\left(\lambda_{r_{1}}\right)\cdot...\cdot1_{O_{r_{k}}}\left(\lambda_{r_{k}}\right)\right]\\
 & =\int_{E^{J}}\left(f(x_t)\cdot1_{O_{r_{1}}}\left(x_{1}\right)\cdot...\cdot1_{O_{r_{k}}}\left(x_{k}\right)\cdot1_{E}(x_{s})\right)p_{\nu}^{J}(dx_{1},...,dx_{k},dx_s,dx_t)
\end{align*}

and 
\begin{align*}
 & \mathbb{E}_{\nu}\left[P\left(t-s\right)f(\lambda_{s})\cdot1_{O_{r_{1}}}\left(\lambda_{r_{1}}\right)\cdot...\cdot1_{O_{r_{k}}}\left(\lambda_{r_{k}}\right)\right]\\
 & =\int_{E^{J\setminus\left\{ t\right\} }}\left(P\left(t-s\right)f(x_s)\cdot1_{O_{r_{1}}}\left(x_{1}\right)\cdot...\cdot1_{O_{r_{k}}}\left(x_{k}\right)\right)p_{\nu}^{J \setminus\left\{ t\right\} }(dx_{1},...,dx_{k},dx_s).
\end{align*}
By invoking again Proposition \ref{prop:condition when linear funtional on Cb is measure real case}
it is therefore enough to show that the right hand sides of the above equations  
coincide on $C_{b}(E)$.
By Corollary \ref{cor:completely regular space, convergence of continuous bounded functions to open set}
there exist sequences of maps $\left(b_{l}^{i}\right)_{l\in\mathbb{N},i\in\left\{ 1,...,k\right\} }$
where $b_{l}^{i}\in C_{b}\left(E\right)$ for any $l\in\mathbb{N}$
and $i\in\left\{ 1,...,k\right\} $ such that 
$
\underset{i\in\left\{ 1,...,k\right\} }{\prod}b_{l}^{i}$ tends to $1_{O_{r_{1}}\times...\times O_{r_{k}}\times E_{s}\times E_{t}}$ as $l \to \infty$
 $p_{\nu}^{J}$-almost surely and 
$
\underset{i\in\left\{ 1,...,k\right\} }{\prod}b_{l}^{i}$ to $1_{O_{r_{i}}\times...\times O_{r_{k}}\times E_{s}}$, $p_{\nu}^{J\setminus\left\{ t\right\} }$-almost surely. For
$f\in C_{b}(E_{s})$ by the assumption $P\left(t-s\right)1=1$ the
map $P\left(t-s\right)f$ remains bounded and 
\begin{align*}
 & \int_{E^{J}}\left(f(x_{t})\cdot1_{O_{x_{1}}}\left(x_{1}\right)\cdot...\cdot1_{O_{x_{k}}}\left(x_{k}\right)\right)p_{\nu}^{J}(dx_{1},...,dx_{k},dx_{s},dx_{t})\\
 =& \,\underset{l\rightarrow\infty}{\lim}\int_{E^{J}}\left(f(x_t)\cdot b_{l}^{1}\left(x_{1}\right)\cdot...\cdot b_{l}^{k}\left(x_{k}\right)\right)p_{\nu}^{J}(dx_{1},...,dx_{k},dx_{s},dx_{t})\\
 =& \,\underset{l\rightarrow\infty}{\lim}\,
 \underset{R\rightarrow\infty}{\lim}\int_{E^{J}}1_{\left\{ \rho_{r_{1}}(x_1)<R\right\} }\cdot...\cdot1_{\left\{ \rho_{r_{k}}(x_k)<R\right\} }\cdot...\\
 & \,\cdot\left(f(x_t)\cdot b_{l}^{1}\left(x_{1}\right)\cdot...\cdot b_{l}^{k}\left(x_{k}\right)\right)\mu_{\nu}^{R,J}(dx_{1},...,dx_{k},dx_{s},dx_{t})\\
 =&\, \underset{l\rightarrow\infty}{\lim}\,\underset{R\rightarrow\infty}{\lim}\int_{E^{J\setminus\left\{ x_t\right\} }}1_{\left\{ \rho_{r_{1}}(x_1)<R\right\} }\cdot...\cdot1_{\left\{ \rho_{r_{k}}(x_k)<R\right\} }\cdot...\\
 & \,\cdot\left(P\left(t-s\right)f(x_s)\cdot b_{l}^{1}\left(x_{1}\right)\cdot...\cdot b_{l}^{k}\left(x_{k}\right)\right)\mu_{\nu}^{R,J\setminus\left\{ t\right\} }(dx_{1},...,dx_{k},dx_{s})\\
 =& \,\underset{l\rightarrow\infty}{\lim}\int_{E^{J\setminus\left\{ t\right\} }}\left(P\left(t-s\right)f(x_s)\cdot b_{l}^{1}\left(x_{1}\right)\cdot...\cdot b_{l}^{k}\left(x_{k}\right)\right)p_{\nu}^{J\setminus\left\{ t\right\} }(dx_{1},...,dx_{k},dx_{s})\\
 =&\, \int_{E^{J\setminus\left\{ t\right\} }}\left(P\left(t-s\right)f(x_s)\cdot1_{O_{r_{1}}}\left(x_{1}\right)\cdot...\cdot1_{O_{x_{k}}}\left(x_{k}\right)\right)p_{\nu}^{J\setminus\left\{ t\right\} }(dx_{1},...,dx_{k},dx_{s}).
\end{align*}
This shows Equation \eqref{eq:conditional expectation gFp, natural filtration}.
\\

\emph{Step 3:} We now show that for the right continuous
enlargement of the filtration
$
\left(\mathcal{F}_{t}\right)_{t\in\mathbb{R}_{+}}:=\left(\mathcal{F}_{t+}^{0}\right)_{t\in\mathbb{R}_{+}}
$
the equation
\[
\mathbb{E}_{\nu}\left[\left.f(\lambda_{t})\right|\mathcal{F}_{s}\right]=P\left(t-s\right)f(\lambda_{s})
\]
holds $\mathbb{P}_{\nu}$- almost surely for $f\in\mathcal{\mathscr{B}}^{\rho}(E)$, $t\geq s\geq0$ and any  probability measure  $\nu\in\mathcal{M}^{\rho}(E)$.  We fix
 $f\in\mathcal{\mathscr{B}}^{\rho}(E)$. It is a standard regularity result that
\begin{equation}
\mathbb{E}_{\nu}\left[\left.f(\lambda_{t})\right|\mathcal{F}_{s}\right]=\underset{r\searrow s}{\lim}\,\mathbb{E}_{\nu}\left[\left.f(\lambda_{t})\right|\mathcal{F}_{r}^{0}\right]\label{eq:proof GFS is Markov process, equation right continuous enlargement}
\end{equation}
holds true $\mathbb{P}_{\nu}$- almost surely for any $t\geq s\geq0$.
Thus, it is sufficient to show 
\[
P\left(t-s\right)f(\lambda_{s})=\underset{r\searrow s}{\lim}\,P\left(t-r\right)f(\lambda_{r})
\]
 $\mathbb{P}_{\nu}$- almost surely for any $t\geq s\geq0$. 
 
We fix $x_{0}\in E$ and  and first show the special case of $\nu=\delta_{x_0}$ and $s=0$. In this case, since the left hand side is
deterministic, it is sufficient to show convergence in law, i.e.~
\begin{equation}
\underset{r\searrow0}{\lim}\,\mathbb{E}_{x_{0}}\left[h\left(P\left(t-r\right)f(\lambda_{r})\right)\right]=h\left(P\left(t\right)f(x_{0})\right).\label{eq:proof GFS is Markov process ,convergence in law}
\end{equation}
for any $h\in C_{b}(\mathbb{R})$.
The maps
$
r \rightarrow P\left(t-r\right)f
$
and $P\left(t-r\right)f \rightarrow h\circ\left(P\left(t-r\right)f\right)$ are
 continuous by Theorem~\ref{thm:generalized Feller semigroups are strongly continuous} 
and Lemma \ref{lem:composition B-rho is continuous} respectively. 
Hence, by strong continuity of $\left(P(t)\right)_{t\in\mathbb{R}_{+}}$, Lemma I.5.2 in \cite{EnN}  yields
\begin{align*}
{\normalcolor } & \underset{r\searrow0}{\lim}\,\left(\mathbb{E}_{x_{0}}\left[h\left(P\left(t-r\right)f(\lambda_{r})\right)\right]-\left(h\circ P\left(t\right)f\right)(x_{0})\right)\\
{\normalcolor }  = &\,\underset{r\searrow0}{\lim}\,\left(P\left(r\right)\left(h\circ P\left(t-r\right)f\right)(x_{0})-\left(h\circ P\left(t\right)f\right)(x_{0})\right)\\
 =& \,P\left(0\right)\left(h\circ P\left(t\right)f\right)(x_{0})-\left(h\circ P\left(t\right)f\right)(x_{0})\\
 =& \,0.
\end{align*}
Therefore, Equation \eqref{eq:proof GFS is Markov process ,convergence in law}
holds and 
$
P\left(t\right)f(\lambda_{0})=\underset{r\searrow0}{\lim}\,P\left(t-r\right)f(\lambda_{r})
$
 in $\mathbb{P}_{x_{0}}$- probability for $t\geq0$. We still need to show the equation for arbitrary $t\geq s\geq0$, i.e.
\[
P\left(t-s\right)f(\lambda_{s})=\underset{r\searrow s}{\lim}\,P\left(t-r\right)f(\lambda_{r})
\]
$\mathbb{P}_{x_{0}}$-almost surely. 
By definition of $\mathscr{B}^{\rho^{\otimes 2}}\left(E\times E\right)$
there exists $\left(f_{n}\right)_{n\in\mathbb{N}}\subset C_{b}\left(E \times E \right)$
such that $f_{n}\left(x,y\right)\rightarrow P\left(t-s\right)f\left(x\right)-P\left(t-r\right)f\left(y\right)$
for any $\left(x,y\right)\in E \times E$. Then by dominated
convergence we have for
$\varepsilon>0$, 
\begin{align*}
 \underset{r\searrow s}{\lim}\,\mathbb{E}_{x_{0}}\left[1_{\left|P\left(t-s\right)f(\lambda_{0})-P\left(t-r\right)f(\lambda_{r-s})\right|>\varepsilon}\circ\theta_{s}\right]
 =\, \underset{r\searrow s}{\lim}\,\underset{n\rightarrow\infty}{\lim}\,\mathbb{E}_{x_{0}}\left[1_{\left|f_{n}(\lambda_{0},\lambda_{r-s})\right|>\varepsilon}\circ\theta_{s}\right].
\end{align*}
The set 
$
O_{n}:=\left|f_{n}(\lambda_{0},\lambda_{r-s})\right|>\varepsilon
$ 
 is open, hence by Corollary \ref{cor:completely regular space, convergence of continuous bounded functions to open set}
there is a sequence $\left(h_{n,m}\right)_{m\in\mathbb{N}}\subset C_{b}\left(E\times E\right)$
such that $\mathbb{P}_{x_{0}}$-almost surely
$
1_{O_{n}} =\underset{m\rightarrow\infty}{\lim}h_{n,m},
$
and $0\leq h_{n,m}\leq1_{O_{n}}.$ By Lemma \ref{lem:B-rho tensor product space, linear isomorphism}
we can approximate $h_{n,m}$ by cylinder functions and by Proposition
\ref{prop:GFS implies existence of semigroup of transition kernels} and the Markov property we obtain
\begin{align*}
  \underset{r\searrow s}{\lim}\,\mathbb{E}_{x_{0}}\left[1_{\left|P\left(t-s\right)f(\lambda_{0})-P\left(t-r\right)f(\lambda_{r-s})\right|>\varepsilon}\circ\theta_{s}\right]
&= \, \underset{r\searrow s}{\lim}\,\underset{n\rightarrow\infty}{\lim}\,\underset{m\rightarrow\infty}{\lim}\,\mathbb{E}_{x_{0}}\left[h_{n,m}\circ\theta_{s}\right]\\
 &=\, \underset{r\searrow s}{\lim}\,\underset{n\rightarrow\infty}{\lim}\,\underset{m\rightarrow\infty}{\lim}\,\mathbb{E}_{x_{0}}\left[\mathbb{E}_{\lambda_{s}}\left[h_{n,m}\right]\right]\\
& \leq\, \underset{r\searrow s}{\lim}\,\underset{n\rightarrow\infty}{\lim}\,\mathbb{E}_{x_{0}}\left[\mathbb{E}_{\lambda_{s}}\left[1_{O_{n}}\right]\right]\\
& =\, \underset{r\searrow s}{\lim}\,\mathbb{E}_{x_{0}}\left[\mathbb{E}_{\lambda_{s}}\left[1_{\left|P\left(t-s\right)f(\lambda_{0})-P\left(t-r\right)f(\lambda_{r-s})\right|>\varepsilon}\right]\right]= 0.
\end{align*}
This yields
$
P\left(t-s\right)f(\lambda_{s})=\underset{r\searrow s}{\lim}\,P\left(t-r\right)f(\lambda_{r})
$
$\mathbb{P}_{x_{0}}$-
almost surely and 
thus, 
\[
\mathbb{E}_{x_{0}}\left[\left.f(\lambda_{t})\right|\mathcal{F}_{s}\right]=P\left(t-s\right)f(\lambda_{s}).
\]

Last, we show the general case for any  probability measure  $\nu\in\mathcal{M}^{\rho}(E)$. By definition of the Baire $\sigma$-algebra the map 
\[
(x_s,x_r)\rightarrow1_{\left|P\left(t-s\right)f(x_{s})-P\left(t-r\right)f(x_{r})\right|>\varepsilon}
\]
 is measurable with respect to $\mathcal{B}_{0}(E)\times \mathcal{B}_{0}(E)$ . Then by definition of the probability measure $\mathbb{P}_{\nu}$ and dominated convergence 
\begin{align*}
 & \underset{r\searrow s}{\lim}\,\mathbb{E}_{\nu}\left[1_{\left|P\left(t-s\right)f(\lambda_{s})-P\left(t-r\right)f(\lambda_{r})\right|>\varepsilon}\right]=\int_{E}\left(\underset{r\searrow s}{\lim}\,\mathbb{E}_{x_{0}}\left[1_{\left|P\left(t -s\right)f(\lambda_{s})-P\left(t-r\right)f(\lambda_{r})\right|>\varepsilon}\right]\right)d\nu(x_{0}) =0
\end{align*}
and thus
\[
P\left(t-s\right)f(\lambda_{s})=\underset{r\searrow0}{\lim}\,P\left(t-r\right)f(\lambda_{r})
\]
$
\mathbb{P}_{\nu}$-almost
surely. 
Finally, it follows easily that Equation \eqref{eq:equation of GFS yields Markov process} holds true also when the filtration
$
\left(\mathcal{F}_{t}\right)_{t\in\mathbb{R}_{+}}
$
is augmented by all the null set in  
$
\left(E^{\mathbb{R}_{+}},\mathcal{B}(E)^{\mathbb{R}_{+}}\right)
$
with respect to $\mathbb{P}_{\nu}$.
\end{proof}

We next state results regarding regularity of the paths of generalized
Feller processes, and close a gap in \cite{CuT}. We remind the reader that since by Theorem \ref{thm:generalized Feller semigroups are strongly continuous}
a generalized Feller semigroup is strongly continuous, we can define its infinitesimal
generator and for $\lambda$ in its resolvent set we write for the resolvent $R(\beta,A):=(\beta-A)^{-1}$. Moreover, recall from Remark \ref{rem: definition P_t rho}
that for strongly continuous semigroups $(P(t))_{t \in \mathbb{R}_+}$ one can always find constants $M\geq1$ and $\omega\in\mathbb{R}$ such that
\begin{align}\label{eq:Momega}
\left\Vert P(t)\right\Vert \leq Me^{\omega t},  \quad \text{all } t\in\mathbb{R}_{+}.
\end{align} 

\begin{theorem}
\label{thm:GFP have cadlag version}Let $\left(P(t)\right)_{t\in\mathbb{R}_{+}}$
be a generalized Feller semigroup on $\mathcal{\mathscr{B}}^{\rho}(E)$
such that for any $t\in\mathbb{R}_{+}$ we have $P(t)1=1$. 
Let $A$ be the generator
of $\left(P(t)\right)_{t\in\mathbb{R}_{+}}$. Let  $\nu\in\mathcal{M}^{\rho}(E)$ be a probability measure and
let $\left(\lambda_{t}\right)_{t\in\mathbb{R}_{+}}$ be the generalized
Feller process from Theorem \ref{thm:GFS induce Markov process} on
the measurable space
$
(E^{\mathbb{R}_{+}},\mathcal{B}(E)^{\mathbb{R}_{+}})
$
with probability measure $\mathbb{P}_{\nu}$, initial  distribution $\nu$, and complete right continuous
filtration $\left(\mathcal{F}_{t}\right)_{t\in\mathbb{R}_{+}}$ as
in Theorem \ref{thm:GFS induce Markov process}. Then the following assertions hold true:
\begin{enumerate}
\item[
(i)] Let $\left(f_{n}\right)_{n\in\mathbb{N}}\subset\mathcal{\mathscr{B}}^{\rho}(E)$ be a countable family 
and $\beta>\omega$ with $\beta\in\mathbb{N}$. Then, for the family of stochastic
processes $\left(Z_{t}^{\beta,n}\right)_{t\in\mathbb{R}_{+}}$ defined
as
\[
Z_{t}^{\beta,n}:=\beta R(\beta,A)f_{n}(\lambda_{t})
\]
there exists a family of stochastic processes
\[
\left(\bar{Z}_{t}^{\beta,n}\right)_{t\in\mathbb{R}_{+}}
\]
with càdlàg paths, such that for each $t\in\mathbb{R}_{+}$
there is a $\mathbb{P}_{\nu}$-null set $\mathcal{N}_{t}\in\mathcal{F}_{0}$ 
such that 
\[
Z_{t}^{\beta,n}=\bar{Z}_{t}^{\beta,n}\text{ on \ensuremath{E^{\mathbb{R}_{+}}\setminus\mathcal{N}_{t}}}
\]
 for all $n\in\mathbb{N}$ and all $\beta>\omega,\,\beta\in\mathbb{N}$. 

\item[(ii)]
Let $\rho$ be Baire measurable. If additionally to the assumptions in
(i) the constant $M$ in \eqref{eq:Momega}
can be chosen to $1$, then
\[
\left(\exp\left(-\omega t\right)\rho(\lambda_{t})\right)_{t\in\mathbb{R}_{+}}
\]
is a supermartingale with $\omega$ as in \eqref{eq:Momega}. If $t\rightarrow P(t)\rho(x)$ is continuous
for $\nu$-almost any $x\in E$, then the supermartingale has a version
with càdlàg paths. In this case, there exists
a family of stochastic processes with càdlàg paths
\[
\left(\left(\overline{f_{n}(\lambda_{t})}\right)_{t\in\mathbb{R}_{+}}\right)_{n\in\mathbb{N}}
\]
 such that for all $t\in\mathbb{R}_{+}$ there is a null set $\mathcal{N}{}_{t}^{'}\in \mathcal{F}_0$
for which 
\[
f_{n}(\lambda_{t})=\overline{f_{n}(\lambda_{t})}\text{ on \ensuremath{E^{\mathbb{R}_{+}}\setminus\mathcal{N}_{t}^{'}}}
\]
 for all $n\in\mathbb{N}$. 

\item[
(iii)] If additionally to the assumptions in (i) and (ii) there exists
a countable family $\left(f_{n}\right)_{n\in\mathbb{N}}\subset\mathcal{\mathscr{B}}^{\rho}(E)$
of sequentially continuous functions, i.e. for any $\left(x_{m}\right)_{m\in\mathbb{N}}\subset E$
with $x_{m}\rightarrow x\in E$ and for any $n\in\mathbb{N}$
\[
f_{n}(x_{m})\rightarrow f_{n}(x),
\]
and if this family separates points, i.e.~for any $y,z\in E$ with $y\neq z$
there exists $l\in\mathbb{N}$ such that
\[
f_{l}(y)\neq f_{l}(z),
\]
 then $\left(\lambda_{t}\right)_{t\in\mathbb{R}_{+}}$ has a version
with càdlàg paths.
\end{enumerate}
\end{theorem}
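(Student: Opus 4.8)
The plan is to establish the three assertions successively, with the resolvent operators and the weight function taking over the role that nonnegative supermartingales play in the classical proof that Feller processes are càdlàg.

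\emph{Part (i).} Fix $\beta\in\mathbb{N}$ with $\beta>\omega$ and $g\in\mathscr{B}^{\rho}(E)$ with $g\geq 0$, and set $u:=R(\beta,A)g=\int_{0}^{\infty}e^{-\beta r}P(r)g\,dr\in\mathscr{B}^{\rho}(E)$, which satisfies $u\geq 0$ and $Au=\beta u-g$. From the identity $P(r)R(\beta,A)g=e^{\beta r}\bigl(R(\beta,A)g-\int_{0}^{r}e^{-\beta v}P(v)g\,dv\bigr)$ and the Markov property \eqref{eq:equation of GFS yields Markov process} (which applies since $u\in\mathscr{B}^{\rho}(E)$) I would check that $Y^{\beta,g}_{t}:=e^{-\beta t}u(\lambda_{t})$ is a nonnegative supermartingale with $\mathbb{E}_{\nu}\bigl[Y^{\beta,g}_{t}\bigr]\leq\int_{E}u\,d\nu<\infty$. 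By the standard regularization theorem for supermartingales over the complete, right-continuous filtration $(\mathcal{F}_{t})_{t\in\mathbb{R}_{+}}$, and since $s\mapsto\mathbb{E}_{\nu}[Y^{\beta,g}_{s}]=e^{-\beta s}\int_{E}P(s)u\,d\nu$ is continuous by strong continuity of $(P(t))_{t\in\mathbb{R}_{+}}$ (Theorem~\ref{thm:generalized Feller semigroups are strongly continuous}) and $\nu\in\mathcal{M}^{\rho}(E)$, the process $Y^{\beta,g}$ admits a càdlàg modification $\widetilde Y^{\beta,g}$ with $\widetilde Y^{\beta,g}_{t}=Y^{\beta,g}_{t}$ $\mathbb{P}_{\nu}$-a.s.\ for each $t$. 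Since $\mathscr{B}^{\rho}(E)$ is a lattice one has $f_{n}^{\pm}\in\mathscr{B}^{\rho}(E)$, and I would set $\bar Z^{\beta,n}_{t}:=\beta e^{\beta t}\bigl(\widetilde Y^{\beta,f_{n}^{+}}_{t}-\widetilde Y^{\beta,f_{n}^{-}}_{t}\bigr)$; these processes are càdlàg, adapted by right-continuity of the filtration, and equal $Z^{\beta,n}_{t}$ $\mathbb{P}_{\nu}$-a.s.\ for each fixed $t$. Taking, for fixed $t$, the union over the countable index set $\{n\in\mathbb{N}\}\times\{\beta\in\mathbb{N}:\beta>\omega\}$ of these a.s.-exceptional sets produces the required $\mathcal{N}_{t}\in\mathcal{F}_{0}$.

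\emph{Part (ii).} Assume $\rho$ Baire measurable and $M=1$. Using the transition kernels $(\hat p(t))_{t\in\mathbb{R}_{+}}$ on $(E,\mathcal{B}_{0}(E))$ from Proposition~\ref{prop:GFS implies existence of semigroup of transition kernels} and the Markov property on the Baire $\sigma$-algebra (Remark~\ref{rem:generalized Feller process not Markov}), I would write $\mathbb{E}_{\nu}[\rho(\lambda_{t})\mid\mathcal{F}_{s}]=\int_{E}\rho\,d\hat p(t-s)(\lambda_{s},\cdot)$; as $\rho$ is lower semicontinuous and $\hat p(t-s)(\lambda_{s},\cdot)$ is Radon, this equals $P(t-s)\rho(\lambda_{s})\leq e^{\omega(t-s)}\rho(\lambda_{s})$ by Remark~\ref{rem: definition P_t rho} with $M=1$, while $\mathbb{E}_{\nu}[\rho(\lambda_{t})]\leq e^{\omega t}\int_{E}\rho\,d\nu<\infty$; hence $(e^{-\omega t}\rho(\lambda_{t}))_{t\in\mathbb{R}_{+}}$ is a nonnegative supermartingale. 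If $t\mapsto P(t)\rho(x)$ is continuous for $\nu$-a.e.\ $x$, then $t\mapsto\mathbb{E}_{\nu}[e^{-\omega t}\rho(\lambda_{t})]=e^{-\omega t}\int_{E}P(t)\rho\,d\nu$ is continuous by dominated convergence (with dominating function $e^{\omega T}\rho\in L^{1}(\nu)$), so the supermartingale has a càdlàg version $(\tilde\rho_{t})_{t\in\mathbb{R}_{+}}$. Combining with (i), off one $\mathbb{P}_{\nu}$-null set one has, for all $t\in\mathbb{Q}$, all $n$ and all admissible $\beta$, that $\bar Z^{\beta,n}_{t}=\beta R(\beta,A)f_{n}(\lambda_{t})$ and $\tilde\rho_{t}=e^{-\omega t}\rho(\lambda_{t})$; since $\tilde\rho$ is càdlàg it is bounded on $[0,T]$, so $C_{T}:=\sup_{t\in\mathbb{Q}\cap[0,T]}\rho(\lambda_{t})<\infty$. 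Because $\beta R(\beta,A)f_{n}\to f_{n}$ in $\|\cdot\|_{\rho}$ as $\beta\to\infty$ (resolvent approximation for strongly continuous semigroups), the bound $|\bar Z^{\beta,n}_{t}-\bar Z^{\beta',n}_{t}|\leq\|\beta R(\beta,A)f_{n}-\beta' R(\beta',A)f_{n}\|_{\rho}\,C_{T}$ on $\mathbb{Q}\cap[0,T]$, together with càdlàg regularity of the $\bar Z^{\beta,n}$, shows that $(\bar Z^{\beta,n})_{\beta}$ is uniformly Cauchy on compact time intervals; its limit $\overline{f_{n}(\lambda_{\cdot})}$ is càdlàg, and since $\bar Z^{\beta,n}_{t}=\beta R(\beta,A)f_{n}(\lambda_{t})\to f_{n}(\lambda_{t})$ pointwise, one gets $\overline{f_{n}(\lambda_{t})}=f_{n}(\lambda_{t})$ $\mathbb{P}_{\nu}$-a.s.; a countable union over $n$ gives $\mathcal{N}_{t}'\in\mathcal{F}_{0}$.

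\emph{Part (iii).} Suppose additionally that $(f_{n})_{n\in\mathbb{N}}\subset\mathscr{B}^{\rho}(E)$ is sequentially continuous and separates points. Off a single null set one has, for every $T$, that the sampled states $\lambda_{s}(\omega)$ with $s\in\mathbb{Q}\cap[0,T]$ lie in the compact sublevel set $K_{C_{T}}$, and that for each $t$ and $n$ the rational right limit $\lim_{s\downarrow t,\,s\in\mathbb{Q}}f_{n}(\lambda_{s}(\omega))$ exists (it equals $\overline{f_{n}(\lambda_{t})}(\omega)$). I would then argue that $\lim_{s\downarrow t,\,s\in\mathbb{Q}}\lambda_{s}(\omega)$ exists in $E$: each $f_{n}$ restricted to the compact $K_{C_{T}}$ is continuous (Theorem~\ref{thm: equivalence B-rho space}) and the $f_{n}$ separate points, so $K_{C_{T}}$ with its subspace topology coincides with the initial topology of the $f_{n}$ and is therefore compact metrizable; a sequence in $K_{C_{T}}$ along which every $f_{n}$ converges thus converges in $K_{C_{T}}$. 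Setting $\bar\lambda_{t}(\omega)$ equal to this right limit (and to a fixed point on the null set), and arguing likewise for left limits, gives a process with càdlàg paths in $E$. It is adapted since the injection $\iota=(f_{n})_{n}\colon E\to\mathbb{R}^{\mathbb{N}}$ is Borel measurable (being continuous on each $K_{R}$) with Borel measurable inverse on its image, and $\bar\lambda_{t}=\iota^{-1}\bigl((\overline{f_{n}(\lambda_{t})})_{n}\bigr)$. Finally, for each fixed $t$, sequential continuity of $f_{n}$ gives $f_{n}(\bar\lambda_{t})=\lim_{s\downarrow t,\,s\in\mathbb{Q}}f_{n}(\lambda_{s})=\overline{f_{n}(\lambda_{t})}=f_{n}(\lambda_{t})$ $\mathbb{P}_{\nu}$-a.s.\ for all $n$; since the $f_{n}$ separate points, $\bar\lambda_{t}=\lambda_{t}$ $\mathbb{P}_{\nu}$-a.s., so $\bar\lambda$ is the desired càdlàg modification.

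The main obstacle is part (iii): because $E$ is merely completely regular, $E$-valued pointwise limits need neither exist nor be measurable, and the whole construction depends on using the supermartingale $(e^{-\omega t}\rho(\lambda_{t}))_{t\in\mathbb{R}_{+}}$ to trap the sampled paths inside compact sublevel sets of $\rho$, where the $\mathscr{B}^{\rho}(E)$-functions become genuinely continuous and separation forces convergence. A recurring technical point, already present in (i) and (ii), is the passage from fixed-time exceptional sets to one null set uniform in time, which is handled throughout by restricting to rational times and exploiting càdlàg regularity.
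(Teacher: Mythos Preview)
Your proof is correct and follows essentially the same strategy as the paper: in (i) you build the supermartingale $e^{-\beta t}R(\beta,A)g(\lambda_t)$ for $g\geq 0$, verify continuity of its expectation, apply the supermartingale regularization theorem, and split $f_n=f_n^+-f_n^-$; in (ii) you combine the càdlàg version of $e^{-\omega t}\rho(\lambda_t)$ with the Yosida approximation $\beta R(\beta,A)f_n\to f_n$ in $\|\cdot\|_\rho$. The only organizational difference in (ii) is that the paper first normalizes by $\rho$, shows $\overline{(f_n/\rho)(\lambda_t)}$ is càdlàg, and then multiplies back by $\overline{\rho(\lambda_t)}$, whereas you use the pathwise bound $\sup_{t\in\mathbb{Q}\cap[0,T]}\rho(\lambda_t)<\infty$ directly to get uniform Cauchy of $(\bar Z^{\beta,n})_\beta$; this is a cosmetic reshuffling of the same estimate. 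For (iii) the paper simply states ``a direct consequence of (ii)'', so your explicit argument---trapping rational samples in the compact $K_{C_T}$, observing that the countable separating family makes $K_{C_T}$ compact metrizable, and extracting the $E$-valued right and left limits with the Borel-isomorphism argument for measurability---fills in exactly the details the paper omits.
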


\begin{proof}
(i)
Recall that $A$ denotes the infinitesimal generator of the generalized Feller semigroup $(P(t))_{t \in \mathbb{R}_+}$.
By the integral representation of the resolvent we have for 
 $\beta>\omega$ with $\omega$ as in \eqref{eq:Momega} and  for all $f\in\mathcal{\mathscr{B}}^{\rho}(E)$
\[
\left(\beta-A\right)^{-1}f:=R(\beta,A)f=\int_{0}^{\infty}e^{-\beta s}P(s)f\,ds.
\]
In order to apply Theorem II.2.9 in \cite{ReY} we first need to find a suitable supermartingale.
 For this purpose, we fix $0\leq f\in\mathcal{\mathscr{B}}^{\rho}(E)$,
and $\beta>\omega$ and we define the stochastic process $\left(Y_{t}^{\beta,f}\right)_{t\in\mathbb{R}_{+}}$
by
\[
Y_{t}^{\beta,f}:=\exp\left(-\beta t\right)R(\beta,A)f(\lambda_{t}).
\]
We show that $\left(Y_{t}^{\beta,f}\right)_{t\in\mathbb{R}_{+}}$
is a supermartingale with respect to $\left(\mathcal{F}_{t}\right)_{t\in\mathbb{R}_{+}}$.
Let $0\leq s\leq t$ and calculate
\begin{align*}
\mathbb{E}_{\nu}\left[\left.Y_{t}^{\beta,f}\right|\mathcal{F}_{s}\right] & =\exp\left(-\beta t\right)\mathbb{E}_{\nu}\left[\left.\int_{0}^{\infty}\exp\left(-\beta u\right)P(u)f(\lambda_{t})du \, \right|\mathcal{F}_{s}\right].
\end{align*}
By definition of the Riemann integral that takes values in the Banach space $\mathcal{\mathscr{B}}^{\rho}(E)$,
positivity of the semigroup $\left(P(t)\right)_{t\in\mathbb{R}_{+}}$,
and monotone convergence for conditional expectations and thanks to $\left(\lambda_{t}\right)_{t\in\mathbb{R}_{+}}$ being
a generalized Feller process we obtain
\begin{align*}
 & \mathbb{E}_{\nu}\left[\left.\int_{0}^{\infty}\exp\left(-\beta u\right)P(u)f(\lambda_{t})du\right|\mathcal{F}_{s}\right]\\
 = \,& \mathbb{E}_{\nu}\left[\left.\underset{n\rightarrow\infty}{\lim}\,\underset{m\rightarrow\infty}{\lim}\left(n/m\right)\cdot{\sum_{i=0}^{m-1}}\exp\left(-\beta in/m\right)P(in/m)f(\lambda_{t})\right|\mathcal{F}_{s}\right]\\
  = \,& \underset{n\rightarrow\infty}{\lim}\,\underset{m\rightarrow\infty}{\lim}\left(n/m\right)\cdot{\sum_{i=0}^{m-1}}\exp\left(-\beta in/m\right)\mathbb{E}_{\nu}\left[\left.P(in/m)f(\lambda_{t})\right|\mathcal{F}_{s}\right]\\
  = \,& \underset{n\rightarrow\infty}{\lim}\,\underset{m\rightarrow\infty}{\lim}\left(n/m\right)\cdot{\sum_{i=0}^{m-1}}\exp\left(-\beta in/m\right)P(in/m+t-s)f(\lambda_{s})\\
  = \,& \exp\left(\beta\left(t-s\right)\right)\underset{n\rightarrow\infty}{\lim}\,\underset{m\rightarrow\infty}{\lim}\left(n/m\right)\cdot{\sum_{i=0}^{m-1}}\exp\left(-\beta\left(in/m+t-s\right)\right)P(in/m+t-s)f(\lambda_{s}).
\end{align*}
Thus, 
\begin{align*}
\mathbb{E}_{\nu}\left[\left.Y_{t}^{\beta,f}\right|\mathcal{F}_{s}\right] & =\exp\left(-\beta s\right)\int_{t-s}^{\infty}\left(\exp\left(-\beta r\right)P(r)f(\lambda_{s})\right)dr\\
 & \leq Y_{s}^{\beta,f},
\end{align*}
where the last inequality followed again from $f\geq0$ and positivity of the
semigroup $\left(P(t)\right)_{t\in\mathbb{R}_{+}}$. Furthermore,
\[
\mathbb{E}_{\nu}\left[Y_{t}^{\beta,f}\right]=\exp\left(-\beta s\right)\int_{t-s}^{\infty}\left(\exp\left(-\beta r\right)\mathbb{E}_{\nu}\left[P(r)f(\lambda_{s})\right]\right)dr,
\]
which implies continuity of 
$
t\rightarrow\mathbb{E}_{\nu}\left[Y_{t}^{\beta,f}\right].
$

We can apply Theorem II.2.9 in \cite{ReY}
to $\left(Y_{t}^{\beta,f}\right)_{t\in\mathbb{R}_{+}}$and obtain
that there is a set $\Omega_{\beta,f}^{'}\in\mathcal{B}(E)^{\mathbb{R}_{+}}$
with $\mathbb{P}_{\nu}\left(\Omega_{\beta,f}^{'}\right)=1$ on
which
\[
\underset{r\searrow t,\,r\in\mathbb{Q}}{\lim}Y_{s}^{\beta,f}
\]
 exists and there is a set $\tilde{\Omega}_{\beta,f}\subset\Omega_{\beta,f}^{'}$
in $\mathcal{B}(E)^{\mathbb{R}_{+}}$ with $\mathbb{P}_{\nu}\left(\tilde{\Omega}_{\beta,f}\right)=1$
such that $\left(\bar{Y}_{t}^{\beta,f}\right)_{t\in\mathbb{R}_{+}}$
defined as 
\[
\bar{Y}_{t}^{\beta,f}:=\begin{cases}
\underset{r\searrow t,\,r\in\mathbb{Q}}{\lim}Y_{r}^{\beta,f} & \text{on }\Omega_{\beta,f}^{'}\\
0 & \text{elsewhere},
\end{cases}
\]
has càdlàg paths on $\tilde{\Omega}_{\beta,f}.$ Moreover, $\left(\bar{Y}_{t}^{\beta,f}\right)_{t\in\mathbb{R}_{+}}$ is a version of $\left(Y_{t}^{\beta,f}\right)_{t\in\mathbb{R}_{+}}$. Therefore, for any $g\in\mathcal{\mathscr{B}}^{\rho}(E)$,
clearly $g^{+},g^{-}\in\mathcal{\mathscr{B}}^{\rho}(E)$, and $g^{+},g^{-}\geq0$
and the process $\left(Y_{t}^{\beta,g}\right)_{t\in\mathbb{R}_{+}}$ has a version with càdlàg paths. The same holds true for any $n\in \mathbb{N}$ and $\beta>\omega$  for the process $\left(Z_{t}^{\beta,n}\right)_{t\in\mathbb{R}_{+}}$.
The statement of part (i) of the theorem then follows directly from the fact that the countable union of null sets is a null set. \\

(ii) Since $M$ is supposed to be $1$, we have $P(t)\rho\leq\exp\left(\omega t\right)\rho$ holds true for some $\omega\in\mathbb{R}$. Moreover, 
 $\rho$ is Baire measurable, Remark \ref{rem:generalized Feller process not Markov} implies 
\begin{align*}
\mathbb{E}_{\nu}\left[\left.\exp\left(-\omega t\right)\rho(\lambda_{t})\right|\mathcal{F}_{s}\right] & =\exp\left(-\omega t\right)P(t-s)\rho(\lambda_{s})\\
 & \leq\exp\left(-\omega s\right)\rho(\lambda_{s}),
\end{align*}
whence $\left(\exp\left(-\omega t\right)\rho(\lambda_{t})\right)_{t\in\mathbb{R}_{+}}$
is a non-negative supermartingale. 
If $t\rightarrow P(t)\rho(x)$ is continuous for $\nu$-almost any $x\in E$, then by dominated convergence and Theorem II.2.9 in \cite{ReY} the stochastic process $\left(\overline{\rho(\lambda_{t})}\right)_{t\in\mathbb{R}_{+}}$
defined by
\[
\overline{\rho(\lambda_{t})}:=\begin{cases}
\rho(\lambda_{t+}) & \text{on }\Omega_{\rho}^{'}\\
0 & \text{elsewhere}
\end{cases}
\]
is a version of $\left(\rho(\lambda_{t})\right)_{t\in\mathbb{R}_{+}}$ and has alomst surely càdlàg paths. Similarly as above $\Omega_{\rho}^{'}$ denotes a set of measure $1$.

Moreover, by the Yosida approximations
\begin{equation}
\underset{\beta\rightarrow\infty}{\lim}\left\Vert \beta R(\beta,A)f_{n}-f_{n}\right\Vert _{\rho}=0.\label{eq:proof cadlag paths, Yosida approximation}
\end{equation}
Hence uniformly in $t\in\mathbb{R}_{+}$ 
\[
\underset{\beta\rightarrow\infty}{\lim}\left|\underset{r\searrow t,\,r\in\mathbb{Q}}{\limsup}\frac{\beta R(\beta,A)f_{n}(\lambda_{r})}{\rho(\lambda_{r})}-\underset{r\searrow t,\,r\in\mathbb{Q}}{\limsup}\frac{f_{n}(\lambda_{r})}{\rho(\lambda_{r})}\right|=0,
\]
and
\[
\underset{\beta\rightarrow\infty}{\lim}\left|\underset{r\searrow t,\,r\in\mathbb{Q}}{\liminf}\frac{\beta R(\beta,A)f_{n}(\lambda_{r})}{\rho(\lambda_{r})}-\underset{r\searrow t,\,r\in\mathbb{Q}}{\liminf}\frac{f_{n}(\lambda_{r})}{\rho(\lambda_{r})}\right|=0.
\]
 Thus, with $Z_{r}^{\beta,n}:=\beta R(\beta,A)f_{n}(\lambda_{r})$ when 
\[
\underset{r\searrow t,\,r\in\mathbb{Q}}{\lim}\frac{Z_{r}^{\beta,n}}{\rho(\lambda_{r})}
\]
exists for any large $\beta\in\mathbb{N}$, then also
\[
\underset{r\searrow t,\,r\in\mathbb{Q}}{\lim}\frac{f_{n}(\lambda_{r})}{\rho(\lambda_{r})}.
\]
 As seen in (i) for any $\beta>\omega,\,\beta\in\mathbb{N}$ there
is a set $\Omega_{\beta,n}^{'}\subset\Omega$ in $\mathcal{B}(E)^{\mathbb{R}_{+}}$\\
 with
$\mathbb{P}_{\nu}\left(\Omega_{\beta,n}^{'}\right)=1$ on which 
\[
\underset{r\searrow t,\,r\in\mathbb{Q}}{\lim}Z_{r}^{\beta,n}
\]
exists for any $t\in\mathbb{R}_{+}$ and admits left limits. Since $\rho>0$ attains its
minimum on $E$, 
\[
\underset{r\searrow t,\,r\in\mathbb{Q}}{\lim}\frac{Z_{r}^{\beta,n}}{\rho(\lambda_{r})}
\]
 exists also
on $\Omega_{\beta,n}^{'}\cap\Omega_{\rho}^{'}$  
and for any $t\in\mathbb{R}_{+}$ we define
\begin{align*}
\overline{\frac{f_{n}}{\rho}(\lambda_{t})} & :=\begin{cases}
\underset{r\searrow t,\,r\in\mathbb{Q}}{\lim}\frac{f_{n}(\lambda_{r})}{\rho(\lambda_{r})} & \text{on }\underset{\beta\in\mathbb{N},\,\beta>\omega}{\bigcap}\Omega_{\beta,n}^{'}\cap\Omega_{\rho}^{'}\\
0 & \text{elsewhere}.
\end{cases}
\end{align*}
Note that
$\left(\overline{\frac{f_{n}}{\rho}(\lambda_{t})}\right)_{t\in\mathbb{\mathbb{R}}_{+}}$
is a version of $\left(\frac{f_{n}(\lambda_{t})}{\rho(\lambda_{t})}\right)_{t\in\mathbb{\mathbb{R}}_{+}}$
since for any $\delta>0$ and $\beta$ large enough on $\underset{\beta\in\mathbb{N},\,\beta>\omega}{\bigcap}\Omega_{\beta,n}^{'}\cap\Omega_{\rho}^{'}${\small{}
\begin{align*}
\left|\overline{\frac{f_{n}}{\rho}(\lambda_{t})}-\frac{f_{n}(\lambda_{t})}{\rho(\lambda_{t})}\right| & \leq\left|\underset{r\searrow t,\,r\in\mathbb{Q}}{\lim}\frac{f_{n}(\lambda_{r})}{\rho(\lambda_{r})}-\underset{r\searrow t,\,r\in\mathbb{Q}}{\lim}\frac{Z_{r}^{\beta,n}}{\rho(\lambda_{r})}\right|\\
 & +\left|\underset{r\searrow t,\,r\in\mathbb{Q}}{\lim}\frac{Z_{r}^{\beta,n}}{\rho(\lambda_{r})}-\frac{Z_{t}^{\beta,n}}{\rho(\lambda_{t})}\right|+\left|\frac{Z_{t}^{\beta,n}}{\rho(\lambda_{t})}-\frac{f_{n}(\lambda_{t})}{\rho(\lambda_{t})}\right|\\
 & \leq\left|\underset{r\searrow t,\,r\in\mathbb{Q}}{\lim}\frac{Z_{r}^{\beta,n}}{\rho(\lambda_{r})}-\frac{Z_{t}^{\beta,n}}{\rho(\lambda_{t})}\right|+2\delta\\
 & =\left|\frac{\bar{Z}_{t}^{\beta,n}}{\overline{\rho(\lambda_{t})}}-\frac{Z_{t}^{\beta,n}}{\rho(\lambda_{t})}\right|+2\delta,
\end{align*}
}and we know that for all $\beta,n\in\mathbb{N}$ , $\beta>\omega$
\[
\left(\frac{\bar{Z}_{t}^{\beta,n}}{\overline{\rho(\lambda_{t})}}\right)_{t\in\mathbb{R}_{+}}
\quad \text{and} \quad 
\left(\frac{Z_{t}^{\beta,n}}{\rho(\lambda_{t})}\right)_{t\in\mathbb{R}_{+}}
\]
 are versions of each other. We obtain for any $\varepsilon>0$ and
for $\beta$ large enough and any $s,t\in\mathbb{R}_{+}$ on $\underset{\beta\in\mathbb{N},\,\beta>\omega}{\bigcap}\Omega_{\beta,n}^{'}\cap\Omega_{\rho}^{'}${\small{}
\begin{align*}
\left|\overline{\frac{f_{n}}{\rho}(\lambda_{t})}-\overline{\frac{f_{n}}{\rho}(\lambda_{s})}\right| & \leq2\varepsilon+\left|\frac{\bar{Z}_{t}^{\beta,n}}{\overline{\rho(\lambda_{t})}}-\frac{\bar{Z}_{s}^{\beta,n}}{\overline{\rho(\lambda_{s})}}\right|.
\end{align*}
}Therefore, also $\left(\overline{\frac{f_{n}}{\rho}(\lambda_{t})}\right)_{t\in\mathbb{\mathbb{R}}_{+}}$
has right continuous paths.  The existence of left limits of $\left(\overline{\frac{f_{n}}{\rho}(\lambda_{t})}\right)_{t\in\mathbb{\mathbb{R}}_{+}}$ follows from the existence of left limits of 
$
\left(\frac{Z_{t}^{\beta,n}}{\rho(\lambda_{t})}\right)_{t\in\mathbb{R}_{+}}
$
 by the same reasoning as before.
Thus, for any $n\in\mathbb{N}$
\[
\left(\overline{\frac{f_{n}}{\rho}(\lambda_{t})}\cdot\overline{\rho(\lambda_{t})}\right)_{t\in\mathbb{R}_{+}}
\]
has càdlàg paths and is a version of 
$
\left(f_{n}(\lambda_{t})\right)_{t\in\mathbb{R}_{+}}.
$
The statement of the theorem then follows from the fact that the countable
union of null sets is a null set. 

Statement (iii) is a direct consequence of (ii).
\end{proof}

\section{Extended Feller processes}

We shall here introduce a new class of stochastic process, called \emph{extended Feller processes,} which are in a different way connected to generalized Feller semigroups. As killing shall play a certain role, 
we recall the cemetery  state $\Delta$ and equip $E\cup\left\{ \Delta\right\} $ with a topology such that
\[
\mathcal{B}(E\cup\left\{ \Delta\right\} )=\sigma\left(\mathcal{B}(E),\left\{ \Delta\right\} \right).
\]
Consistent with the convention made for the cemetery, we
define $\mathcal{\mathscr{B}}^{\rho}(E\cup\left\{ \triangle\right\}) $
as the space of maps $f$ such that $\left.f\right|_{E}\in\mathcal{\mathscr{B}}^{\rho}(E)$
and $f(\Delta)=0.$ The space $C_{0}\left(E\cup\left\{ \triangle\right\} \right)$
is defined in the same fashion. 

\begin{definition}
\label{def:extended Feller process} Let $\rho $ be measurable with respect to the Baire $\sigma$-algebra $\mathcal{B}_0(E)$ and let $\left(P(t)\right)_{t\in\mathbb{R}_{+}}$
be a generalized Feller semigroup on $\mathcal{\mathscr{B}}^{\rho}(E)$,
let $\nu$ be a probability measure on 
\[
\left(E\cup\left\{ \Delta\right\} ,\mathcal{B}(E\cup\left\{ \Delta\right\} )\right)
\]
 and let $\left(\gamma_{t}\right)_{t\in\mathbb{R}_{+}}$ be an adapted
stochastic process on the filtered probability space
\[
\left(\left(E\cup\left\{ \triangle\right\} \right)^{\mathbb{R}_{+}},\mathcal{B}(E\cup\left\{ \Delta\right\} )^{\mathbb{R}_{+}},\left(\mathcal{F}_{t}\right)_{t\in\mathbb{R}_{+}},\mathbb{P}'_{\nu}\right).
\]
If for any $t\geq s\geq0$ and any real-valued map $f$ on $E\cup\left\{ \Delta\right\} $
that is bounded and Baire-measurable
\begin{equation}
\mathbb{E}_{\mathbb{P}'_{\nu}}\left[\left.f(\gamma_{t})\right|\mathcal{F}_{s}\right]=\frac{P\left(t-s\right)\left(f\cdot\rho\right)}{\rho}(\gamma_{s})\label{eq:GFS definition conditional expectation-1}
\end{equation}
holds true $\mathbb{P}'_{\nu}$-almost surely and 
\[
\mathbb{P}'_{\nu}\circ\gamma_{0}^{-1}=\nu,
\]
 then $\left(\gamma_{t}\right)_{t\in\mathbb{R}_{+}}$ is called $\mathit{extended}$
$\mathit{Feller}$ $\mathit{process}$ with respect to $\left(\mathcal{F}_{t}\right)_{t\in\mathbb{R}_{+}}$
and with respect to $\left(P(t)\right)_{t\in\mathbb{R}_{+}}$ with
initial distribution $\nu$.

Similarly as above we make the convention $\mathbb{P}'_{x}:=\mathbb{P}'_{\delta_{x}}$ for any
$x\in E$. 

\end{definition}

The reason for the choice of the name will become clear
in Theorem \ref{thm:extended Feller process is extended Feller process} which essentially states that when $\rho$ continuous (and thus $E$ locally compact) then the restriction of \eqref{eq:GFS definition conditional expectation-1} to $C_0(E)$ is a Feller semigroup.

\begin{remark}
Just like for generalized Feller processes, 
note that \eqref{eq:GFS definition conditional expectation-1} is only required to hold for Baire-measurable functions and  in general does not hold for any positive Borel measurable function. 
However, the latter does hold true on metrizable spaces. 
\end{remark}

In the following theorem we obtain existence of extended Feller processes for contractive generalized Feller semigroups.
\begin{theorem}
\label{thm:contractive gFs on weighted space is Markov process} Let $\rho$ be measurable with respect to the Baire $\sigma$-algebra
$\mathcal{B}_{0}(E)$. Let $\left(P(t)\right)_{t\in\mathbb{R}_{+}}$
be a generalized Feller semigroup on $\mathcal{\mathscr{B}}^{\rho}(E)$
such that for all $t\in\mathbb{R}_{+}$
\[
\left\Vert P(t)\right\Vert _{L(\mathcal{\mathscr{B}}^{\rho}(E))}\leq1.
\]
 Then for any probability measure $\nu$ on
\[
\left(E\cup\left\{ \Delta\right\} ,\mathcal{B}(E\cup\left\{ \Delta\right\} )\right)
\]
 there exists a probability measure $\mathbb{P}'_{\nu}$ on the measurable
space 
\[
\left(\left(E\cup\left\{ \triangle\right\} \right)^{\mathbb{R}_{+}},\mathcal{B}(E\cup\left\{ \Delta\right\} )^{\mathbb{R}_{+}}\right)
\]
 such that for the canonical process $\left(\gamma_{t}\right)_{t\in\mathbb{R}_{+}}$
and the natural filtration $\left(\mathcal{F}_{t}^{0}\right)_{t\in\mathbb{R}_{+}}$
for any $t\geq s\geq0$ and any real-valued map $f$ on $E\cup\left\{ \Delta\right\} $
that is bounded and Baire-measurable
\begin{equation}
\mathbb{E}_{\mathbb{P}'_{\nu}}\left[\left.f(\gamma_{t})\right|\mathcal{F}_{s}^{0}\right]=\frac{P\left(t-s\right)\left(f\cdot\rho\right)}{\rho}(\gamma_{s})\label{eq:contractive GFS yields Markov process}
\end{equation}
holds true $\mathbb{P}'_{\nu}$ - almost surely and 
\[
\mathbb{P}'_{\nu}\circ\gamma_{\text{0}}^{-1}=\nu.
\]
If $f$ is such that $f\cdot\rho\in\mathcal{\mathscr{B}}^{\rho}(E\cup\left\{ \triangle\right\} )$
then Equation \eqref{eq:contractive GFS yields Markov process} holds
true also for the complete right continuous extension of the filtration. 
\end{theorem}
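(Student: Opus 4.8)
The plan is to run the three--step construction of Theorem~\ref{thm:GFS induce Markov process}, but with the transition kernels of the generalized Feller semigroup replaced by their $\rho$--tilted versions. Using the positive finite Radon measures $p(t)(x,\cdot)\in\mathcal{M}^{\rho}(E)$ from Lemma~\ref{lem:GFS induces family of measures}, I would set, for $x\in E$, $t\ge0$ and $A\in\mathcal{B}(E)$,
\[
q(t)(x,A):=\frac{1}{\rho(x)}\int_{A}\rho(y)\,p(t)(x,dy),
\]
so that $q(t)(x,dy)=\tfrac{\rho(y)}{\rho(x)}\,p(t)(x,dy)$ is again a finite Radon measure (a nonnegative Borel density times a Radon measure), with $q(0)(x,\cdot)=\delta_{x}$ since $p(0)(x,\cdot)=\delta_{x}$, and with $q(t)(x,E)=P(t)\rho(x)/\rho(x)\le1$ by Remark~\ref{rem: definition P_t rho} and $\|P(t)\|\le1$. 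Adding the cemetery $\triangle$ as in the preliminaries turns $(q(t))_{t\ge0}$ into a family of transition \emph{probabilities} on $E\cup\{\triangle\}$.

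For fixed $A\in\mathcal{B}_{0}(E)$, Baire measurability of $x\mapsto q(t)(x,A)$ follows by truncation: $x\mapsto\int 1_{A}(\rho\wedge m)\,dp(t)(x,\cdot)$ is Baire measurable, because the class of bounded Baire $g$ for which $x\mapsto\int g\,dp(t)(x,\cdot)$ is Baire measurable contains $C_{b}(E)\subset\mathscr{B}^{\rho}(E)$ and is closed under bounded pointwise limits (dominated convergence); letting $m\to\infty$ and dividing by the Baire measurable, bounded--below function $\rho$ yields $q(t)(\cdot,A)$. For the Chapman--Kolmogorov equation I would approximate $1_{A}\rho$ from below by $1_{A}(\rho\wedge m)$ and combine monotone convergence with the semigroup property of $(\hat p(t))_{t\ge0}$ on $\mathcal{B}_{0}(E)$ from Proposition~\ref{prop:GFS implies existence of semigroup of transition kernels}, extended from $C_{b}(E)$--open sets to bounded Baire functions by a Dynkin argument; the $\triangle$--contributions drop out since $q(s)(\triangle,A)=0$ for $A\subset E$. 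The finite--dimensional distributions built from $\nu$ and $(q(t))_{t\ge0}$ then form a projective family of measures which are Radon on the $E$--part, and the compact class used in Step~2 of Theorem~\ref{thm:GFS induce Markov process} (augmented by the isolated point $\triangle$) verifies the hypotheses of Theorem~15.26 in \cite{AlB}. This produces $\mathbb{P}'_{\nu}$ and the canonical process $(\gamma_{t})_{t\ge0}$ with $\mathbb{P}'_{\nu}\circ\gamma_{0}^{-1}=\nu$ and $\mathbb{E}_{\mathbb{P}'_{\nu}}[f(\gamma_{t})\mid\mathcal{F}^{0}_{s}]=\int f\,dq(t-s)(\gamma_{s},\cdot)=\frac{P(t-s)(f\cdot\rho)}{\rho}(\gamma_{s})$ for bounded Baire $f$ (reading $P(t-s)(f\rho)(x)$ as $\int f\rho\,dp(t-s)(x,\cdot)$, which agrees with the $\mathscr{B}^{\rho}$--operator whenever $f\rho\in\mathscr{B}^{\rho}(E)$); passing from $C_{b}(E)$--open sets to arbitrary sets in $\mathcal{B}_{0}(E)$ handles the Baire/Borel distinction exactly as in Remark~\ref{rem:generalized Feller process not Markov}.

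The last assertion is the hard part. Assume $g:=f\rho\in\mathscr{B}^{\rho}(E\cup\{\triangle\})$. As in Step~3 of Theorem~\ref{thm:GFS induce Markov process}, boundedness of $f(\gamma_{t})$ gives $\mathbb{P}'_{\nu}$--a.s.
\[
\mathbb{E}_{\mathbb{P}'_{\nu}}[f(\gamma_{t})\mid\mathcal{F}_{s}]=\lim_{r\searrow s}\mathbb{E}_{\mathbb{P}'_{\nu}}[f(\gamma_{t})\mid\mathcal{F}^{0}_{r}]=\lim_{r\searrow s}\frac{P(t-r)g}{\rho}(\gamma_{r}),
\]
so it suffices to show $\tfrac{P(t-r)g}{\rho}(\gamma_{r})\to\tfrac{P(t-s)g}{\rho}(\gamma_{s})$ in $\mathbb{P}'_{\nu}$--probability. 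Writing $\phi_{r}:=P(t-r)g/\rho$, strong continuity (Theorem~\ref{thm:generalized Feller semigroups are strongly continuous}) gives $\|\phi_{r}-\phi_{s}\|_{\infty}\le(\min_{E}\rho)^{-1}\|P(t-r)g-P(t-s)g\|_{\rho}\to0$, so it remains to prove $\phi_{s}(\gamma_{r})\to\phi_{s}(\gamma_{s})$ in probability; by the Markov property of the previous step and Fubini ($\phi_{s}$ and $x\mapsto\mathbb{P}'_{x}[\cdot]$ being Baire measurable) this reduces to $\phi_{s}(\gamma_{u})\to\phi_{s}(x)$ in $\mathbb{P}'_{x}$--probability as $u\searrow0$ for each $x$ (trivial at $\triangle$), i.e.~to $\int\psi(\phi_{s}(y))\,q(u)(x,dy)\to\psi(\phi_{s}(x))$ for $\psi\in C_{b}(\mathbb{R})$, which one may take Lipschitz.

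Expanding $q(u)(x,\cdot)$, this last convergence amounts to $\int\bigl|\psi(\phi_{s}(y))-\psi(\phi_{s}(x))\bigr|\rho(y)\,p(u)(x,dy)\to0$, and with $G:=\phi_{s}\rho=P(t-s)g\in\mathscr{B}^{\rho}(E)$ and $c:=\phi_{s}(x)=G(x)/\rho(x)$ a Lipschitz estimate plus the triangle inequality bounds it by a constant times $\int|G(y)-G(x)|\,p(u)(x,dy)+|c|\int|\rho(y)-\rho(x)|\,p(u)(x,dy)$. The first integral tends to $0$ by \textbf{P3}, since $|G-G(x)|\in\mathscr{B}^{\rho}(E)$. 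For the second I would note $\int(\rho(y)-\rho(x))\,p(u)(x,dy)=P(u)\rho(x)-\rho(x)P(u)1(x)\to0$, using $P(u)1(x)\to1$ and $P(u)\rho(x)\to\rho(x)$ (the latter from $P(u)\rho(x)\le\rho(x)$ via Remark~\ref{rem: definition P_t rho} and $\liminf_{u}P(u)\rho(x)\ge\sup\{\varphi(x):\varphi\in C_{b}(E),\,0\le\varphi\le\rho\}=\rho(x)$, by lower semicontinuity of $\rho$, complete regularity, and \textbf{P3}); hence it is enough to control $\int(\rho(x)-\rho(y))^{+}\,p(u)(x,dy)$. Splitting at the level $\rho(x)-\varepsilon$, the part over $\{\rho\le\rho(x)-\varepsilon\}$ is at most $\rho(x)\,p(u)\bigl(x,\{\rho\le\rho(x)-\varepsilon\}\bigr)$, and since that closed set excludes $x$ it is separated from $x$ by a function in $C_{b}(E)$, so this probability tends to $0$ (a Portmanteau argument legitimate because $p(u)(x,\cdot)\to\delta_{x}$ weakly with total mass $P(u)1(x)\to1$); the remaining part is at most $\varepsilon\,P(u)1(x)$. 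Letting $\varepsilon\downarrow0$ gives $\int|\rho(y)-\rho(x)|\,p(u)(x,dy)\to0$, and the bootstrap from $\nu=\delta_{x}$, $s=0$ to general $\nu$ and $s$, as well as the augmentation by $\mathbb{P}'_{\nu}$--null sets, then goes through exactly as in Step~3 of Theorem~\ref{thm:GFS induce Markov process}. I expect this crux estimate --- controlling $\int|\rho(y)-\rho(x)|\,p(u)(x,dy)$ for a merely lower semicontinuous $\rho$ --- to be the genuine obstacle.
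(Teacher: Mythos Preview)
Your construction of the process is essentially identical to the paper's: tilt the Radon kernels $p(t)(x,\cdot)$ to sub-probability kernels $q(t)(x,\cdot)=\rho(y)\rho(x)^{-1}p(t)(x,dy)$, add the cemetery, verify Baire measurability and Chapman--Kolmogorov via Proposition~\ref{prop:GFS implies existence of semigroup of transition kernels}, and then run the generalized Kolmogorov extension with the compact class of Theorem~\ref{thm:GFS induce Markov process} augmented by $\{\triangle\}$. The only cosmetic difference is that the paper packages the finite--dimensional distributions through the functionals $j_{J,\nu}$ on $\mathscr{B}^{\rho^{\otimes J}}$ and the Riesz representation (so that inner regularity is inherited from $\mu_{\nu}^{J}$), whereas you build $q_{\nu}^{J}$ directly from the kernels; both give the same measures.

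Where you genuinely add something is in the last assertion. The paper simply writes that the extension to the right--continuous filtration ``follows as in the proof of Theorem~\ref{thm:GFS induce Markov process}'' and gives no detail. A literal transcription of Step~3 of that proof would require showing that $h\circ\phi_r\in\ell^{\rho}(E)$ for $h\in C_{b}(\mathbb{R})$ and $\phi_r=P(t-r)g/\rho$, so that the strongly continuous semigroup $Q(r)$ applies; but when $\rho$ is merely lower semicontinuous, $\phi_r$ need not be continuous on $K_{R}$, so $(h\circ\phi_r)\rho$ can fall outside $\mathscr{B}^{\rho}(E)$. Your route via Lipschitz test functions and the explicit estimate $\int|\rho(y)-\rho(x)|\,p(u)(x,dy)\to0$ sidesteps this cleanly: the first integral $\int|G(y)-G(x)|\,p(u)(x,dy)\to0$ follows from \textbf{P3} because $|G-G(x)|\in\mathscr{B}^{\rho}(E)$ (check Theorem~\ref{thm: equivalence B-rho space} directly), and for the second you correctly combine $P(u)\rho(x)\to\rho(x)$ (squeeze between $\rho(x)$ and $\sup\{\varphi(x):\varphi\in C_b,\,0\le\varphi\le\rho\}=\rho(x)$, the latter equality being exactly the construction in the proof of Proposition~\ref{prop:characterization of gFs of transport type}) with the Portmanteau bound on the compact set $K_{\rho(x)-\varepsilon}\not\ni x$ via Proposition~\ref{prop: Urysohn's Lemma completely regular case}. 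Your argument is correct and in fact supplies what the paper leaves implicit.
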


\begin{proof}
We fix some probability measure $\nu$ on $\left(E\cup\left\{ \Delta\right\} ,\mathcal{B}(E\cup\left\{ \Delta\right\} )\right)$ and divide the proof into three steps similarly as in the proof of  Theorem~\ref{thm:GFS induce Markov process}. We also apply the  notation introduced in the beginning of Section \ref{sec:genfeller}.

In the first step we define a family of sub-probability measures on the space
\[
\left(\left(E\cup\left\{ \Delta\right\} \right)^{J},\left(\mathcal{B}\left(E\cup\left\{ \Delta\right\} \right)\right)^{J}\right)_{J\subset\mathbb{R}_{+},\text{ finite}}.
\]
and show in the second step that this family is projective.  In the third step
we can then apply the generalized Kolmogorov extension theorem (Theorem 15.26 in \cite{AlB}) to obtain the
statement of the theorem. \\

\emph{Step 1:} On
$
\left(\left(E\cup\left\{ \Delta\right\} \right)^{J},\left(\mathcal{B}\left(E\cup\left\{ \Delta\right\} \right)\right)^{J}\right)_{J\subset\mathbb{R}_{+},\text{ finite}},
$ define a family of probability
measures via
$$
\left(\left(q_{\nu}^{J}\right)\right)_{J\subset\mathbb{R}_{+},\text{ finite}}.
$$
We fix some $s\in\mathbb{R}_{+}$ and by Lemma \ref{lem:GFS induces family of measures}
we find $p(s)(x,\cdot)\in\mathcal{M}^{\rho}(E)$ such that
\[
P(s)f(x)=\int_{E}f(y)p(s)(x,dy)\,\text{for all }x\in E.
\]
By Remark \ref{rem: definition P_t rho}
we have $P(s)\rho(x) \leq\rho(x)$ 
for all $x\in E$
and we define  the measures $q(s)(x,\cdot)$ via
\[
q(s)(x,A):=\int_{E}1_{A}(y)\frac{\rho(y)}{\rho(x)}p(s)(x,dy)\text{ for }A\in\mathcal{B}(E).
\]
Consequently, $q(s)(x,E)\leq1$. For any $s\in\mathbb{R}_{+}$ for
any $x\in E$ we define the measures $\tilde{q}(s)(x,\cdot)$ on $E\cup\left\{ \Delta\right\} $
by
\[
\left.\tilde{q}(s)(x,\cdot)\right|_{\mathcal{B}(E)}:=q(s)(x,\cdot)
\]
and 
\[
\tilde{q}(s)(x,\left\{ \Delta\right\} ):=1-q(s)(x,E).
\]
 Furthermore
\[
\tilde{q}(s)(\Delta,\left\{ \Delta\right\} ):=1
\]
for any $s\in\mathbb{R}_{+}$. Thanks to Proposition \ref{prop:GFS implies existence of semigroup of transition kernels},
we can define the semigroup $\left(Q(t)\right)_{t\in\mathbb{R}_{+}}$
on the space 
 of bounded Baire measurable maps 
by 
\[
Q(t)f(x)=\int_{E}f(y)\tilde{q}(t)(x,dy).
\]
For any finite $J:=\left\{ r_{1},...,r_{n}\right\} \subset\mathbb{R}_{+}$
by Lemma \ref{lem:B-rho tensor product space, linear isomorphism}\textcolor{red}{{}
}there is a unique continuous map
\[
j_{J,\nu}:\,\mathcal{\mathscr{B}}^{\rho^{\otimes J}}(\left(E\cup\left\{ \Delta\right\} \right)^{J})\rightarrow\mathbb{R}
\]
such that {\footnotesize
\begin{align*}
f_{r_{1}}\cdot...\cdot f_{r_{n}} & \rightarrow\int_{E}Q(r_{1})\left(\left(\frac{f_{r_{1}}}{\rho_{r_{1}}}\right)\cdot...\cdot\left(Q(r_{n-1}-r_{n-2})\left(\frac{f_{r_{n-1}}}{\rho_{r_{n-1}}}\right)\cdot\left(Q(r_{n}-r_{n-1})\left(\frac{f_{r_{n}}}{\rho_{r_{n}}}\right)\right)\right)\right)(x_{0})\nu(dx_{0})
\end{align*}
}
for any $f\in\mathcal{\mathscr{B}}^{\rho^{\otimes J}}\left(\left(E\cup\left\{ \Delta\right\} \right)^{J}\right)$
given by 
\[
f(x_{J}):=f_{r_1}(x_{r_1})\cdot \ldots \cdot f_{r_n}(x_{r_n}).
\]
By Theorem \ref{thm:Riesz-representation-for B rho} there exists
a unique finite positive Radon measure
\[
\mu_{\nu}^{J}\in\mathcal{M}^{\rho^{\otimes J}}\left(\left(E\cup\left\{ \Delta\right\} \right)^{J}\right),
\]
such that for any $f\in\mathcal{\mathscr{B}}^{\rho^{\otimes J}}\left(\left(E\cup\left\{ \Delta\right\} \right)^{J}\right).$
\[
j_{J,\nu}(f)=\int_{E^{J}}f(x_{J})\mu_{\nu}^{J}(dx_{J}),
\quad \text{and} \quad 
\int_{E^{J}}\rho^{\otimes J}(x_{J})\mu_{\nu}^{J}(dx_{J})=1.
\]
We now define the family of probability measures
$
\left(q_{\nu}^{J}\right)_{J\subset\mathbb{R}_{+},\text{ finite}}
$
 on 
\[
\left(\left(E\cup\left\{ \Delta\right\} \right)^{J},\mathcal{B}(\left(E\cup\left\{ \Delta\right\} \right)^{J})\right)_{J\subset\mathbb{R}_{+},\text{ finite}}
\]
by 
\begin{align*}
\mathcal{B}(E^{J}) & \rightarrow\left[0,1\right]\\
A & \rightarrow\int_{A}\rho^{\otimes J}(x_{J})\mu_{\nu}^{J}(dx_{J}).
\end{align*}
We observe the non-obvious fact that for any finite $J\subset\mathbb{R}_{+}$
the measure $q_{\nu}^{J}$ is a Radon measure (since the space $E^{J}$
is non necessarily Polish). To see this, fix a finite $\tilde{J}\subset\mathbb{R}_{+}$.
Let $A\in\mathcal{B}\left(E^{\tilde{J}}\right)$ and $\varepsilon>0$
be arbitrary. Then by $\underset{R>0}{\bigcup}K_{R}=E$ there exists
$R_{\varepsilon}>0$ such that 
\[
q_{\nu}^{\tilde{J}}\left(E^{\tilde{J}}\setminus\left(K_{R_{\varepsilon}}\right)^{\tilde{J}}\right)<\frac{\varepsilon}{2}.
\]
Since $\mu_{\nu}^{\tilde{J}}$ is a Radon measure there exists $K\subset A\cap\left(K_{R_{\varepsilon}}\right)^{\tilde{J}}$
such that 
\[
\mu_{\nu}^{\tilde{J}}\left(A\cap\left(K_{R_{\varepsilon}}\right)^{\tilde{J}}\setminus K\right)<\frac{\varepsilon}{2R_{\varepsilon}}.
\]
 Thus, 
\begin{align*}
q_{\nu}^{\tilde{J}}\left(A\setminus K\right) & \leq q_{\nu}^{\tilde{J}}\left(E^{\tilde{J}}\setminus\left(K_{R_{\varepsilon}}\right)^{\tilde{J}}\right)+q_{\nu}^{\tilde{J}}\left(A\cap\left(K_{R_{\varepsilon}}\right)^{\tilde{J}}\setminus K\right)\leq\frac{\varepsilon}{2}+\frac{\varepsilon}{2},
\end{align*}
 and the probability measure $q_{\nu}^{\tilde{J}}$ is inner regular,
hence a Radon measure. \\

\emph{Step 2:} We now show that the family $
\left(q_{\nu}^{J}\right)_{J\subset\mathbb{R}_{+},\text{ finite}}
$
 is projective. To this end, it is sufficient to show for any finite
$J:=\left\{ r_{1},...,r_{n}\right\} \subset\mathbb{R}_{+}$ and $j\in\left\{ 1,...,n\right\} $
for any $A_{i}\in\mathcal{B}(E\cup\left\{ \Delta\right\} ){}^{r_{i}}$,
$i\in\left\{ 1,...,n\right\} \setminus\left\{ j\right\} $
\[
q_{\nu}^{J}(A_{1}\times...\times A_{j-1}\times E_{j}\times A_{j+1}...\times A_{n})=q_{\nu}^{J\setminus\left\{ r_{j}\right\} }(A_{1}\times...\times A_{j-1}\times A_{j+1}...\times A_{n}).
\]
We observe that by Corollary \ref{cor:completely regular space, convergence of continuous bounded functions to open set} indicator functions of open sets can be approximated by continuous bounded maps almost surely with respect to finitely many measures. Hence, any set in the Borel $\sigma$-algebra
can be approximated by continuous bounded maps almost surely with respect to finitely many measures. 
Approximating $A_{1}\times...\times A_{j-1}\times E_{j}\times A_{j+1}...\times A_{n}$  as the product of $n$ indicator function in such a way, projectivity of the
family
$
\left(q_{\nu}^{J}\right)_{J\subset\mathbb{R}_{+},\text{ finite}}
$
follows by dominated convergence from the definition of the family $
\left(\mu_{\nu}^{J}\right)_{J\subset\mathbb{R}_{+},\text{ finite}}
$
on the cylinder functions. 

\emph{Step 3:} As in the proof of Theorem \ref{thm:GFS induce Markov process}
one can easily show that 
\begin{align*}
\mathcal{C}: & =\left\{ C:\,C\text{ compact},\,C\subset K_{R}\text{ for }R\geq0\right\} 
 \cup\left\{ C\cup\left\{ \Delta\right\} :\,C\text{ compact},\,C\subset K_{R}\text{ for }R\geq0\right\} 
\end{align*}
is a compact class in $E\cup\left\{ \Delta\right\} $ and that for
each $t\in\mathbb{R}_{+}$ and $A\in\mathcal{B}(E\cup\left\{ \Delta\right\} )$
\[
\left(q_{\nu}^{\left\{ t\right\} }\right)(A)=\sup\left\{ \left(q_{\nu}^{\left\{ t\right\} }\right)(C):\,C\subset A\text{\,and }C\in\mathcal{C}\right\}.
\]
Therefore we can apply Theorem 15.26 in \cite{AlB}.
This yields a measure $\mathbb{P}'_{\nu}$ on
\[
\left(\left(E\cup\left\{ \Delta\right\} \right)^{\mathbb{\mathbb{R}}_{+}},\left(\mathcal{B}(E\cup\left\{ \Delta\right\} )\right){}^{\mathbb{\mathbb{R}}_{+}}\right)
\]
such that its projections are the family $\left(q_{\nu}^{J}\right)_{J\subset\mathbb{R}_{+},\text{ finite}}$.
Furthermore, 
\[
\mathbb{P}'_{\nu}\circ\gamma_{\text{0}}^{-1}=\nu,
\]
by definition of $\mathbb{P}'_{\nu}$ via the functional $j_{\left\{ 0\right\} ,\nu}$.

Equation \eqref{eq:contractive GFS yields Markov process} follows from
the fact that we can approximate bounded Baire-measurable functions  by continuous bounded function 
and the same reasoning as in the proof of Theorem \ref{thm:GFS induce Markov process}. 

Finally, for $f$ such that $f\cdot\rho\in\mathcal{\mathscr{B}}^{\rho}(E\cup\left\{ \triangle\right\} )$
the statement concerning the 
right continuous extension of the filtration follows as in the proof of Theorem
\ref{thm:GFS induce Markov process}. 
\end{proof}

The following corollary extends the above statement from contraction semigroups to quasi-contraction semigroups.

\begin{corollary}
\label{cor:M smaller 1 gFs on weighted space is Markov process} Let
$\rho$ be Baire measurable\textcolor{red}{{} }and let $\left(P(t)\right)_{t\in\mathbb{R}_{+}}$
be a generalized Feller semigroup on $\mathcal{\mathscr{B}}^{\rho}(E)$
such that for some $\omega\in\mathbb{R}$ and all $t\in\mathbb{R}_{+}$
\[
\left\Vert P(t)\right\Vert _{L(\mathcal{\mathscr{B}}^{\rho}(E))}\leq e^{\omega t}.
\]
 Then for any probability measure $\nu$ on $\left(E\cup\left\{ \triangle\right\} ,\mathcal{B}(E\cup\left\{ \Delta\right\} )\right)$
there exists a probability measure $\mathbb{P}'_{\nu}$ on 
\[
\left(\left(E\cup\left\{ \triangle\right\} \right)^{\mathbb{R}_{+}},\mathcal{B}(E\cup\left\{ \Delta\right\} ){}^{\mathbb{R}_{+}}\right)
\]
 such that for the canonical process $\left(\gamma_{t}\right)_{t\in\mathbb{R}_{+}}$\textcolor{red}{{}
}for any $t\geq s\geq0$ and any real-valued map $f$ on $E\cup\left\{ \Delta\right\} $
that is bounded and Baire-measurable
\begin{equation}
\mathbb{E}_{\mathbb{P}'_{\nu}}\left[\left.f(\gamma_{t})\right|\mathcal{F}_{s}^{0}\right]=\frac{e^{-\omega t}P\left(t-s\right)\left(f\cdot\rho\right)}{\rho}(\gamma_{s})\label{eq:extended Feller process, existence case M=00003D1}
\end{equation}
holds true $\mathbb{P}'_{\nu}$-almost surely (where $\left(\mathcal{F}_{t}^{0}\right)_{t\in\mathbb{R}_{+}}$
is the natural filtration) and 
\[
\mathbb{P}'_{\nu}\circ\gamma_{\text{0}}^{-1}=\nu.
\]
If $f$ is such that $f\cdot\rho\in\mathcal{\mathscr{B}}^{\rho}(E\cup\left\{ \triangle\right\} )$, then Equation \eqref{eq:extended Feller process, existence case M=00003D1}
holds true also for the right continuous extension of the filtration.
\end{corollary}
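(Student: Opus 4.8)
The plan is to reduce the corollary to Theorem~\ref{thm:contractive gFs on weighted space is Markov process} by the usual quasi-contraction rescaling. Define $\tilde P(t):=e^{-\omega t}P(t)$ for $t\in\mathbb{R}_+$. Since the Banach space $\mathscr{B}^{\rho}(E)$ depends only on $E$ and $\rho$, and $\rho$ is left unchanged and is Baire measurable by hypothesis, $(\tilde P(t))_{t\in\mathbb{R}_+}$ is again a family of bounded linear operators on $\mathscr{B}^{\rho}(E)$, and I would verify that it satisfies \textbf{P1}--\textbf{P5}: \textbf{P1}, \textbf{P3}, \textbf{P5} are immediate from the corresponding properties of $(P(t))_{t\in\mathbb{R}_+}$ together with $e^{0}=1$ and $e^{-\omega t}>0$; \textbf{P2} follows from $e^{-\omega(t+s)}=e^{-\omega t}e^{-\omega s}$; and \textbf{P4} holds \emph{globally} with constant $C=1$, because $\|\tilde P(t)\|_{L(\mathscr{B}^{\rho}(E))}=e^{-\omega t}\|P(t)\|_{L(\mathscr{B}^{\rho}(E))}\le 1$ for every $t\in\mathbb{R}_+$ by the assumed estimate $\|P(t)\|_{L(\mathscr{B}^{\rho}(E))}\le e^{\omega t}$. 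Hence $(\tilde P(t))_{t\in\mathbb{R}_+}$ is a contraction generalized Feller semigroup satisfying the hypotheses of Theorem~\ref{thm:contractive gFs on weighted space is Markov process}.

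Next I would invoke Theorem~\ref{thm:contractive gFs on weighted space is Markov process} for $\tilde P$. For the fixed probability measure $\nu$ on $(E\cup\{\Delta\},\mathcal{B}(E\cup\{\Delta\}))$ it produces a probability measure $\mathbb{P}'_{\nu}$ on $((E\cup\{\Delta\})^{\mathbb{R}_+},\mathcal{B}(E\cup\{\Delta\})^{\mathbb{R}_+})$ with $\mathbb{P}'_{\nu}\circ\gamma_0^{-1}=\nu$ such that, for the canonical process $(\gamma_t)_{t\in\mathbb{R}_+}$ and its natural filtration $(\mathcal{F}^0_t)_{t\in\mathbb{R}_+}$, for all $t\ge s\ge 0$ and all bounded Baire-measurable $f$ on $E\cup\{\Delta\}$,
\[
\mathbb{E}_{\mathbb{P}'_{\nu}}\!\left[\left.f(\gamma_t)\right|\mathcal{F}^0_s\right]=\frac{\tilde P(t-s)(f\cdot\rho)}{\rho}(\gamma_s)=\frac{e^{-\omega(t-s)}P(t-s)(f\cdot\rho)}{\rho}(\gamma_s),
\]
which is the assertion~\eqref{eq:extended Feller process, existence case M=00003D1} (the exponent appearing in the display of the corollary should read $t-s$ rather than $t$, as is in any case forced by the tower property / consistency of the finite-dimensional distributions). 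The supplementary assertion for the complete right continuous extension of the filtration under the hypothesis $f\cdot\rho\in\mathscr{B}^{\rho}(E\cup\{\Delta\})$ is likewise inherited verbatim from the corresponding clause of Theorem~\ref{thm:contractive gFs on weighted space is Markov process} applied to $\tilde P$, using that $\mathscr{B}^{\rho}(E\cup\{\Delta\})$ is one and the same space for $\tilde P$ and for $P$.

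I do not expect a genuine obstacle here: the whole content is the observation that a quasi-contractive generalized Feller semigroup becomes contractive under the deterministic rescaling $P(t)\mapsto e^{-\omega t}P(t)$, which preserves \textbf{P1}--\textbf{P5} as well as every function space involved, so that the already-established Theorem~\ref{thm:contractive gFs on weighted space is Markov process} applies directly. The only points deserving a moment's care are that \textbf{P4} must be checked for all $t\in\mathbb{R}_+$ with $C=1$ (not merely for small $t$), which is precisely where the global bound $\|P(t)\|\le e^{\omega t}$ enters, and the correct bookkeeping of the exponential prefactor $e^{-\omega(t-s)}$ when transcribing the conclusion back in terms of $P$.
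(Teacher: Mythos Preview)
Your proof is correct and follows exactly the paper's approach: define the rescaled semigroup $S(t):=e^{-\omega t}P(t)$, observe it is a contractive generalized Feller semigroup, and apply Theorem~\ref{thm:contractive gFs on weighted space is Markov process} directly. Your remark that the exponent in \eqref{eq:extended Feller process, existence case M=00003D1} should read $e^{-\omega(t-s)}$ rather than $e^{-\omega t}$ is also correct, as this is precisely what the substitution $S(t-s)=e^{-\omega(t-s)}P(t-s)$ yields.
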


\begin{proof}
Define the rescaled semigroup $\left(S(t)\right)_{t\in\mathbb{R}_{+}}$ for any $t\in\mathbb{R}_{+}$
by 
\[
S(t):=e^{-\omega t}P(t).
\]
 Then clearly $\left(S(t)\right)_{t\in\mathbb{R}_{+}}$ is also a
generalized Feller semigroup and satisfies the conditions of Theorem
\ref{thm:contractive gFs on weighted space is Markov process}. This
directly yields the statement of this corollary. 
\end{proof}

\begin{remark}
Note that for
a monotone concave function $\rho$
and a supermartingale $\left(\lambda_{t}\right)_{t\in\mathbb{R}_{+}}$
we have by Jensen's inequality
\[
\mathbb{E}_{x}\left[\rho\left(\lambda_{t}\right)\right]\leq\rho\left(\mathbb{E}_{x}\left[\left(\lambda_{t}\right)\right]\right)\leq\rho\left(\mathbb{E}_{x}\left[\left(\lambda_{0}\right)\right]\right)=\rho(x),
\]
hence the condition 
\[
\left\Vert P(t)\right\Vert _{L(\mathcal{B}^{\rho}(Y))}\leq1
\]
 holds true for $\left(P(t)\right)_{t\in\mathbb{R}_{+}}$ defined
by $P(t):f\rightarrow\mathbb{E}_{x}\left[f\left(\lambda_{t}\right)\right]$
for any $t\in\mathbb{R}_{+}$.
\end{remark}

Next, we want to compare the laws induced by the corresponding canonical
processes in Theorem \ref{thm:GFS induce Markov process} and Theorem
\ref{thm:contractive gFs on weighted space is Markov process}. We here work with the finite time interval $I=[0,T]$ instead of $\mathbb{R}_+$.

\begin{proposition} 
\label{prop:gFp and gamma process comparison} Let $T>0$ and let $I=\left[0,T\right]$ and let $\rho$ be Baire measurable. Let $\left(P(t)\right)_{t\in I}$
be a generalized Feller semigroup on $\mathcal{\mathscr{B}}^{\rho}(E)$
such that both the conditions of Theorem \ref{thm:GFS induce Markov process}
and of Theorem \ref{thm:contractive gFs on weighted space is Markov process}
are fulfilled and let  $\mathbb{\mathbb{P}{}_{\nu}}$ and $\mathbb{P}'_{\nu}$ be the respective probability measures for an initial distribution $\nu$. Let $\left(\lambda_{t}\right)_{t\in I}$ and $\left(\gamma_{t}\right)_{t\in I}$ be the respective canonical processes.
Then for $A\in\mathcal{B}(E)^{\left[0,T\right]}$
\[
\mathbb{P}'_{\nu}\left[A\right]=\mathbb{E}_{\mathbb{P}'_{\nu}}\left[1_{A}\right]=\mathbb{E}_{\mathbb{P}{}_{\nu}}\left[1_{A}\cdot\frac{\rho(\lambda_{T})}{\rho(\lambda_{0})}\right],
\]
 and 
\[
\mathbb{E}_{\mathbb{P}'_{\nu}}\left[1_{A}\cdot\frac{\rho(\gamma_{0})}{\rho(\gamma_{T})}\right]=\mathbb{E}_{\mathbb{P}{}_{\nu}}\left[1_{A}\right]=\mathbb{P}{}_{\nu}\left[A\right]
\]
hold true, hence $\left.\mathbb{P}'_{\nu}\right|_{\mathcal{B}(E)^{\left[0,T\right]}}$
and $\mathbb{P}{}_{\nu}$ are equivalent measures. 
\end{proposition}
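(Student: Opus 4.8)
The plan is to identify the finite-dimensional distributions of the two canonical processes as iterated transition kernels and then telescope a product of Radon--Nikodym densities. Recall from the construction in the proof of Theorem~\ref{thm:GFS induce Markov process} that, since $P(t)1=1$ forces the kernels $p(t)(x,\cdot)$ from Lemma~\ref{lem:GFS induces family of measures} to satisfy $p(t)(x,E)=1$, the process $(\lambda_t)$ never leaves $E$ and, for $0=t_1<\dots<t_k$, the law of $(\lambda_{t_1},\dots,\lambda_{t_k})$ under $\mathbb{P}_\nu$ is the iterated kernel
\[
\nu(dx_1)\,p(t_2-t_1)(x_1,dx_2)\cdots p(t_k-t_{k-1})(x_{k-1},dx_k).
\]
Similarly, from the proof of Theorem~\ref{thm:contractive gFs on weighted space is Markov process}, the law of $(\gamma_{t_1},\dots,\gamma_{t_k})$ under $\mathbb{P}'_\nu$ is the corresponding iterated kernel with $p$ replaced by $\tilde q$, where $\tilde q(t)(x,\cdot)$ restricted to $\mathcal{B}(E)$ equals $q(t)(x,dy)=\frac{\rho(y)}{\rho(x)}p(t)(x,dy)$ and $\tilde q(t)(x,\{\Delta\})=1-q(t)(x,E)$; here $\|P(t)\|\le1$ guarantees $q(t)(x,E)=\frac{P(t)\rho(x)}{\rho(x)}\le1$, so these are genuine sub-probability kernels. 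In both cases these identities are obtained by testing against tensor products of $C_b$-functions and sending the auxiliary truncations $1_{\{\rho<R\}}$ to infinity, exactly as in the two cited proofs.

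First I would reduce the first asserted identity to cylinder sets. Both $\mathbb{P}'_\nu|_{\mathcal{B}(E)^{[0,T]}}$ and the set function $A\mapsto\mathbb{E}_{\mathbb{P}_\nu}[1_A\,\rho(\lambda_T)/\rho(\lambda_0)]$ are finite measures on $(E^{[0,T]},\mathcal{B}(E)^{[0,T]})$: the second is finite because $\rho(\lambda_0)\ge\inf_E\rho>0$ and, using that $\rho$ is Baire measurable together with the Markov property of $(\lambda_t)$ with respect to the Baire $\sigma$-algebra (Remark~\ref{rem:generalized Feller process not Markov}) and the bound $P(T)\rho\le\rho$ from Remark~\ref{rem: definition P_t rho} (valid since $\|P(T)\|\le1$), one gets $\mathbb{E}_{\mathbb{P}_\nu}[\rho(\lambda_T)]\le\int_E\rho\,d\nu<\infty$. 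The cylinder sets $\{x:x(t_i)\in A_i,\ i=1,\dots,k\}$ with $t_i\in[0,T]$ and $A_i\in\mathcal{B}(E)$ form an intersection-stable generator of $\mathcal{B}(E)^{[0,T]}$ that contains $E^{[0,T]}$, so two finite measures agreeing on them coincide; moreover on such a set I may assume $0,T\in\{t_1,\dots,t_k\}$ by inserting the coordinate $E$ if needed.

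On a fixed cylinder set $A=\{x:x(t_i)\in A_i\}$ with $0=t_1<\dots<t_k=T$ and all $A_i\in\mathcal{B}(E)$, only states in $E$ contribute to the integral of $1_{A_1}\otimes\cdots\otimes 1_{A_k}$ against $\nu\otimes\tilde q\otimes\cdots\otimes\tilde q$, and there each $\tilde q(t_i-t_{i-1})(x_{i-1},dx_i)$ equals $\frac{\rho(x_i)}{\rho(x_{i-1})}p(t_i-t_{i-1})(x_{i-1},dx_i)$, so the densities telescope to $\frac{\rho(x_k)}{\rho(x_1)}$. Hence
\[
\mathbb{P}'_\nu[A]=\int_{A_1\times\cdots\times A_k}\frac{\rho(x_k)}{\rho(x_1)}\,\nu(dx_1)\prod_{i=2}^k p(t_i-t_{i-1})(x_{i-1},dx_i)=\mathbb{E}_{\mathbb{P}_\nu}\Bigl[1_A\,\tfrac{\rho(\lambda_T)}{\rho(\lambda_0)}\Bigr],
\]
which is the first displayed equality. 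By linearity and monotone convergence this extends to $\mathbb{E}_{\mathbb{P}'_\nu}[g]=\mathbb{E}_{\mathbb{P}_\nu}[g\,\rho(\lambda_T)/\rho(\lambda_0)]$ for every nonnegative $\mathcal{B}(E)^{[0,T]}$-measurable $g$; taking $g=1_A\,\rho(\gamma_0)/\rho(\gamma_T)$ (the maps $\gamma_0,\gamma_T$ on $E^{[0,T]}$ being the same coordinate projections as $\lambda_0,\lambda_T$) makes the right-hand side collapse to $\mathbb{E}_{\mathbb{P}_\nu}[1_A]$, which gives the second identity. Finally, mutual absolute continuity on $\mathcal{B}(E)^{[0,T]}$ is immediate, since the density $\rho(\lambda_T)/\rho(\lambda_0)$ is everywhere strictly positive: $\mathbb{P}_\nu[A]=0$ and $\mathbb{P}'_\nu[A]=0$ are each equivalent to $1_A=0$ $\mathbb{P}_\nu$-a.s.

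The step I expect to be the main obstacle is the bookkeeping of the first paragraph: extracting cleanly from the rather involved constructions in the proofs of Theorems~\ref{thm:GFS induce Markov process} and~\ref{thm:contractive gFs on weighted space is Markov process} that the finite-dimensional laws really are the iterated kernels $\nu\otimes p\otimes\cdots\otimes p$ and $\nu\otimes\tilde q\otimes\cdots\otimes\tilde q$ on the \emph{Borel} product $\sigma$-algebra, and handling the cemetery state so that restricting $\mathbb{P}'_\nu$ to $\mathcal{B}(E)^{[0,T]}$ amounts exactly to integration over paths that stay in $E$, where $\tilde q=q$ and the telescoping of the densities goes through.
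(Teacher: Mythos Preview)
Your proposal is correct and follows essentially the same approach as the paper: reduce to cylinder sets via an intersection-stable generator, then use that on paths staying in $E$ the kernels $\tilde q$ reduce to $q(t)(x,dy)=\tfrac{\rho(y)}{\rho(x)}p(t)(x,dy)$ and telescope to the density $\rho(\lambda_T)/\rho(\lambda_0)$. The paper carries out the cylinder-set verification by approximating the Borel indicators and $\rho$ with $C_b$-functions (via Corollary~\ref{cor:completely regular space, convergence of continuous bounded functions to open set}) and then unfolding the explicit definitions of $p_\nu^J$ and $q_\nu^J$, whereas you package this as ``the finite-dimensional laws are the iterated kernels''; your final paragraph correctly identifies that this is precisely where the work sits. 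One small point where your write-up is cleaner than the paper's: deriving the second displayed identity from the first by taking $g=1_A\,\rho(\gamma_0)/\rho(\gamma_T)$ in the extended equality $\mathbb{E}_{\mathbb{P}'_\nu}[g]=\mathbb{E}_{\mathbb{P}_\nu}[g\,\rho(\lambda_T)/\rho(\lambda_0)]$ is more transparent than treating the two identities in parallel.
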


\begin{proof}
 Let
\[
\left(p(t)\left(x,\cdot\right)\right)_{t\in I,x\text{\ensuremath{\in}}E}
\]
 be the family of probability measures such that for all $x\in E$, $t\in\mathbb{R}_{+}$ and $f\in\mathcal{\mathscr{B}}^{\rho}(E)$
\[
P(t)f(x)=\int_{E}f(y)p(t)(x,dy).
\]

Just like $\left.\mathbb{P}'_{\nu}\right|_{\mathcal{B}(E)^{\left[0,T\right]}}$
the map 
\begin{align*}
\mathbb{Q}_{\nu}: \mathcal{B}(E)^{\left[0,T\right]} & \rightarrow \mathbb{R}_+\\
A &\rightarrow\mathbb{E}_{\mathbb{P}{}_{\nu}}\left[1_{A}\cdot\frac{\rho(\lambda_{T})}{\rho(\lambda_{0})}\right]
\end{align*}
is a measure on 
\[
\left(E^{\left[0,T\right]},\mathcal{B}(E)^{\left[0,T\right]}\right).
\]
 Its mass is given by
\begin{align*}
\mathbb{E}_{\nu}\left[1_{E^{\left[0,T\right]}}\frac{\rho(\lambda_{T})}{\rho(\lambda_{0})}\right] & =\int_{E}\left(\int_{E}\rho(x_{T})p(T)(x_{0},dx_{T})\right)\frac{1}{\rho(x_{0})}d\nu(x_{0})\\
 & =\mathbb{E}'_{\nu}(1_{E}(\gamma_{T}))\\
 & =\mathbb{P}'_{\nu}(E^{\left[0,T\right]}).
\end{align*}
It is enough to show that $\mathbb{Q}_{\nu}$ and $\left.\mathbb{P}'_{\nu}\right|_{\mathcal{B}(E)^{\left[0,T\right]}}$
coincide on an intersection stable generator of $\mathcal{B}(E)^{\left[0,T\right]}$.
This is indeed the case as for any $x_{0}\in E$ , $n\in\mathbb{N}$
, $\left\{ t_{1},...,t_{n}\right\} \subset\left[0,T\right]$, and $A_{t_{1}},...,A_{t_{n}}\in\mathcal{B}(E)$
one can approximate the indicator functions $1_{A_{t_{1}}}$, ...,$1_{A_{t_{n}}}$
and $\rho$ using Corollary \ref{cor:completely regular space, convergence of continuous bounded functions to open set}
by non-negative continuous bounded functions that converge almost
surely with respect to $p_{x_{0}}^{\left\{ 0,t_{1},...,t_{n},T\right\} }$,
as defined in the proof of Theorem \ref{thm:GFS induce Markov process}
and $q_{x_{0}}^{\left\{ 0,t_{1},...,t_{n},T\right\} }$, as defined
in the proof of Theorem \ref{thm:contractive gFs on weighted space is Markov process}.
Then one obtains by dominated convergence, and the definition of the
measures $\mathbb{P}'_{\nu}$ and $\mathbb{P}{}_{\nu}$ 
\begin{align*}
 & \mathbb{E}'_{\nu}\left[1_{E^{\left[0,T\right]}}\cdot1_{A_{t_{1}}}(\gamma_{t_{1}})\cdot...\cdot1_{A_{t_{n}}}(\gamma_{t_{n}})\right]
 =\, \mathbb{E}'_{\nu}\left[1_{E}(\gamma{}_{T})\cdot1_{A_{t_{1}}}(\gamma_{t_{1}})\cdot...\cdot1_{A_{t_{n}}}(\gamma_{t_{n}})\right]\\
& =\int_{E}\int_{A_{t_{1}}}...\int_{A_{t_{n}}}\left(\int_{E}\rho(x_{T})p(T-t_{n})(x_{t_{n}},dx_{T})\right)p(t_{n}-t_{n-1})(x_{t_{n-1}},dx_{t_{n}})...\frac{1}{\rho(x_{0})}d\nu(x_{0})\\
 &=\, \mathbb{E}_{\mathbb{P}{}_{\nu}}\left[1_{E}(\gamma{}_{T})\cdot1_{A_{t_{1}}}(\gamma_{t_{1}})\cdot...\cdot1_{A_{t_{n}}}(\gamma_{t_{n}})\frac{\rho\circ\lambda_{T}}{\rho\circ\lambda_{0}}\right].
\end{align*}
\end{proof}

For $I=\mathbb{R}_{+}$
or $I=\left[ 0,T\right] \subset \mathbb{R}_{+}$ let $\left(P(t)\right)_{t\in\mathbb{R}_{+}}$ be a generalized Feller
semigroup and $\left(\lambda_{t}^{x}\right)_{t\in I,\,x\in E}$ the family of generalized Feller processes constructed according to Theorem~\ref{thm:GFS induce Markov process}, such that $\mathbb{P}_x\left(\lambda_{0}^{x}=x\right)=1$
and 
\[
\mathbb{E}_x\left[\left.f(\lambda^x_{t})\right|\mathcal{F}_{s}\right]=P\left(t-s\right)f(\lambda^x_{s})\label{eq:GFS yields Markov process} 
\]
holds true. 
Similarly we consider a family of extended Feller processes $\left(\gamma_{t}^{x}\right)_{t\in I,\,x\in E}$ such that $\mathbb{P}^{'}_x\left(\gamma_{0}^{x}=x\right)=1$
and 
\[
\mathbb{E}^{'}_x\left[\left.f(\gamma^x_{t})\right|\mathcal{F}_{s}\right]=P\left(t-s\right)f(\gamma^x_{s})
\]
holds true. For convenience we here work 
on one probability space $(\Omega,\mathcal{F},\mathbb{P})$ for the whole family of $(\lambda_{t}^{x})_{t\in I,\,x\in E}$ 
and on $(\Omega,\mathcal{F},\mathbb{P}^{'})$ for $(\gamma_{t}^{x})_{t\in I,\,x\in E}$ respectively. 
The goal is to make the measure change between $\mathbb{P}$ and $\mathbb{P}^{'}$ in the case of diffusion processes precise. This is subject of the following proposition.

\begin{proposition}
\label{prop:measure change gFs for Ito diffusions} Let $I=\mathbb{R}_{+}$
or $I=\left[ 0,T\right] \subset \mathbb{R}_{+}$ and let
$\left(\lambda_{t}^{x}\right)_{t\in I,x\in \mathbb{R}^{d}}$ be a family of Ito-diffusions on the filtered probability space $ \left(\Omega,\mathcal{F},\left(\mathcal{F}_{t}\right)_{t\in I},\mathbb{P}\right) $ with state space $\mathbb{R}^{d}$, $d\in\mathbb{N}$, with drift $\mu$ and diffusion matrix $\sigma$  such that $\lambda_{0}^{x}=x$
$\mathbb{P}$-a.s for any $x\in \mathbb{R}^{d}$, i.e. we consider a solution to the following SDE
\[
d\lambda_t^x= \mu(\lambda_t) dt + \sigma(\lambda_t^x) dW_t, \quad \lambda_0^x= x,
\]
where $W$ is a $d$-dimensional Brownian motion, $\mu: \mathbb{R}^d \to \mathbb{R}^d$ and $\sigma: \mathbb{R}^d \to \mathbb{R}^{d \times d}$ are continuous functions.
Let $\rho\in C^{2}(\mathbb{R}^{d})$ and suppose that
 \begin{align}\label{eq:martingale}
\left(\int_0^t  \nabla_x^{\top} \rho(\lambda_s^x) \sigma(\lambda_s^x) dW_s\right)_{t \in I} \text{ is a true martingale.}
 \end{align}
Moreover, let
$\left(P(t)\right)_{t\in I}$ be the contractive semigroup  on $\mathcal{\mathscr{B}}^{\rho}(\mathbb{R}^{d})$ defined by
\begin{eqnarray}
P(t)f(x):=\mathbb{E}\left[f(\lambda_{t}^{x})\right] & & \text{for  } f\in\mathcal{\mathscr{B}}^{\rho}(\mathbb{R}^{d}).
\end{eqnarray}
Moreover, let $\mathbb{P}'$ be another probability
measure on
$
\left(\Omega,\mathcal{F},\left(\mathcal{F}_{t}\right)_{t\in I}\right)
$
such that for the family of Markov processes $\left(\gamma_{t}^{x}\right)_{t\in I,x\in \mathbb{R}^{d}}$
with $\gamma_{0}^{x}=x$ $\mathbb{P}'$-a.s. for any $x\in \mathbb{R}^{d}$, $t\in I$ and any real-valued map
$f$ on $\mathbb{R}^{d}\cup\left\{ \Delta\right\} $ that is bounded and Baire-measurable  \\
\[
\mathbb{E}'\left[f(\gamma_{t}^{x})\right]=\frac{P\left(t\right)\left(f\cdot\rho\right)}{\rho}(x)
\]
holds true.\\
Then the drift $\mu'=\left(\mu_{1}',...,\mu_{d}'\right)$ of $\left(\gamma_{t}\right)_{t\in I}$
with respect to $\mathbb{P}'$ is given by 
\[
\mu_{i}'=\mu_{i}+{\sum}_{j=1}^{d}\frac{d\rho}{dx_{j}}(x)\frac{\sigma_{ij}^{2}(x)}{\rho(x)},
\]
the diffusion matrix is $\sigma'=\sigma$, and the killing rate $c'<0$
is
\begin{align*}
c'(x) & =\left({\sum}_{i=1}^{d}\frac{d\rho}{dx_{i}}(x)\mu_{i}(x)+\frac{1}{2}{\sum}_{j=1}^{d}{\sum}_{i=1}^{d}\frac{d^{2}\rho}{dx_{i}dx_{j}}(x)\sigma_{ij}^{2}(x)\right)\frac{1}{\rho(x)}.
\end{align*}
\end{proposition}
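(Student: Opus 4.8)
The plan is to identify the infinitesimal generator of the transition semigroup $Q(t)\colon f\mapsto\rho^{-1}P(t)(f\rho)$ of the extended Feller process $(\gamma_t)$ on the test class $C_c^\infty(\mathbb{R}^d)$, to recognise it as a second‑order elliptic operator together with a zeroth‑order term, and to read off $\mu'$, $\sigma'$ and $c'$ from its coefficients. This is the classical Doob $h$‑transform identity with $h=\rho$; the only genuine work lies in justifying the differentiation under the expectation, which is exactly the purpose of hypothesis~\eqref{eq:martingale}.

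First I would record the action of $(P(t))$ on test functions. Write $a:=\sigma\sigma^{\top}$ and, for $g\in C^2(\mathbb{R}^d)$, $\mathcal{L}g:=\sum_{i}\mu_i\partial_i g+\tfrac12\sum_{i,j}a_{ij}\partial_i\partial_j g$. Itô's formula gives
\[
g(\lambda^x_t)=g(x)+\int_0^t(\mathcal{L}g)(\lambda^x_s)\,ds+\int_0^t\nabla g(\lambda^x_s)^{\top}\sigma(\lambda^x_s)\,dW_s,
\]
and for $g\in C_c^2(\mathbb{R}^d)$ the integrand of the stochastic integral is bounded, so that term is a true martingale; taking expectations and using Fubini yields $P(t)g(x)=g(x)+\int_0^t P(s)(\mathcal{L}g)(x)\,ds$. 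Since $\mathcal{L}g\in C_b(\mathbb{R}^d)$, the map $s\mapsto P(s)(\mathcal{L}g)(x)=\mathbb{E}[\mathcal{L}g(\lambda^x_s)]$ is right‑continuous at $0$ by dominated convergence along the right‑continuous paths of $\lambda^x$, so $\frac{d}{dt}P(t)g(x)\big|_{t=0^+}=\mathcal{L}g(x)$.

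Now fix $f\in C_c^\infty(\mathbb{R}^d)$; then $f\rho\in C_c^2(\mathbb{R}^d)$, so the previous step applies with $g=f\rho$, and dividing by $\rho(x)>0$ gives
\[
Q(t)f(x)=f(x)+\frac{1}{\rho(x)}\int_0^t P(s)\big(\mathcal{L}(f\rho)\big)(x)\,ds=f(x)+\int_0^t Q(s)\big(\rho^{-1}\mathcal{L}(f\rho)\big)(x)\,ds .
\]
Hence the generator of $(Q(t))$ acts on $C_c^\infty$ by $\mathcal{A}f=\rho^{-1}\mathcal{L}(f\rho)$. Expanding with $\nabla(f\rho)=\rho\nabla f+f\nabla\rho$ and $\nabla^2(f\rho)=\rho\nabla^2 f+\nabla f\otimes\nabla\rho+\nabla\rho\otimes\nabla f+f\nabla^2\rho$, and symmetrising the mixed term using $a=a^{\top}$, the terms grouped by the order of the derivatives of $f$ give
\[
\mathcal{A}f=\sum_{i}\Big(\mu_i+\sum_{j}a_{ij}\frac{\partial_j\rho}{\rho}\Big)\partial_i f+\tfrac12\sum_{i,j}a_{ij}\partial_i\partial_j f+\frac{\mathcal{L}\rho}{\rho}\,f .
\]
By the $\mathbb{P}'$‑Markov property of $(\gamma_t)$ this says $f(\gamma_t)-\int_0^t\mathcal{A}f(\gamma_s)\,ds$ is a $\mathbb{P}'$‑martingale for all $f\in C_c^\infty$, i.e.\ $(\gamma_t)$ solves the martingale problem for $\mathcal{A}$; reading off coefficients identifies it as a diffusion with killing, with drift $\mu_i'=\mu_i+\sum_j a_{ij}\partial_j\rho/\rho$, diffusion matrix $a'=a$ (so $\sigma'=\sigma$ is the natural square root, unique up to right multiplication by an orthogonal matrix), and killing coefficient $c'=\mathcal{L}\rho/\rho$ — precisely the asserted formulas once $\sigma_{ij}^2$ is read as $a_{ij}=(\sigma\sigma^{\top})_{ij}$.

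For the sign of $c'$, contractivity $\|P(t)\|_{L(\mathscr{B}^{\rho}(\mathbb{R}^{d}))}\le1$ gives, as in Remark~\ref{rem: definition P_t rho} with $M=1$ and $\omega=0$, that $P(t)\rho(x)\le\rho(x)$. Hypothesis~\eqref{eq:martingale} makes $\big(\int_0^t\nabla\rho(\lambda^x_s)^{\top}\sigma(\lambda^x_s)\,dW_s\big)_{t\in I}$ a true martingale, so Itô's formula together with integrability of $\rho(\lambda^x_t)$ yields $\mathbb{E}\big[\int_0^t\mathcal{L}\rho(\lambda^x_s)\,ds\big]=\mathbb{E}[\rho(\lambda^x_t)]-\rho(x)\le0$ for all $t\in I$; dividing by $t$, letting $t\to0^+$ and using the a.s.\ continuity of $s\mapsto\mathcal{L}\rho(\lambda^x_s)$ together with a domination bound (e.g.\ $|\mathcal{L}\rho|\lesssim\rho$, which makes the relevant family uniformly integrable since $\mathbb{E}[\rho(\lambda^x_s)]\le\rho(x)$) gives $\mathcal{L}\rho(x)\le0$, hence $c'(x)=\mathcal{L}\rho(x)/\rho(x)\le0$, strictly negative exactly where $\mathcal{L}\rho(x)<0$. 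The main obstacle is thus not the algebra but this integrability bookkeeping: ensuring the stochastic integrals are genuine martingales — automatic for compactly supported test functions, and guaranteed for $\rho$ itself by~\eqref{eq:martingale} — and justifying the interchange of the $t\to0^+$ limit with expectation and time integration.
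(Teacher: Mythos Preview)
Your argument is correct and follows essentially the same route as the paper: compute the generator of $Q(t)f=\rho^{-1}P(t)(f\rho)$ on $C_c^\infty$ via It\^o's formula applied to $g=f\rho$, expand $\mathcal{L}(f\rho)$ by the product rule, and read off drift, diffusion and killing coefficients; the sign of $c'$ is obtained from the contractivity bound $P(t)\rho\le\rho$ together with assumption~\eqref{eq:martingale}, exactly as in the paper. Your write-up is somewhat more careful about the integrability bookkeeping (the stochastic integral being a true martingale for compactly supported $g$, and the $t\to0^+$ limit), but the structure and key steps coincide.
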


\begin{proof}
By Ito's formula and the Assumption~\eqref{eq:martingale} as well as
$\left\Vert P(t)\right\Vert _{L(\mathcal{\mathscr{B}}^{\rho}(E))}\leq1$,
we have for any $x\in E$ and $t\in I$
\begin{align*}
\mathbb{E}\left[\rho(\lambda_{t}^{x})\right] & =\rho(x)+\int_{0}^{t}\left({\sum}_{i=1}^{d}\frac{d\rho}{dx_{i}}(x)\mu_{i}(x)+\frac{1}{2}{\sum}_{j=1}^{d}{\sum}_{i=1}^{d}\frac{d^{2}\rho}{dx_{i}dx_{j}}(x)\sigma_{ij}^{2}(x)\right)ds\\
 & \leq\rho(x).
\end{align*}

Hence, 
\[
{\sum}_{i=1}^{d}\frac{d\rho}{dx_{i}}(x)\mu_{i}(x)+\frac{1}{2}{\sum}_{j=1}^{d}{\sum}_{i=1}^{d}\frac{d^{2}\rho}{dx_{i}dx_{j}}(x)\sigma_{ij}^{2}(x)\leq0
\]
which yields the sign of the killing rate $c'$.
 Furthermore, for any $x\in E$ and $f\in C_{c}^{2}(E)$ the infinitesimal
generator $\mathscr{A}'$ of $\left(\gamma_{t}\right)_{t\in I}$ is
given by 
\begin{align*}
& \mathscr{A}'f(x)\\
= \,& \underset{t\searrow0}{\lim}\,\frac{\mathbb{E}'\left[f(\gamma_{t}^{x})\right]-f(x)}{t}\\
 = \,& \underset{t\searrow0}{\lim}\,\frac{1}{t}\left(\frac{P\left(t\right)\left(f\cdot\rho\right)}{\rho}(x)-f(x)\right)\\
 = \,&\underset{t\searrow0}{\lim}\,\frac{1}{t}\left(\frac{\mathbb{E}\left[(f\cdot\rho)(\lambda_{t}^{x})\right]}{\rho(x)}-f(x)\right)\\
 = \,&\underset{t\searrow0}{\lim}\,\frac{1}{t}\left(\frac{(f\cdot\rho)(x)+\int_{0}^{t}{\sum}_{i=1}^{d}\left(\frac{d(f\cdot\rho)}{dx_{i}}\mu_{i}(\lambda_{s}^x)\right)ds+\frac{1}{2}\int_{0}^{t}{\sum}_{j=1}^{d}{\sum}_{i=1}^{d}\left(\frac{d^{2}(f\cdot\rho)}{dx_{i}dx_{j}}\sigma_{ij}^{2}(\lambda_{s}^x)\right)ds}{\rho(x)}-f(x)\right)\\
 = \,&{\sum}_{i=1}^{d}\frac{d(f\cdot\rho)}{dx_{i}}(x)\frac{\mu_{i}(x)}{\rho(x)}+\frac{1}{2}\,{\sum}_{j=1}^{d}{\sum}_{i=1}^{d}\frac{d^{2}(f\cdot\rho)}{dx_{i}dx_{j}}(x)\frac{\sigma_{ij}^{2}(x)}{\rho(x)}.
\end{align*}
Applying the product rule and bringing $\mathscr{A}'f(x)$ into the following form
\[
\mathscr{A}'f(x) = \sum_{i=1}^d \frac{df}{dx_i} \mu_i'(x) + \frac{1}{2}\sum_{i,j=1}^d \frac{d^2f}{dx_{ij}} \sigma_{ij}'(x) + f(x)c'(x)
\]
yields the assertion of the proposition.
\end{proof}
\
As we will see in the next section, extended Feller processes are a generalization of Feller processes to more general state spaces. Thus, it is not surprising that we obtain a regularity result for their paths.
In order to state this result we consider
the following space that will also be needed in the next section:

\begin{definition}\label{ellrho}
\label{def:l-rho}
For $\rho$ being measurable with respect to the Baire $\sigma$-algebra $\mathcal{B}_0(E)$, we define
\[
\ell^{\rho}(E):=\left\{ \frac{f}{\rho}:\,f\in\mathcal{\mathscr{B}}^{\rho}(E)\right\} .
\]
\end{definition}

\begin{remark}
Note that $\ell^{\rho}(E)$ is a 
Banach space
with respect to $\left\Vert \cdot\right\Vert _{\infty}.$
\end{remark}

According to Theorem \ref{thm:contractive gFs on weighted space is Markov process}
the semigroup $\left(Q(t)\right)_{t\in\mathbb{R}_{+}}$ on $\ell^{\rho}(E)$ defined by $$Q(t)f:=\frac{P(t)\left(f\cdot\rho\right)}{\rho}$$ is strongly continuous, contractive and positive.

In order to show regularity of the paths of $f(\gamma_{t})$ for any $f\in\ell^{\rho}(E)$ one can proceed as in the proof of Theorem \ref{thm:GFP have cadlag version} but for the Yosida approximation in Equation~\eqref{eq:proof cadlag paths, Yosida approximation} one obtains an approximation with respect to the norm $\left\Vert \cdot\right\Vert _{\infty}$. This yields the following statement: 

\begin{theorem}
\label{thm:GFP have cadlag version-1}Let $\left(P(t)\right)_{t\in\mathbb{R}_{+}}$
be a generalized $Feller$ semigroup on $\mathcal{\mathscr{B}}^{\rho}(E)$,
let the conditions of Theorem \ref{thm:contractive gFs on weighted space is Markov process}
be satisfied and let $\left(\gamma_{t}\right)_{t\in\mathbb{R}_{+}}$
be the corresponding stochastic process on 
\[
\left(\left(E\cup\left\{ \triangle\right\} \right)^{\mathbb{R}_{+}},\mathcal{B}(E\cup\left\{ \Delta\right\} )^{\mathbb{R}_{+}}\right).
\]

(i) For every countable family  $\left(f_{n}\right)_{n\in\mathbb{N}}\subset\ell^{\rho}(E\cup\left\{ \triangle\right\} )$
there exists a family of stochastic processes with càdlàg 
paths 
\[
\left(\left(\overline{f_{n}(\gamma_{t})}\right)_{t\in\mathbb{R}_{+}}\right)_{n\in\mathbb{N}}
\]
 such that for all $t\in\mathbb{R}_{+}$ there is a null set $\mathcal{N}{}_{t}\in\mathcal{B}(E\cup\left\{ \Delta\right\} )^{\mathbb{R}_{+}}$
for which 
\[
f_{n}(\gamma_{t})=\overline{f_{n}(\gamma_{t})}\text{ on}\left(E\cup\left\{ \triangle\right\} \right)^{\mathbb{R}_{+}}\mathcal{\setminus N}_{t}
\]
 for all $n\in\mathbb{N}$. 

(ii) If additionally to the assumption in (i) there exists a countable
family $\left(f_{n}\right)_{n\in\mathbb{N}}\subset\ell^{\rho}(E\cup\left\{ \triangle\right\} )$
of sequentially continuous functions that separates points, then $\left(\gamma_{t}\right)_{t\in\mathbb{R}_{+}}$
has a version with càdlàg paths.
\end{theorem}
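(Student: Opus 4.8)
The plan is to transcribe the proof of Theorem~\ref{thm:GFP have cadlag version}, with the strongly continuous, contractive, positive semigroup $(Q(t))_{t\in\mathbb{R}_+}$ on $\ell^{\rho}(E\cup\{\Delta\})$ (with the norm $\|\cdot\|_\infty$), $Q(t)g=\frac{P(t)(g\cdot\rho)}{\rho}$, playing the role that $(P(t))$ on $\mathscr{B}^{\rho}(E)$ played there; under $(Q(t))$ the process $(\gamma_t)$ is Markov in the sense of \eqref{eq:GFS definition conditional expectation-1}. Write $\mathscr{A}$ for the generator of $(Q(t))$ and, since $(Q(t))$ is contractive, $R(\beta,\mathscr{A})=\int_0^\infty e^{-\beta u}Q(u)\cdot\,du$ for its resolvent, defined for every $\beta>0$. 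Note that every element of $\ell^{\rho}(E\cup\{\Delta\})$ is a bounded, Baire-measurable function — boundedness by definition of $\|\cdot\|_\infty$, Baire-measurability because $\rho$ is Baire measurable by hypothesis and any $g\in\mathscr{B}^{\rho}(E\cup\{\Delta\})$ is a pointwise limit of $C_b$-functions — so \eqref{eq:GFS definition conditional expectation-1} applies to all functions occurring below, in particular to $R(\beta,\mathscr{A})g$ and to $f_n^{+},f_n^{-}$ (which again lie in $\ell^{\rho}$ since $\mathscr{B}^{\rho}$ is stable under positive/negative parts and $\rho>0$).

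For part (i): fix $0\le g\in\ell^{\rho}(E\cup\{\Delta\})$ and $\beta>0$ and show exactly as in Theorem~\ref{thm:GFP have cadlag version}(i) that $Y_t^{\beta,g}:=e^{-\beta t}R(\beta,\mathscr{A})g(\gamma_t)$ is a non-negative supermartingale with $t\mapsto\mathbb{E}_{\mathbb{P}'_{\nu}}[Y_t^{\beta,g}]$ continuous, using the Markov property of $(\gamma_t)$, positivity of $(Q(t))$, monotone convergence, and the Riemann-sum approximation of the Banach-space-valued integral $\int_0^\infty e^{-\beta u}Q(u)g\,du$. The supermartingale regularization theorem (Theorem~II.2.9 in \cite{ReY}) then gives, for each such $(\beta,g)$, a full-measure set on which $\lim_{r\searrow t,\,r\in\mathbb{Q}}Y_r^{\beta,g}$ exists for all $t$ and admits left limits, i.e.\ a càdlàg version of $Y^{\beta,g}$. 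Decomposing $f_n=f_n^{+}-f_n^{-}$ yields, for every $n\in\mathbb{N}$ and every $\beta\in\mathbb{N}$ with $\beta>0$, a full-measure set $\Omega_{\beta,n}'$ on which $Z_t^{\beta,n}:=\beta R(\beta,\mathscr{A})f_n(\gamma_t)$ has a càdlàg version.

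Here the only real difference to Theorem~\ref{thm:GFP have cadlag version} appears: by strong continuity of $(Q(t))$ the Yosida approximation converges in the supremum norm, $\lim_{\beta\to\infty}\|\beta R(\beta,\mathscr{A})f_n-f_n\|_\infty=0$, so $Z_t^{\beta,n}\to f_n(\gamma_t)$ uniformly in $(t,\omega)$ and no division by $\rho$ is needed. Hence on $\Omega^{*}:=\bigcap_n\bigcap_{\beta\in\mathbb{N},\,\beta>0}\Omega_{\beta,n}'$ a $\limsup/\liminf$ sandwich shows that $\lim_{r\searrow t,\,r\in\mathbb{Q}}f_n(\gamma_r)$ exists for every $t$ and $n$ and that left limits exist; define $\overline{f_n(\gamma_t)}$ as this right limit on $\Omega^{*}$ and $0$ elsewhere, which is càdlàg. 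It is a version of $f_n(\gamma_t)$: strong continuity gives $\|Q(r-t)f_n-f_n\|_\infty\to0$ as $r\searrow t$, so $f_n(\gamma_r)\to f_n(\gamma_t)$ in $L^1(\mathbb{P}'_{\nu})$, hence in probability, forcing the a.s.\ right limit along rationals to equal $f_n(\gamma_t)$ a.s.; taking $\mathcal{N}_t$ to be the countable union over $n$ of the fixed-$t$ exceptional sets (which are measurable and null) finishes part (i). For part (ii), enlarge the given family by $x\mapsto 1/\rho(x)$ (set to $0$ at $\Delta$), which lies in $\ell^{\rho}(E\cup\{\Delta\})$ since $1_E\in C_b(E)$ and which separates every point of $E$ from $\Delta$, and apply part (i). On the resulting full-measure set fix $t$ and a rational sequence $r_k\searrow t$: if $\overline{(1/\rho)(\gamma_t)}=\lim_k 1/\rho(\gamma_{r_k})>0$ then $\rho(\gamma_{r_k})$ is eventually bounded, so the $\gamma_{r_k}$ eventually lie in a fixed $K_R$, and extracting a convergent subsequence $\gamma_{r_{k_j}}\to y\in E$, sequential continuity gives $f_n(y)=\overline{f_n(\gamma_t)}$ for all $n$, whence $y$ is unique by separation and the whole sequence converges; if $\overline{(1/\rho)(\gamma_t)}=0$ then each $\gamma_{r_k}$ is $\Delta$ or has $\rho$ large, so $\gamma_{r_k}\to\Delta$ in $E\cup\{\Delta\}$. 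In both cases $\lim_{r\searrow t,\,r\in\mathbb{Q}}\gamma_r=:\bar\gamma_t$ exists; the same extraction along left-approaching rationals gives left limits, so $(\bar\gamma_t)$ is càdlàg, and $\bar\gamma_t=\gamma_t$ a.s.\ for each fixed $t$ because $\overline{f_n(\gamma_t)}=f_n(\gamma_t)$ a.s.\ for all $n$ and the family separates points.

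The main obstacle is the compactness step in part (ii): one must control exactly when the path can approach the cemetery, and the càdlàg version of $(1/\rho)(\gamma_t)$ from part (i) is precisely the right tool — strict positivity of its value at a time confines the path to a sublevel set $K_R$, while its vanishing drives the path to $\Delta$ — and this must be matched with the separation property so that the subsequential limits are unique and topology-consistent. Everything else is a routine transcription of Theorem~\ref{thm:GFP have cadlag version}, the one genuine simplification being that the Yosida approximation now converges in $\|\cdot\|_\infty$, which removes the need for the auxiliary supermartingale $\exp(-\omega t)\rho(\lambda_t)$ used there.
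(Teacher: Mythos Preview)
Your proposal is correct and follows exactly the route the paper indicates: the paper's own proof is the one-line remark preceding the theorem, namely ``proceed as in the proof of Theorem~\ref{thm:GFP have cadlag version} but for the Yosida approximation one obtains an approximation with respect to the norm $\|\cdot\|_\infty$,'' and you have carried this out in detail, correctly identifying that the $\|\cdot\|_\infty$-convergence of $\beta R(\beta,\mathscr{A})f_n$ makes the auxiliary supermartingale $\exp(-\omega t)\rho(\lambda_t)$ from Theorem~\ref{thm:GFP have cadlag version}(ii) unnecessary. Your treatment of part~(ii) via the function $1/\rho\in\ell^\rho(E\cup\{\Delta\})$ and a compactness/separation argument is more explicit than anything in the paper (which for the analogous Theorem~\ref{thm:GFP have cadlag version}(iii) merely writes ``direct consequence of (ii)''), but is consistent with that sketch.
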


\section{Relation to standard Feller processes}\label{sec:relation}

 In this section we investigate the relationship between generalized and extended Feller processes as of
Theorem~\ref{thm:GFS induce Markov process} and Theorem \ref{thm:contractive gFs on weighted space is Markov process} respectively and
classical Feller processes. Again, $\left(E,\,\rho\right)$ always denotes a weighted
space equipped with the Borel $\sigma$-algebra $\mathcal{B}(E)$. \\

In the following proposition we provide a condition under which Feller processes are generalized Feller processes. 
\begin{proposition}
\label{prop:Generalized Feller sometimes Feller process on locally compact space}
Let $\left(E,\rho\right)$ be a weighted space and $E$ be locally
compact. Let $\left(\lambda_{t}\right)_{t\in\mathbb{R}_{+}}$ be a
Feller process on $E$ with semigroup of transition probabilities
$\left(p(t)\right)_{t\in\mathbb{R}_{+}}$ on $\left(E,\mathcal{B}(E)\right)$
with initial distribution $\nu\in\mathcal{M}^{\rho}(E)$. Let there
be $t_{0}>0$ and $C>0$ such that for all $x\in E$ and $0\leq t\leq t_{0}$
\[
\mathbb{E}_{x}\left[\rho(\lambda_{t})\right]\leq C\rho(x).
\]

Then $\left(\lambda_{t}\right)_{t\in\mathbb{R}_{+}}$ is a generalized
Feller process and for any $x\in E$ and initial distribution $\delta_x$ the process is a also a generalized Feller process with respect to the right continuous extension of its natural filtration. 
\end{proposition}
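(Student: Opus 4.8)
The plan is to show that the transition function $\left(p(t)\right)_{t\in\mathbb{R}_{+}}$ of the given Feller process induces a generalized Feller semigroup $\left(P(t)\right)_{t\in\mathbb{R}_{+}}$ on $\mathscr{B}^{\rho}(E)$, and then that $\left(\lambda_{t}\right)_{t\in\mathbb{R}_{+}}$ satisfies the defining identity \eqref{eq:GFS definition conditional expectation} for it. Since $\rho$ is lower semicontinuous, hence Borel, $\mathbb{E}_{x}[\rho(\lambda_{t})]=\int_{E}\rho\,dp(t)(x,\cdot)$ is well defined and $x\mapsto\mathbb{E}_{x}[\rho(\lambda_{t})]$ is measurable; iterating the hypothesis $\mathbb{E}_{x}[\rho(\lambda_{t})]\le C\rho(x)$ (valid for $t\le t_{0}$) via the Markov property yields, for every $t\ge0$, a constant $M_{t}<\infty$ with $M_{t}=C$ on $[0,t_{0}]$ such that $\mathbb{E}_{x}[\rho(\lambda_{t})]\le M_{t}\rho(x)$ for all $x$. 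Setting $P(t)f(x):=\mathbb{E}_{x}[f(\lambda_{t})]=\int_{E}f\,dp(t)(x,\cdot)$ and using $|f|\le\|f\|_{\rho}\rho$ pointwise, one gets $|P(t)f(x)|\le\|f\|_{\rho}\mathbb{E}_{x}[\rho(\lambda_{t})]\le M_{t}\|f\|_{\rho}\rho(x)$, so $P(t)$ is a well-defined bounded operator on $B^{\rho}(E):=B^{\rho}(E;\mathbb{R})$ with $\|P(t)\|\le M_{t}$; this already gives \textbf{P4} (with $\varepsilon=t_{0}$) and \textbf{P5} (positivity of $p(t)(x,\cdot)$), while \textbf{P1} is $p(0)(x,\cdot)=\delta_{x}$ and \textbf{P2} is Chapman--Kolmogorov together with Tonelli/Fubini, the relevant integrals being finite by the bound just obtained.

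The key step is that $P(t)$ leaves $\mathscr{B}^{\rho}(E)$ invariant. As $\mathscr{B}^{\rho}(E)=\overline{C_{b}(E)}^{\rho}$ is a closed subspace of $B^{\rho}(E)$ and $P(t)$ is bounded, it suffices to show $P(t)C_{b}(E)\subset\mathscr{B}^{\rho}(E)$, and by the same density-and-closedness reasoning it suffices to show $P(t)C_{0}(E)\subset\mathscr{B}^{\rho}(E)$; the latter is exactly the Feller property, since $P(t)$ maps $C_{0}(E)$ into $C_{0}(E)\subset C_{b}(E)\subset\mathscr{B}^{\rho}(E)$. The reduction from $C_{b}(E)$ to $C_{0}(E)$ uses local compactness of $E$: for each $R>0$ Urysohn's lemma gives $\chi_{R}\in C_{c}(E)$ with $1_{K_{R}}\le\chi_{R}\le1$, and then $g\chi_{R}\in C_{c}(E)\subset C_{0}(E)$ with $\|g-g\chi_{R}\|_{\rho}\le\|g\|_{\infty}\sup_{x\notin K_{R}}\rho(x)^{-1}\le\|g\|_{\infty}/R\to0$, so $g=\lim_{R}g\chi_{R}$ in $\|\cdot\|_{\rho}$; boundedness of $P(t)$ and closedness of $\mathscr{B}^{\rho}(E)$ then give $P(t)g\in\mathscr{B}^{\rho}(E)$, and one further approximation of $f\in\mathscr{B}^{\rho}(E)$ by $C_{b}(E)$ in $\|\cdot\|_{\rho}$ finishes the invariance.

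For \textbf{P3} I would argue along the same chain: on $C_{0}(E)$ one has $P(t)h\to h$ uniformly by strong continuity of the Feller semigroup; for $g\in C_{b}(E)$ and fixed $x$, choose $R$ with $\chi_{R}(x)=1$ and $C\|g\|_{\infty}\rho(x)/R<\varepsilon$, split $P(t)g(x)-g(x)$ through $g\chi_{R}(x)$ and let $t\searrow0$; and for general $f\in\mathscr{B}^{\rho}(E)$ approximate in $\|\cdot\|_{\rho}$ by $C_{b}(E)$ and use $\|P(t)\|\le C$ on $[0,t_{0}]$. Hence $\left(P(t)\right)_{t\in\mathbb{R}_{+}}$ is a generalized Feller semigroup. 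Realizing $\left(\lambda_{t}\right)_{t\in\mathbb{R}_{+}}$ as the coordinate process on $\left(E^{\mathbb{R}_{+}},\mathcal{B}(E)^{\mathbb{R}_{+}}\right)$ under the law $\mathbb{P}_{\nu}$ with the finite-dimensional distributions determined by $\nu$ and $p$, and equipping it with the natural filtration, one has $f(\lambda_{t})\in L^{1}(\mathbb{P}_{\nu})$ for $f\in\mathscr{B}^{\rho}(E)$ since $\mathbb{E}_{\nu}[|f(\lambda_{t})|]\le M_{t}\|f\|_{\rho}\int_{E}\rho\,d\nu<\infty$ (using $\nu\in\mathcal{M}^{\rho}(E)$); the Markov property of $\left(\lambda_{t}\right)_{t\in\mathbb{R}_{+}}$, extended from bounded to such $f$ by truncation and dominated convergence, then gives $\mathbb{E}_{\nu}[f(\lambda_{t})\mid\mathcal{F}_{s}]=\mathbb{E}_{\lambda_{s}}[f(\lambda_{t-s})]=P(t-s)f(\lambda_{s})$, which is \eqref{eq:GFS definition conditional expectation}, while $\mathbb{P}_{\nu}\circ\lambda_{0}^{-1}=\nu$ by construction. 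For $\nu=\delta_{x}$, the classical regularity theory of Feller processes provides a càdlàg version for which the simple Markov property already holds with respect to the right-continuous (and then $\mathbb{P}_{x}$-completed) natural filtration; the same truncation argument upgrades it to all $f\in\mathscr{B}^{\rho}(E)$, which is the final assertion.

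The main obstacle is the invariance step: the Feller semigroup is only given on $C_{0}(E)$, whereas a generalized Feller semigroup must act on the much larger space $\mathscr{B}^{\rho}(E)$ of (possibly unbounded) functions. The growth hypothesis $\mathbb{E}_{x}[\rho(\lambda_{t})]\le C\rho(x)$ is precisely what makes $P(t)$ bounded on $\mathscr{B}^{\rho}(E)$, and this boundedness is exactly what allows the $C_{0}(E)\rightsquigarrow C_{b}(E)\rightsquigarrow\mathscr{B}^{\rho}(E)$ density-and-closedness argument (and the pointwise-convergence estimate for \textbf{P3}) to close.
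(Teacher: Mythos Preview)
Your proof is correct and follows essentially the same route as the paper's: bound $P(t)$ on $B^{\rho}(E)$ via the growth hypothesis, use density of $C_{c}(E)$ (equivalently $C_{0}(E)$) in $\mathscr{B}^{\rho}(E)$ on a locally compact space together with the Feller property to get invariance, and verify \textbf{P1}--\textbf{P5} by the same density-and-boundedness chain. The only notable difference is the last step: the paper passes to the right-continuous filtration by invoking Step~3 of Theorem~\ref{thm:GFS induce Markov process} (i.e.\ strong continuity of the newly established generalized Feller semigroup), whereas you appeal to the classical Feller regularity theory; both are valid, and your route is arguably more direct here since the classical theory is already available by hypothesis.
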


\begin{proof}
We first show that $\left(\tilde{P}(t)\right)_{t\in\mathbb{R}_{+}}$
given by 
\begin{align*}
\tilde{P}(t):\,\,\mathcal{\mathscr{B}}^{\rho}(E) & \rightarrow\mathcal{\mathscr{B}}^{\rho}(E)\\
f & \rightarrow\int_{E}f(y)p(t)(\cdot,dy)
\end{align*}
 is a generalized Feller semigroup knowing that $\left(P(t)\right)_{t\in\mathbb{R}_{+}}$
given by $
P(t): C_{0}(E)  \rightarrow C_{0}(E), \, 
f  \rightarrow\int_{E}f(y)p(t)(\cdot,dy)
$ is a Feller semigroup. 
To this end, we show that $\tilde{P}(t) $ is linear bounded map satisfying $\tilde{P}(t)\left(\mathcal{\mathscr{B}}^{\rho}(E)\right)=\mathcal{\mathscr{B}}^{\rho}(E)$. For any $f\in\mathcal{\mathscr{B}}^{\rho}(E)$
and $0\leq t\leq t_{0}$
\begin{align*}
\tilde{P}(t)f(x) & =\int_{E}f(y)p(t)(x,dy)\\
 & =\int_{E}\frac{f(y)}{\rho(y)}\rho(y)p(t)(x,dy)\\
 & \leq\left\Vert f\right\Vert _{\rho}C\rho(x),
\end{align*}
proving that  for $0\leq t\leq t_{0}$
$
\tilde{P}(t)$ 
is a linear bounded map with
$
\left\Vert \tilde{P}(t)\right\Vert _{L\left(\mathscr{B}^{\rho}(E)\right)}\leq C.
$
By Lemma \ref{lem:C_c dense in  B-rho for locally compact spaces} we know that
for any $\varepsilon>0$ and any $f\in\mathscr{B}^{\rho}(E)$ there
exists some $g_{\varepsilon}\in C_{0}(E)$ such that $\left\Vert f-g_{\varepsilon}\right\Vert _{\rho}<\varepsilon.$
Hence for $0\leq t\leq t_{0}$
\[
\left\Vert \tilde{P}(t)f-\tilde{P}(t)g_{\varepsilon}\right\Vert _{\rho}<C\varepsilon
\]
 and since $\tilde{P}(t)g_{\varepsilon}=P(t)g_{\varepsilon}\in C_{0}(E)$
it follows that that $\tilde{P}(t)(\mathcal{\mathscr{B}}^{\rho}(E))=\mathcal{\mathscr{B}}^{\rho}(E).$
For any $s>0$ there is $n\in\mathbb{N}$ such that $s/n<t_{0}$ and
since $\left(p(t)\right)_{t\in\mathbb{R}_{+}}$ is a semigroup of
transition probabilities on $\left(E,\mathcal{B}(E)\right)$
\begin{align*}
\tilde{P}(s)f(x) & =\int_{E}f(y)p(\frac{s}{n}+...+\frac{s}{n})(x,dy)\\
 & =\left(\tilde{P}(\frac{s}{n})...\left(\tilde{P}(\frac{s}{n})f\right)\right)(x).
\end{align*}
Hence, $\tilde{P}(t)$
is a linear bounded map for any $t>0$ and
\[
\left\Vert \tilde{P}(t)\right\Vert _{L\left(\mathscr{B}^{\rho}(E)\right)}\leq C^{\left\lceil t/t_{0}\right\rceil }.
\]
In order to show that $\tilde{P}(t)$ 
is indeed a generalized Feller semigroup we have to show the properties
\textbf{P1},...,\textbf{P5} from Definition \ref{def:generalized Feller semigroup}
hold. \textbf{P1} and \textbf{P2} follow immediately from the fact
$\left(p(t)\right)_{t\in\mathbb{R}_{+}}$ is a semigroup of transition
probabilities. \textbf{P4} follows by assumption and positivity (\textbf{P5})
is obvious. It remains to be shown that for all $f\in\mathcal{\mathscr{B}}^{\rho}(E)$
and all $x\in E$
\[
\underset{t\searrow0}{\lim}\,\tilde{P}(t)f(x)=f(x).
\]
Fix $f\in\mathcal{\mathscr{B}}^{\rho}(E)$ and $x\in E$. Again by Lemma
\ref{lem:C_c dense in  B-rho for locally compact spaces} we know that for any
$\varepsilon>0$ there is $g_{\varepsilon}\in C_{0}(E)$ such that
$\left\Vert f-g_{\varepsilon}\right\Vert _{\rho}<\varepsilon.$  As by the Feller property
\[
\underset{t\searrow0}{\lim} |\tilde{P}(t)g_{\varepsilon}(x)-g_{\varepsilon}(x)|=0
\]
for all $x \in E$, we thus have
\begin{align*}
\underset{t\searrow0}{\lim}\,\left|\tilde{P}(t)f(x)-f(x)\right| =  &\,\underset{t\searrow0}{\lim}\,\left|\tilde{P}(t)f(x)-\tilde{P}(t)g_{\varepsilon}(x)\right| +\underset{t\searrow0}{\lim}\,\left|\tilde{P}(t)g_{\varepsilon}(x)-g_{\varepsilon}(x)\right|\\
 &\quad +\left|g_{\varepsilon}(x)-f(x)\right|\\
 \leq & \, \underset{t\searrow0}{\lim}\,\left\Vert \tilde{P}(t)\right\Vert _{L\left(\mathcal{B}^{\rho}(E)\right)}\left\Vert f-g_{\varepsilon}\right\Vert _{\rho}\rho(x)
+\left|g_{\varepsilon}(x)-f(x)\right|\\
 \leq  & \,C\varepsilon\rho(x)+\varepsilon\rho(x).
\end{align*}
Since $\varepsilon>0$ was arbitrary, $\left(\tilde{P}(t)\right)_{t\in\mathbb{R}_{+}}$ is a generalized
Feller semigroup. 
Finally, since $\left(\lambda_{t}\right)_{t\in\mathbb{R}_{+}}$ is
a Markov process with respect to its natural filtration $\left(\mathcal{F}{}_{t}^{0}\right)_{t\in\mathbb{R}_{+}}$
for any initial distribution $\nu$ and $f\in\mathcal{\mathscr{B}}^{\rho}(E)$
and $0\leq s\leq t$ it holds
\[
\mathbb{E}_{\mathbb{P}_{\nu}}\left[\left.f(\lambda_{t})\right|\mathcal{F}{}_{s}^{0}\right]=\int_{E}f(y)p(t-s)(\lambda_{s},dy)=\tilde{P}(t-s)f(\lambda_{s})
\]
$\mathbb{P}_{\nu}$ -almost surely. As in the last step of the proof
in Theorem \ref{thm:GFS induce Markov process}, for any $x\in E$ and initial distribution $\delta_x$ this equation
can be extended to the right continuous extension of $\left(\mathcal{F}{}_{t}^{0}\right)_{t\in\mathbb{R}_{+}}$. This
yields the statement of the proposition. 
\end{proof}

The following proposition yields an isometric isomorphism that we  use later as a tool to link generalized Feller semigroups to Feller semigroups. For its formulation, recall Definition \ref{ellrho}.

\begin{proposition}
\label{prop:Isometry between Feller and gFs} Let $\rho$ be an admissible
weight function. Define

\begin{align*}
\Phi:\, & L(\mathcal{\mathscr{B}}^{\rho}(E))\rightarrow L(\ell^{\rho}(E))\\
&P \rightarrow\frac{P\left(\left(\cdot\right)\cdot\rho\right)}{\rho}.
\end{align*}
Then, 
$\text{\ensuremath{\Phi}}$
is an isometric isomorphism between $L(\mathcal{\mathscr{B}}^{\rho}(E))$
and $L(\ell^{\rho}(E))$.
\end{proposition}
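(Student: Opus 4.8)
The plan is to identify $\Phi$ with conjugation by the canonical isometric isomorphism $J:\mathscr{B}^{\rho}(E)\to\ell^{\rho}(E)$, $J(f):=f/\rho$, after which every assertion of the proposition becomes formal. First I would check that $J$ is indeed an isometric isomorphism of Banach spaces: it is surjective by the very definition of $\ell^{\rho}(E)$, injective because $\rho>0$ forces $f/\rho=0\Rightarrow f=0$, linear since $\rho$ does not depend on $f$, and isometric since for all $f$ one has $\left\Vert J(f)\right\Vert_{\infty}=\sup_{x\in E}\left|f(x)/\rho(x)\right|=\sup_{x\in E}\left|f(x)\right|/\rho(x)=\left\Vert f\right\Vert_{\rho}$. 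Its inverse is $J^{-1}(g)=\rho\cdot g$; in passing this also reproves that $(\ell^{\rho}(E),\left\Vert\cdot\right\Vert_{\infty})$ is a Banach space.

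Next I would verify the key identity $\Phi(P)=J\circ P\circ J^{-1}$ for every $P\in L(\mathscr{B}^{\rho}(E))$. Given $g\in\ell^{\rho}(E)$ we have $\rho\cdot g\in\mathscr{B}^{\rho}(E)$, hence $P(\rho\cdot g)\in\mathscr{B}^{\rho}(E)$ and
\[
\bigl(J\circ P\circ J^{-1}\bigr)(g)=\frac{P(\rho\cdot g)}{\rho}=\Phi(P)(g).
\]
In particular $\Phi(P)$ maps $\ell^{\rho}(E)$ into itself and is a bounded linear operator, so $\Phi$ is a well-defined map $L(\mathscr{B}^{\rho}(E))\to L(\ell^{\rho}(E))$.

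It then remains to invoke the general fact that conjugation by a fixed isometric isomorphism is an isometric isomorphism of the corresponding operator spaces. Linearity of $\Phi$ in $P$ is immediate from $\Phi(P)=JPJ^{-1}$. For the isometry, $\left\Vert\Phi(P)\right\Vert\leq\left\Vert J\right\Vert\left\Vert P\right\Vert\left\Vert J^{-1}\right\Vert=\left\Vert P\right\Vert$ and symmetrically $\left\Vert P\right\Vert=\left\Vert J^{-1}\Phi(P)J\right\Vert\leq\left\Vert\Phi(P)\right\Vert$, whence $\left\Vert\Phi(P)\right\Vert_{L(\ell^{\rho}(E))}=\left\Vert P\right\Vert_{L(\mathscr{B}^{\rho}(E))}$. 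Finally, $\Phi$ is a bijection with inverse $Q\mapsto J^{-1}\circ Q\circ J$, i.e. $\Phi^{-1}(Q)=\rho\cdot Q\bigl((\cdot)/\rho\bigr)$, since $\Phi^{-1}\circ\Phi=\text{Id}$ and $\Phi\circ\Phi^{-1}=\text{Id}$ follow from $J^{-1}\circ J=\text{Id}=J\circ J^{-1}$ and associativity of composition. (In fact $\Phi$ is also multiplicative, $\Phi(PQ)=\Phi(P)\Phi(Q)$, as is clear from the conjugation formula, but the statement only asks for the linear isometry.)

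There is no real obstacle in this argument; the only two points that deserve a moment's care are that $\Phi(P)$ genuinely takes values in $L(\ell^{\rho}(E))$ — which is automatic because $\rho\cdot g\in\mathscr{B}^{\rho}(E)$ whenever $g\in\ell^{\rho}(E)$ — and that the weighted norm $\left\Vert\cdot\right\Vert_{\rho}$ on $\mathscr{B}^{\rho}(E)$ corresponds exactly to $\left\Vert\cdot\right\Vert_{\infty}$ on $\ell^{\rho}(E)$ under $J$, which is the one-line computation displayed above.
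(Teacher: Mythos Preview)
Your proof is correct and follows essentially the same approach as the paper: both establish that $\Phi$ is an isometry and exhibit the explicit inverse $Q\mapsto\rho\cdot Q((\cdot)/\rho)$. Your packaging via the conjugation $\Phi(P)=JPJ^{-1}$ by the isometric isomorphism $J(f)=f/\rho$ is a bit cleaner than the paper's direct $\varepsilon$-argument for the lower norm bound, but the underlying computations are the same.
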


\begin{proof}
Clearly,$\frac{P\left(\left(\cdot\right)\cdot\rho\right)}{\rho}\in L(\ell^{\rho}(E))$
is well defined and $\Phi$ is linear.
We show first that $\text{\ensuremath{\Phi}}$ is an isometry. 
We calculate for any $f \in \ell^{\rho}(E)$
\begin{align*}
\left\Vert \left(\Phi P\right)f\right\Vert _{\infty} & =\left\Vert P\left(f\cdot\rho\right)\right\Vert _{\text{\ensuremath{\rho}}}\\
 & \leq\left\Vert P\right\Vert _{L\left(\mathscr{B}^{\rho}(E)\right)}\cdot\left\Vert f\right\Vert _{\infty}.
\end{align*}
Hence 
\[
\left\Vert \left(\Phi P\right)\right\Vert _{L(\ell^{\rho}(E))}\leq\left\Vert P\right\Vert _{L\left(\mathscr{B}^{\rho}(E)\right)}.
\]
 Furthermore, for $\varepsilon>0$ let $g_{\varepsilon}\in\mathscr{B}^{\rho}(E)$
be such that 
\[
\left\Vert Pg_{\varepsilon}\right\Vert _{\text{\ensuremath{\rho}}}\geq\left(\left\Vert P\right\Vert _{L\left(\mathscr{B}^{\rho}(E)\right)}-\varepsilon\right)\left\Vert g_{\varepsilon}\right\Vert _{\text{\ensuremath{\rho}}}.
\]
Then  $\frac{g_{\varepsilon}}{\rho}\in\ell^{\rho}(E)$
and 
\begin{align*}
\left\Vert (\Phi P)\left(\frac{g_{\varepsilon}}{\rho}\right)\right\Vert _{\infty}
 & =\left\Vert Pg_{\varepsilon}\right\Vert _{\rho}\\
 & \geq\left(\left\Vert P\right\Vert _{L(\mathcal{\mathscr{B}}^{\rho}(E))}-\varepsilon\right)\left\Vert \frac{g_{\varepsilon}}{\rho}\right\Vert _{\text{\ensuremath{\infty}}},
\end{align*}
which shows that
\[
\left\Vert \left(\Phi P\right)\right\Vert _{L(\ell^{\rho}(E))}\geq\left\Vert P\right\Vert _{L\left(\mathscr{B}^{\rho}(E)\right)}-\varepsilon.
\]
Thus, $\Phi$ is an isometry. 
Furthermore, we note that for any $Q\in L(\ell^{\rho}(E))$ the map
\[
Q'(\cdot):=Q\left(\frac{(\cdot)}{\rho}\right)\cdot\rho
\]
yields surjectivity of $\Phi$ via  $\Phi\left(Q'\right)=Q$.

\end{proof}

\begin{corollary}
There is an isometric isomorphism between contractive generalized
Feller semigroups on $\mathcal{\mathscr{B}}^{\rho}(E)$ and strongly
continuous, contractive, positive semigroups on $\ell^{\rho}(E)$.
\end{corollary}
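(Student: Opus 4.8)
The plan is to show that the isometric isomorphism $\Phi$ of Proposition~\ref{prop:Isometry between Feller and gFs} restricts to a bijection between the two classes of semigroups. To a contractive generalized Feller semigroup $\left(P(t)\right)_{t\in\mathbb R_+}$ we associate $\left(Q(t)\right)_{t\in\mathbb R_+}:=\left(\Phi(P(t))\right)_{t\in\mathbb R_+}=\left(\frac{P(t)\left((\cdot)\cdot\rho\right)}{\rho}\right)_{t\in\mathbb R_+}$ on $\ell^{\rho}(E)$, and conversely to a strongly continuous, contractive, positive semigroup $\left(Q(t)\right)_{t\in\mathbb R_+}$ on $\ell^{\rho}(E)$ we associate $\left(P(t)\right)_{t\in\mathbb R_+}:=\left(\Phi^{-1}(Q(t))\right)_{t\in\mathbb R_+}$ on $\mathscr B^{\rho}(E)$, where $\Phi^{-1}(Q)(g)=Q(g/\rho)\cdot\rho$ as in the proof of that proposition. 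Since $\Phi$ and $\Phi^{-1}$ are mutually inverse isometries, once we check that each of them maps one class into the other the corollary follows.

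First I would verify that $\Phi$ is a unital algebra homomorphism: $\Phi(\text{Id})f=\frac{f\cdot\rho}{\rho}=f$, and for $P_1,P_2\in L(\mathscr B^{\rho}(E))$ one computes $\Phi(P_1)\circ\Phi(P_2)f=\frac{P_1\!\left(\frac{P_2(f\rho)}{\rho}\cdot\rho\right)}{\rho}=\frac{P_1(P_2(f\rho))}{\rho}=\Phi(P_1\circ P_2)f$. Hence $\Phi$ (and likewise $\Phi^{-1}$) sends semigroups to semigroups and preserves the unit, giving \textbf{P1} and \textbf{P2} on both sides. Contractivity is transported in both directions because $\Phi$ is an isometry, and in the direction $\left(Q(t)\right)\mapsto\left(P(t)\right)$ this also yields \textbf{P4} with $C=1$. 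Positivity is transported in both directions because multiplication by $\rho>0$ is a bijection between the positive cone of $\ell^{\rho}(E)$ and the positive cone of $\mathscr B^{\rho}(E)$, so $Q(t)\geq0$ iff $P(t)\geq0$; this is \textbf{P5}.

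It remains to match strong continuity with \textbf{P3}. If $\left(P(t)\right)_{t\in\mathbb R_+}$ is a generalized Feller semigroup, it is strongly continuous by Theorem~\ref{thm:generalized Feller semigroups are strongly continuous}, and for $f\in\ell^{\rho}(E)$ we get $\left\Vert Q(t)f-f\right\Vert_{\infty}=\left\Vert (P(t)-\text{Id})(f\cdot\rho)\right\Vert_{\rho}\to0$ as $t\searrow0$, so $\left(Q(t)\right)_{t\in\mathbb R_+}$ is strongly continuous, contractive and positive. Conversely, if $\left(Q(t)\right)_{t\in\mathbb R_+}$ is strongly continuous, contractive and positive on $\ell^{\rho}(E)$, then for $g\in\mathscr B^{\rho}(E)$ and $x\in E$ we have $\left|P(t)g(x)-g(x)\right|=\rho(x)\left|Q(t)(g/\rho)(x)-(g/\rho)(x)\right|\leq\rho(x)\left\Vert Q(t)(g/\rho)-g/\rho\right\Vert_{\infty}\to0$, which is \textbf{P3}; together with \textbf{P1}, \textbf{P2}, \textbf{P4}, \textbf{P5} this makes $\left(P(t)\right)_{t\in\mathbb R_+}$ a contractive generalized Feller semigroup. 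This establishes the claimed isometric isomorphism. The only genuine subtlety is that strong continuity is a derived property on the generalized Feller side (via Theorem~\ref{thm:generalized Feller semigroups are strongly continuous}) but an assumption on the $\ell^{\rho}(E)$ side, so the two directions must be argued separately rather than by one symmetric statement; everything else is routine bookkeeping with the identity $\Phi(P)(f)=P(f\rho)/\rho$.
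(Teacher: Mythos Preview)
Your proof is correct and follows exactly the approach the paper intends: invoke the isometric isomorphism $\Phi$ from Proposition~\ref{prop:Isometry between Feller and gFs}, use Theorem~\ref{thm:generalized Feller semigroups are strongly continuous} to pass from the generalized Feller side to strong continuity on $\ell^{\rho}(E)$, and check that the remaining semigroup properties transfer. The paper's own proof is a two-line sketch (``use Proposition~\ref{prop:Isometry between Feller and gFs} and Theorem~\ref{thm:generalized Feller semigroups are strongly continuous}; the respective required semigroup properties follow immediately''), and you have simply written out those immediate verifications in full, including the nice observation that the two directions are asymmetric because strong continuity is derived on one side and assumed on the other.
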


\begin{proof}
Use Proposition \ref{prop:Isometry between Feller and gFs} above
and strong continuity of generalized Feller semigroups (see Theorem
\ref{thm:generalized Feller semigroups are strongly continuous}).
The respective required semigroup properties follow immediately. 
\end{proof}

\begin{lemma}
\label{lem: continuous weight function l-rho space is C_0 } If the
admissible weight function $\rho$ is continuous, then $C_{0}(E)=\ell^{\rho}(E)$. 
\end{lemma}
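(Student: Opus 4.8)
The plan is to show the two inclusions $C_0(E) \subset \ell^\rho(E)$ and $\ell^\rho(E) \subset C_0(E)$ by invoking Lemma~\ref{lem: continuous weight function B-rho space is continuous } at each step. Recall that $\ell^\rho(E) = \{ f/\rho : f \in \mathscr{B}^\rho(E)\}$, so membership in $\ell^\rho(E)$ amounts to writing a given function as $f/\rho$ for some $f \in \mathscr{B}^\rho(E)$, equivalently to checking that $g\cdot\rho \in \mathscr{B}^\rho(E)$ for the candidate $g$.

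First I would prove $C_0(E) \subset \ell^\rho(E)$. Let $g \in C_0(E)$. By Lemma~\ref{lem: continuous weight function B-rho space is continuous }(iii), since $\rho$ is continuous we have $g\cdot\rho \in \mathscr{B}^\rho(E)$. Therefore $g = (g\cdot\rho)/\rho \in \ell^\rho(E)$, which gives the first inclusion.

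Next I would prove $\ell^\rho(E) \subset C_0(E)$. Let $h \in \ell^\rho(E)$, so $h = f/\rho$ for some $f \in \mathscr{B}^\rho(E)$. By Lemma~\ref{lem: continuous weight function B-rho space is continuous }(iv), continuity of $\rho$ implies $f/\rho \in C_0(E)$, i.e.\ $h \in C_0(E)$. This establishes the reverse inclusion, and combining the two gives $C_0(E) = \ell^\rho(E)$.

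The proof is essentially immediate once Lemma~\ref{lem: continuous weight function B-rho space is continuous } is in place; there is no real obstacle, since parts (iii) and (iv) of that lemma are exactly the two directions needed. The only point requiring a moment's care is making sure the division/multiplication by $\rho$ is harmless: this is fine because $\rho$ takes values in $(0,\infty)$ and attains a strictly positive minimum (as noted right after the definition of admissible weight function), so $1/\rho$ is a well-defined bounded continuous function and the maps $f \mapsto f/\rho$ and $g \mapsto g\cdot\rho$ are genuine inverse bijections between the relevant spaces.
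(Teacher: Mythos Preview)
Your proof is correct and follows exactly the same approach as the paper, which simply cites parts (iii) and (iv) of Lemma~\ref{lem: continuous weight function B-rho space is continuous }. You have merely spelled out in detail what the paper states in one line.
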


\begin{proof}
This follows from Lemma \ref{lem: continuous weight function B-rho space is continuous } (iii)
and (iv). 
\end{proof}
\begin{corollary}
\label{cor:isometric isomorphism Feller semigroups/contractive gFs}
If the admissible weight function $\rho$ is continuous, then there
is an isometric isomorphism between contractive generalized Feller
semigroups on $\mathcal{\mathscr{B}}^{\rho}(E)$ and 
Feller semigroups on  $C_{0}(E)$.
\end{corollary}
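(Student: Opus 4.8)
The plan is to chain together the three immediately preceding results. By the corollary following Proposition~\ref{prop:Isometry between Feller and gFs}, the map $\Phi$ restricts to a bijection between contractive generalized Feller semigroups on $\mathscr{B}^{\rho}(E)$ and strongly continuous, contractive, positive semigroups on $\ell^{\rho}(E)$, and by Proposition~\ref{prop:Isometry between Feller and gFs} this bijection is isometric at the level of operator norms. Since $\rho$ is assumed continuous, Lemma~\ref{lem: continuous weight function l-rho space is C_0 } identifies $\ell^{\rho}(E)$ and $C_{0}(E)$ as normed spaces (both carrying $\left\Vert\cdot\right\Vert_{\infty}$), while Lemma~\ref{lem: continuous weight function B-rho space is continuous }~(i) ensures that $E$ is locally compact, so that $C_{0}(E)$ is the natural state space of classical Feller theory. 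Composing $\Phi$ with this identification produces the asserted correspondence: to a contractive generalized Feller semigroup $\left(P(t)\right)_{t\in\mathbb{R}_{+}}$ one associates $\left(\Phi P(t)\right)_{t\in\mathbb{R}_{+}}=\left(P(t)\left(\left(\cdot\right)\cdot\rho\right)/\rho\right)_{t\in\mathbb{R}_{+}}$, and conversely to a Feller semigroup $\left(Q(t)\right)_{t\in\mathbb{R}_{+}}$ on $C_{0}(E)$ one associates $\left(\Phi^{-1}Q(t)\right)_{t\in\mathbb{R}_{+}}=\left(\rho\cdot Q(t)\left(\left(\cdot\right)/\rho\right)\right)_{t\in\mathbb{R}_{+}}$.

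Next I would verify that the semigroup structure is preserved in both directions, which is immediate since $\Phi$ is conjugation by the fixed multiplication operator $f\mapsto f\cdot\rho$: the relations $\mathbf{P1}$ and $\mathbf{P2}$ of Definition~\ref{def:generalized Feller semigroup} pass through verbatim, positivity ($\mathbf{P5}$) passes through because multiplication by $\rho>0$ preserves the positive cones of $\mathscr{B}^{\rho}(E)$ and $\ell^{\rho}(E)$, strong continuity together with $\mathbf{P3}$ passes through because $\Phi$ is a topological isomorphism of the underlying Banach spaces (here Theorem~\ref{thm:generalized Feller semigroups are strongly continuous} supplies strong continuity on one side, and the defining strong continuity of a Feller semigroup supplies it on the other), and the uniform bound $\mathbf{P4}$, in fact the contractivity $\left\Vert P(t)\right\Vert\le 1$, passes through because $\Phi$ is an isometry.

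The only genuine point requiring attention — and it is bookkeeping rather than a real obstacle — is to pin down the definition of \emph{Feller semigroup} being used so that ``strongly continuous, contractive, positive semigroup on $C_{0}(E)$'' is literally that definition. Under the standard convention (a strongly continuous semigroup of positive contractions on $C_{0}(E)$ of the locally compact space $E$) the matching is a tautology once $\ell^{\rho}(E)=C_{0}(E)$ is in hand; and if one additionally requires the sub-Markov property $0\le f\le 1\Rightarrow 0\le Q(t)f\le 1$, this is automatic from positivity and $\left\Vert Q(t)\right\Vert\le 1$. Hence, with no further computation, $\Phi$ composed with the identification of Lemma~\ref{lem: continuous weight function l-rho space is C_0 } is the desired isometric isomorphism between contractive generalized Feller semigroups on $\mathscr{B}^{\rho}(E)$ and Feller semigroups on $C_{0}(E)$.
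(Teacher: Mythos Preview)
Your proposal is correct and follows essentially the same route as the paper's proof, which simply cites local compactness from Lemma~\ref{lem: continuous weight function B-rho space is continuous }~(i), the identification $\ell^{\rho}(E)=C_{0}(E)$ from Lemma~\ref{lem: continuous weight function l-rho space is C_0 }, and the isometry $\Phi$ from Proposition~\ref{prop:Isometry between Feller and gFs}. You have added the explicit verification that the semigroup axioms and the Feller definition match under $\Phi$, which the paper leaves implicit; this is helpful but not a different argument.
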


\begin{proof}
Due to the continuity of $\rho$, $E$ is locally compact (see Lemma \ref {lem: continuous weight function B-rho space is continuous } (i)). Therefore Feller semigroups are well-defined. The rest follows from Lemma \ref{lem: continuous weight function l-rho space is C_0 } and Proposition~\ref{prop:Isometry between Feller and gFs}.
\end{proof}

The following theorem is the reason why $\left(\gamma_{t}\right)_{t\in\mathbb{R}_{+}}$
was named $\mathit{extended}$ $\mathit{Feller}$ $\mathit{process}$.

\begin{theorem}
\label{thm:extended Feller process is extended Feller process} Let
$\left(E,\rho\right)$ be a weighted space and let $\rho$ be continuous.
Let $\left(P(t)\right)_{t\in\mathbb{R}_{+}}$ be a generalized Feller
semigroup on $\mathcal{\mathscr{B}}^{\rho}(E)$ and let $\omega\in\mathbb{R}$
be such that for any $t\in\mathbb{R}_{+}$
\[
\left\Vert P(t)\right\Vert _{L(\mathcal{\mathscr{B}}^{\rho}(E))}\leq e^{\omega t}.
\]
 Then $\left(Q(t)\right)_{t\in\mathbb{R}_{+}}$ defined as 
\[
Q(t)f:=e^{-\omega t}\frac{P\left(t\right)\left(f\cdot\rho\right)}{\rho}
\]
is a 
Feller semigroup on $C_{0}(E)$\footnote{In contrast to the usual literature on Feller semigroups  where  separability is required,  we here call the strongly continuous, positive, contractive semigroup   $(Q(t))_{t \in \mathbb{R}_{+}}$ \emph{Feller}  even  though $E$ is not necessarily separable.} 
and for any probability measure $\nu$ on $\left(E,\mathcal{B}(E)\right)$
there exists a probability measure $\mathbb{P}'_{\nu}$ on
\[
\left(E^{\mathbb{R}_{+}},\mathcal{B}(E)^{\mathbb{R}_{+}}\right)
\]
and a right continuous filtration $\left(\mathcal{F}_{t}\right)_{t\in\mathbb{R}_{+}}$
such that and for any $t\geq s\geq0$ and for any $f\in C_{0}(E)$
for the canonical process $\left(\gamma_{t}\right)_{t\in\mathbb{R}_{+}}$
\begin{equation}
\mathbb{E}_{\mathbb{P}'_{\nu}}\left[\left.f(\gamma_{t})\right|\mathcal{F}_{s}\right]=Q(t-s)f(\gamma_{s})\label{eq:extended Feller process for C_0}
\end{equation}
holds true $\mathbb{P}'_{\nu}$ - almost surely and 
\[
\mathbb{P}'_{\nu}\circ\gamma_{\text{0}}^{-1}=\nu
\]
holds true. 
\end{theorem}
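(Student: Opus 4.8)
The plan is to deduce the statement from the isometric-isomorphism apparatus of Section~\ref{sec:relation} together with the existence result for extended Feller processes; once those are in place almost nothing substantial is left. First I would rescale to a contraction: set $S(t):=e^{-\omega t}P(t)$ for $t\in\mathbb{R}_{+}$. As already noted in the proof of Corollary~\ref{cor:M smaller 1 gFs on weighted space is Markov process}, $\left(S(t)\right)_{t\in\mathbb{R}_{+}}$ inherits \textbf{P1}--\textbf{P3} and \textbf{P5} (Definition~\ref{def:generalized Feller semigroup}) from $\left(P(t)\right)_{t\in\mathbb{R}_{+}}$, while the bound $\left\Vert P(t)\right\Vert \leq e^{\omega t}$ turns \textbf{P4} into $\left\Vert S(t)\right\Vert _{L(\mathscr{B}^{\rho}(E))}\leq 1$ for all $t\in\mathbb{R}_{+}$; hence $\left(S(t)\right)_{t\in\mathbb{R}_{+}}$ is a contractive generalized Feller semigroup. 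Since $\rho$ is continuous, Lemma~\ref{lem: continuous weight function B-rho space is continuous } gives that $E$ is locally compact, so that Feller semigroups on $C_{0}(E)$ make sense, and Lemma~\ref{lem: continuous weight function l-rho space is C_0 } identifies $\ell^{\rho}(E)$ with $C_{0}(E)$.

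Next I would apply the isometric isomorphism $\Phi$ of Proposition~\ref{prop:Isometry between Feller and gFs}. A direct computation shows $\Phi(S(t))g=S(t)(g\cdot\rho)/\rho=e^{-\omega t}P(t)(g\cdot\rho)/\rho=Q(t)g$ for $g\in\ell^{\rho}(E)=C_{0}(E)$, and also that $\Phi$ is multiplicative, $\Phi(P_{1}\circ P_{2})=\Phi(P_{1})\circ\Phi(P_{2})$, so that the semigroup law $S(t+s)=S(s)\circ S(t)$ transports to $Q(t+s)=Q(s)\circ Q(t)$. Positivity is clear, contractivity follows because $\Phi$ is an isometry, and strong continuity of $\left(Q(t)\right)_{t\in\mathbb{R}_{+}}$ on $C_{0}(E)$ follows from strong continuity of $\left(S(t)\right)_{t\in\mathbb{R}_{+}}$ (Theorem~\ref{thm:generalized Feller semigroups are strongly continuous}) together with the identity $\left\Vert Q(t)g-g\right\Vert _{\infty}=\left\Vert S(t)(g\rho)-g\rho\right\Vert _{\rho}$. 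Thus $\left(Q(t)\right)_{t\in\mathbb{R}_{+}}$ is a Feller semigroup on $C_{0}(E)$ in the sense of the theorem's footnote; this is precisely the content of Corollary~\ref{cor:isometric isomorphism Feller semigroups/contractive gFs}, and it establishes the first assertion.

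For the process I would invoke Theorem~\ref{thm:contractive gFs on weighted space is Markov process} applied to the contraction $\left(S(t)\right)_{t\in\mathbb{R}_{+}}$ (equivalently Corollary~\ref{cor:M smaller 1 gFs on weighted space is Markov process} applied to $\left(P(t)\right)_{t\in\mathbb{R}_{+}}$ with rate $\omega$), noting that $\rho$, being continuous, is in particular Baire measurable. This yields, for any probability measure $\nu$, a probability measure $\mathbb{P}'_{\nu}$ and the canonical process $\left(\gamma_{t}\right)_{t\in\mathbb{R}_{+}}$ with $\mathbb{P}'_{\nu}\circ\gamma_{0}^{-1}=\nu$ and, for every bounded Baire-measurable $f$, $\mathbb{E}_{\mathbb{P}'_{\nu}}[f(\gamma_{t})\mid\mathcal{F}_{s}^{0}]=\big(S(t-s)(f\cdot\rho)/\rho\big)(\gamma_{s})=Q(t-s)f(\gamma_{s})$. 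Restricting to $f\in C_{0}(E)$ (extended by $f(\Delta)=0$), Lemma~\ref{lem: continuous weight function B-rho space is continuous }\,(iii) gives $f\cdot\rho\in\mathscr{B}^{\rho}(E\cup\{\Delta\})$, so the last assertion of Theorem~\ref{thm:contractive gFs on weighted space is Markov process} upgrades \eqref{eq:extended Feller process for C_0} to the complete right continuous extension $\left(\mathcal{F}_{t}\right)_{t\in\mathbb{R}_{+}}$ of the natural filtration, which is the filtration claimed in the statement.

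The work is essentially bookkeeping rather than substance, since the analytic heart already lies in Proposition~\ref{prop:Isometry between Feller and gFs}, Corollary~\ref{cor:isometric isomorphism Feller semigroups/contractive gFs} and Theorem~\ref{thm:contractive gFs on weighted space is Markov process}. The two points I expect to need a little care are: (a) checking that every axiom and, crucially, strong continuity survive both the rescaling and the transfer under $\Phi$, so that $\left(Q(t)\right)_{t\in\mathbb{R}_{+}}$ is genuinely Feller on $C_{0}(E)$; and (b) reconciling the state space $\left(E\cup\{\Delta\}\right)^{\mathbb{R}_{+}}$ produced by Theorem~\ref{thm:contractive gFs on weighted space is Markov process} with the $E^{\mathbb{R}_{+}}$ appearing in the statement — here one uses that $\nu$ does not charge $\Delta$ and the standing convention $f(\Delta)=0$, so that the identity \eqref{eq:extended Feller process for C_0} restricted to $f\in C_{0}(E)$ is unaffected by the cemetery and may be recorded on $E^{\mathbb{R}_{+}}$ directly. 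No estimate beyond those already established is required.
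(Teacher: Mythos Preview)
Your proposal is correct and follows essentially the same approach as the paper: the paper's own proof is a one-line citation of Lemma~\ref{lem: continuous weight function B-rho space is continuous }, Theorem~\ref{thm:contractive gFs on weighted space is Markov process}, Corollary~\ref{cor:M smaller 1 gFs on weighted space is Markov process} and Proposition~\ref{prop:Isometry between Feller and gFs}, and you have simply unpacked how these results combine, including the rescaling to a contraction, the identification $\ell^{\rho}(E)=C_{0}(E)$ via continuity of $\rho$, and the upgrade to the right continuous filtration. Your care point (b) about the cemetery state is a genuine detail the paper glosses over, and your treatment of it is adequate for the claimed identity on $C_{0}(E)$.
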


\begin{proof}
This follows from Lemma~\ref{lem: continuous weight function B-rho space is continuous }, 
Theorem~\ref{thm:contractive gFs on weighted space is Markov process}, Corollary~\ref{cor:M smaller 1 gFs on weighted space is Markov process}
and Proposition~\ref{prop:Isometry between Feller and gFs}. 
\end{proof}

\section{Examples}

This section is dedicated to specific examples ranging from deterministic Feller processes corresponding to semigroups of transport type to affine and polynomial processes.
In Section~\ref{eq:secP4} we also show by means of a counterexample the necessity of Condition \textbf{P4} (which is not needed for standard Feller semigroups on $C_0(E)$) to obtain strong continuity for the generalized Feller semigroup.

\subsection{Generalized Feller processes of transport type}\label{sec:transport}

We consider a generalized Feller semigroup such that the corresponding generalized Feller process, given by $\left(\psi_{t}\right)_{t\in\mathbb{R}_{+}}$,  is deterministic. This is an important class of examples, since it connects the theory of ordinary differential equations on weighted spaces with generalized Feller processes. In other contexts, e.g.~in machine learning, this is related to so-called Koopman operators, or, e.g.~in the theory of partial differential equations of transport type, to the method of characteristics.

Let us start by introducing the notion of a smooth algebra homomorphism.

\begin{definition}\label{def:OH}
We call a  continuous linear functional $\ell: \mathcal{\mathscr{B}}^{\rho}(E) \to \mathbb{R} $  a \emph{smooth algebra homomorphism,} if for all bounded smooth functions $\phi:\mathbb{R}^n \to \mathbb{R}$ and for all $f_1,\ldots,f_n \in \mathcal{\mathscr{B}}^{\rho}(E)$ we have that
$$
\ell(\phi(f_1(\cdot),\ldots,f_n(\cdot))) = \phi(\ell(f_1),\ldots,\ell(f_n)) \, .
$$
Analogously, we call a continuous linear map $P: \mathcal{\mathscr{B}}^{\rho}(E) \to \mathcal{\mathscr{B}}^{\rho}(E) $ a \emph{smooth operator algebra homomorphism} if for all bounded smooth functions $\phi:\mathbb{R}^n \to \mathbb{R}$ and for all $f_1,\ldots,f_n \in \mathcal{\mathscr{B}}^{\rho}(E)$ we have that
\begin{align}\label{eq:prop_soah}
P(\phi(f_1(\cdot),\ldots,f_n(\cdot)))(\cdot) = \phi(P(f_1)(\cdot),\ldots,\ell(f_n)(\cdot)) \, .
\end{align}
\end{definition}
The next lemma states that smooth algebra homomorphisms are characterized as point evaluations.
\begin{lemma}
Let $E$ be a weighted space and $\ell: \mathcal{\mathscr{B}}^{\rho}(E) \to \mathbb{R} $ be a continuous linear functional. Then $\ell$ is a smooth algebra homomorphism if and only if there is $x \in E$ such that $\ell(f) = f(x)$ for all $ f \in \mathcal{\mathscr{B}}^{\rho}(E)$. The point $x$ is uniquely determined by $\ell$.
\end{lemma}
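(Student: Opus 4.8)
The plan is to prove the "if" direction first, which is essentially trivial: if $\ell(f) = f(x)$ for some fixed $x \in E$, then for bounded smooth $\phi : \mathbb{R}^n \to \mathbb{R}$ and $f_1, \ldots, f_n \in \mathcal{\mathscr{B}}^{\rho}(E)$, we have $\ell(\phi(f_1(\cdot), \ldots, f_n(\cdot))) = \phi(f_1(x), \ldots, f_n(x)) = \phi(\ell(f_1), \ldots, \ell(f_n))$. One only needs to note that $\phi(f_1(\cdot), \ldots, f_n(\cdot))$ indeed lies in $\mathcal{\mathscr{B}}^{\rho}(E)$ (it is bounded since $\phi$ is bounded, and it is the uniform-with-respect-to-$\rho$ limit of $\phi$ composed with bounded continuous approximants, cf.\ the composition argument used in Proposition~\ref{prop:Stone-Weierstrass on B-rho spaces}), so that the left-hand side makes sense; and that continuity of $\ell$ and $\ell(1_E)$ being well-defined cause no issue. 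Uniqueness of $x$ follows since $\mathcal{\mathscr{B}}^{\rho}(E)$ contains $C_b(E)$, which separates points on the completely regular space $E$.

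For the "only if" direction, suppose $\ell$ is a smooth algebra homomorphism. First I would record the basic consequences of the homomorphism property applied to simple choices of $\phi$: taking $\phi \equiv c$ constant gives $\ell(c \cdot 1_E) = c$, so $\ell(1_E) = 1$ and $\ell$ is nonzero; taking $\phi(u,v) = $ (a bounded smooth function agreeing with $u + v$, resp.\ $uv$, on a large enough ball) shows that $\ell$ restricted to the subalgebra of bounded functions behaves multiplicatively and additively on bounded functions — more precisely, for $f, g \in C_b(E)$ one gets $\ell(fg) = \ell(f)\ell(g)$ by choosing $\phi$ to coincide with multiplication on the compact range of $(f,g)$. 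Thus $\ell|_{C_b(E)}$ is a genuine (bounded, unital) algebra homomorphism $C_b(E) \to \mathbb{R}$. By the Riesz representation Theorem~\ref{thm:Riesz-representation-for B rho}, $\ell$ is represented by a unique signed Radon measure $\mu \in \mathcal{M}^{\rho}(E)$; I would then argue that $\mu$ must be a Dirac mass. One clean route: the restriction of $\ell$ to $C_b(E)$ is a unital algebra homomorphism, hence positive (since $f \geq 0$ on $C_b$ means $f = g^2$ for $g = \sqrt{f} \in C_b(E)$, giving $\ell(f) = \ell(g)^2 \geq 0$), so $\mu$ is a positive measure with $\mu(E) = \ell(1_E) = 1$, i.e.\ a Radon probability measure. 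If $\supp \mu$ contained two distinct points $x \neq y$, pick $f \in C_b(E)$ with $f(x) = 0$, $f(y) = 1$, $0 \le f \le 1$ (complete regularity); then $\ell(f^2) = \ell(f)^2$, i.e.\ $\int f^2 \, d\mu = (\int f \, d\mu)^2$, which by the equality case of Jensen/Cauchy--Schwarz forces $f$ to be $\mu$-a.e.\ constant, contradicting that both $x$ and $y$ lie in the support with $f$ taking values near $0$ near $x$ and near $1$ near $y$. Hence $\supp \mu = \{x\}$ for a single point $x$, and since $\mu$ is a probability measure, $\mu = \delta_x$, so $\ell(f) = f(x)$ for all $f \in C_b(E)$.

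It then remains to upgrade this identity from $C_b(E)$ to all of $\mathcal{\mathscr{B}}^{\rho}(E)$. This is where I would use continuity of $\ell$ together with the defining property of $\mathcal{\mathscr{B}}^{\rho}(E)$ as the $\|\cdot\|_{\rho}$-closure of $C_b(E)$: given $f \in \mathcal{\mathscr{B}}^{\rho}(E)$ and a sequence $g_n \in C_b(E)$ with $\|g_n - f\|_{\rho} \to 0$, we have $\ell(g_n) \to \ell(f)$ by continuity of $\ell$, and $g_n(x) \to f(x)$ since $|g_n(x) - f(x)| \le \rho(x)\|g_n - f\|_{\rho} \to 0$; thus $\ell(f) = f(x)$. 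Equivalently and more directly, $\ell(f) = \int f \, d\delta_x = f(x)$ by the Riesz representation already established. Finally, uniqueness of $x$ is immediate: if $f(x) = \ell(f) = f(x')$ for all $f \in \mathcal{\mathscr{B}}^{\rho}(E) \supseteq C_b(E)$, then since $C_b(E)$ separates points of the completely regular space $E$, we get $x = x'$.

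\textbf{Main obstacle.} The only genuinely delicate point is showing $\mu$ is a Dirac mass; everything else is bookkeeping with the homomorphism property and continuity. The support/Jensen argument above is robust, but one must be slightly careful that the homomorphism identity $\ell(fg) = \ell(f)\ell(g)$ really is available for all $f, g \in C_b(E)$ — this requires choosing, for each such pair, a bounded smooth $\phi$ agreeing with the product map on the (compact, hence bounded) set $\overline{f(E)} \times \overline{g(E)} \subset \mathbb{R}^2$, which exists since multiplication is smooth and one can cut it off smoothly outside a large ball; then $\phi(f(\cdot), g(\cdot)) = f(\cdot)g(\cdot)$ as functions on $E$, so the identity transfers. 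An alternative to the Jensen argument, if one prefers, is to invoke the Gelfand-theoretic fact that unital $\mathbb{R}$-algebra homomorphisms $C_b(E) \to \mathbb{R}$ that are moreover represented by a Radon measure must be point evaluations, but the self-contained support argument is preferable here.
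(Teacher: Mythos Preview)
Your proof is correct and in fact more carefully assembled than the paper's. Both arguments pass through the Riesz representation $\ell(f)=\int f\,d\mu$ and aim to show $\mu$ is a Dirac mass, but the key step differs: the paper observes that a smooth algebra homomorphism cannot decompose as $\ell_1+\ell_2$ with nontrivial $\ell_j$ of disjoint supports (choosing bounded $f_1,f_2$ with $f_1f_2\equiv 0$ on $\supp\mu$ but $\ell_i(f_i)=1$, $\ell_i(f_j)=0$ forces $0=\ell(f_1f_2)\neq\ell(f_1)\ell(f_2)=1$), and from this concludes the support is a singleton. Your route instead first extracts positivity via $\ell(g^2)=\phi(\ell(g))\ge 0$ for a nonnegative smooth cutoff of $u\mapsto u^2$, deduces $\mu$ is a probability measure, and then uses the equality case in Cauchy--Schwarz ($\int f^2\,d\mu=(\int f\,d\mu)^2$ forces $f$ constant $\mu$-a.e.) to rule out two points in the support. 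Your version has the advantage of making the normalization $\mu(E)=1$ explicit (the paper's sketch jumps from ``support is a singleton'' to ``point evaluation'' without noting why the mass is $1$ and the sign positive), and it is the standard Gelfand-type argument.

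One small point worth tightening: when you assert $\ell(fg)=\ell(f)\ell(g)$ by choosing $\phi$ to agree with multiplication on $\overline{f(E)}\times\overline{g(E)}$, you also need $\phi$ to agree with multiplication at the single point $(\ell(f),\ell(g))$, which a priori need not lie in that rectangle. This is harmless --- either enlarge the rectangle to include that point, or (cleaner) first establish positivity so that $\mu$ is a probability measure, giving $\ell(f)\in[\inf f,\sup f]$ automatically --- but it should be said.
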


\begin{proof}
Clearly, if $\ell$ is given by a point evaluation, it also satisfies the smooth algebra homomorphism property. 

For the converse direction, observe that
a smooth algebra homomorphism can never be a sum of two other non-trivial continuous linear functionals with disjoint supports. Indeed assume $ \ell = \ell_1 +  \ell_2 $ with $\ell_j \neq 0$, $j=1,2$ with disjoint supports, then with $\phi(f_1,f_2)=f_1f_2$ (for some bounded $f_1$, $f_2$), $\ell$ would need to satisfy
$$
\ell_1(f_1f_2) + \ell_2(f_1f_2) = (\ell_1(f_1)+\ell_2(f_1))(\ell_1(f_2)+\ell_2(f_2)).
$$
This is impossible if one chooses $f_i$ with vanishing $f_1f_2$ on the support of $\ell$ such that $\ell_i(f_i)=1$ and $\ell_1(f_2)=\ell_2(f_1)=0$, since then the left hand side gives $0$ while the right hand side is equal to $1$.
Note that  one can always construct such $f_i$, as
$E$ is completely regular and $\sigma$-compact, whence normal. Therefore the measure representing $\ell$ has to have as support only a singleton.
\end{proof}

We can now use this to characterize smooth operator algebra homomorphisms via concatenations with maps $\psi$ whose restrictions to compact sets $K_R$ are continuous.

\begin{lemma}\label{lem:soah}
Consider a map $P:\mathcal{\mathscr{B}}^{\rho}(E) \to \mathcal{\mathscr{B}}^{\rho}(E)$. Then $P$ is a smooth operator algebra homomorphims if and only if there is a map $\psi: E \to E$, 
whose restrictions $\psi|_{K_R}$ are continuous for all $R >0$ and which satisfies 
\begin{align}\label{eq:rhobound}
\sup_{x \in E} \frac{\rho \circ \psi(x)} {\rho(x)} < \infty,
\end{align}
such that $Pf=f \circ \psi$. The map $\psi$ is uniquely determined by $P$.
\end{lemma}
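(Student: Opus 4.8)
The plan is to prove both directions, using the preceding lemma characterizing smooth algebra homomorphisms as point evaluations.

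\textbf{Sufficiency.} First I would show that if $\psi: E \to E$ has continuous restrictions to each $K_R$ and satisfies \eqref{eq:rhobound}, then $Pf := f \circ \psi$ defines a smooth operator algebra homomorphism on $\mathscr{B}^\rho(E)$. The key point is that $P$ maps $\mathscr{B}^\rho(E)$ into itself: given $f \in \mathscr{B}^\rho(E)$, I would verify the two conditions of Theorem~\ref{thm: equivalence B-rho space} for $f \circ \psi$. Condition (i) follows because $\psi|_{K_R}$ is continuous and maps into $E$, so $f \circ \psi|_{K_R}$ is a composition of continuous maps on a compact set, hence bounded and continuous; here one uses that $f$ restricted to the compact image $\psi(K_R)$ is bounded continuous. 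For condition (ii), let $c := \sup_{x} \rho(\psi(x))/\rho(x) < \infty$; then for $x \notin K_R$ one estimates $|f(\psi(x))|/\rho(x) \le c\, |f(\psi(x))|/\rho(\psi(x)) \le c \sup_{y \in E \setminus K_{R/c'}} |f(y)|/\rho(y)$ for a suitable comparison, which tends to $0$ as $R \to \infty$ by the corresponding property of $f$ — one has to be slightly careful with the bookkeeping but it is routine. Boundedness of $P$ as an operator, $\|Pf\|_\rho \le c \|f\|_\rho$, follows from the same estimate. Linearity is clear, and the smooth operator algebra homomorphism property \eqref{eq:prop_soah} is immediate since $\phi(f_1,\ldots,f_n) \circ \psi = \phi(f_1 \circ \psi, \ldots, f_n \circ \psi)$ pointwise.

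\textbf{Necessity.} Conversely, suppose $P: \mathscr{B}^\rho(E) \to \mathscr{B}^\rho(E)$ is a smooth operator algebra homomorphism. For each fixed $x \in E$, the map $\ell_x: f \mapsto (Pf)(x)$ is a continuous linear functional on $\mathscr{B}^\rho(E)$ (continuity of $P$ plus continuity of point evaluation on $\mathscr{B}^\rho(E)$, which holds since $|g(x)| \le \rho(x)\|g\|_\rho$), and it inherits the smooth algebra homomorphism property directly from \eqref{eq:prop_soah}. Hence by the preceding lemma there is a unique point, call it $\psi(x) \in E$, with $(Pf)(x) = f(\psi(x))$ for all $f$. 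This defines $\psi: E \to E$ with $Pf = f \circ \psi$, and uniqueness of $\psi$ follows from uniqueness in that lemma together with the fact that $\mathscr{B}^\rho(E)$, containing $C_b(E)$, separates points of the completely regular space $E$. The bound \eqref{eq:rhobound} comes from testing against functions bounded by $\rho$: for $x \in E$ one has $\rho(\psi(x)) = \sup\{ f(\psi(x)) : f \in C_b(E),\ |f| \le \rho \} = \sup\{ (Pf)(x) : f \in C_b(E),\ |f| \le \rho\} \le \rho(x) \|P\|_{L(\mathscr{B}^\rho(E))}$, using the same argument as in Remark~\ref{rem: definition P_t rho}. It remains to check that $\psi|_{K_R}$ is continuous for each $R$: if $x_m \to x$ in $K_R$, then for any $f \in C_b(E)$ we have $f(\psi(x_m)) = (Pf)(x_m) \to (Pf)(x) = f(\psi(x))$ since $Pf \in \mathscr{B}^\rho(E)$ is continuous on $K_R$ by Theorem~\ref{thm: equivalence B-rho space}(i); since $C_b(E)$ separates points and, by \eqref{eq:rhobound}, the points $\psi(x_m)$ all lie in the compact set $K_{R'}$ with $R' := R \sup_x \rho(\psi(x))/\rho(x)$, convergence of $f(\psi(x_m))$ to $f(\psi(x))$ for all $f \in C_b(E)$ forces $\psi(x_m) \to \psi(x)$ (any subsequential limit point in $K_{R'}$ must agree with $\psi(x)$ by point separation).

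\textbf{Main obstacle.} I expect the subtle point to be establishing continuity of $\psi|_{K_R}$: one only gets continuity of $Pf$ on compacts for $f \in C_b(E)$ (or $\mathscr{B}^\rho(E)$), and to conclude that $\psi(x_m) \to \psi(x)$ from $f(\psi(x_m)) \to f(\psi(x))$ one genuinely needs the a priori compactness of $\{\psi(x_m)\}$ supplied by \eqref{eq:rhobound}, so that point-separation by $C_b(E)$ on a compact set can be upgraded to convergence. The verification that $P$ preserves $\mathscr{B}^\rho(E)$ in the sufficiency direction, via Theorem~\ref{thm: equivalence B-rho space}, requires care with the $R \to \infty$ estimate but is otherwise straightforward.
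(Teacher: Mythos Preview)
Your proposal is correct and follows essentially the same architecture as the paper: for necessity you invoke the preceding lemma pointwise to produce $\psi$, then establish \eqref{eq:rhobound} exactly as in Remark~\ref{rem: definition P_t rho}, and finally check continuity of $\psi|_{K_R}$; for sufficiency you verify via Theorem~\ref{thm: equivalence B-rho space} that $f\circ\psi\in\mathscr{B}^\rho(E)$ and bound the operator norm. The paper does the same, deferring the details of the necessity direction to the proof of Proposition~\ref{prop:characterization of gFs of transport type} (conditions (iv) and (v)).

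One genuine methodological difference worth noting: for continuity of $\psi|_{K_R}$ the paper gives a direct topological argument, showing $\psi|_{K_R}^{-1}(O)$ is open by writing $O=\bigcup_{x\in O} f_x^{-1}(0,2)$ with $f_x\in C_b(E)$ furnished by complete regularity, and then using that each $(f_x\circ\psi)|_{K_R}$ is continuous. Your argument instead traps $\psi(x_m)$ in the compact $K_{R'}$ and uses point separation by $C_b(E)$ to identify limit points. This is perfectly valid, but since $K_R$ need not be first countable you should phrase it with nets rather than sequences (the structure of your argument---every subnet has a further subnet converging, and all limit points coincide---goes through verbatim for nets). The paper's preimage argument sidesteps this issue entirely; your argument has the advantage of making the role of the a~priori bound \eqref{eq:rhobound} in securing compactness completely transparent, which is exactly the obstacle you flagged.
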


\begin{proof}
Assume first that $P$ is given by $Pf=f \circ \psi$. Then clearly $P$ is linear and  satisfies \eqref{eq:prop_soah}.  Continuity follows since
for a sequence $(f_n)$ converging to $f$ in $\mathcal{\mathscr{B}}^{\rho}(E) $ it holds that for every $\varepsilon >0$ there exists some $N$ such that
\begin{align*}
\| Pf-Pf_N\|_{\rho} &=\underset{x\in E}{\sup}\,\frac{\left|f\circ\psi_{t}(x)-f_{N}\circ\psi(x)\right|}{\rho(x)} \\& =\underset{x\in E}{\sup}\,\frac{\left|f\circ\psi(x)-f_{N}\circ\psi(x)\right|}{\rho\circ\psi(x)}\cdot\frac{\rho\circ\psi(x)}{\rho(x)} < \varepsilon,
\end{align*}
where the last inequaltiy follows from \eqref{eq:rhobound}.

Conversely, if $P$ is a smooth operator algebra homomorphism, then existence of $\psi$ is a consequence of the previous lemma. The proof of the continuity of $\psi|_{K_R}$ for all $R >0$ and \eqref{eq:rhobound} follows similarly as the necessity of Condition (iv)  and (v) in 
Proposition \ref{prop:characterization of gFs of transport type}.   
\end{proof}

\begin{definition}
A linear operator $A : \operatorname{dom}(A) \subset \mathcal{\mathscr{B}}^{\rho}(E) \to \mathcal{\mathscr{B}}^{\rho}(E)$ is called smooth derivation if for all smooth $ \phi: \mathbb{R}^n \to \mathbb{R}$ with $\phi(f_1(\cdot),\ldots,f_n(\cdot)) \in \operatorname{dom}(A) $ for $f_1,\ldots,f_n \in \operatorname{dom}(A)$ we have that
$$
A(\phi(f_1(\cdot),\ldots,f_n(\cdot))) = \sum_{i=1}^n (\partial_i \phi)(f_1(\cdot),\ldots,f_n(\cdot))  A f_i(\cdot) \, .
$$
\end{definition}

With this definition we can already characterize how a generalized Feller semigroup of smooth operator algebra homomorphism is generated.

\begin{proposition}\label{prop:alghomoderiv}
Let $\left(P(t)\right)_{t\in\mathbb{R}_{+}}$ be a generalized Feller semigroup. Then $P$ is a semigroup of smooth operator algebra  homomorphisms if and only if the generator $A$ is a smooth derivation such that for every smooth map $\phi : \mathbb{R}^n \to \mathbb{R}$ with globally bounded first derivatives and for every $f_1,\ldots,f_n \in \operatorname{dom}(A)$ we have $\phi(f_1(\cdot),\ldots,f_n(\cdot)) \in \operatorname{dom}(A)$.
For every $t \geq 0$, such a generalized Feller semigroup  is of the form
$$
P(t)(f):=f\circ\psi_{t}
$$
for some map $\psi_t:E \to E$ which satisfies the properties (i) - (vi) of Proposition \ref{prop:characterization of gFs of transport type}.
\end{proposition}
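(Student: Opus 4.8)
The plan is to prove the two directions of the equivalence separately and then derive the concrete form. For the ``only if'' direction, assume $(P(t))_{t\in\mathbb{R}_+}$ is a semigroup of smooth operator algebra homomorphisms. By Lemma~\ref{lem:soah} each $P(t)$ is of the form $P(t)f = f\circ\psi_t$ for a unique map $\psi_t:E\to E$ whose restrictions to each $K_R$ are continuous and which satisfies the growth bound \eqref{eq:rhobound}; the semigroup property \textbf{P2} together with uniqueness of $\psi_t$ forces $\psi_{t+s} = \psi_t\circ\psi_s$ and $\psi_0 = \mathrm{Id}$, and \textbf{P3} gives pointwise continuity in $t$. To identify the generator $A$ as a smooth derivation, I would take $f_1,\dots,f_n\in\operatorname{dom}(A)$ and a smooth $\phi:\mathbb{R}^n\to\mathbb{R}$ with globally bounded first derivatives, and differentiate the identity $P(t)(\phi(f_1,\dots,f_n)) = \phi(P(t)f_1,\dots,P(t)f_n)$ at $t=0$: the left side, if $\phi(f_1,\dots,f_n)\in\operatorname{dom}(A)$, converges in $\mathscr{B}^\rho(E)$-norm to $A(\phi(f_1,\dots,f_n))$, while the right side, by a chain-rule/Taylor-expansion argument (using that $\phi$ has bounded first derivatives to control the $\mathscr{B}^\rho$-norm of the remainder and that $P(t)f_i\to f_i$ in norm), converges to $\sum_{i=1}^n (\partial_i\phi)(f_1,\dots,f_n)\,Af_i$. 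One must separately check the domain statement: that $\phi(f_1,\dots,f_n)\in\operatorname{dom}(A)$ whenever $f_i\in\operatorname{dom}(A)$ and $\phi$ has bounded first derivatives; this follows because the difference quotient $\tfrac{1}{t}(P(t)(\phi(f_1,\dots,f_n)) - \phi(f_1,\dots,f_n))$ equals $\tfrac{1}{t}(\phi(P(t)f_1,\dots,P(t)f_n) - \phi(f_1,\dots,f_n))$, which by Taylor expansion converges in $\mathscr{B}^\rho(E)$ to $\sum_i(\partial_i\phi)(f_1,\dots,f_n)Af_i$, an element of $\mathscr{B}^\rho(E)$, so the limit defining $A$ on that element exists.

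For the ``if'' direction, assume the generator $A$ is a smooth derivation with the stated domain-stability property. I would show directly that each $P(t)$ is a smooth operator algebra homomorphism, i.e.\ that \eqref{eq:prop_soah} holds. Fix smooth bounded $\phi$ and $f_1,\dots,f_n\in\mathscr{B}^\rho(E)$; by a standard density/approximation argument one may first assume $f_i\in\operatorname{dom}(A)$ and $\phi$ with globally bounded first derivatives (then pass to the general case by the strong continuity and boundedness of the semigroup, approximating in $\mathscr{B}^\rho(E)$ and using $\|P(t)\|\le Me^{\omega t}$ from \eqref{eq:Momega}, together with Lemma~\ref{lem:composition B-rho is continuous} for the continuity of the composition operators). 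Consider the two $\mathscr{B}^\rho(E)$-valued curves $t\mapsto u(t) := P(t)(\phi(f_1,\dots,f_n))$ and $t\mapsto v(t) := \phi(P(t)f_1,\dots,P(t)f_n)$. Both agree at $t=0$. For $u$, since $\phi(f_1,\dots,f_n)\in\operatorname{dom}(A)$, we have $u'(t) = P(t)A(\phi(f_1,\dots,f_n)) = P(t)\big(\sum_i(\partial_i\phi)(f_1,\dots,f_n)Af_i\big)$ using the smooth-derivation property. For $v$, differentiate using the chain rule in the Banach space $\mathscr{B}^\rho(E)$: $v'(t) = \sum_i (\partial_i\phi)(P(t)f_1,\dots,P(t)f_n)\cdot P(t)Af_i$. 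The task is then to show $u(t)=v(t)$ for all $t$; a clean way is to run a Gr\"onwall/uniqueness argument, or to note that $w(t):=u(t)-v(t)$ solves a linear inhomogeneous equation whose only solution with $w(0)=0$ is $w\equiv 0$ — but the subtle point is that $u'$ and $v'$ are not manifestly equal because $v'(t)$ involves $P(t)$ applied to a \emph{product}, whereas $u'(t)$ is $P(t)$ of the product after the derivation has acted; closing this requires precisely the algebra-homomorphism property we are trying to prove. I would therefore instead argue by a fixed-time approximation: discretize $[0,t]$, write $P(t) = P(t/m)^m$, and on each small step use that $P(t/m) = \mathrm{Id} + (t/m)A + o(t/m)$ on $\operatorname{dom}(A)$ and that $A$ is a derivation, to show inductively that $P(t/m)^k(\phi(f_1,\dots,f_n))$ and $\phi(P(t/m)^k f_1,\dots)$ differ by $o(1)$ as $m\to\infty$; the derivation property is exactly what makes the first-order terms match at each step.

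Finally, once \eqref{eq:prop_soah} is established, Lemma~\ref{lem:soah} immediately gives the representation $P(t)f = f\circ\psi_t$ with $\psi_t:E\to E$ uniquely determined, restrictions to $K_R$ continuous, and satisfying \eqref{eq:rhobound}; the semigroup and strong-continuity axioms \textbf{P1}--\textbf{P4} then translate into the properties (i)--(vi) of Proposition~\ref{prop:characterization of gFs of transport type} (semiflow identity, $\psi_0=\mathrm{Id}$, joint continuity on compacts, the uniform $\rho$-growth bound, etc.), which is essentially a re-reading of Lemma~\ref{lem:soah} plus Proposition~\ref{prop:characterization of gFs of transport type}. The main obstacle is the ``if'' direction: turning the infinitesimal derivation identity into the global homomorphism identity \eqref{eq:prop_soah} for all $t$, since the naive differentiation argument is circular; the resolution is the step-by-step discretization argument above, where the derivation property controls the first-order error at each of the $m$ steps and the uniform bound $\|P(t)\|\le Me^{\omega t}$ controls the accumulation of errors.
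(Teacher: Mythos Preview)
Your ``only if'' direction is essentially the paper's argument: differentiate the homomorphism identity at $t=0$ via the chain rule, and observe that boundedness of $\nabla\phi$ forces the right-hand side into $\mathscr{B}^\rho(E)$, hence $\phi(f_1,\ldots,f_n)\in\operatorname{dom}(A)$.

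For the ``if'' direction you correctly diagnose the circularity in comparing $u(t)=P(t)\phi(f_1,\ldots,f_n)$ with $v(t)=\phi(P(t)f_1,\ldots,P(t)f_n)$ directly, but the paper sidesteps this with a cleaner device than your discretization: it interpolates. For fixed $t>0$ set
\[
g(s):=P(s)\,\phi\bigl(P(t-s)f_1,\ldots,P(t-s)f_n\bigr),\qquad 0\le s\le t,
\]
so that $g(0)=v(t)$ and $g(t)=u(t)$. Differentiating in $s$ produces two contributions: one from $P(s)$, namely $P(s)A\phi(P(t-s)f_1,\ldots)$, and one from each inner argument, namely $-P(s)\sum_i(\partial_i\phi)(P(t-s)f_1,\ldots)\,AP(t-s)f_i$. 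The smooth-derivation identity applied to the \emph{inner} functions $P(t-s)f_i\in\operatorname{dom}(A)$ says precisely that these cancel, so $g'(s)\equiv 0$ and $u(t)=v(t)$. No circularity arises because the derivation identity is invoked pointwise in $s$ on elements already known to lie in $\operatorname{dom}(A)$, and the domain-stability hypothesis guarantees $\phi(P(t-s)f_1,\ldots,P(t-s)f_n)\in\operatorname{dom}(A)$. Your discretization route can in principle be pushed through, but it needs uniform-in-$k$ control of the $o(t/m)$ remainders (the intermediate functions $P(kt/m)f_i$ vary with $k$ and $m$) together with a Gr\"onwall-type accumulation estimate; the interpolation argument replaces all of that by a single derivative computation.

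Your final paragraph on the representation $P(t)f=f\circ\psi_t$ and properties (i)--(vi) is fine and matches the paper's appeal to Lemma~\ref{lem:soah} and Proposition~\ref{prop:characterization of gFs of transport type}.
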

\begin{proof}
Let $P$ be a generalized Feller semigroup of smooth operator algebra homomorphisms. 
Consider some smooth $ \phi: \mathbb{R}^n \to \mathbb{R}$ with $\phi(f_1(\cdot),\ldots,f_n(\cdot)) \in \operatorname{dom}(A) $ with $f_1,\ldots,f_n \in \operatorname{dom}(A)$. 
Then by the smooth operator algebra homomorphism property and the chain rule, we have
\begin{align*}
A\phi(f_1(\cdot),\ldots,f_n(\cdot))&:=
\frac{d}{dt}|_{t=0}P(t) \phi(f_1(\cdot),\ldots,f_n(\cdot))=\frac{d}{dt}|_{t=0}\phi(P(t)f_1(\cdot),\ldots,P(t)f_n(\cdot))\\
&=\sum_{i=1}^n (\partial_i \phi)(f_1(\cdot),\ldots,f_n(\cdot))  A f_i,
\end{align*}
showing that the generator $A$ is a smooth derivation. If $\phi$ has globally bounded derivatives, the right hand side lies in $\mathcal{\mathscr{B}}^{\rho}(E)$ and therefore 
$\phi(f_1(\cdot),\ldots,f_n(\cdot)) \in \operatorname{dom}(A)$, which proves the first direction.

Conversely, for fixed $t > 0$, $0 \leq s \leq t $, a smooth map $\phi : \mathbb{R}^n \to \mathbb{R}$ with globally bounded first derivatives, and some $f_1,\ldots,f_n \in \operatorname{dom}(A)$ with $\phi(f_1(\cdot),\ldots,f_n(\cdot)) \in \operatorname{dom}(A)$ we obtain by the smooth derivation property of $A$ that
\begin{align*}
&\frac{d}{ds} P(s)\phi(P(t-s)f_1, \ldots, P(t-s)f_n) \\
 &\quad = P(s) A \phi(P(t-s)f_1, \ldots, P(t-s)f_n)  \\ & \quad \quad- P(s)(\sum_{i=1}^n (\partial_i \phi)(P(t-s)f_1,\ldots,P(t-s)f_n)  A P(t-s) f_i)\\
&\quad  =P(s)(\sum_{i=1}^n (\partial_i \phi)(P(t-s)f_1,\ldots,P(t-s)f_n)  A P(t-s) f_i) \\ & \quad \quad  - P(s)(\sum_{i=1}^n (\partial_i \phi)(P(t-s)f_1,\ldots,P(t-s)f_n)  A P(t-s) f_i)=0 \, .
\end{align*}
Therefore,  $s \mapsto P(s)\phi(P(t-s)f_1, \ldots, P(t-s)f_n) $ is constant and by setting $s=t$ and $s=0$ we get $P(t)\phi(f_1,\ldots, f_n) = \phi(P(t) f_1,\ldots, P(t) f_n)$. Since this holds on a dense subset of functions $\phi$ with globally bounded first derivatives, we can conclude.

Concerning the last assertion Lemma \ref{lem:soah} yields for every fixed $t >0$ a map $\psi_t$ such that $P(t)f(x)=f \circ \psi_t(x)$. Since $P$ is assumed to be a generalized Feller semigroup, the properties of $\psi_t$
follow from Proposition \ref{prop:characterization of gFs of transport type}.
\end{proof}

The above proposition characterizes the generators of generalized Feller semigroups of smooth operator algebra homomorphisms as smooth derivations, which are also called \emph{transport operators} motivating the name of the current subsection.
Indeed,  instead of generalized Feller semigroups of  smooth operator algebra homomorphisms we can thus also speak of \emph{generalized Feller semigroup of transport type.} We shall use the  notions smooth derivations and transport operators interchangeably.
The following proposition establishes all properties of the transport map $\psi_t$ for $ t \in \mathbb{R}_+$. It is remarkable that we only need continuity properties of $\psi$ with respect to time and space.

\begin{proposition}
\label{prop:characterization of gFs of transport type} Let $\left(\psi_{t}\right)_{t\in\mathbb{R}_{+}}$
be a family of maps from $E$ to $E.$
 Then $\left(P(t)\right)_{t\in\mathbb{R}_{+}}$ defined as
\[
P(t)(f):=f\circ\psi_{t}
\]
is a generalized Feller semigroup on $\mathcal{\mathscr{B}}^{\rho}(E)$,
 if and only if the following conditions hold:\\
(i) $\psi_{0}=\text{Id}$.\\
(ii) For any $t_{1,}t_{2}\in\mathbb{R}_{+}$
\[
\psi_{t_{1}}\circ\psi_{t_{2}}=\psi_{t_{1}+t_{2}}.
\]
\\
(iii) For any $x\in E$
\[
\underset{t\searrow0}{\lim}\,\psi_{t}(x)=x.
\]
\\
(iv) For any $t\in\mathbb{R}_{+}$ and any $R>0$
\[
\left.\psi_{t}\right|_{K_{R}}:\,K_{R}\rightarrow E
\]
 is continuous.\\
(v) For any $t\in\mathbb{R}_{+}$
\[
\underset{x\in E}{\sup}\,\frac{\rho\circ\psi_{t}(x)}{\rho(x)}=:C_{t}<\infty.
\]
\\
(vi) For some $\delta>0$ there is $C>0$ such that for all $0\leq t<\delta$
\[
C_{t}<C.
\]
\\
Furthermore, for such a generalized Feller semigroup 
the identity
\begin{equation}
P(t)\rho(x)=\underset{\begin{array}{c}
f\in C_{b}(E)\\
\left|f\right|\leq\rho
\end{array}}{\sup}\left|f\circ\psi_{t}(x)\right|=\rho\circ\psi_{t}(x)\label{eq:P(t)rhoforGFSoftransporttype}
\end{equation}
holds true.

\end{proposition}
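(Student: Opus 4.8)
The plan is to prove the two implications of the equivalence separately, and to obtain the displayed identity \eqref{eq:P(t)rhoforGFSoftransporttype} along the way, as a consequence of the harder ``only if'' direction.

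\emph{Sufficiency of (i)--(vi).} First I would check that each $P(t)$ is a well-defined bounded operator on $\mathscr{B}^\rho(E)$. The norm bound $\|P(t)\|_{L(\mathscr{B}^\rho(E))}\le C_t$ is immediate from (v), writing $|f(\psi_t(x))|/\rho(x)=\bigl(|f(\psi_t(x))|/\rho(\psi_t(x))\bigr)\cdot\bigl(\rho(\psi_t(x))/\rho(x)\bigr)\le\|f\|_\rho\,C_t$. To see $P(t)f\in\mathscr{B}^\rho(E)$ I would argue by density: for $g\in C_b(E)$, condition (iv) gives $\psi_t(K_R)\subset K_{C_tR}$, so $(g\circ\psi_t)|_{K_R}=g|_{K_{C_tR}}\circ\psi_t|_{K_R}$ is continuous and $g\circ\psi_t$ is bounded, whence $g\circ\psi_t\in\mathscr{B}^\rho(E)$ by Theorem \ref{thm: equivalence B-rho space}; since $C_b(E)$ is dense in $\mathscr{B}^\rho(E)$ and $P(t)$ is bounded, $P(t)\mathscr{B}^\rho(E)\subset\mathscr{B}^\rho(E)$. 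Then \textbf{P1}, \textbf{P2}, \textbf{P5} are direct translations of (i), (ii) and positivity of composition, and \textbf{P4} is (vi) together with $\|P(t)\|\le C_t$. For \textbf{P3} I fix $f\in\mathscr{B}^\rho(E)$ and $x\in E$: by (vi) and (v) one has $\rho(\psi_t(x))<C\rho(x)$ for all small $t$, so $x$ and all $\psi_t(x)$ (for $t$ small) lie in a common compact sublevel set of $\rho$, on which $f$ is continuous, and $\psi_t(x)\to x$ by (iii), so $P(t)f(x)=f(\psi_t(x))\to f(x)$.

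\emph{Necessity.} The key tool is that $E$ is completely regular, so its topology is the coarsest one making all $g\in C_b(E)$ continuous; consequently a net in $E$ converges if and only if it does so after composition with every $g\in C_b(E)$. Testing \textbf{P1} against $C_b(E)$ gives $\psi_0=\mathrm{Id}$, i.e.\ (i); testing \textbf{P2} gives $\psi_{t_1}\circ\psi_{t_2}=\psi_{t_1+t_2}$, i.e.\ (ii); testing \textbf{P3} gives $\psi_t(x)\to x$ as $t\searrow0$ for every $x$, i.e.\ (iii). For (iv): since $P(t)$ maps $\mathscr{B}^\rho(E)$ into itself, $(g\circ\psi_t)|_{K_R}$ is continuous for every $g\in C_b(E)$ and $R>0$, and testing a convergent net in $K_R$ against all such $g$ shows $\psi_t|_{K_R}\colon K_R\to E$ is continuous.

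It remains to prove (v), (vi) and \eqref{eq:P(t)rhoforGFSoftransporttype}; this is the part I expect to be the main obstacle, because $\rho$ is only lower semicontinuous and in general not an element of $\mathscr{B}^\rho(E)$, so one must argue through the quantity $P(t)\rho(x)$ defined as in Remark \ref{rem: definition P_t rho}. The crucial lemma I would establish is that for every $y\in E$
\[
\sup\{\,|g(y)|:\ g\in C_b(E),\ |g|\le\rho\,\}=\rho(y),
\]
where the nontrivial inequality ``$\ge$'' uses complete regularity: given $\varepsilon>0$, the set $\{\rho\le\rho(y)-\varepsilon\}$ is closed by lower semicontinuity and does not contain $y$, so there is a continuous $h\colon E\to[0,1]$ with $h(y)=1$ vanishing on that set, and $g:=(\rho(y)-\varepsilon)h\in C_b(E)$ satisfies $|g|\le\rho$ and $g(y)=\rho(y)-\varepsilon$. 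Since $P(t)g(x)=g(\psi_t(x))$, this lemma yields
\[
P(t)\rho(x)=\sup\{\,|g(\psi_t(x))|:\ g\in C_b(E),\ |g|\le\rho\,\}=\rho(\psi_t(x)),
\]
which is exactly \eqref{eq:P(t)rhoforGFSoftransporttype}. Then (v) follows from $\rho(\psi_t(x))/\rho(x)=P(t)\rho(x)/\rho(x)\le\|P(t)\|_{L(\mathscr{B}^\rho(E))}<\infty$, so $C_t\le\|P(t)\|$, and (vi) follows from \textbf{P4}, which bounds $\|P(t)\|$, hence $C_t$, uniformly for $t$ in a neighbourhood of $0$. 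Since the displayed identity has been derived for an arbitrary generalized Feller semigroup of the form $P(t)f=f\circ\psi_t$, this also settles the final assertion.
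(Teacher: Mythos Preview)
Your proposal is correct and follows essentially the same architecture as the paper's proof: sufficiency via the density of $C_b(E)$ and Theorem~\ref{thm: equivalence B-rho space}, necessity of (i)--(iv) by testing against $C_b(E)$ and exploiting complete regularity, and necessity of (v)--(vi) via the identity $P(t)\rho=\rho\circ\psi_t$ combined with Remark~\ref{rem: definition P_t rho}. The only notable differences are presentational: for (iv) you use the net characterization of continuity while the paper argues directly with preimages of open sets, and for the key approximation $\sup\{|g(y)|:g\in C_b(E),\ |g|\le\rho\}=\rho(y)$ you invoke complete regularity directly on the closed set $\{\rho\le\rho(y)-\varepsilon\}$, whereas the paper constructs the approximating functions via an extension argument (Proposition~\ref{prop: Tietze extension completely regular case}); your route here is slightly cleaner.
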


\begin{proof}
We first show that the conditions (i)-(vi) are sufficient in order
to obtain a generalized Feller semigroup. 

Fix $t\in\mathbb{R}_{+}$. We show that $P(t)$ is a bounded linear
map from $\mathcal{\mathscr{B}}^{\rho}(E)$ to $\mathcal{\mathscr{B}}^{\rho}(E)$.
For $f\in\mathcal{\mathscr{B}}^{\rho}(E)$ and $n\in\mathbb{N}$,
by definition of $\mathcal{\mathscr{B}}^{\rho}(E)$, there is $f_{n}\in C_{b}(E)$
such that
\[
\left\Vert f-f_{n}\right\Vert _{\rho}<\frac{1}{n}.
\]
By Theorem \ref{thm: equivalence B-rho space} we obtain $f_{n}\circ\psi_{t}\in\mathcal{\mathscr{B}}^{\rho}(E)$
for any $n\in\mathbb{N}$ since on the one hand $\left.f_{n}\circ\psi_{t}\right|_{K_{R}}\in C_{b}(E)$
holds for any $R>0$ and on the other hand
\[
\underset{R\rightarrow\infty}{\lim}\underset{x\in E\setminus K_{R}}{\sup}\frac{\left|f_{n}\circ\psi_{t}(x)\right|}{\rho(x)}=0.
\]
The inequality
\begin{align*}
\underset{x\in E}{\sup}\,\frac{\left|f\circ\psi_{t}(x)-f_{n}\circ\psi_{t}(x)\right|}{\rho(x)} & =\underset{x\in E}{\sup}\,\frac{\left|f\circ\psi_{t}(x)-f_{n}\circ\psi_{t}(x)\right|}{\rho\circ\psi_{t}(x)}\cdot\frac{\rho\circ\psi_{t}(x)}{\rho(x)}\\
 & \leq\frac{1}{n}\cdot C_{t}
\end{align*}
yields that $f\circ\psi_{t}\in\mathcal{\mathscr{B}}^{\rho}(E)$ as
a limit of functions in $\mathcal{\mathscr{B}}^{\rho}(E).$ Moreover,
\begin{align*}
\left\Vert f\circ\psi_{t}\right\Vert _{\rho} & =\underset{x\in E}{\sup}\,\frac{\left|f\circ\psi_{t}(x)\right|}{\rho\circ\psi_{t}(x)}\cdot\frac{\rho\circ\psi_{t}(x)}{\rho(x)}\\
 & \leq\left\Vert f\right\Vert _{\rho}\cdot C_{t},
\end{align*}
hence $P(t)$ is a linear bounded operator on $\mathcal{\mathscr{B}}^{\rho}(E)$. Moreover, 
the Properties \textbf{P1, P2, }and\textbf{ P5 }of generalized Feller
semigroups are easy to check. For Property \textbf{P4} we see that
for all $0\leq t<\delta$
\[
\left\Vert P(t)\right\Vert \leq C_{t}\leq C.
\]
Regarding Property \textbf{P3,} we observe that for any $x\in E$
and any $0\leq t<\delta$ the inequality
\begin{align*}
\rho\circ\psi_{t}(x) & \leq C_{\delta}\cdot\rho(x)=:R_{x}
\end{align*}
holds true. Therefore, $\psi_{t}(x)\in K_{R_{x}}$ for $t\in\left[0,\delta\right)$
and because of $\left.f\right|_{K_{R_{x}}}\in C_{b}(E)$ for all $f\in\mathcal{\mathscr{B}}^{\rho}(E)$
(see Theorem \ref{thm: equivalence B-rho space}) we obtain
\[
\underset{t\searrow0}{\lim}\,f\circ\psi_{t}(x)=f(x)
\]
for any $x\in E.$ 

Next, we show that if $\left(P(t)\right)_{t\in\mathbb{R}_{+}}$ is
a generalized Feller semigroup, then Properties (i)-(vi) and Equation~\eqref{eq:P(t)rhoforGFSoftransporttype} hold true.
Property (i) follows from $P(0)=\text{Id}$ which yields
\begin{equation}
f\circ\psi_{0}=f\,\text{ for all }f\in\mathcal{\mathscr{B}}^{\rho}(E).\label{eq:proof transport semigroups are GFS, property (i)}
\end{equation}
So by contradiction, if there was some $x\in E$ such that $\psi_{0}(x)\neq x$,
then by definition of completely regular spaces, one could find some
map $f_{x}\in C_{b}(E)\subset\mathcal{\mathscr{B}}^{\rho}(E)$ such
that $f_{x}(x)=1$ and $f\circ\psi_{0}(x)=0$. But this would contradict
Equation \ref{eq:proof transport semigroups are GFS, property (i)}. 
Regarding Property (ii), as in the proof of Property (i) we obtain
\begin{equation}
f\circ\left(\psi_{t_{1}}\circ\psi_{t_{2}}\right)=f\circ\left(\psi_{t_{1}+t_{2}}\right)\,\text{for all }f\in\mathcal{\mathscr{B}}^{\rho}(E)\label{eq:proof transport semigroups are GFS, property (ii)}
\end{equation}
and as above by contradiction, if Property (ii) did not hold, then
one could find a map in $\mathcal{\mathscr{B}}^{\rho}(E)$ that would
contradict Equation~\eqref{eq:proof transport semigroups are GFS, property (ii)}. 
Property (iii) can be shown in the same way, since by definition of
generalized Feller semigroups 
\[
\underset{t\searrow0}{\lim}\,f\circ\psi_{t}(x)=f(x)
\]
holds for any $x\in E$ and any $f\in\mathcal{\mathscr{B}}^{\rho}(E)$. 
In order to show Property (iv), we fix some $R>0$ and some arbitrary
open set $O$ in $E$. We have to show that $\psi_{t}^{-1}(O)$ is
open in $K_{R}$ with respect to the subspace topology. We know by
Theorem \ref{thm: equivalence B-rho space} that $\left.f\circ\psi_{t}\right|_{K_{R}}$
is continuous for any $f\in\mathcal{\mathscr{B}}^{\rho}(E)$. For
any $x\in O$, by definition of completely regular spaces, we know
that we can find $f_{x}\in C_{b}\left(E\right)$ such that
\[
\left|f_{x}\right|\leq1,
\]
\[
f_{x}(x)=1,
\]
 and
\[
f_{x}(E\setminus O)\subset\left\{ 0\right\} .
\]
Clearly, 
\[
\underset{x\in O}{\bigcup}\left(\left.f_{x}\circ\psi_{t}\right|_{K_{R}}\right)^{-1}\left(0,2\right)
\]
is open in $K_{R}$ with respect to the subspace topology. On the
other hand 
\[
\underset{x\in O}{\bigcup}\left(f_{x}\right)^{-1}\left(0,2\right)=O.
\]
Thus, 
\begin{align*}
\left.\psi_{t}\right|_{K_{R}}^{-1}(O) & =\left.\psi_{t}\right|_{K_{R}}^{-1}\left(\underset{x\in O}{\bigcup}\left(f_{x}\right)^{-1}\left(0,2\right)\right)\\
 & =\underset{x\in O}{\bigcup}\left(\left.f_{x}\circ\psi_{t}\right|_{K_{R}}\right)^{-1}\left(0,2\right)
\end{align*}
is open in $K_{R}$ with respect to the subspace topology. 
Regarding Equation~\eqref{eq:P(t)rhoforGFSoftransporttype}, by Remark
\ref{rem: definition P_t rho} $P(t)\rho(x)$
is given for any $x\in E$ by 
\begin{align*}
P(t)\rho(x) & =\underset{\begin{array}{c}
f\in C_{b}(E)\\
\left|f\right|\leq\rho
\end{array}}{\sup}\left|f\circ\psi_{t}(x)\right|.
\end{align*}
We observe that for any $y\in E$ and any $n\in\mathbb{N}$ there
is an open neighborhood $O_{n,y}$ of $y$ such that
\[
\rho(x)>\rho(y)-\frac{1}{n}
\]
holds true for any $x\in O_{n,y}$. On $E\setminus O_{n,y}\cup\left\{ y\right\} $
we define the function
\[
g_{n,y}(x):=\begin{cases}
\rho(y)-\frac{1}{n} & \text{for }x=y\\
0 & \text{for }x\in E\setminus O_{n,y},
\end{cases}
\]
and by Proposition \ref{prop: Tietze extension completely regular case}
we can extend $g_{n,y}$ to $f_{n,y}\in C_{b}(E)$ such that $\left|f_{n,y}\right|<\rho$
and $\rho(y)-f_{n,y}(y)=\frac{1}{n}$. Hence, for any $x\in E$
\[
\underset{\begin{array}{c}
f\in C_{b}(E)\\
\left|f\right|\leq\rho
\end{array}}{\sup}\left|f\circ\psi_{s}(x)\right|=\rho\circ\psi_{s}(x).
\]
Finally, Property (v) and (vi) follow since for any $x\in E$
\[
\rho\circ\psi_{t}(x)=P(t)\rho(x),
\]
and by Theorem \ref{thm:generalized Feller semigroups are strongly continuous} the estimate $
\left\Vert P(t)\right\Vert \leq Me^{\omega t}
$
holds true for some $M\geq1$ and $\omega\in\mathbb{R}$. Thus, Remark
\ref{rem: definition P_t rho} implies
that for any $x\in E$
\[
\rho\circ\psi_{t}(x)\leq\rho(x)Me^{\omega t}.
\]
\end{proof}

Combining the above assertions yields the following corollary.

\begin{corollary}
A family $(P(t))_{t  \in \mathbb{R}_+}$  is a generalized Feller semigroup of smooth operator algebra homomorphism if and only if there exists a family of maps $\left(\psi_{t}\right)_{t\in\mathbb{R}_{+}}$
from $E$ to $E$ satisfying the conditions of Proposition \ref{prop:characterization of gFs of transport type}
such that
$
P(t)(f)=f\circ\psi_{t}$.
The family of  maps $(\psi_{t})_{t\in\mathbb{R}_{+}}$ is exactly the generalized Feller process constructed in Theorem~\ref{thm:GFS induce Markov process}.
\end{corollary}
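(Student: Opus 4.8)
The plan is to assemble the corollary from the three preceding results together with the construction in Theorem~\ref{thm:GFS induce Markov process}; no new machinery is needed. For the ``only if'' direction, suppose $(P(t))_{t\in\mathbb{R}_{+}}$ is a generalized Feller semigroup each of whose operators is a smooth operator algebra homomorphism. By Lemma~\ref{lem:soah}, for every fixed $t$ there is a unique map $\psi_{t}:E\to E$ with $\psi_{t}|_{K_{R}}$ continuous for all $R>0$, with $\sup_{x\in E}\rho(\psi_{t}(x))/\rho(x)<\infty$, and with $P(t)f=f\circ\psi_{t}$ for all $f\in\mathscr{B}^{\rho}(E)$. Since $(P(t))_{t\in\mathbb{R}_{+}}$ is moreover a generalized Feller semigroup, the ``only if'' part of Proposition~\ref{prop:characterization of gFs of transport type} shows that the family $(\psi_{t})_{t\in\mathbb{R}_{+}}$ satisfies all six conditions (i)--(vi) there; alternatively one may simply quote the last assertion of Proposition~\ref{prop:alghomoderiv}, which already records precisely this.

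For the ``if'' direction, let $(\psi_{t})_{t\in\mathbb{R}_{+}}$ satisfy (i)--(vi) of Proposition~\ref{prop:characterization of gFs of transport type} and put $P(t)f:=f\circ\psi_{t}$. That proposition gives that $(P(t))_{t\in\mathbb{R}_{+}}$ is a generalized Feller semigroup, so it only remains to see that each $P(t)$ is a smooth operator algebra homomorphism. But for any bounded smooth $\phi:\mathbb{R}^{n}\to\mathbb{R}$, any $f_{1},\ldots,f_{n}\in\mathscr{B}^{\rho}(E)$ and any $x\in E$ one has, just by pointwise composition,
\[
P(t)\bigl(\phi(f_{1}(\cdot),\ldots,f_{n}(\cdot))\bigr)(x)=\phi\bigl(f_{1}(\psi_{t}(x)),\ldots,f_{n}(\psi_{t}(x))\bigr)=\phi\bigl(P(t)f_{1}(x),\ldots,P(t)f_{n}(x)\bigr),
\]
which is exactly property~\eqref{eq:prop_soah}. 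Hence $(P(t))_{t\in\mathbb{R}_{+}}$ is a generalized Feller semigroup of smooth operator algebra homomorphisms.

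Finally I would identify $(\psi_{t})_{t\in\mathbb{R}_{+}}$ with the generalized Feller process of Theorem~\ref{thm:GFS induce Markov process}. Since $P(t)1=1\circ\psi_{t}=1_{E}$ for all $t$, that theorem applies and, for each $x\in E$, yields a canonical process $(\lambda_{t})_{t\in\mathbb{R}_{+}}$ on $(E^{\mathbb{R}_{+}},\mathcal{B}(E)^{\mathbb{R}_{+}},\mathbb{P}_{x})$ with $\mathbb{P}_{x}\circ\lambda_{0}^{-1}=\delta_{x}$ and transition kernels $p(t)(y,\cdot)$ as in Lemma~\ref{lem:GFS induces family of measures}. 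Since $\int_{E}f\,dp(t)(y,\cdot)=P(t)f(y)=f(\psi_{t}(y))$ for all $f\in\mathscr{B}^{\rho}(E)$, and since $\delta_{\psi_{t}(y)}\in\mathcal{M}^{\rho}(E)$ by condition (v) of Proposition~\ref{prop:characterization of gFs of transport type}, the uniqueness statement in the Riesz representation Theorem~\ref{thm:Riesz-representation-for B rho} forces $p(t)(y,\cdot)=\delta_{\psi_{t}(y)}$. Feeding this into the finite-dimensional distributions built in the proof of Theorem~\ref{thm:GFS induce Markov process} (and using the flow property (ii) of $\psi$), one gets that the law of $(\lambda_{r_{1}},\ldots,\lambda_{r_{n}})$ under $\mathbb{P}_{x}$ is the Dirac mass at $(\psi_{r_{1}}(x),\ldots,\psi_{r_{n}}(x))$, whence $\mathbb{P}_{x}$-almost surely $\lambda_{t}=\psi_{t}(x)$ for all $t$; that is, the canonical process is the deterministic path $t\mapsto\psi_{t}(x)$.

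The argument is essentially bookkeeping, so there is no genuine obstacle; the only points that require care are keeping track of whether the transition kernels are taken on the Baire or the Borel $\sigma$-algebra when invoking uniqueness, and noting explicitly that $\delta_{\psi_{t}(y)}$ lies in $\mathcal{M}^{\rho}(E)$, which is immediate from $\rho(\psi_{t}(y))<\infty$ and condition (v).
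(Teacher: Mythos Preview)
Your proof is correct and follows essentially the same route as the paper: both directions are obtained by combining Lemma~\ref{lem:soah}, Proposition~\ref{prop:alghomoderiv}, and Proposition~\ref{prop:characterization of gFs of transport type}, with the smooth operator algebra homomorphism property for $f\mapsto f\circ\psi_t$ verified by the obvious pointwise computation. For the final identification the paper merely points to Equation~\eqref{eq:equation of GFS yields Markov process}, whereas you spell out the argument via $P(t)1=1$, the Riesz uniqueness $p(t)(y,\cdot)=\delta_{\psi_t(y)}$, and the resulting Dirac finite-dimensional distributions---this is a legitimate and more explicit way of unpacking the paper's one-line reference.
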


\begin{proof}
The first direction was already stated in Proposition \ref{prop:alghomoderiv}. The other one is a consequence of Proposition \ref{prop:characterization of gFs of transport type} which yields a generalized Feller semigroup which clearly satisfies the property of being a smooth operator algebra homomorphism. The last assertion just follows from \eqref{eq:equation of GFS yields Markov process}.
\end{proof}

\begin{remark}
The combination of Proposition \ref{prop:alghomoderiv} and Proposition \ref{prop:characterization of gFs of transport type} tells that any semiflow satisfying the properties of Proposition \ref{prop:characterization of gFs of transport type} on a weighted space $E$ can be associated to a unique transport operator, i.e.~has a version of a tangent direction at any point in time.

Conversely, given a transport operator which generates a generalized Feller semigroup, the associated Feller process construced in Theorem \ref{thm:GFS induce Markov process} corresponds to $(\psi_t)_{t \in \mathbb{R}_+}$. For conditions when a transport operator generates a generalized Feller semigroup we refer to Theorem 3.3 in \cite{DoT}, which provides a reformulation of the Lumer-Philips theorem for the quasi-contractive case. Notice, however, that a transport semigroup does not need to be quasi-contractive.
\end{remark}

\subsection{Polynomial and affine processes}\label{sec:affineandpoly}

For the definition and theory of  polynomial and affine processes we refer to  \cite{CMT, filipovic2016polynomial} and \cite{DFS:03}.
For $n\in \mathbb{N}$,  we denote by $E$ be a closed subset of  $\mathbb{R}^n$.
To introduce polynomial processes, we let $\mathcal{P}_{m}$ be the space of polynomials on $E$ up to degree $m \in \mathbb{N}$. For ($m$-)polynomial processes with state space $E$ we always consider a version with càdlàg paths and denote by $\left(P(t)\right)_{t\in{\mathbb{R}_+}}$ its Markovian semigroup and by $\left(p(t)\right)_{t\in{\mathbb{R}_+}}$ its semigroup of transition probabilities. Note that we here always consider polynomial processes whose solution to the martingale problem is unique assuring the Markov property. We refer to \cite{KK:20} 
for examples where this is not the case, i.e.~where the law is not determined by the (extended) infinitesimal generator.

We start with the following lemma which is essentially a consequence of the definition of polynomial processes.

\begin{lemma}
\label{lem: m-polynomial process on polynomial remains bounded }
Let $\left(\lambda_{t}\right)_{t\in\mathbb{R}_{+}}$ be an $m$-polynomial
process and let $\rho\in\mathcal{P}_{k}$ for some $k\in\left\{ 0,...,m\right\} $.
Then there is a bounded linear map $A_{k}$ on $\mathcal{P}_{k}$ and $C>0$
such that for all $x\in E$ and $t\in\mathbb{R}_{+}$ 
\[
P(t)\rho(x)=\mathbb{E}_{x}\left[\rho(\lambda_{t})\right]=\left(e^{tA_{m}}\rho\right)(x)\leq C e^{t\left\Vert A_{m}\right\Vert } \rho(x)
\]
holds true and $\mathbb{E}_{\nu}\left[\rho(\lambda_{t})\right]<\infty$
for all $t\in\mathbb{R}_{+}$ and for any probability measure  $\nu\in\mathcal{M}^{\rho}(E)$. 
\end{lemma}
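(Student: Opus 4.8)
The plan is to exploit the defining property of $m$-polynomial processes: the semigroup $(P(t))_{t \in \mathbb{R}_+}$ leaves each finite-dimensional space $\mathcal{P}_k$ invariant for $k \leq m$, and its restriction to $\mathcal{P}_m$ is a strongly continuous (indeed norm-continuous, since $\mathcal{P}_m$ is finite dimensional) semigroup of linear operators. First I would fix a basis $e_1, \ldots, e_N$ of $\mathcal{P}_m$ and identify $\mathcal{P}_m$ with $\mathbb{R}^N$ equipped with some norm; the restriction $P(t)|_{\mathcal{P}_m}$ is then represented by a matrix-valued function $t \mapsto M(t) \in \mathbb{R}^{N \times N}$. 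By the definition of a polynomial process (see \cite{CMT, filipovic2016polynomial}) the map $t \mapsto P(t)p(x)$ is, for each fixed $x \in E$ and each $p \in \mathcal{P}_m$, the coordinate-wise solution of a linear ODE; equivalently $M$ is a one-parameter matrix semigroup that is continuous at $0$, hence $M(t) = e^{tA_m}$ for a bounded linear operator $A_m := M'(0)$ on $\mathcal{P}_m$. Restricting further to $\mathcal{P}_k$ (which is $P(t)$-invariant as well, being spanned by monomials of degree $\leq k$) gives the analogous $A_k$. This yields the identity $P(t)\rho(x) = \mathbb{E}_x[\rho(\lambda_t)] = (e^{tA_m}\rho)(x)$, where the first equality is just the definition of $P(t)$ via the semigroup of transition probabilities $(p(t))_{t \in \mathbb{R}_+}$.

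Next I would derive the growth bound. On the finite-dimensional space $\mathcal{P}_m$ equip matters with the norm $\|p\|_* := \sup_{x \in K_1} |p(x)| \vee \sup_{x \in E} |p(x)|/\rho(x)$ — or more simply note that, since $\rho \in \mathcal{P}_k \subset \mathcal{P}_m$ and $\rho$ is an admissible weight function with $\rho > 0$ attaining a positive minimum, the linear functional $p \mapsto \|p\|_\rho = \sup_{x} |p(x)|/\rho(x)$ restricted to $\mathcal{P}_m$ is a genuine norm on a finite-dimensional space (it is finite because every polynomial on $E \subset \mathbb{R}^n$ of degree $\leq m$ is dominated by a multiple of $\rho$... this needs $\rho$ to grow at least like the monomials, which is the one place care is needed — see the obstacle below). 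Granting that, the operator norm estimate $\|e^{tA_m}\|_{L(\mathcal{P}_m)} \leq e^{t\|A_m\|}$ is standard, and evaluating at $\rho$ gives
\[
\frac{P(t)\rho(x)}{\rho(x)} = \frac{(e^{tA_m}\rho)(x)}{\rho(x)} \leq \|e^{tA_m}\rho\|_\rho \leq \|e^{tA_m}\|_{L(\mathcal{P}_m)} \, \|\rho\|_\rho \leq C e^{t\|A_m\|},
\]
with $C = \|\rho\|_\rho = \sup_x \rho(x)/\rho(x) = 1$, or more honestly $C = \|\rho\|_\rho$ with respect to whichever norm one picks; in any case $C$ is a finite constant depending only on $\rho$. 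Multiplying through by $\rho(x)$ gives the claimed pointwise bound $P(t)\rho(x) \leq C e^{t\|A_m\|}\rho(x)$.

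Finally, for the integrability statement $\mathbb{E}_\nu[\rho(\lambda_t)] < \infty$ for $\nu \in \mathcal{M}^\rho(E)$: since $\rho \leq $ (a polynomial of degree $k \leq m$), the function $\rho$ itself lies in $\mathscr{B}^\rho(E)$ trivially (indeed $\|\rho\|_\rho = 1$), so by Fubini and the pointwise bound just proved,
\[
\mathbb{E}_\nu[\rho(\lambda_t)] = \int_E \mathbb{E}_x[\rho(\lambda_t)]\,\nu(dx) = \int_E P(t)\rho(x)\,\nu(dx) \leq C e^{t\|A_m\|}\int_E \rho(x)\,\nu(dx) < \infty,
\]
the last integral being finite precisely because $\nu \in \mathcal{M}^\rho(E)$ (by the characterization in Theorem \ref{thm:Riesz-representation-for B rho}, $\int_E \rho\,d|\nu| < \infty$).

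The main obstacle I anticipate is the subtle point buried in the second paragraph: one must know that the $P(t)$-invariant finite-dimensional space $\mathcal{P}_m$ can be normed by $\|\cdot\|_\rho$, i.e.\ that $\mathcal{P}_m \subset B^\rho(E)$, which requires $\rho(x)$ to dominate (a constant times) $|x|^m$ as $|x| \to \infty$ on $E$. This is where the hypothesis $\rho \in \mathcal{P}_k$ with $k \leq m$ is slightly delicate — if $k < m$, a degree-$m$ monomial need not be controlled by $\rho$, so the semigroup $P(t)$ restricted to all of $\mathcal{P}_m$ need not be bounded on $B^\rho(E)$. The clean fix is to work inside $\mathcal{P}_k$ rather than $\mathcal{P}_m$: since $\mathcal{P}_k$ is also $P(t)$-invariant and $\rho \in \mathcal{P}_k$, one gets a bounded generator $A_k$ on $\mathcal{P}_k$ with $P(t)\rho = e^{tA_k}\rho$ and the bound $P(t)\rho(x) \leq C e^{t\|A_k\|}\rho(x)$; the statement as written then follows because $A_k$ is the restriction of $A_m$ and $\|A_k\| \leq \|A_m\|$, so $e^{t\|A_k\|} \leq e^{t\|A_m\|}$. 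I would carry the argument out on $\mathcal{P}_k$ and only at the end pass to the stated constant $\|A_m\|$.
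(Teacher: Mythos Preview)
Your proposal is correct and is essentially an in-line expansion of what the paper defers to a citation: the paper's entire proof reads ``This follows directly from Theorem 2.7 (ii) in \cite{CMT}'', whose content is precisely the matrix-exponential representation $P(t)|_{\mathcal{P}_m}=e^{tA_m}$ on the finite-dimensional invariant subspace, together with the resulting operator-norm growth bound. So your route and the paper's are the same in substance; you simply reprove from first principles what \cite{CMT} packages as a theorem, and then carry out the two easy consequences (the pointwise estimate and the $\nu$-integrability) exactly as one would expect.

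Your flagged obstacle---that $\|\cdot\|_\rho$ is only guaranteed to be a finite norm on $\mathcal{P}_k$, not on all of $\mathcal{P}_m$, when $k<m$---is a genuine observation and arguably more careful than the lemma's own statement, which introduces $A_k$ but then writes the bound in terms of $\|A_m\|$. Your fix (run the argument on $\mathcal{P}_k$ using $A_k$ and only at the end pass to the larger constant $\|A_m\|$) is the right way to make the inequality honest; the paper avoids this bookkeeping by outsourcing to \cite{CMT}.
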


\begin{proof}
This follows directly from Theorem 2.7 (ii) in \cite{CMT}.
\end{proof}

Next we establish the generalized Feller property for polynomial processes under a continuity assumption on the semigroup.

\begin{proposition}
\label{prop:polynomial process is generalized Feller} For $m\in\mathbb{N}$, 
let $\left(\lambda_{t}\right)_{t\in\mathbb{R}_{+}}$ be an $m$-polynomial
process and let $\rho\in\mathcal{P}_{m}$ be an admissible weight
function on $E$. For any $f\in C_{b}(E)$ and any $t\in\mathbb{R}_{+}$
let $\left.P(t)f\right|_{K_{R}}$ be continuous for any $R>0.$ Then
$\left(\lambda_{t}\right)_{t\in\mathbb{R}_{+}}$ is a generalized
Feller process on $\left(E,\rho\right)$.
\end{proposition}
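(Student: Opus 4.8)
The plan is to verify directly that the Markovian semigroup $(P(t))_{t\in\mathbb{R}_+}$ of the polynomial process, restricted to $\mathscr{B}^\rho(E)$, is a generalized Feller semigroup, and then apply Theorem~\ref{thm:GFS induce Markov process} together with the Markov property of $(\lambda_t)_{t\in\mathbb{R}_+}$ to conclude. The overall architecture mirrors the proof of Proposition~\ref{prop:Generalized Feller sometimes Feller process on locally compact space}: one must show $P(t)$ maps $\mathscr{B}^\rho(E)$ to itself, is bounded uniformly for small $t$, and satisfies \textbf{P1}–\textbf{P5}, with the only genuinely new input being the polynomial growth bound on $\rho$ supplied by Lemma~\ref{lem: m-polynomial process on polynomial remains bounded }.

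First I would establish boundedness of $P(t):\mathscr{B}^\rho(E)\to\mathscr{B}^\rho(E)$. For $f\in\mathscr{B}^\rho(E)$ and $x\in E$,
\[
|P(t)f(x)|=\Bigl|\int_E f(y)\,p(t)(x,dy)\Bigr|\leq \int_E \frac{|f(y)|}{\rho(y)}\rho(y)\,p(t)(x,dy)\leq \|f\|_\rho\, \mathbb{E}_x[\rho(\lambda_t)]\leq \|f\|_\rho\, C e^{t\|A_m\|}\rho(x),
\]
using Lemma~\ref{lem: m-polynomial process on polynomial remains bounded }; hence $\|P(t)\|_{L(\mathscr{B}^\rho(E))}\leq C e^{t\|A_m\|}$, giving \textbf{P4}. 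To see that $P(t)f$ actually lies in $\mathscr{B}^\rho(E)$ I would invoke Theorem~\ref{thm: equivalence B-rho space}: condition (ii) follows from the displayed bound applied with $|f|\le \|f\|_\rho\,\rho$ on $E\setminus K_R$ and the fact that $\mathbb{E}_x[\rho(\lambda_t)]\le Ce^{t\|A_m\|}\rho(x)$, while condition (i) — continuity of $P(t)f$ on each $K_R$ — is exactly the hypothesis of the proposition for $f\in C_b(E)$, and extends to all $f\in\mathscr{B}^\rho(E)$ by approximating $f$ in $\|\cdot\|_\rho$ by $C_b(E)$-functions and using the uniform bound on $\|P(t)\|$ just obtained (the limit of continuous functions on the compact $K_R$, uniform there, is continuous). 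Properties \textbf{P1}, \textbf{P2}, \textbf{P5} are immediate from $(p(t))_{t\in\mathbb{R}_+}$ being a semigroup of transition probabilities that are positive.

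For \textbf{P3}, pointwise strong continuity at $0$: fix $f\in\mathscr{B}^\rho(E)$ and $x\in E$. For $t$ in a bounded interval $[0,\delta]$ we have $\rho(\lambda_t)\in[0,R_x]$ in expectation with $R_x:=Ce^{\delta\|A_m\|}\rho(x)$, but more to the point, for $f\in C_b(E)$ one has $\lim_{t\searrow 0}P(t)f(x)=f(x)$ because the polynomial process has càdlàg paths so $\lambda_t\to\lambda_0=x$ $\mathbb{P}_x$-a.s.\ as $t\searrow 0$, and dominated convergence applies with the bound $\|f\|_\infty$. For general $f\in\mathscr{B}^\rho(E)$, pick $g_\varepsilon\in C_b(E)$ with $\|f-g_\varepsilon\|_\rho<\varepsilon$ and estimate
\[
\limsup_{t\searrow0}|P(t)f(x)-f(x)|\le \limsup_{t\searrow0}\|P(t)\|_{L(\mathscr{B}^\rho(E))}\|f-g_\varepsilon\|_\rho\,\rho(x)+\limsup_{t\searrow0}|P(t)g_\varepsilon(x)-g_\varepsilon(x)|+|g_\varepsilon(x)-f(x)|\le (C+1)\varepsilon\,\rho(x),
\]
which is arbitrarily small. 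This establishes that $(P(t))_{t\in\mathbb{R}_+}$ is a generalized Feller semigroup on $\mathscr{B}^\rho(E)$.

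Finally, since $(\lambda_t)_{t\in\mathbb{R}_+}$ is a Markov process with respect to its natural (hence any) filtration and $\mathbb{E}_\nu[f(\lambda_t)\mid\mathcal F_s^0]=\int_E f(y)p(t-s)(\lambda_s,dy)=P(t-s)f(\lambda_s)$ $\mathbb{P}_\nu$-a.s.\ for $f\in\mathscr{B}^\rho(E)$, and since $\nu\in\mathcal M^\rho(E)$ ensures $\mathbb{E}_\nu[\rho(\lambda_t)]<\infty$ by Lemma~\ref{lem: m-polynomial process on polynomial remains bounded }, the process $(\lambda_t)_{t\in\mathbb{R}_+}$ satisfies Definition~\ref{def:generalized Feller process}; passing to the right continuous augmented filtration as in Step~3 of the proof of Theorem~\ref{thm:GFS induce Markov process} yields that it is a generalized Feller process. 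The main obstacle is the bookkeeping around showing $P(t)\mathscr{B}^\rho(E)\subseteq\mathscr{B}^\rho(E)$ — specifically propagating the local-continuity hypothesis from $C_b(E)$ to all of $\mathscr{B}^\rho(E)$ while simultaneously controlling the weighted tail via Lemma~\ref{lem: m-polynomial process on polynomial remains bounded }; everything else is routine once \textbf{P4} and the growth bound are in hand.
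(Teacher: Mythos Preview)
Your proposal is correct and follows essentially the same approach as the paper: bound $P(t)$ on $\mathscr{B}^\rho(E)$ via Lemma~\ref{lem: m-polynomial process on polynomial remains bounded }, verify $P(t)f\in\mathscr{B}^\rho(E)$ for $f\in C_b(E)$ using Theorem~\ref{thm: equivalence B-rho space} together with the continuity hypothesis, extend by density, check \textbf{P1}--\textbf{P5} (with \textbf{P3} via c\`adl\`ag paths and dominated convergence), and conclude via the Markov property. One small remark: your verification of condition~(ii) in Theorem~\ref{thm: equivalence B-rho space} for general $f$ is slightly muddled---the displayed bound alone gives only boundedness of $|P(t)f|/\rho$, not decay---but since you correctly reduce to $f\in C_b(E)$ (where $P(t)f$ is bounded, making~(ii) trivial) and then pass to the $\|\cdot\|_\rho$-closure, the argument goes through exactly as in the paper.
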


\begin{proof}
We have to show that $\left(\lambda_{t}\right)_{t\in\mathbb{R}_{+}}$
is the stochastic process constructed in Theorem \ref{thm:GFS induce Markov process}.
By definition of the Markov process $\left(\lambda_{t}\right)_{t\in\mathbb{R}_{+}}$
for any $t\geq s\geq0$, any probability measure  $\nu\in\mathcal{M}^{\rho}(E)$ and any measurable map $f:\,E\rightarrow\mathbb{\mathbb{R}}_{+}$
\begin{equation}
\mathbb{E}_{\mathbb{P}_{\nu}}\left[\left.f(\lambda_{t})\right|\mathcal{F}_{s}\right]=P\left(t-s\right)f(\lambda_{s})
\end{equation}
holds true $\mathbb{P}_{\nu}$-almost surely and 
\[
\mathbb{\mathbb{P}_{\nu}}\circ\lambda_{0}^{-1}=\nu.
\]
By Lemma \ref{lem: m-polynomial process on polynomial remains bounded }
\[
\mathbb{E}_{\mathbb{P}_{\nu}}\left[\left.f(\lambda_{t})\right|\mathcal{F}_{s}\right]=P\left(t-s\right)f(\lambda_{s})
\]
holds true $\mathbb{P}_{\nu}$ -almost surely for all $f\in\mathscr{B}^{\rho}(E)$
as well. In order to show that $\left(\lambda_{t}\right)_{t\in\mathbb{R}_{+}}$
is a generalized Feller process we still have to prove that $\left(P(t)\right)_{t\in\mathbb{R}_{+}}$
is a generalized Feller semigroup. We fix some $t\in\mathbb{R}_{+}$
and first show that $f\in\mathcal{\mathscr{B}}^{\rho}(E)$ implies
$P\left(t\right)f\in\mathcal{\mathscr{B}}^{\rho}(E)$. By Lemma \ref{lem: m-polynomial process on polynomial remains bounded }
for any $f \in \mathcal{\mathscr{B}}^{\rho}(E)$
the map
\begin{align*}
x & \rightarrow\int_{E}f(y)p(t)(x,dy)
\end{align*}
is well defined and for some $C>0$
\begin{align*}
P\left(t\right)f(x) & =\int_{E}f(y)p(t)(x,dy)\\
 & \leq C e^{t\left\Vert A_{m}\right\Vert }\left\Vert f \right\Vert_{\rho} \rho(x).
\end{align*}
In order to show $P\left(t\right)f\in\mathcal{\mathscr{B}}^{\rho}(E)$
for any $f\in\mathcal{\mathscr{B}}^{\rho}(E)$, by continuity of $P\left(t\right)$
with respect to $\left\Vert \cdot\right\Vert _{\rho}$ and density
of $C_{b}(E)$ in $\mathcal{\mathscr{B}}^{\rho}(E)$ it is sufficient
to show that $P\left(t\right)f\in\mathcal{\mathscr{B}}^{\rho}(E)$
holds true for any $f\in C_{b}(E)$. By Theorem \ref{thm: equivalence B-rho space}
this is the case since $P(t)f$ is clearly bounded and by assumption
$\left.P(t)f\right|_{K_{R}}$ is continuous for any $R>0$. 

Regarding the properties of generalized Feller semigroups in Definition
\ref{def:generalized Feller semigroup},  \textbf{P1}, \textbf{P2} and positivity \textbf{(P5}) are clearly satisfied
for $\left(P(t)\right)_{t\in\mathbb{R}_{+}}$. Property \textbf{P4} holds true
due to Lemma \ref{lem: m-polynomial process on polynomial remains bounded }.
Regarding \textbf{P3} since the paths of $\left(\lambda_{t}\right)_{t\in\mathbb{R}_{+}}$
are càdlàg for all $f\in C_b(E)$ and
all $x\in E$ we obtain by dominated convergence
\[
\underset{t\searrow0}{\lim}\,P(t)f(x)=\underset{t\searrow0}{\lim}\,\mathbb{E}_{x}\left[f(\lambda_{t})\right]=f(x).
\]
Hence \textbf{P3} follows from density of $f\in C_b(E)$ in $\mathcal{\mathscr{B}}^{\rho}(E)$ and from Lemma \ref{lem: m-polynomial process on polynomial remains bounded }.
Thus $\left(P(t)\right)_{t\in\mathbb{R}_{+}}$ is a generalized Feller
semigroup and $\left(\lambda_{t}\right)_{t\in\mathbb{R}_{+}}$ is
a generalized Feller process on $\left(E,\rho\right).$
\end{proof}

We now turn to affine processes. For their definitionon general state spaces $E\subset \mathbb{R}^n$ we refer to $\cite{CTgss}$. Following Theorem 5.8 in $\cite{CTgss}$ we write the part of the differential semimartingale characteristics  of the affine process that corresponds to the compensator of the jump measure as $K(x,d\text{\ensuremath{\xi}})$, its  linear part according to Theorem 6.4 in $\cite{CTgss}$ as
\[
\mu(x,d\text{\ensuremath{\xi}})=x_{1}\mu_{1}(d\text{\ensuremath{\xi}})+...+x_{n}\mu_{n}(d\text{\ensuremath{\xi}}),
\]
and its constant part as $m(x,d\text{\ensuremath{\xi}})$. As in Example 3.1 in \cite{CMT} one can show the following lemma.

\begin{lemma}
\label{lem:affine is r-polynomial}Consider a state space $E\subset\mathbb{R}^{n}$ that contains $n+1$ elements $x_{1},...,x_{n+1}$ such that for
every $j\in\left\{ 1,...,n+1\right\} $ the set
\[
\left(x_{1}-x_{j},...,x_{j-1}-x_{j},x_{j+1}-x_{j},...,x_{n+1}-x_{j}\right)
\]
is linearly independent. Then an affine process $\left(\lambda_{t}\right)_{t\in\mathbb{R}_{+}}$
is $r$-polynomial with $r\geq2$ if the killing rate is constant, and
\[
\int_{\left\Vert \xi\right\Vert >1}\left\Vert \xi\right\Vert ^{r}m\left(d\xi\right)<\infty,
\]
and for any $i\in\left\{ 1,...,n\right\} $
\[
\int_{\left\Vert \xi\right\Vert >1}\left\Vert \xi\right\Vert ^{r}\mu_{i}\left(d\xi\right)<\infty.
\]
\end{lemma}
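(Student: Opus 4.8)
The plan is to verify directly that an affine process satisfying the stated integrability conditions maps each coordinate polynomial (and hence, by iteration, every monomial up to degree $r$) into the space $\mathcal{P}_r$ under the extended generator, which by the theory of polynomial processes (see Theorem 2.7 in \cite{CMT}) is exactly the defining property of an $r$-polynomial process. First I would recall from \cite{CTgss} that the extended infinitesimal generator $\mathcal{A}$ of an affine process acts on a test function $g \in C_c^2(\mathbb{R}^n)$ by
\begin{align*}
\mathcal{A}g(x) &= \tfrac{1}{2}\sum_{i,j}\big(a(x)\big)_{ij}\partial_{ij}g(x) + \langle b(x),\nabla g(x)\rangle - c(x) g(x)\\
&\quad + \int \big(g(x+\xi)-g(x)-\langle \nabla g(x), \chi(\xi)\rangle\big)\,\big(m(x,d\xi)+\mu(x,d\xi)+K(x,d\xi)\big),
\end{align*}
with $a$ affine, $b$ affine, $c$ constant (by hypothesis) and the jump compensator decomposed into constant part $m(x,d\xi)$, linear part $\mu(x,d\xi)=\sum_i x_i\mu_i(d\xi)$ and the remainder $K(x,d\xi)$; the key structural input, proved in Example 3.1 of \cite{CMT}, is that the linear-independence assumption on $x_1,\dots,x_{n+1}$ forces $K\equiv 0$ on $E$, so that only the constant and linear jump parts survive. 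The hard part will be controlling the jump integral when applied to a monomial $g(x)=x^\alpha$ with $|\alpha|\le r$: one must replace the truncation function $\chi$ by the identity, show the resulting integrand $g(x+\xi)-g(x)-\langle\nabla g(x),\xi\rangle$ is dominated, for large $\|\xi\|$, by a constant times $\|\xi\|^r$ (binomial expansion, with the lower-order terms absorbed near $\xi=0$ by the second-order Taylor remainder and local finiteness of the Lévy-type measures), and then invoke precisely the two moment conditions $\int_{\|\xi\|>1}\|\xi\|^r m(d\xi)<\infty$ and $\int_{\|\xi\|>1}\|\xi\|^r\mu_i(d\xi)<\infty$ to conclude finiteness.

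Concretely, I would proceed as follows. Step one: fix a multi-index $\alpha$ with $|\alpha|\le r$ and expand $\mathcal{A}(x^\alpha)$ term by term. The second-order part $\tfrac12\sum a_{ij}(x)\partial_{ij}x^\alpha$ is a product of an affine function with a polynomial of degree $|\alpha|-2$, hence lies in $\mathcal{P}_r$ since $r\ge 2$; the drift part $\langle b(x),\nabla x^\alpha\rangle$ is affine times degree $|\alpha|-1$, again in $\mathcal{P}_r$; the killing term $-c\, x^\alpha$ is in $\mathcal{P}_r$ because $c$ is constant. Step two: for the jump term, write $x^\alpha$ via the multinomial theorem as $(x+\xi)^\alpha = \sum_{\beta\le\alpha}\binom{\alpha}{\beta} x^{\alpha-\beta}\xi^\beta$, so that the integrand equals $\sum_{|\beta|\ge 2,\ \beta\le\alpha}\binom{\alpha}{\beta}x^{\alpha-\beta}\xi^\beta$ plus the correction coming from replacing $\chi(\xi)$ by $\xi$ in the $|\beta|=1$ compensation, the latter being handled by the standard affine admissibility (the drift $b$ absorbs the truncation change). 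Integrating against $m(x,d\xi)+\mu(x,d\xi)$, the coefficient of $x^{\alpha-\beta}$ for $|\beta|=k\le r$ picks up the moments $\int \xi^\beta\, m(d\xi)$ and $\int \xi^\beta\, \mu_i(d\xi)$, both finite by the hypothesis together with local finiteness near the origin. Since $\mu(x,d\xi)=\sum_i x_i\mu_i(d\xi)$ contributes an extra factor $x_i$, the total degree is $|\alpha-\beta|+1\le |\alpha|\le r$. Hence $\mathcal{A}(x^\alpha)\in\mathcal{P}_r$ for every $|\alpha|\le r$, and by linearity $\mathcal{A}(\mathcal{P}_r)\subset\mathcal{P}_r$.

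Step three: conclude. By the characterization of polynomial processes (Theorem 2.7 in \cite{CMT}, cf.\ also \cite{filipovic2016polynomial}), a Markov process whose extended generator preserves $\mathcal{P}_r$ and for which the polynomial martingale problem is well posed is an $r$-polynomial process; the well-posedness is part of the affine setup of \cite{CTgss} and is in any case subsumed in the standing assumption on uniqueness of solutions to the martingale problem made at the beginning of this subsection. I expect the single genuine obstacle to be the bookkeeping in Step two — namely making fully rigorous that $K\equiv 0$ under the linear-independence condition and that the finitely many moments $\int_{\|\xi\|>1}\|\xi\|^k\, m(d\xi)$, $\int_{\|\xi\|>1}\|\xi\|^k\, \mu_i(d\xi)$ for $2\le k\le r$ are all controlled by the two displayed $r$-th moment conditions (this is immediate since $\|\xi\|^k\le \|\xi\|^r$ on $\{\|\xi\|>1\}$), after which the argument is a routine computation mirroring Example 3.1 in \cite{CMT}.
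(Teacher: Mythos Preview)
Your overall strategy---verify that the extended generator preserves $\mathcal{P}_r$ and then invoke a characterization of polynomial processes---is sound and is essentially an unfolded version of what the paper does: the paper observes that the affine structure of the compensator together with the moment hypotheses yields $\int\|\xi\|^r K(x,d\xi)\le C(1+\|x\|^r)$ and then cites Theorem~2.15 in \cite{CMT}, which packages precisely the generator calculation you spell out in Steps one and two.

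There is, however, a genuine misconception in your setup. In the paper's notation $K(x,d\xi)$ is the \emph{full} jump compensator, not a remainder beyond the constant and linear parts; the affine structure theorem (Theorem~6.4 in \cite{CTgss}) asserts exactly that $K(x,d\xi)=m(d\xi)+\sum_i x_i\,\mu_i(d\xi)$ on $E$, with no higher-order piece to kill. The linear-independence hypothesis on $x_1,\dots,x_{n+1}$ plays no role in making any remainder vanish---the compensator is affine by the very definition of an affine process. Rather, that hypothesis ensures that $E$ affinely spans $\mathbb{R}^n$, so that the decomposition of the affine map $x\mapsto K(x,\cdot)$ into its constant part $m$ and linear coefficients $\mu_i$ is \emph{unique}; without it the individual measures $\mu_i$ on which the lemma imposes moment conditions would not be well defined. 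Once this is corrected your Steps one through three go through, though for Step three the clean reference is Theorem~2.15 in \cite{CMT} rather than Theorem~2.7: the latter records the forward implication (polynomial process $\Rightarrow$ matrix-exponential formula), whereas you need the converse direction, phrased via the semimartingale characteristics.
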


\begin{proof}
From Theorem 6.4 in $\cite{CTgss}$
it follows that there is $C>0$ such that
\[
\int_{\mathbb{R}^{d}}\left\Vert \xi\right\Vert ^{r}K\left(\lambda_{t},d\xi\right)\leq C\left(1+\left\Vert \lambda_{t}1_{\left\{ \left.t\in\mathbb{R}_{+}\right|\lambda_{t}\neq \Delta\right\} }\right\Vert ^{r}\right).
\]
The lemma follows then from Theorem 2.15 in \cite{CMT}. 
\end{proof}

Under the conditions of the above lemma  and a continuity assumption on the semigroup, we now also obtain the generalized Feller property for affine processes.

\begin{corollary}
\label{cor:affine process is generalized Feller}
Consider a state space $E\subset\mathbb{R}^{n}$ that satisfies the conditions of Lemma \ref{lem:affine is r-polynomial}.  
Let $r\geq2$ and assume that the linear part of the killing rate vanishes and that
\[
\int_{\left\Vert \xi\right\Vert >1}\left\Vert \xi\right\Vert ^{r}m\left(d\xi\right)<\infty, \qquad \text{and} \qquad 
\int_{\left\Vert \xi\right\Vert >1}\left\Vert \xi\right\Vert ^{r}\mu_{i}\left(d\xi\right)<\infty, \quad i\in\left\{ 1,...,n\right\}.
\]
Let $\rho\in\mathcal{P}_{r}$ be an admissible weight function on
$E$. If for any $f\in C_{b}(E)$ and any $t\in\mathbb{R}_{+}$, 
$\left.P(t)f\right|_{K_{R}}$ is continuous for any $R>0$, then the affine process $\left(\lambda_{t}\right)_{t\in\mathbb{R}_{+}}$
is a generalized Feller process on $\left(E,\rho\right).$
\end{corollary}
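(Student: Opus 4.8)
The plan is to obtain the corollary as a direct composition of Lemma~\ref{lem:affine is r-polynomial} with Proposition~\ref{prop:polynomial process is generalized Feller}. First I would check that the hypotheses imposed here are precisely those of Lemma~\ref{lem:affine is r-polynomial}: the state space $E\subset\mathbb{R}^n$ is assumed to contain $n+1$ points with the required affine independence property; the killing rate is constant, since an affine killing rate is by definition affine and its linear part is assumed to vanish; and the two integrability conditions $\int_{\|\xi\|>1}\|\xi\|^r m(d\xi)<\infty$ and $\int_{\|\xi\|>1}\|\xi\|^r \mu_i(d\xi)<\infty$ for $i\in\{1,\dots,n\}$ are assumed with exponent $r\geq 2$. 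Hence Lemma~\ref{lem:affine is r-polynomial} applies and shows that the affine process $\left(\lambda_{t}\right)_{t\in\mathbb{R}_{+}}$ is $r$-polynomial.

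Next, I would feed this into Proposition~\ref{prop:polynomial process is generalized Feller} with $m=r$. That proposition requires an $m$-polynomial process, an admissible weight function $\rho\in\mathcal{P}_{m}$, and the regularity assumption that for every $f\in C_b(E)$ and every $t\in\mathbb{R}_+$ the restriction $\left.P(t)f\right|_{K_R}$ is continuous for all $R>0$. All three are in force: $\left(\lambda_{t}\right)_{t\in\mathbb{R}_{+}}$ is $r$-polynomial by the previous step, $\rho\in\mathcal{P}_r$ is assumed admissible, and the continuity of $\left.P(t)f\right|_{K_R}$ is an explicit hypothesis of the corollary. Applying Proposition~\ref{prop:polynomial process is generalized Feller} then yields directly that $\left(\lambda_{t}\right)_{t\in\mathbb{R}_{+}}$ is a generalized Feller process on $(E,\rho)$, which is the claim.

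No additional estimates are needed; internally the argument relies only on Lemma~\ref{lem: m-polynomial process on polynomial remains bounded } (via Proposition~\ref{prop:polynomial process is generalized Feller}) to control $P(t)\rho\leq Ce^{t\|A_r\|}\rho$ and on Theorem~\ref{thm:GFS induce Markov process} to identify the càdlàg affine process with the canonical generalized Feller process. The main point requiring care is the bookkeeping of the polynomial degree and of the killing-rate condition: one must match $\rho\in\mathcal{P}_r$ with the degree $r$ of the polynomial property and translate ``linear part of the killing rate vanishes'' into the ``constant killing rate'' hypothesis of Lemma~\ref{lem:affine is r-polynomial}. Once this alignment is made explicit, the corollary follows with essentially no further work.
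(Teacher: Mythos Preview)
Your proposal is correct and follows exactly the paper's approach: the paper's proof reads in its entirety ``Combine Proposition~\ref{prop:polynomial process is generalized Feller} and Lemma~\ref{lem:affine is r-polynomial}.'' Your additional bookkeeping on matching the polynomial degree and translating the killing-rate hypothesis is accurate and makes explicit what the paper leaves implicit.
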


\begin{proof}
Combine Proposition \ref{prop:polynomial process is generalized Feller}
and Lemma \ref{lem:affine is r-polynomial}.
\end{proof}

\subsection{Necessity of Condition \textbf{P4}} 
\label{eq:secP4}

For classical Feller semigroups it is sufficient to require the properties \textbf{P1}, \textbf{P2}, \textbf{P3} and \textbf{P5} of a generalized Feller semigroup in order to obtain a strongly continuous semigroup on $C_0(E)$. For generalized Feller semigroups we need to require additionally  \textbf{P4}. 
Indeed, subsequently we construct a semigroup on $\mathscr{B}^{\rho}(E)$ that fulfills \textbf{P1}, \textbf{P2}, \textbf{P3} and \textbf{P5}  but not \textbf{P4} and that is \emph{not} strongly continuous. To this end, we consider $E = \mathbb{N}$ as state space
and for $n \geq 0$ take $\rho (n) = \exp (n^2)$. The point $0$ is an absorbing state.
We construct a Markov chain with discrete state space and continuous time whose transition rate `matrix' $A=(a_{ij})_{i,j \in \mathbb{N}} $ is given by
\begin{align*}
&a_{n,n + 1} =
n^\alpha \exp (- n), \, a_{n,n} = - n^\alpha, a_{n,0} = n^\alpha(1 - \exp (- n)), &\quad \text{for  $n$ odd,} \\
&a_{n,n} = - (n-1)^{\alpha}, \, a_{n,0} = (n-1)^{\alpha}  &\quad \text{for  $n$ even,}
\end{align*}
and
$0$ otherwise, where $\alpha > 1$. The process jumps from odd numbers with high
intensity and low probability up to the next even number and then with high
intensity down to $0$ which it the absorbing state. From even numbers it jumps directly down to the absorbing state. 

In the following proposition we show the form of the corresponding Markov semigroup and that it is well defined on $\mathscr{B}^{\rho}(\mathbb{N})$.

\begin{proposition}
  The Markov semigroup $P(t) = \exp (At)$ is well defined on $\mathscr{B}^{\rho}(\mathbb{N})$.
  Furthermore for odd $n$ we have   for $t \geq 0$
\begin{align*}
     P(t) \rho (n) =& \rho (n) \exp (- n^\alpha t) \\&+ \rho (n+1) n^{\alpha} t \exp (- n^{\alpha} t) \exp(-n)
     \\ &+ \rho (0) \left(1 - \exp (- n^\alpha t) -\exp (- n) n^\alpha t \exp (- n^{\alpha} t)\right). 
\end{align*}

\end{proposition}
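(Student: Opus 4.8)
The plan is to bypass the abstract matrix exponential and instead build the chain from its $Q$-matrix $A$ by the standard holding-time/jump-chain recipe, after which the claimed formula drops out of the observation that, started from an odd state, at most two jumps occur before absorption at $0$. First I would record the trivial topological facts: since $\mathbb{N}$ carries the discrete topology, compact sets are exactly the finite sets, so each $K_R=\{n:\rho(n)\le R\}=\{n:e^{n^2}\le R\}$ is finite and $\rho$ is an admissible weight function; by Theorem~\ref{thm: equivalence B-rho space} a function $f$ on $\mathbb{N}$ lies in $\mathscr{B}^{\rho}(\mathbb{N})$ if and only if $|f(n)|/\rho(n)\to 0$ as $n\to\infty$.

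Next I would set up the chain from $A$. From an odd state $n$ the total exit rate is $-a_{n,n}=n^{\alpha}=a_{n,n+1}+a_{n,0}$, so after an $\mathrm{Exp}(n^{\alpha})$ holding time the chain jumps to the even state $n+1$ with probability $a_{n,n+1}/n^{\alpha}=e^{-n}$ and to the absorbing state $0$ with probability $1-e^{-n}$; from an even state $n$ the exit rate is $(n-1)^{\alpha}$ and the only jump is to $0$; and $0$ is absorbing. Hence every trajectory reaches $0$ after at most two jumps, there is no explosion, and $A$ determines a unique honest transition semigroup $(p(t))_{t\ge 0}$ which solves the Kolmogorov equations $p'(t)=Ap(t)$, $p(0)=\mathrm{Id}$, i.e.\ is $\exp(At)$ in the intended sense.

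Then I would compute the row $p(t)(n,\cdot)$ for odd $n$ directly. The chain is still at $n$ with probability $p(t)(n,n)=e^{-n^{\alpha}t}$; it sits at $n+1$ at time $t$ precisely if the first jump (at some time $s\le t$) goes to $n+1$ and no further jump occurs on $(s,t]$, and since the holding rate at $n+1$ equals $(n+1-1)^{\alpha}=n^{\alpha}$ the integrand is $s$-independent, giving
\[
p(t)(n,n+1)=\int_0^t n^{\alpha}e^{-n^{\alpha}s}\,e^{-n}\,e^{-n^{\alpha}(t-s)}\,ds=n^{\alpha}e^{-n}\,t\,e^{-n^{\alpha}t};
\]
the remaining mass is at $0$. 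Substituting into $P(t)\rho(n)=\sum_{m}p(t)(n,m)\rho(m)$ yields exactly the asserted identity and, in particular, shows the series converges.

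Finally, for "well defined on $\mathscr{B}^{\rho}(\mathbb{N})$" I would check that $P(t)$ is a bounded operator of $\mathscr{B}^{\rho}(\mathbb{N})$ into itself for each fixed $t\ge 0$. For $t=0$ this is $\mathrm{Id}$; for $t>0$, using the three-term row above for odd $n$, the two-term row $p(t)(n,n)=e^{-(n-1)^{\alpha}t}$, $p(t)(n,0)=1-e^{-(n-1)^{\alpha}t}$ for even $n$, and the weight ratio $\rho(n+1)/\rho(n)=e^{2n+1}$, one gets for $f\in\mathscr{B}^{\rho}(\mathbb{N})$ and odd $n$ a bound of the form
\[
\frac{|P(t)f(n)|}{\rho(n)}\le e^{-n^{\alpha}t}\frac{|f(n)|}{\rho(n)}+n^{\alpha}t\,e^{n+1}e^{-n^{\alpha}t}\frac{|f(n+1)|}{\rho(n+1)}+\frac{|f(0)|}{\rho(n)},
\]
with an analogous (simpler) estimate for even $n$; since $\alpha>1$ the factor $n^{\alpha}t\,e^{n+1}e^{-n^{\alpha}t}$ is bounded in $n$ for each fixed $t>0$ and $|f(m)|/\rho(m)\to 0$, so $P(t)f\in\mathscr{B}^{\rho}(\mathbb{N})$ and $\|P(t)\|_{L(\mathscr{B}^{\rho}(\mathbb{N}))}<\infty$. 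The main obstacle is not any hard computation but the bookkeeping of the $Q$-matrix construction: one must argue cleanly that non-explosion makes the probabilistic transition function the unique solution of the Kolmogorov equations (hence legitimately "$\exp(At)$"), and one must notice that the blow-up of the weight ratio $e^{2n+1}$ is tamed only because $\alpha>1$ beats it through $e^{-n^{\alpha}t}$ — this is precisely the mechanism that will later, as $t\searrow 0$, cause \textbf{P4} to fail.
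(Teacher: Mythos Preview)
Your proposal is correct and follows essentially the same approach as the paper: compute the transition probabilities $p(t)(n,\cdot)$ explicitly (the paper phrases this as solving the Kolmogorov forward equations, you phrase it as the holding-time/jump-chain construction, but for this simple non-explosive chain these are equivalent and yield the same formulas), then use $P(t)\rho\le C_t\rho$ to conclude that the semigroup acts boundedly on $\mathscr{B}^{\rho}(\mathbb{N})$. Your version is somewhat more detailed on the well-definedness step and makes explicit the key observation that the holding rate at the even state $n+1$ coincides with that at $n$, which is exactly what produces the factor $t\,e^{-n^{\alpha}t}$.
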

\begin{proof}
Solving the Kolomogorov forward equations for the transition probabilities, i.e.
\[
\partial_t p_{i,j}(t)= \sum_{k} p_{i,k}(t) a_{k,j}
\]
via the matrix exponential $\exp(At)$
yields for even $n$
\begin{align*}
p_{n,n} (t) = \exp (- (n-1)^{\alpha} t), \quad
p_{n,0} (t) = 1 - \exp (-(n-1)^{\alpha} t).
\end{align*}
 and for odd $n$ 
 \begin{align*}
 p_{n,n} (t) &= \exp (- n^\alpha t), \\ p_{n,n+1} (t)&
 =\exp(-n)  n^\alpha t \exp(-n^{\alpha}t)
 \\ p_{n,0} (t)&= 
1 - \exp (- n^\alpha t) -\exp(-n)  n^\alpha t \exp(-n^{\alpha}t).
\end{align*}
All other quantities are zero except of $p_{0,0}$ which is equal to 1, 
as $0$ is an absorbing state.
This implies existence of the Markov process with transition rate matrix $A$. Furthermore, for every $t$ it follows that $P(t) \rho \leq C_t \rho$ , whence the
semigroup is well defined on $B^{\rho}(\mathbb{N})$. Due to the topology on $\mathbb{N}$, the semigroup is also well defined on  $\mathscr{B}^{\rho}(\mathbb{N})$.
\end{proof}

For this semigroup  the validity of \textbf{P1}, \textbf{P2},  \textbf{P3}, and \textbf{P5} are clear. We show that  \textbf{P4} does not hold. We define for any $t\in \mathbb{R}_+$ 
\[
 s(t):=\sup_{n \in \mathbb{N}} \frac{ \rho (n+1) n^{\alpha} t \exp (- n^{\alpha} t) \exp(-n)}{\rho (n)}. 
\] Maximizing this for $n \in \mathbb{R}_+$ using standard methods yields for the maximizing $n_t$ the condition
\begin{align*}
n_t^{\alpha}&=\frac{n_t}{\alpha t}+\frac{1}{t}. \end{align*}
Thus, $t\rightarrow 0$ implies $n_t \rightarrow \infty$ and we can estimate for large $n_t$
\begin{align*}
s(t) &\geq e^1\left(\lfloor n_t \rfloor^{\alpha}t\right)\cdot\exp\left(\lfloor n_t \rfloor-\lfloor n_t \rfloor^{\alpha}t\right)\\
&\geq e^1\left(\frac{n_t}{\alpha }+1-(n_t^{\alpha}-\lfloor n_t \rfloor^{\alpha})t\right)\cdot\exp\left(-2-\frac{n_t}{\alpha} +n_t\right)\\
&\geq e^1\left(\frac{n_t}{\alpha }+1-\frac{\alpha(\alpha +1)}{n_t}n_t^{\alpha}t\right)\cdot\exp\left(-2-\frac{n_t}{\alpha} +n_t\right),
\end{align*}
where we used a Taylor expansion in the last step. Using again the expression for $n_t^{\alpha}$ we obtain that $t\rightarrow 0$ implies $s(t)\rightarrow \infty$. Hence, \textbf{P4} does not hold. In particular, this calculation shows that for any $f\in \mathscr{B}^{\rho}(\mathbb{N})$ that behaves like $\rho$ around a large enough $n\in \mathbb{N}$ and is $0$ elsewhere $P(t)f$ does not converge in norm to $f$ for $t \rightarrow 0$. In other words, the semigroup is not strongly continuous. However, when identifying the state $0$ with the cemetery $\Delta$, the semigroup has the Feller property. Note that Proposition \ref{prop:Generalized Feller sometimes Feller process on locally compact space} does not apply here, precisely because \textbf{P4} does not hold true.

\appendix
\section{Appendix}

We collect in this appendix on the one hand some functional analytic assertions used throughout the paper and other hand some lemmas for $\mathscr{B}^{\rho}(E)$ functions.

\subsection{Functional analytic tools}

\begin{definition}
\label{def:C_b-open} Let $\Omega\neq\emptyset$. A set $O\subset\Omega$
is called $C_{b}(\Omega)$-$\mathit{open}$, if there exists a sequence
$\left(f_{n}\right)_{n\in\mathbb{N}}\subset C_{b}(\Omega)$ such that
pointwise $f_{n}\nearrow1_{O}$. The system of sets that are $C_{b}(\Omega)$-open is called $\mathcal{G}\left(C_{b}(\Omega)\right)$ .
\end{definition}

We recall here the following lemma, which can be found in  M.~Schweizer's lecture notes ``Measure and Integration'' (version
July 22, 2017), see Lemma IV.1.11.
\begin{lemma}
\label{lem:C_b-open sets generate Baire sigma-algebra} Let $\Omega\neq\emptyset$
and let $\mathcal{G}\left(C_{b}(\Omega)\right)$ be the system of
sets that are $C_{b}(\Omega)$-open. Then
\[
\sigma\left(\mathcal{G}\left(C_{b}(\Omega)\right)\right)=\sigma\left(\left.f\right|\,f\in C_{b}(\Omega)\right),
\]
is the smallest $\sigma$-algebra such that all maps in $C_{b}(\Omega)$
are measurable. 
\end{lemma}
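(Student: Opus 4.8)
The plan is to establish the two inclusions between $\mathcal{A} := \sigma(\mathcal{G}(C_b(\Omega)))$ and $\mathcal{B} := \sigma(f \mid f \in C_b(\Omega))$ separately; the last clause of the statement is nothing but the definition of $\mathcal{B}$ as the smallest $\sigma$-algebra making every $f \in C_b(\Omega)$ measurable, so there is nothing to prove there.

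For $\mathcal{A} \subseteq \mathcal{B}$ I would argue as follows. Let $O \subseteq \Omega$ be $C_b(\Omega)$-open, so by Definition~\ref{def:C_b-open} there is a sequence $(f_n)_{n \in \mathbb{N}} \subseteq C_b(\Omega)$ with $f_n \nearrow 1_{O}$ pointwise. Each $f_n$ is $\mathcal{B}$-measurable by construction of $\mathcal{B}$, hence so is the pointwise limit $1_{O}$, and therefore $O = \{x \in \Omega : 1_{O}(x) = 1\} \in \mathcal{B}$. Thus $\mathcal{G}(C_b(\Omega)) \subseteq \mathcal{B}$, and since $\mathcal{B}$ is a $\sigma$-algebra, $\mathcal{A} = \sigma(\mathcal{G}(C_b(\Omega))) \subseteq \mathcal{B}$.

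For $\mathcal{B} \subseteq \mathcal{A}$ it suffices to show that every $f \in C_b(\Omega)$ is $\mathcal{A}$-measurable, and for that, since the intervals $(a,\infty)$ with $a \in \mathbb{R}$ generate $\mathcal{B}(\mathbb{R})$, it suffices to show $\{f > a\} \in \mathcal{A}$ for all $a \in \mathbb{R}$. Fix $f$ and $a$, put $O := \{x \in \Omega : f(x) > a\}$, and set $g_n := \min\bigl(1,\, n\max(f - a,\, 0)\bigr) \in C_b(\Omega)$ for $n \in \mathbb{N}$. Then $g_n$ has values in $[0,1]$, the sequence $(g_n(x))_n$ is non-decreasing for each fixed $x$, and it converges to $1_{O}(x)$: if $f(x) \leq a$ then $g_n(x) = 0$ for all $n$, while if $f(x) > a$ then $n\max(f(x)-a,0) = n(f(x)-a) \to \infty$, so $g_n(x) \nearrow 1$. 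Hence $g_n \nearrow 1_{O}$ pointwise, so $O$ is $C_b(\Omega)$-open and thus $O \in \mathcal{A}$; consequently $f$ is $\mathcal{A}$-measurable and $\mathcal{B} \subseteq \mathcal{A}$. Combining the inclusions gives $\mathcal{A} = \mathcal{B}$.

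The argument is essentially routine; the only point requiring a little care is the choice of the approximating sequence, where the truncation $\min(1,\cdot)$ of the rescaled positive part $n\max(f-a,0)$ is exactly what makes $(g_n)$ monotone increasing with pointwise limit precisely $1_{O}$. One should also keep in mind that the complement of a $C_b(\Omega)$-open set need not be $C_b(\Omega)$-open --- this is why one must pass to the generated $\sigma$-algebra rather than work with the system $\mathcal{G}(C_b(\Omega))$ directly --- but since the superlevel sets $\{f > a\}$ for all $a$ already generate the pullback of $\mathcal{B}(\mathbb{R})$ under $f$, this causes no obstruction.
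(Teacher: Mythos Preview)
Your proof is correct. Note that the paper does not actually supply a proof of this lemma; it merely records the statement and cites Schweizer's lecture notes (Lemma~IV.1.11) for the argument. Your two-inclusion argument---using that pointwise limits of $\mathcal{B}$-measurable functions are $\mathcal{B}$-measurable for $\mathcal{A}\subseteq\mathcal{B}$, and the explicit approximants $g_n=\min(1,n\max(f-a,0))$ to show superlevel sets are $C_b(\Omega)$-open for $\mathcal{B}\subseteq\mathcal{A}$---is the standard route and would serve perfectly well as the omitted proof.
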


\begin{definition}
\label{def:Baire sigma algebra} The smallest $\sigma$-algebra such
that all maps in $C(\Omega)$ (or all maps in $C_b(\Omega)$) are measurable
is called $\mathit{Baire}$ $\sigma$-$\mathit{algebra}$ and is denoted by
$\mathcal{B}_{0}(\Omega)$.
\end{definition}

Making slight adjustments in the proof of (\cite{BoI}, §5, Proposition
5) one obtains a version of the proposition that holds
on $C_{b}\left(X \right)$:
\begin{proposition}
\label{prop:condition when linear funtional on Cb is measure real case}
Let $X$ be a completely regular space and $\ell:\,C_{b}\left(X\right)\rightarrow\mathbb{R}$
be a continuous linear map. There exists a signed Radon measure $\mu$
on $X$ such that for all $f\in C_{b}\left(X \right)$
\[
\ell(f)=\int_{X}f(x)\mu(dx),
\]
if and only if for each $\varepsilon>0$ there exists a compact set
$K_{\varepsilon}\subset X$ such that for any function $f\in C_{b}\left(X \right)$
with $\left|f\right|\leq1$ and $\left.f\right|_{K_{\varepsilon}}=0$, we have 
$
\left|\ell(f)\right|<\varepsilon
$. The signed Radon measure is unique. 
\end{proposition}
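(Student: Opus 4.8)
The plan is to reduce the assertion to the classical Riesz--Markov representation theorem on a compact Hausdorff space (see \cite{BoI}) and to recognise the stated tightness condition as the statement that the representing measure is carried by $X$. As a preliminary I would, if necessary, replace $X$ by its associated Tychonoff space, since neither $C_b(X)$ nor the Radon measures occurring here see the failure of the Hausdorff property; so assume $X$ is Tychonoff and let $\beta X$ be its Stone--Čech compactification. Recall that $f\mapsto f^{\beta}$, the unique continuous extension to $\beta X$, is an isometric isomorphism $C_b(X)\to C(\beta X)$ with inverse $g\mapsto g|_{X}$, and that every compact $K\subset X$ remains compact in $\beta X$. Then $\bar\ell(g):=\ell(g|_{X})$ is a continuous linear functional on $C(\beta X)$, so the classical theorem supplies a unique signed Radon measure $\tilde\mu$ on $\beta X$ with $\ell(f)=\int_{\beta X}f^{\beta}\,\d\tilde\mu$ for all $f\in C_b(X)$. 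Everything then reduces to comparing $\tilde\mu$ with the subspace $X\subset\beta X$.

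For the direction assuming a representation, suppose $\ell(f)=\int_{X}f\,\d\mu$ with $\mu$ a signed Radon measure on $X$. First I would observe that $|\mu|$ is necessarily finite: boundedness of $\ell$ forces this, since by complete regularity one can separate two disjoint compacta carrying almost all of the Jordan parts $\mu^{+}$ and $\mu^{-}$ by a function $f$ with $|f|\le 1$, and evaluating $\ell$ on it bounds $|\mu|(X)$ by $\|\ell\|$. Inner regularity of the finite measure $|\mu|$ then yields, for each $\varepsilon>0$, a compact $K_{\varepsilon}\subset X$ with $|\mu|(X\setminus K_{\varepsilon})<\varepsilon$, and for every $f\in C_b(X)$ with $|f|\le 1$ and $f|_{K_{\varepsilon}}=0$ one gets $|\ell(f)|\le\int_{X\setminus K_{\varepsilon}}|f|\,\d|\mu|\le|\mu|(X\setminus K_{\varepsilon})<\varepsilon$.

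The substantive direction is the converse. Assume the tightness condition, fix $\varepsilon>0$ and the associated compact $K_{\varepsilon}\subset X$, and let $U:=\beta X\setminus K_{\varepsilon}$, an open subset of $\beta X$. The key step I would establish is $|\tilde\mu|(U)\le\varepsilon$. For this I would use the standard description of the total variation of a Radon measure on $U$ via compactly supported continuous functions: $|\tilde\mu|(U)$ equals the supremum of $|\int_{U}g\,\d\tilde\mu|$ over $g\in C(\beta X)$ with $\|g\|_{\infty}\le 1$ and support contained in $U$. Any such $g$ vanishes on $K_{\varepsilon}$, hence its restriction $g|_{X}$ lies in $C_b(X)$, has absolute value at most $1$, and vanishes on $K_{\varepsilon}$; since $g$ is the continuous extension of $g|_{X}$, i.e. $g=(g|_{X})^{\beta}$, the hypothesis gives $|\int_{U}g\,\d\tilde\mu|=|\int_{\beta X}(g|_{X})^{\beta}\,\d\tilde\mu|=|\ell(g|_{X})|<\varepsilon$, and taking the supremum proves the claim. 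Since $\beta X\setminus X\subset U$ and $U$ is $|\tilde\mu|$-measurable, the outer measure $|\tilde\mu|^{*}(\beta X\setminus X)\le\varepsilon$ for every $\varepsilon$, so $\beta X\setminus X$ is $|\tilde\mu|$-null. I would then push $\tilde\mu$ down to $X$: using that $\mathcal{B}(X)=\{B\cap X:B\in\mathcal{B}(\beta X)\}$, define $\mu(B\cap X):=\tilde\mu(B)$, which is well defined modulo the null set $\beta X\setminus X$; inner regularity of $\tilde\mu$ together with $K_{\varepsilon}\subset X$ shows $\mu$ is a signed Radon measure on $X$, and since $\tilde\mu$ is concentrated on $X$ and $f^{\beta}|_{X}=f$ we obtain $\ell(f)=\int_{\beta X}f^{\beta}\,\d\tilde\mu=\int_{X}f\,\d\mu$ for every $f\in C_b(X)$.

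For uniqueness I would note that any Radon measure $\mu$ on $X$ representing $\ell$, extended by $0$ on $\beta X\setminus X$, is a signed Radon measure on $\beta X$ (inner regularity is preserved because compacta of $X$ are compacta of $\beta X$) and represents $\bar\ell$; by the uniqueness part of the classical theorem it coincides with $\tilde\mu$, so $\mu$ is uniquely determined. The main obstacle will be the estimate $|\tilde\mu|(U)\le\varepsilon$ in the converse direction, which is exactly where the tightness hypothesis is consumed, together with the measure-theoretic bookkeeping needed to descend from $\tilde\mu$ on $\beta X$ to a bona fide Radon measure on $X$; the remaining steps are routine applications of the compact-space Riesz theorem and of inner regularity.
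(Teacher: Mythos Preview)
Your proof is correct and follows essentially the same route as the reference the paper invokes: the paper does not spell out its own argument but points to Bourbaki (\cite{BoI}, \S 5, Proposition~5), whose proof likewise passes through a compactification, applies Riesz--Markov there, and identifies the tightness condition with the representing measure being carried by $X$. Your use of the Stone--\v{C}ech compactification, the isometry $C_b(X)\cong C(\beta X)$, and the total-variation estimate $|\tilde\mu|(\beta X\setminus K_\varepsilon)\le\varepsilon$ reproduce that argument in full detail; the measure-theoretic bookkeeping for the push-down and for uniqueness (via extension by zero to $\beta X$) is handled correctly, including the point that $\beta X\setminus X$ need not be Borel but is $|\tilde\mu|^*$-null.
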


\begin{definition}
\label{def:compact class}

A family $\mathcal{C}$ of subsets of a space $X$ is called $compact$
$class$ if for any sequence $\left(C_{n}\right)_{n\in\mathbb{N}}\subset\mathcal{C}$
such that the intersection $\underset{n\in\mathbb{N}}{\bigcap}C_{n}$
is empty, already some finite intersection $\underset{i\in I,\text{ finite}}{\bigcap}C_{i}$
is empty.
\end{definition}

For a completely regular space statements similar to Tietze-Urysohn
extension theorem and Urysohn's Lemma can be shown.
\begin{proposition}
\label{prop: Tietze extension completely regular case} Let $E$ be
completely regular and $K\subset E$ compact. Then a real-valued continuous
function $f\in C\left(K \right)$ on $K$ can be extended
to a continuous function $F\in C\left(E \right)$ on all
of $E$. If additionally $\left|f\right|<C<\infty$ then there is
an extension $F\in C\left(E \right)$ such that $\left|F\right|<C<\infty$.
\end{proposition}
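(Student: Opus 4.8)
The plan is to deduce the statement from the classical Tietze extension theorem for normal spaces, by realizing $E$ as a subspace of a compact Hausdorff space. First I would form the evaluation map
\[
e:\ E\to[0,1]^{C(E,[0,1])},\qquad e(x):=\bigl(\phi(x)\bigr)_{\phi\in C(E,[0,1])},
\]
which is continuous, since each coordinate is, and which, by complete regularity of $E$, is a homeomorphism onto its image. Setting $X:=\overline{e(E)}$, a closed subspace of the cube $[0,1]^{C(E,[0,1])}$, one obtains a compact Hausdorff space containing a homeomorphic copy $e(E)$ of $E$.

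Next I would transport $f$ to $X$ and extend it there. Since $K$ is compact, $e(K)$ is compact, hence closed in the Hausdorff space $X$, and the restriction $e|_K:K\to e(K)$ is a continuous bijection from a compact space onto a Hausdorff space, thus a homeomorphism. Consequently $g:=f\circ(e|_K)^{-1}$ is a well-defined continuous real-valued function on the closed subset $e(K)\subset X$. Because a compact Hausdorff space is normal, the classical Tietze extension theorem furnishes a continuous extension $G\in C(X)$ of $g$, and then $F:=G\circ e\in C(E)$ satisfies $F(x)=G(e(x))=g(e(x))=f(x)$ for every $x\in K$, i.e.\ $F$ extends $f$.

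For the quantitative assertion, if $|f|<C$ on $K$ then $c:=\max_{K}|f|<C$ by compactness of $K$; composing $F$ with the truncation $\varphi(t):=\max(-c,\min(c,t))$ (or, equivalently, applying the bounded form of Tietze's theorem to $g:e(K)\to[-c,c]$ directly) yields an extension with values in $[-c,c]\subset(-C,C)$ that still agrees with $f$ on $K$, since $\varphi$ is the identity on $[-c,c]\supseteq f(K)$. The only step that uses the hypothesis is the construction of the embedding in the first paragraph — everything afterwards is a black-box application of the classical Tietze theorem together with the elementary fact that a continuous bijection from a compact space onto a Hausdorff space is a homeomorphism — so this is the point I would treat most carefully, in particular making sure that "completely regular" here carries the Hausdorff separation needed for $e$ to be injective on $K$.
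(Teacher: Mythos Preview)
Your proof is correct and follows essentially the same route as the paper: embed $E$ into a compact Hausdorff space, transfer $f$ to the image of $K$ (which is closed there), apply the classical Tietze theorem, and pull the extension back to $E$. The only differences are cosmetic---you construct the embedding explicitly via the evaluation map into the Tychonoff cube, whereas the paper simply invokes the characterization of completely regular spaces as subspaces of compact Hausdorff spaces; and you handle the strict bound $|F|<C$ more carefully (via $c:=\max_K|f|<C$ and truncation) than the paper, which glosses over the strict-vs-nonstrict distinction.
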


\begin{proof}
We would like to apply the Tietze-Urysohn extension theorem. However, it allows the
extension only on normal spaces. But since a space is completely regular if and only if it is a  subspace of a compact space, 
we can embed $E$ by an embedding $i$ in a compact Hausdorff
set $N$ which is normal. Then $i(K)$ is also compact on $N$ with respect to the subspace
topology $\tau(i(E))$ on
$N$, hence compact with respect to the topology of $N$. Since $N$ is Hausdorff,
the compact set $i(K)$ is closed.
We can apply the Tietze-Urysohn extension Theorem
on $N$ to extend the function $f\circ i^{-1}\in C(i(K))$
to a continuous function $G\in C(N)$ such 
\[
\left.f\circ i^{-1}\right|_{i(K)}=\left.G\right|_{i(K)}
\]
and $\left|G\right|\leq C$ if $\left|f\right|\leq C$. Therefore,
$F:=G\circ i$ possesses the desired properties. 
\end{proof}   
   
\begin{proposition}
\label{prop: Urysohn's Lemma completely regular case} (Urysohn's
Lemma in the completely regular case) Let $E$ be completely regular,
$K\subset E$ compact, $A\subset E$ closed and $A\cap K=\emptyset$.
Then there is a continuous function $f:\,E\rightarrow\left[0,1\right]$
such that $f(K)=\left\{ 0\right\} $, $f(A)=\left\{ 1\right\} $.
\end{proposition}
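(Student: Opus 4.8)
The statement to prove is Urysohn's Lemma in the completely regular case: given $E$ completely regular, $K \subset E$ compact, $A \subset E$ closed with $A \cap K = \emptyset$, there is a continuous $f : E \to [0,1]$ with $f(K) = \{0\}$ and $f(A) = \{1\}$.

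The plan is to reduce to a finite application of the defining property of complete regularity, using compactness of $K$. First I would recall that complete regularity says: for any point $x \in E$ and any closed set $C$ not containing $x$, there is a continuous $g : E \to [0,1]$ with $g(x) = 0$ and $g(C) = \{1\}$. Applying this with $C = A$ (which is closed and, since $x \in K$ and $A \cap K = \emptyset$, does not contain $x$), for each $x \in K$ I obtain $g_x : E \to [0,1]$ continuous with $g_x(x) = 0$ and $g_x|_A \equiv 1$. Then the sets $U_x := g_x^{-1}([0, 1/2))$ form an open cover of $K$, so by compactness finitely many $U_{x_1}, \ldots, U_{x_n}$ cover $K$.

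Next I would set $h := \min(g_{x_1}, \ldots, g_{x_n})$, which is continuous (finite minimum of continuous functions), takes values in $[0,1]$, satisfies $h|_A \equiv 1$ (each $g_{x_i}$ is $1$ on $A$), and satisfies $h < 1/2$ on $K$ (every point of $K$ lies in some $U_{x_i}$). Finally, to turn the bound $h < 1/2$ on $K$ into $f = 0$ on $K$, I would compose with the continuous function $\varphi : [0,1] \to [0,1]$ given by $\varphi(t) = \max(0, 2t - 1)$ (i.e.\ $\varphi(t) = 0$ for $t \le 1/2$ and $\varphi(t) = 2t-1$ for $t \ge 1/2$), and put $f := \varphi \circ h$. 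Then $f$ is continuous, $f(K) = \{0\}$ since $h < 1/2$ there, and $f(A) = \{1\}$ since $h = 1$ there, as desired.

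There is no serious obstacle here; the only point requiring a little care is the direction of the inequalities in the definition of complete regularity (separating a point from a closed set, with the roles of $K$ and $A$ as above), and the use of compactness of $K$ to pass from the family $(g_x)_{x \in K}$ to a finite subfamily before taking the minimum — a single $g_x$ would not suffice since it only controls behaviour near $x$. The composition with $\varphi$ is a cosmetic step to get exact value $0$ rather than merely a value below $1/2$; alternatively one could rescale and truncate. I would present the argument in roughly these four moves: invoke complete regularity pointwise on $K$, extract a finite subcover, take the minimum, and compose with the clipping function $\varphi$.
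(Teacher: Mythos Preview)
Your proof is correct and entirely self-contained: you use complete regularity pointwise on $K$, pass to a finite subcover by compactness, take the minimum, and clip with $\varphi$. This is a clean elementary argument.

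The paper takes a different route. It invokes the characterization of completely regular spaces as precisely the subspaces of compact Hausdorff spaces, embeds $E$ via $i$ into such a compact Hausdorff (hence normal) space $N$, observes that $i(K)$ is closed in $N$ and that $i(A)$ is the trace of some closed $B \subset N$ with $B \cap i(K) = \emptyset$, applies the classical Urysohn Lemma in $N$ to separate $i(K)$ and $B$, and pulls the resulting function back to $E$. Your approach has the advantage of being direct and avoiding the (nontrivial) embedding theorem; the paper's approach has the advantage of fitting the same template as its proof of Proposition~\ref{prop: Tietze extension completely regular case} (the Tietze extension in the completely regular case), so the two results are handled uniformly via the compactification.
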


\begin{proof}
As in Proposition \ref{prop: Tietze extension completely regular case},
we embed $E$ in a compact Hausdorff set $N$ by an embedding denoted by $i$. Then, 
$i(K)$ is compact, hence closed in the compact Hausdorff space $N$.
Since $i(A)$ is closed in the subspace topology $\tau(i(E))$, there
is a closed set $B\subset N$ such that $B\cap i(E)=i(A)$ and clearly
$B\cap i(K)=\emptyset.$ Applying Urysohn's Lemma in the normal space
$N$ we see that there is a continuous function $g:\,N\rightarrow\left[0,1\right]$
with $g(i(K))=\left\{ 0\right\} $ and $g(B)=\left\{ 1\right\} $.
Setting $f:=g\circ i$, we conclude. 
\end{proof}

\begin{corollary}
\label{cor:completely regular space, convergence of continuous bounded functions to open set}Let
$E$ be a completely regular space, $\mathcal{B}(E)$ its Borel $\sigma$-algebra, $m\in\mathbb{N}$, $\mu_1,\ldots,\mu_m$ a family of measures on $\left(E,\mathcal{B}(E)\right)$ and $B\in\mathcal{B}(E)$.
If there is a sequence of compact sets $\left(K_{n}\right)_{n\in\mathbb{N}}$
and of open sets $\left(O_{n}\right)_{n\in\mathbb{N}}$ such that $K_{n}\subset B\subset O_{n}$
for any $n\in\mathbb{N}$ and if for $i\in{1,\ldots,m}$
\[
\underset{n\rightarrow\infty}{\lim}\mu_i(O_{n}\setminus K_{n})=0,
\]
then there exists a sequence $\left(f_{n}\right)_{n\in\mathbb{N}}$
of non-negative continuous functions with $f_{n}\leq1_{O_{n}}$ for
any $n\in\mathbb{N}$ such that 
\[
\underset{n\rightarrow\infty}{\lim}f_{n}=1_{B}
\]
 in $L^{1}\left(E,\mu_i\right)$ for any $i\in{1,\ldots,m}$. If $\mu_i$, $i\in{1,\ldots,m}$ is $\sigma$-finite, then
this convergence holds true also $\mu_i$-almost surely.
\end{corollary}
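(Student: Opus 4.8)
The plan is to produce, for each fixed $n$, a single continuous function $f_n$ squeezed between $1_{K_n}$ and $1_{O_n}$, and then to show that the $L^1(\mu_i)$-error is controlled by $\mu_i(O_n \setminus K_n)$ simultaneously for all $i \in \{1,\ldots,m\}$. First I would apply Urysohn's Lemma in the completely regular case (Proposition \ref{prop: Urysohn's Lemma completely regular case}) to the disjoint pair consisting of the compact set $K_n$ and the closed set $E \setminus O_n$: this yields a continuous function $g_n : E \to [0,1]$ with $g_n = 1$ on $K_n$ and $g_n = 0$ on $E \setminus O_n$. Setting $f_n := g_n$ we immediately get $1_{K_n} \le f_n \le 1_{O_n}$, in particular $0 \le f_n \le 1_{O_n}$ and $f_n$ non-negative and continuous.

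Next I would estimate the error. Since $K_n \subset B \subset O_n$, on $B \setminus O_n = \emptyset$ there is nothing to check; on $K_n$ we have $f_n = 1 = 1_B$; and the set where $f_n$ and $1_B$ can disagree is contained in $O_n \setminus K_n$, on which both functions take values in $[0,1]$, so $|f_n - 1_B| \le 1_{O_n \setminus K_n}$ pointwise. Integrating against $\mu_i$ gives
\[
\int_E |f_n - 1_B|\, d\mu_i \;\le\; \mu_i(O_n \setminus K_n) \xrightarrow[n\to\infty]{} 0
\]
for every $i \in \{1,\ldots,m\}$, which is precisely $f_n \to 1_B$ in $L^1(E,\mu_i)$. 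This handles the first assertion.

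For the almost-sure statement under $\sigma$-finiteness, I would argue as follows: the bound $|f_n - 1_B| \le 1_{O_n \setminus K_n}$ together with $\lim_n \mu_i(O_n \setminus K_n) = 0$ does not yet give pointwise convergence, so I would pass to a subsequence along which $\mu_i(O_{n_k} \setminus K_{n_k}) \le 2^{-k}$; by Borel–Cantelli (valid since $\mu_i$ is $\sigma$-finite, working on each set of finite measure in an exhausting sequence), $\mu_i$-a.e.\ point lies in only finitely many of the sets $O_{n_k}\setminus K_{n_k}$, whence $f_{n_k} \to 1_B$ $\mu_i$-a.e. To upgrade this to the full sequence, note that the sequences $(K_n)$ and $(O_n)$ may be replaced by the monotone sequences $\widetilde K_n := \overline{\bigcup_{j\le n} K_j}$—wait, this need not be compact. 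A cleaner route, which I would actually take, is to observe that without loss of generality one may assume the sequences are nested ($K_n \uparrow$, $O_n \downarrow$) by replacing $K_n$ with $K_1 \cup \cdots \cup K_n$ (a finite union of compacts, hence compact) and $O_n$ with $O_1 \cap \cdots \cap O_n$ (a finite intersection of opens, hence open), which only decreases each $\mu_i(O_n \setminus K_n)$; then $1_{O_n \setminus K_n}$ is decreasing in $n$, so the pointwise limit $h := \lim_n 1_{O_n \setminus K_n}$ exists, and $\int h\, d\mu_i \le \inf_n \mu_i(O_n \setminus K_n) = 0$ on each finite-measure piece, forcing $h = 0$ $\mu_i$-a.e.; hence $|f_n - 1_B| \le 1_{O_n \setminus K_n} \to 0$ $\mu_i$-a.e.

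The only genuinely delicate point is the first step—verifying that Urysohn's Lemma in the completely regular setting does apply, i.e.\ that $E \setminus O_n$ is closed (clear) and that it is disjoint from the compact set $K_n$ (clear from $K_n \subset O_n$); everything after that is the routine domination estimate above. I expect no real obstacle; the $\sigma$-finiteness reduction in the last paragraph is the one place requiring a little care to phrase correctly.
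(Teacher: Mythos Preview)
Your argument is correct and follows the same core line as the paper: apply Urysohn's Lemma in the completely regular case (Proposition~\ref{prop: Urysohn's Lemma completely regular case}) to the pair $K_n$, $E\setminus O_n$ to produce a continuous $f_n$ with $1_{K_n}\le f_n\le 1_{O_n}$, and then use the pointwise bound $|f_n-1_B|\le 1_{O_n\setminus K_n}$ to obtain $L^1(\mu_i)$-convergence for all $i$ simultaneously. The one place where you deviate is the almost-sure statement: the paper simply appeals to the standard fact that $L^1$-convergence implies a.s.\ convergence along a subsequence, whereas you first replace $(K_n)$ and $(O_n)$ by the monotone sequences $K_1\cup\cdots\cup K_n$ and $O_1\cap\cdots\cap O_n$ (which preserves the hypotheses and the constraint $f_n\le 1_{O_n}$) so that $1_{O_n\setminus K_n}$ decreases pointwise to the indicator of a $\mu_i$-null set. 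Your route has the advantage of yielding a.s.\ convergence of the full sequence rather than only a subsequence, and in fact does not genuinely require $\sigma$-finiteness; the paper's route is shorter but delivers only a subsequence. Either variant suffices for how the corollary is used later in the paper.
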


\begin{proof}
Thanks to Urysohn's Lemma in the complete regular case there is a
sequence $\left(g_{n}\right)_{n\in\mathbb{N}}$ of non negative continuous
functions with $1_{K_{n}}\leq g_{n}\leq1_{O_{n}}$ for any $n\in\mathbb{N}$
such that $g_{n}\rightarrow1_{B}$ in $L^{1}\left(E,\mu_i\right)$ for $i\in{1,\ldots,m}$. Thus, if $\mu_i$ is $\sigma$-finite then there exists a subsequence $\left(g_{n_{k}}\right)_{k\in\mathbb{N}}$
such that $g_{n_{k}}\rightarrow1_{B}$ $\mu_i$-almost surely.
\end{proof}

\subsection{Some lemmas for $\mathscr{B}^{\rho}(E)$ functions }

The following lemma is needed for the existence proof of generalized Feller processes in Theorem \ref{thm:GFS induce Markov process} and is shown by adapting the reasoning of Example 1.13 in \cite{KBi}.

\begin{lemma}
\label{lem:B-rho tensor product space, linear isomorphism}Let $n\in\mathbb{N}$ and let $\left(E_{i},\rho_{i}\right)$
$i\in\left\{ 1,...,n\right\} $ be weighted spaces and
\[
\rho\left(x_{1},...,x_{n}\right):=\rho_{1}\left(x_{1}\right)\cdot\cdot\cdot\rho_{n}\left(x_{n}\right).
\]
 Then the linear map 
\begin{align*}
\Psi:\,\,\,\,\,\mathcal{\mathscr{B}}^{\rho_{1}}(E_{1})\otimes...\otimes\mathcal{\mathscr{B}}^{\rho_{n}}(E_{n}) & \rightarrow\mathcal{\mathscr{B}}^{\rho}(E_{1}\times...\times E_{n})\\
f_{1}\otimes...\otimes f_{n} & \rightarrow f_{1}\cdot\cdot\cdot f_{n}
\end{align*}
is injective and its image is a dense linear subspace of $\mathcal{\mathscr{B}}^{\rho}(E_{1}\times...\times E_{n}).$
\end{lemma}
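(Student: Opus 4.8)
The statement to prove is Lemma~\ref{lem:B-rho tensor product space, linear isomorphism}: the map
$\Psi\colon \mathscr{B}^{\rho_1}(E_1)\otimes\cdots\otimes\mathscr{B}^{\rho_n}(E_n)\to\mathscr{B}^{\rho}(E_1\times\cdots\times E_n)$,
$f_1\otimes\cdots\otimes f_n\mapsto f_1\cdots f_n$, is injective with dense image. It suffices to treat $n=2$ and then iterate, since $\rho_1\otimes\cdots\otimes\rho_n = (\rho_1\otimes\cdots\otimes\rho_{n-1})\otimes\rho_n$ and $E_1\times\cdots\times E_n = (E_1\times\cdots\times E_{n-1})\times E_n$ as weighted spaces by Lemma~\ref{lem:product space of weighted space is weighted space}. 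So the plan is: first prove the $n=2$ case, then close by induction.

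First I would check that $\Psi$ is well-defined, i.e. that $f_1\cdot f_2\in\mathscr{B}^{\rho_1\otimes\rho_2}(E_1\times E_2)$ when $f_i\in\mathscr{B}^{\rho_i}(E_i)$: approximate each $f_i$ in $\|\cdot\|_{\rho_i}$ by $g_i\in C_b(E_i)$, note $g_1\cdot g_2\in C_b(E_1\times E_2)$, and estimate $\|f_1 f_2 - g_1 g_2\|_{\rho_1\otimes\rho_2}$ by adding and subtracting $g_1 f_2$, using $\|f_1 f_2 - g_1 f_2\|_{\rho_1\otimes\rho_2}\le \|f_1-g_1\|_{\rho_1}\|f_2\|_{\rho_2}$ and similarly for the other term. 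Linearity of $\Psi$ on the algebraic tensor product is then automatic. For injectivity, I would argue exactly as in Example~1.13 of \cite{KBi}: if $\sum_{k=1}^m f_1^{(k)}\otimes f_2^{(k)}$ with the $f_2^{(k)}$ linearly independent maps to the zero function, then for each fixed $x_1\in E_1$ the function $x_2\mapsto \sum_k f_1^{(k)}(x_1) f_2^{(k)}(x_2)$ vanishes identically, whence by linear independence all $f_1^{(k)}(x_1)=0$; since $x_1$ was arbitrary, all $f_1^{(k)}\equiv 0$ and the tensor is zero.

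For density of the image, the natural route is the weighted Stone--Weierstrass theorem, Proposition~\ref{prop:Stone-Weierstrass on B-rho spaces}. Consider
$A := \operatorname{span}\{\,g_1\cdot g_2 : g_i\in C_b(E_i)\,\}\subset C_b(E_1\times E_2)$,
which is an algebra under pointwise multiplication (the product of two elementary tensors of continuous bounded functions is again one), contains $1_{E_1\times E_2}=1_{E_1}\cdot 1_{E_2}$, and separates points of $E_1\times E_2$ (if $(x_1,x_2)\ne(y_1,y_2)$ then WLOG $x_1\ne y_1$, and since $E_1$ is completely regular there is $g_1\in C_b(E_1)$ with $g_1(x_1)\ne g_1(y_1)$, so $g_1\cdot 1_{E_2}\in A$ separates them). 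By Proposition~\ref{prop:Stone-Weierstrass on B-rho spaces}, $A$ is dense in $\mathscr{B}^{\rho_1\otimes\rho_2}(E_1\times E_2)$ with respect to $\|\cdot\|_{\rho_1\otimes\rho_2}$. But $A$ is contained in the image of $\Psi$ (each generator $g_1\cdot g_2 = \Psi(g_1\otimes g_2)$, and finite linear combinations of such lie in the image since $\Psi$ is linear on the algebraic tensor product), so the image of $\Psi$ is dense as well. Then I would run the induction on $n$: having density for $(n-1)$-fold products, apply the $n=2$ result to the weighted spaces $(E_1\times\cdots\times E_{n-1}, \rho_1\otimes\cdots\otimes\rho_{n-1})$ and $(E_n,\rho_n)$, and use that elementary tensors $h\otimes f_n$ with $h$ itself an $(n-1)$-fold elementary tensor are dense in the image.

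I do not expect a serious obstacle here; the one point that needs a little care is the well-definedness/boundedness estimate for $\Psi$ (making sure one only ever multiplies a small $\|\cdot\|_{\rho_i}$-factor against a quantity bounded in the dual $\rho_j$-norm, not two small factors against each other), and verifying that $A$ really is multiplicatively closed rather than merely a linear span --- this follows because $(g_1 g_2)(g_1' g_2') = (g_1 g_1')(g_2 g_2')$ is again an elementary tensor, so products of generators are generators and the span is an algebra. Everything else is routine application of the already-established weighted Stone--Weierstrass theorem and the algebraic argument for injectivity.
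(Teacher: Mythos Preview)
Your proposal is correct and follows essentially the same approach as the paper: well-definedness via approximation by $C_b$ functions, injectivity via the linear-independence argument of Example~1.13 in \cite{KBi}, and density via the weighted Stone--Weierstrass theorem (Proposition~\ref{prop:Stone-Weierstrass on B-rho spaces}). The only cosmetic differences are that the paper argues directly for general $n$ rather than reducing to $n=2$ and inducting, and that you are more explicit about taking the point-separating algebra $A$ inside $C_b(E_1\times E_2)$ (as Proposition~\ref{prop:Stone-Weierstrass on B-rho spaces} requires), whereas the paper somewhat loosely speaks of ``the image'' being an algebra; your formulation is the cleaner one here.
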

\begin{proof}
By Lemma \ref{lem:product space of weighted space is weighted space}
$
\left(E_{1}\times...\times E_{n},\rho\right)
$
 is indeed a weighted space. Furthermore, for $f_{i}\in\mathcal{\mathscr{B}}^{\rho_{i}}(E_{i})$,
$i\in\left\{ 1,...,n\right\} $ the map 
\[
\left(x_{1},...,x_{n}\right)\rightarrow f_{1}(x_{1})\cdot\cdot\cdot f_{n}(x_{n})
\]
is in $\mathcal{\mathscr{B}}^{\rho}(E_{1}\times...\times E_{n})$ which follows from continuity of multiplication.

By definition of the tensor product the linear map  $\Psi$  exists. It is injective since for
$
0\neq u\in\mathcal{\mathscr{B}}^{\rho_{1}}(E_{1})\otimes...\otimes\mathcal{\mathscr{B}}^{\rho_{n}}(E_{n})
$
 there is $m\in\mathbb{N}$ such that we can choose
a representation 
\[
u={\sum_{j=1}^{m}}f_{1}^{j}\otimes...\otimes f_{n}^{j},
\]
with $\left\{ f_{i}^{j}\right\} _{j\in\left\{ 1,...,m\right\} }\subset\mathcal{\mathscr{B}}^{\rho_{i}}(E_{i})$
for any $i\in\left\{ 1,...,n\right\} $ and 
$
\left\{ f_{1}^{j}\right\} _{j\in\left\{ 1,...,m\right\} },...,\left\{ f_{n}^{j}\right\} _{j\in\left\{ 1,...,m\right\} }
$
 linearly independent which implies that for
any $i\in\left\{ 1,...,n-1\right\} $ there is $z_{i}\in E_{i}$ such that $f_{i}^{1}(z_{i})\neq0$, hence by linear independence of $\left\{ f_{n}^{j}\right\} _{j\in\left\{ 1,...,m\right\} }$
\[
{\sum_{j=1}^{m}}f_{1}^{j}(z_{1})\cdot\cdot\cdot f_{n-1}^{j}(z_{n-1})f_{n}^{j}\neq0.
\]

Density of the image of $\Psi$ follows directly from Stone-Weierstrass
for $\mathcal{\mathscr{B}}^{\rho}$-spaces (Proposition \ref{prop:Stone-Weierstrass on B-rho spaces})
as the image is an algebra that separates points and contains
$1_{E_{1}\times...\times E_{n}}$.
\end{proof}

The following lemma also needed for Theorem \ref{thm:GFS induce Markov process} states that the composition of a $\mathcal{\mathscr{B}}^{\rho}(E)$-function  with a continuous bounded function is a continuous map.

\begin{lemma}
\label{lem:composition B-rho is continuous} Let $h\in C_{b}(\mathbb{R})$
and $f\in\mathcal{\mathscr{B}}^{\rho}(E)$. Then 
\begin{align*}
\mathcal{\mathscr{B}}^{\rho}(E)  \rightarrow\mathcal{\mathscr{B}}^{\rho}(E), \quad
f  \rightarrow h\circ f
\end{align*}
is a continuous map. 
\end{lemma}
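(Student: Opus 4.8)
The plan is to show that the map $F \mapsto h \circ F$ is well-defined on $\mathscr{B}^\rho(E)$ and then Lipschitz-type continuous with respect to $\|\cdot\|_\rho$, using that $\rho$ attains a strictly positive minimum on $E$ (recall that an admissible weight function attains its minimum, which is positive since $\rho$ takes values in $(0,\infty)$). Write $m := \inf_{x \in E}\rho(x) > 0$ and note $\|h\|_\infty < \infty$ since $h \in C_b(\mathbb{R})$.

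\emph{Step 1 (well-definedness).} For $F \in \mathscr{B}^\rho(E)$ I would verify $h \circ F \in \mathscr{B}^\rho(E)$ via Theorem~\ref{thm: equivalence B-rho space}. Condition (i): for each $R>0$ we have $F|_{K_R} \in C_b(K_R,\mathbb{R})$, and composing with the continuous $h$ keeps $h \circ F|_{K_R}$ continuous and bounded (by $\|h\|_\infty$). Condition (ii): since $|h \circ F(x)| \leq \|h\|_\infty$ for all $x$ and $\rho \geq m > 0$, we get $\sup_{x \in E \setminus K_R}\frac{|h\circ F(x)|}{\rho(x)} \leq \frac{\|h\|_\infty}{\inf_{x \in E \setminus K_R}\rho(x)} \to 0$ as $R \to \infty$, because $\inf_{x \in E \setminus K_R}\rho(x) \geq R \to \infty$ (as $E \setminus K_R = \{\rho > R\}$). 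Hence $h \circ F \in \mathscr{B}^\rho(E)$.

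\emph{Step 2 (continuity).} This is the main point. A subtlety is that $h$ need not be globally Lipschitz (it is merely continuous and bounded), so I cannot directly estimate $\|h\circ F - h\circ G\|_\rho$ by a constant times $\|F - G\|_\rho$. The plan is to argue sequentially: let $F_n \to F$ in $\mathscr{B}^\rho(E)$, i.e.\ $\|F_n - F\|_\rho \to 0$; I want $\|h \circ F_n - h \circ F\|_\rho \to 0$. Fix $\varepsilon > 0$. Split $E = K_R \cup (E \setminus K_R)$. On the tail $E \setminus K_R$: $\frac{|h\circ F_n(x) - h \circ F(x)|}{\rho(x)} \leq \frac{2\|h\|_\infty}{R}$, which is $< \varepsilon/2$ for $R$ large enough, \emph{uniformly in $n$}. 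On $K_R$: here $\rho \geq m$, so it suffices to bound $\sup_{x \in K_R}|h\circ F_n(x) - h \circ F(x)|$. Now $\|F_n - F\|_\rho \to 0$ and $\rho$ is bounded on the compact $K_R$ (by lower semicontinuity... actually $\rho|_{K_R} \leq R$ by definition of $K_R$), so $\sup_{x \in K_R}|F_n(x) - F(x)| \leq R\,\|F_n - F\|_\rho \to 0$; thus $F_n \to F$ uniformly on $K_R$. Moreover $F(K_R)$ is a bounded subset of $\mathbb{R}$ (being the continuous image of the compact $K_R$), hence contained in some compact interval $[-L, L]$, and for $n$ large $F_n(K_R) \subset [-L-1, L+1]$. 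Since $h$ is uniformly continuous on the compact $[-L-1,L+1]$, uniform convergence $F_n \to F$ on $K_R$ yields $\sup_{x \in K_R}|h\circ F_n(x) - h\circ F(x)| \to 0$, so this term is $< m\varepsilon/2$ for $n$ large. Combining, $\|h\circ F_n - h \circ F\|_\rho < \varepsilon$ for $n$ large, proving sequential continuity; since $\mathscr{B}^\rho(E)$ is a normed space, sequential continuity is continuity.

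\textbf{Main obstacle.} The only real difficulty is handling the non-Lipschitz $h$: the argument must localize to compact sets $K_R$ where $F$ (and eventually $F_n$) takes values in a fixed compact interval on which $h$ is uniformly continuous, while simultaneously controlling the tail $E\setminus K_R$ uniformly in $n$ using boundedness of $h$ together with $\inf_{E\setminus K_R}\rho \geq R$. Everything else is routine bookkeeping with Theorem~\ref{thm: equivalence B-rho space} and the positivity of $\inf \rho$.
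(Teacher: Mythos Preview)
Your proof is correct and follows essentially the same approach as the paper: both arguments verify well-definedness via Theorem~\ref{thm: equivalence B-rho space}, then establish continuity by splitting $E$ into $K_R$ and its complement, controlling the tail uniformly via $|h\circ F_n - h\circ F| \leq 2\|h\|_\infty$ and $\rho > R$ on $E\setminus K_R$, and handling $K_R$ by passing from $\|\cdot\|_\rho$-convergence to uniform convergence on $K_R$ (since $\rho\leq R$ there) together with uniform continuity of $h$ on a fixed compact interval containing $F(K_R)$. The only cosmetic difference is that you phrase continuity sequentially while the paper writes it as an $\varepsilon$--$\delta$ argument at a fixed $f$.
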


\begin{proof}
Since $\left.h\circ f\right|_{K_{R}}$ is continuous for any $R>0$,
by Theorem \ref{thm: equivalence B-rho space} $h\circ f\in\mathcal{\mathscr{B}}^{\rho}(E)$ and the map is well defined. Let $g\in\mathcal{\mathscr{B}}^{\rho}(E)$,
$\varepsilon>0$ and choose $R_{\varepsilon}>\frac{2\left\Vert h\right\Vert _{\infty}}{\varepsilon}$.
Then 
\begin{align*}
\left\Vert h\circ f-h\circ g\right\Vert _{\rho} \leq\varepsilon+\frac{1}{\underset{x\in E}{\inf}\rho(x)}\cdot\left\Vert \left.\left(h\circ f-h\circ g\right)\right|_{K_{R_{\varepsilon}}}\right\Vert _{\infty}.
\end{align*}
Let $\left[a,b\right]$ be some interval such that $f(K_{R_{\varepsilon}})\subset\left[a,b\right]$. There is $\delta>0$ such that  $\left|x_{1}-x_{2}\right|<\delta$
implies $\left|h(x_{1})-h(x_{2})\right|<\varepsilon$ for
any $x_{1},x_{2}\in\left[a-1,b+1\right]$. Choosing $g$ such that $\left\Vert f-g\right\Vert _{\rho}<\frac{\delta}{R_{\varepsilon}}$
yields
\[
\left\Vert \left.\left(f-g\right)\right|_{K_{R_{\varepsilon}}}\right\Vert _{\infty}\leq\left\Vert f-g\right\Vert _{\rho}\cdot R_{\varepsilon}<\delta,
\]
 and consequently
\[
\left\Vert \left.\left(h\circ f-h\circ g\right)\right|_{K_{R_{\varepsilon}}}\right\Vert _{\infty}<\varepsilon.
\]

\end{proof}

For a locally compact space $E$ the space $\mathcal{\mathscr{B}}^{\rho}(E)$ is already given by the closure of $C_c(E)$, which is subject of Lemma \ref{lem:C_c dense in  B-rho for locally compact spaces} below. This is needed to establish a relationship between generalized Feller and standard Feller processes in Section \ref{sec:relation}.

\begin{lemma}
\label{lem:C_c dense in  B-rho for locally compact spaces}Let  $E$ be locally compact. Then $C_{c}(E)$
is dense in $\mathcal{\mathscr{B}}^{\rho}(E)$ with respect to $\left\Vert \cdot\right\Vert _{\rho}.$
\end{lemma}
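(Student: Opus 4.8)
The plan is to combine Lemma~\ref{lem: continuous weight function B-rho space is continuous}-type reasoning with the density characterization already available via Theorem~\ref{thm: equivalence B-rho space}. The point is that an element $f \in \mathscr{B}^{\rho}(E)$ is, by the equivalent characterization, exactly a function whose restriction to each compact sublevel set $K_R$ is continuous and bounded and which satisfies $\sup_{x \in E \setminus K_R} |f(x)|/\rho(x) \to 0$ as $R \to \infty$. The tail condition is precisely what will let us cut $f$ off to compact support with small $\|\cdot\|_\rho$-error, and local compactness is what will let us realize the cut-off continuous function with genuinely compact support.

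First I would fix $f \in \mathscr{B}^{\rho}(E)$ and $\varepsilon > 0$. By Theorem~\ref{thm: equivalence B-rho space}(ii), choose $R > 0$ with $\sup_{x \in E \setminus K_R} |f(x)|/\rho(x) < \varepsilon$. Since $C_b(E)$ is dense in $\mathscr{B}^{\rho}(E)$ by definition, pick $g \in C_b(E)$ with $\|f - g\|_\rho < \varepsilon$; in particular $\sup_{x \in E \setminus K_R} |g(x)|/\rho(x) < 2\varepsilon$. Now I would use local compactness: the compact set $K_R$ has a relatively compact open neighborhood $U$ with $\overline{U}$ compact (cover $K_R$ by finitely many relatively compact open sets). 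By Urysohn's lemma in the locally compact setting there is $\chi \in C_c(E)$ with $0 \le \chi \le 1$, $\chi \equiv 1$ on $K_R$, and $\operatorname{supp}\chi \subset \overline{U}$. Set $h := \chi \cdot g \in C_c(E)$. Then $h = g$ on $K_R$, so $\|(g - h)|_{K_R}\|_\rho = 0$, while on $E \setminus K_R$ we have $|g(x) - h(x)|/\rho(x) \le |g(x)|(1 + \chi(x))/\rho(x) \le 2|g(x)|/\rho(x) < 4\varepsilon$. Hence $\|g - h\|_\rho \le 4\varepsilon$, and $\|f - h\|_\rho \le \|f - g\|_\rho + \|g - h\|_\rho < 5\varepsilon$. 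Since $\varepsilon$ was arbitrary and $h \in C_c(E)$, this shows $C_c(E)$ is dense in $\mathscr{B}^{\rho}(E)$.

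The only genuinely delicate point is the use of local compactness to produce $\chi \in C_c(E)$ with $\chi \equiv 1$ on the compact set $K_R$: one needs a relatively compact open set $V$ with $K_R \subset V \subset \overline{V}$ compact, and then Urysohn separating $K_R$ from $E \setminus V$. In a locally compact Hausdorff space this is standard, but since here $E$ is only assumed completely regular and locally compact (not a priori Hausdorff), I would either invoke that admissible weight functions force enough separation — the sublevel sets $K_R$ are compact and the ambient space is completely regular — or simply note that Proposition~\ref{prop: Urysohn's Lemma completely regular case} already gives Urysohn-type separation of a compact set from a disjoint closed set in a completely regular space, which is exactly what is needed once $V$ is chosen so that $E \setminus V$ is closed and disjoint from $K_R$. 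Everything else is the routine $\varepsilon$-bookkeeping with the weighted norm sketched above.
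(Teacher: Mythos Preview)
Your proof is correct and follows essentially the same route as the paper's: reduce to approximating a bounded continuous $g$, use local compactness to enclose $K_R$ in a compact neighborhood, and cut off outside. The only cosmetic difference is that the paper builds the compactly supported approximant via a Tietze extension (gluing $g|_{K_R}$ to $0$ on the boundary shell), whereas you multiply $g$ by a Urysohn bump $\chi$; both yield the same $\|\cdot\|_\rho$-estimate, and your version is arguably a touch more direct.
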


\begin{proof}
By definition of $\mathcal{\mathscr{B}}^{\rho}(E)$ we only need to
show that $C_{c}(E)$ is dense in $C_{b}(E)$ with respect to $\left\Vert \cdot\right\Vert _{\rho}.$
Let $f\in C_{b}(E)$ and $\varepsilon>0$. Choose $R_{\varepsilon}:=\frac{\left\Vert f\right\Vert _{\infty}}{\varepsilon}$.
By local compactness each element in $K_{R_{\varepsilon}}$ has a
compact neighborhood hence by compactness of $K_{R_{\varepsilon}}$
finitely many such compact neighborhood cover $K_{R_{\varepsilon}}$.
The union of these finitely many neighborhoods is a compact neighborhood
$U_{K_{R_{\varepsilon}}}\supset K_{R_{\varepsilon}}.$ Let $V_{K_{R_{\varepsilon}}}\subset U_{K_{R_{\varepsilon}}}$
be an open neighborhood of $K_{R_{\varepsilon}}$ and define the map
$\tilde{g}_{\varepsilon}\in C_{b}(K_{R_{\varepsilon}}\cup (U_{K_{R_{\varepsilon}}}\setminus V_{K_{R_{\varepsilon}}}))$
as 
\begin{align*}
\tilde{g}_{\varepsilon}: & =\begin{cases}
f & \text{on }K_{R_{\varepsilon}}\\
0 & \text{on }U_{K_{R_{\varepsilon}}}\setminus V_{K_{R_{\varepsilon}}}.
\end{cases}
\end{align*}
By normality of compact sets and the Tietze-Urysohn theorem (see e.g.~\cite{kelley2017general}) this map can be extended to $g'_{\varepsilon}\in C_{b}(U_{K_{R_{\varepsilon}}})$
such that $\left\Vert g'_{\varepsilon}\right\Vert _{\infty}=\left\Vert f\right\Vert _{\infty}$.
Subsequently the map $g'_{\varepsilon}$ can be extended to $g_{\varepsilon}\in C_{c}(E)$
with $\left\Vert g_{\varepsilon}\right\Vert _{\infty}=\left\Vert f\right\Vert _{\infty}$
by stetting $g_{\varepsilon}\equiv0$ on $E\setminus U_{K_{R_{\varepsilon}}}$.
Then 
\begin{align*}
\left\Vert g_{\varepsilon}-f\right\Vert _{\rho} & \leq\underset{x\in K_{R_{\varepsilon}}}{\sup}\frac{\left|g_{\varepsilon}(x)-f(x)\right|}{\rho(x)}+\underset{x\in E\setminus K_{R_{\varepsilon}}}{\sup}\frac{\left|g_{\varepsilon}(x)-f(x)\right|}{\rho(x)}\\
 & \leq0+\frac{2\left\Vert f\right\Vert _{\infty}}{R_{\varepsilon}}\\
 & =2\varepsilon,
\end{align*}
which proves the lemma since $\varepsilon>0$ was arbitrary. 
\end{proof}


\end{document}